\newtheorem*{mytheorem}{Theorem}
\theoremstyle{plain}
      \newtheorem{theorem}{Theorem}
      \newtheorem{lemma}[theorem]{Lemma}
      \newtheorem{corollary}[theorem]{Corollary}
      \newtheorem{proposition}[theorem]{Proposition}
      \theoremstyle{definition}
      \newtheorem{definition}[theorem]{Definition}
     \theoremstyle{remark}
     \newtheorem{remark}[theorem]{Remark}
     \theoremstyle{Fact}
\theoremstyle{notation}
      \newtheorem{notation}[theorem]{Notation}
\newenvironment{proof2}[1][Proof]{\noindent\textit{#1 :} }{\ 
}
\newenvironment{lemmas}[1][Lemma]{\begin{lemma} }{\ \hfill \rule{0.5em}{0.5em}\end{lemma}}
\newenvironment{corollaries}[1][Lemma]{\begin{corollary} }{\ \hfill \rule{0.5em}{0.5em}\end{corollary}}
\newenvironment{propositions}[1][Lemma]{\begin{proposition} }{\ \hfill \rule{0.5em}{0.5em}\end{proposition}}
\newcounter{Step}
\newenvironment{step}[0]{\bigskip\addtocounter{Step}{1}\noindent\textbf{Step \theStep :} }{\
  \begin{flushright} \end{flushright}}
\newenvironment{claim}[0]{\bigskip\noindent\textbf{Claim :} }{\
  \begin{flushright} \end{flushright}}
\newcommand{\red}{
}\newcommand{\re}{}
\newcommand{\gre}{}
\def\N{\mbox{I\hspace{-.15em}N} }
\def\R{\mbox{I\hspace{-.15em}R} }
\def\C{\hspace{.17em}\mbox{l\hspace{-.47em}C} }
\def\F{\mbox{I\hspace{-.15em}F} }
\def\Z{\mbox{Z\hspace{-.30em}Z} }
\def\mt{m\circ1\otimes\tau\otimes1}
\def\tm{(\tau\circ m)}
\def\d{\bar{\delta _{i}}}
\def\o{\otimes}
\def\D{\mathscr{D}}
\def\DD{\mathcal{D}}
\def\H{\mathscr{H}}
\def\n{\noindent}
\def\E{\mathscr{E}}
\def\A{\mathcal{A}}
\def\H{\mathcal{H}}
\def\J{\mathcal{J}}
\def\II{\mathcal{I}}
\def\B{\mathcal{B}}
\begin{document}

\title{A Free Stochastic Partial Differential Equation}
\begin{abstract}
We get stationary solutions of a free stochastic partial differential equation. As an application, we prove equality of non-microstate and microstate free entropy dimensions under a Lipschitz like condition on conjugate variables, assuming also the von Neumann algebra $R^\omega$ embeddable. This includes an N-tuple of q-Gaussian random variables e.g. for $|q|N\leq 0.13$. 
\\
\medskip
\\
\end{abstract}


\author[Y. Dabrowski]{Yoann Dabrowski}\address{ 
Universit\'e de Lyon\\
Universit\'e Lyon 1\\
Institut Camille Jordan\\
43 blvd. du 11 novembre 1918\\
F-69622 Villeurbanne cedex\\
France}
\email{dabrowski@math.univ-lyon1.fr}
\thanks{Research partially supported by NSF grants DMS-0555680 and DMS-0900776.}

\subjclass[2000]{46L54, 60H15}
\keywords{Free Stochastic Partial Differential Equations; Lower bounds on microstate Free Entropy Dimension; Free probability; q-Gaussian variables}
\date{}
\maketitle


\begin{center}
\section*{Introduction}
\end{center}
In a fundamental series of papers, Voiculescu introduced analogs of entropy and Fisher information in the context of free probability theory. A first microstate free entropy $\chi(X_{1},...,X_{n})$ is defined as a normalized limit of the volume of sets of microstate i.e. matricial approximations (in moments) of the n-tuple of self-adjoints $X_{i}$ living in a (tracial) $W^{*}$-probability space $M$. Starting from a definition of a free Fisher information \cite{Vo5}, Voiculescu also defined a non-microstate free entropy $\chi^{*}(X_{1},...,X_{n})$, known by the fundamental work \cite{BGC} to be greater than the previous microstate entropy, and believed to be equal (at least modulo Connes' embedding conjecture). For more details, we refer the reader to the survey \cite{VoS} for a list of properties as well as applications of free entropies in the theory of von Neumann algebras.

 Moreover in case of infinite entropy, two other invariants the microstate and non-microstate free entropy dimensions (respectively written $\delta_0(X_{1},...,X_{n})$ and $\delta^*(X_{1},...,X_{n})$) have been introduced to generalize results known for finite entropy. Surprisingly, Connes and Shlyakhtenko found in \cite{CS} a relation between those entropy dimensions and the first $L^{2}$-Betti numbers they defined for finite von Neumann algebras. For instance, for (real and imaginary parts of ) generators of finitely generated groups, $\delta^*$ has been proved in \cite{MS05} to be equal to $\beta^{(2)}_1(\Gamma)-\beta^{(2)}_0(\Gamma)+1$ (cf. e.g. \cite{L2} for $L^{2}$-Betti numbers of groups).

In \cite{S07}, Dimitri Shlyakhtenko obtained lower bounds on microstate free entropy dimension (motivated by the goal of trying to prove equality with non-microstate free entropy dimension), in studying the following free stochastic differential equation :
$$X_{t}^{(i)}=X_{0}^{(i)}-\frac{1}{2}\int_{0}^{t}\xi_{s}^{(i)}ds + S_{t}^{(i)}$$ where $\xi_{s}^{\gre (i)}$ is the i-th conjugate variable of $X_{s}^{(i)}$'s in the sense of \cite{Vo5}, $S_{t}^{(i)}$ a free Brownian motion free with respect to $X_{0}^{(i)}$. Let us recall that for $(M=W^*(X_{1},...,X_{N}),\tau)$, if $X_1,...,X_N$ are algebraically free, the i-th partial free difference quotient $\partial_i:L^2(M)\to L^2(M)\o L^2(M)$ is the unique derivation densely defined (on non-commutative polynomials) such that $\partial_j(X_i)=1_{i=j}1\o 1$. Then the i-th conjugate variable is defined by $\partial_i^*(1\o1)\in L^2(M)$ if it exists. In \cite{S07}, this equation was solved in order to get stationary solutions for analytic conjugate variable, and thus this paper proved that in case of analytic conjugate variable if moreover $W^*(X_1,...,X_N)$ is $R^\omega$ embeddable, then $\delta_0(X_{1},...,X_{N})=\delta^*(X_{1},...,X_{N})=N$. Of course, if we believe in the previous general equality, this should be proved in much more general cases, e.g. for $L^2$ conjugate variable, i.e. finite Fisher information. The goal of this paper is to prove this equality in an intermediate case, under a Lipschitz like condition on conjugate variables.  Let us emphasize our definition does not involve operator Lipschitz functions and is relative to $M$, but it is nothing but the usual notion of being a Lipschitz function of $X$ (for instance applied by functional calculus)  in the one variable case (this is a Sobolev like definition of lipschitzness in the one variable case)~:

\begin{definition}
$(M=W^*(X_{1},...,X_{N}),\tau)$ is said to satisfy a \textit{Lipschitz conjugate variable condition} if the partial free difference quotients $\partial_i$ are defined and if the conjugate variables $\partial_i^*1\o 1$ exist in $L^2(M)$ (for all $i$) and moreover are in the domain of the closure $\overline{\partial}$ of $(\partial_1,...,\partial_N)$  with $\overline{\partial_j}\partial_i^*1\o 1\in M\overline{\o} M^{op}\subset L^2(M\o M^{o\red p})\simeq L^2(M\o M)$ (von Neumann tensor product, $M^{op}$ the opposite algebra).
\end{definition}

Let us state a precise result, the main byproduct of our work in this respect (cf. corollary \ref{MicrostateChap2}) is the following~:

\begin{mytheorem}
Consider $(M=W^*(X_{1},...,X_{N}),\tau)$ a $R^\omega$-embeddable finite von Neumann algebra satisfying a Lipschitz conjugate variable condition. 
Then the microstate entropy dimension $\delta_0(X_1,...,X_N)=N$.
\end{mytheorem}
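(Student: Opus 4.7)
My plan is to adapt Shlyakhtenko's strategy from \cite{S07} to the Lipschitz conjugate variable setting, leveraging the stationary solution of the free SDE $dX_t^{(i)} = -\frac{1}{2}\xi^{(i)}(X_t)dt + dS_t^{(i)}$ constructed in the body of this paper. Stationarity gives $X_t \sim X_0$ in joint law for all $t$, and introducing the auxiliary tuple $Y_t^{(i)} = X_t^{(i)} + \frac{1}{2}\int_0^t \xi^{(i)}(X_s)ds = X_0^{(i)} + S_t^{(i)}$ identifies $Y_t$ in law as the free convolution $X_0 \boxplus S_t^{\mathrm{free}}$. For $t>0$ this convolution already has analytic conjugate variables and inherits $R^\omega$-embeddability from $X_0$, so $\delta_0(Y_t) = N$ by Shlyakhtenko's original theorem in \cite{S07}.

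I would then produce a matricial lift of the flow. Given a microstate $x_0 \in (M_k(\mathbb{C})_{s.a.})^N$ of $X_0$ and an independent Hermitian matrix Brownian motion $s_{\cdot}$, solve the classical SDE $dx_t^{(i)} = -\frac{1}{2}\xi^{(i)}(x_t)dt + ds_t^{(i)}$, with $\xi^{(i)}$ evaluated on matrices via a carefully chosen noncommutative polynomial approximant, and record $y_t := x_0 + s_t$. The hypothesis $\overline{\partial_j}\xi^{(i)} \in M\,\overline{\otimes}\, M^{op}$ furnishes a $k$-independent operator Lipschitz constant for the drift, granting global solvability of the matrix SDE and, by Gr\"onwall, uniform-in-$k$ approximation of the free solution on compact time intervals; thus $x_t$ is still a microstate of $X_0$, while $y_t$ is a microstate of $X_0 \boxplus S_t^{\mathrm{free}}$.

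The main quantitative step is a Jacobian estimate on the map $x_0 \mapsto y_t$ at fixed Brownian path $s_{\cdot}$. Its differential is governed by $\overline{\partial_j}\xi^{(i)}$ acting on $M_k(\mathbb{C})^N$ by left-right multiplication ($a \otimes b^{op} \cdot c = acb$), and the closed $M\,\overline{\otimes}\, M^{op}$ form of the hypothesis yields a trace-level estimate of the form $|\det d\Psi_t^{(k)}| \geq \exp(-C N k^2 t)$ with $C$ independent of $k$. Transporting microstate packing numbers along the flow, Jung's packing-number characterization of $\delta_0$ then yields $\delta_0(X_1,\ldots,X_N) \geq \delta_0(Y_t) - Ct = N - Ct$; sending $t \to 0^+$ gives $\delta_0(X_1,\ldots,X_N) \geq N$, and the matching upper bound $\delta_0 \leq N$ is automatic for an $N$-tuple of bounded self-adjoints.

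The principal obstacle is the uniform-in-$k$ control of the matricial drift and its Jacobian. The conjugate variables $\xi^{(i)}$ are only $L^2$ elements of $M$, so their matricial evaluation requires polynomial approximants whose operator Lipschitz norms do not degrade with $k$; the closed tensor form $\overline{\partial_j}\xi^{(i)} \in M\,\overline{\otimes}\, M^{op}$ (as opposed to merely $L^2(M\otimes M)$) is precisely what allows such approximants to exist with $k$-independent constants, securing both the Gr\"onwall comparison and the Jacobian estimate. Additional care is needed to verify that $R^\omega$-embeddability propagates along the matricial flow so that genuine microstates remain available, and that the matricial Brownian increments indeed approximate the free Brownian motion in joint distribution uniformly enough for the packing-number transport to conclude.
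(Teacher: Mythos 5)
There is a genuine gap, and it sits exactly where you locate your ``principal obstacle.'' Your quantitative engine is a matricial lift: you want to evaluate the conjugate variables $\xi^{(i)}$ at matrix tuples through polynomial approximants with $k$-independent operator-Lipschitz constants, and to deduce a dimension-free Jacobian bound for $x_0\mapsto y_t$ from $\overline{\partial_j}\,\xi^{(i)}\in M\,\overline{\otimes}\,M^{op}$. But the Lipschitz conjugate variable condition is a Sobolev-type statement about fixed elements of $M$ relative to the given trace; it is emphatically \emph{not} an operator-Lipschitz or noncommutative-function condition (the paper stresses this: ``our definition does not involve operator Lipschitz functions and is relative to $M$''). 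A conjugate variable is just an $L^2$ (here bounded) element of $M$; it has no canonical evaluation at a microstate, and approximating it by polynomials in $L^2$ or even in operator norm gives no control whatsoever of those polynomials on matrix tuples that only approximate $X$ in distribution, let alone uniform-in-$k$ Lipschitz constants or the bound $|\det d\Psi_t^{(k)}|\geq e^{-CNk^2t}$. This is precisely why the paper does not solve $dX_t=-\tfrac12\xi(X_t)dt+dS_t$ (which would require applying $\xi$ to the evolving variables, i.e.\ a theory of noncommutative Lipschitz functions that is unavailable), but instead solves the dual SPDE $dX_t=-\tfrac12\Delta(X_t)dt+\delta(X_t)\# dS_t$, whose coefficients are fixed (unbounded) operators on $L^2(M)$; so even your opening premise, that the body of the paper provides a stationary solution of the SDE with drift $\xi^{(i)}(X_t)$, is not what is constructed.

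The correct bridge from the flow to $\delta_0$ also does not need any matrix SDE, Jacobian estimate, or packing transport of your own. What the paper's construction yields is a trace-preserving $*$-homomorphism $\Phi_t:W^*(X_1,\dots,X_N)\to M_0* L(F_\infty)$ with the $L^2$ estimate $\|\Phi_t(X_j)-X_j-\sqrt{t}\,\sum_i\overline{\partial_i}(X_j)\# S_i\|_2\leq c_j t$, i.e.\ $\Phi_t(X_j)$ is within $O(t)$ of $X_j+\sqrt{t}S_j$; Shlyakhtenko's Theorem 4 in \cite{S07} (which already encapsulates the microstate/covering-number argument, using $R^\omega$-embeddability, for any approximation of order $t^\alpha$, $\alpha>1/2$) then gives $\delta_0(X_1,\dots,X_N)=N$ directly. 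Your alternative chain ``$\delta_0(X)\geq\delta_0(Y_t)-Ct$ via a determinant bound'' is not a proof as written even granting the matrix flow: a volume distortion of size $e^{-CNk^2t}$ controls $\chi$-type quantities, and converting it into a statement about packing exponents at scale $\epsilon$ (Jung's characterization) requires relating $t$ to $\epsilon$ through exactly the kind of uniform Lipschitz control of the flow that the hypothesis does not supply. If you want to salvage your outline, the repair is to abandon the matricial lift entirely, quote the $O(t)$ estimate for $\Phi_t$ coming from the dual-equation solution together with its stationarity, and let \cite[Theorem 4]{S07} do the microstate work.
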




We show in section {\red 4.3} that q-Gaussian variables (introduced in \cite{BS91}) are a non-trivial instance of non-commutative variables having Lipschitz conjugate variables for small $q$ (e.g. $|q|N\leq 0.13$ thus improving a computation in \cite{S07} and proving that $\delta_0$ does not only converge to $N$ for small $q$ but is identically equal to $N$ and thus equal to $\delta^*(X_1,...,X_N)$. {\gre One could actually prove with our techniques  $\delta_0(X_1,...,X_N)=N$ is still valid on a slightly larger range of $q$'s, i.e. as soon as $|q|N<1$ and $|q|\sqrt{N}\leq 0.13$, we will detail this elsewhere}.

Let us come back to our stochastic differential equation setting. By lack of a theory of  ``non-commutative Lipschitz functions", we will rather solve a dual Stochastic Partial Differential Equation
with the right stationarity property to get this result.

To explain the equation we solve, let us note that if we call $\Phi_s(X)=X_s$ the automorphism we hope being able to build solving the above equation, then Ito formula implies e.g. for any non-commutative polynomials P :
$$\Phi_t(P(X_{0}))=\Phi_0(P(X_{0}))-\frac{1}{2}\int_{0}^{t}\Phi_s(\Delta (P(X_{0})))ds + \int_{0}^{t}\Phi_s\o\Phi_s(\delta (P(X_{0})))\#dS_{s}.$$
We refer the reader to the main text for reminders about free stochastic integration. Here $\delta=(\delta_1,...,\delta_n)$ is the free difference quotient, $\Delta=\delta^*\delta$. This is  also the equation the author solved in a more recent paper \cite{Dab10}  in a much more general context but with more limited applications to microstate free entropy dimensions, because of a lack of control on the von Neumann algebra in which we build the process in this new approach.

Here we will thus rather solve the following dual equation :
\begin{align}\label{IntEq}X_{t}=X_{0}-\frac{1}{2}\int_{0}^{t}\Delta (X_{s})ds + \int_{0}^{t}\delta (X_{s})\#dS_{s},\end{align}

where $\delta$ will be an appropriate extension of the free difference quotient by zero on the free Brownian motion and a corresponding $\Delta=\delta^*\delta$. It is well known in quantum stochastic integration over symmetric Fock space that solving right Hudson Parthasarathy equations instead of left HP equations enables to solve equations in a mild sense (see e.g. \cite{FagnolaWills}) as in the classical Stochastic Differential equation case (see e.g. \cite{DaPrato}). {\gre It} is in order to use those techniques we considered this equation rather than the previous one.

Before describing the content of this paper, let us explain the relation of our work with classical stochastic partial differential equations. There are basically three main approaches to analysing SPDEs: the ``martingale
(or martingale measure) approach'' (cf. \cite{Wal86}), the ``semigroup (or mild
solution) approach'' (cf. \cite{DaPrato}) and the ``variational approach''
(cf. \cite{Roz}).  We will mainly refer to the above monographs instead of the original enormously rich literature. Beyond those mainstream approaches, one should also mention Krylov's $L^p$-theory \cite{KryLp}
and Kostelenez's methods \cite{Kos} using limits of particle systems, and also an old approach for more concrete SPDEs using SDEs in nuclear spaces and distributions (e.g. \cite{Ust}). Here we will adapt to the free SDE context a part of the semigroup approach using variational techniques. To compare with our work, we thus insist here on those two approaches.

To fix ideas a general SPDE considered in the classical literature is often of the form :
$$dX_t(\xi) = A(t,X_t(\xi),D_{\xi}X_t(\xi),D^2_{\xi} X_t(\xi))dt
+ B(t,X_t(\xi),D_{\xi}X_t(\xi))dW_t.$$
Since this will be our main interest, we will mainly focus on the linear time independent case where $A$ is thus a second order differential operator, $B$ a first order one, let say valued in Hilbert-Schmidt operators from the noise space $Y$(let say $W_t$ is a standard (with covariance $Id$) cylindrical brownian motion on a Hilbert space $Y$) to the space $H$ where $X_t$ lives. The linear case was also motivated in the early theory by filtering problems giving rise to linear equations suitable for the variational approach.  

Both approaches share the common features of considering SPDEs as SDEs valued in infinite dimensional spaces (usually Hilbert spaces of Sobolev types), using PDE techniques often in an abstract functional analytic setting.

Let us describe first the variational approach, originating from \cite{Par72},\cite{Par75}, \cite{KryRoz} (we refer to \cite{Roz}, and the recent introductory \cite{PreRoc} in the coercive case). Usually, solutions are built here by a Galerkin scheme, in first projecting the equation to finite dimensional sub-Hilbert spaces. After this transformation, the equation is an ordinary SDE solved by usual techniques. At this level, estimates (for this approximation) are proved, enabling to take a (weak) limit. The equation is first solved in a weak sense, avoiding to require $X_t\in D(A)$.
The standard assumption is the so called coercivity condition (also called superparabolic case in \cite{Roz} when considered for concrete differential operators).

This is roughly written : $$2\langle x, Ax \rangle +||B(x)||_{HS}^2 + \delta ||x||_U^2\leq  K ||x||_H^2,$$
where $U$ is another Hilbert space such that $U\subset D(B)$ continuously (often if $A$ is time independent self-adjoint, -A positive, $U=D((-A)^{1/2})$, for instance, to fix ideas). Having $\delta>0$ then enables to get a bound on $||X_t||_H$ and say $\int_0^t||X_s||_U^2ds$ giving sufficiently many regularity to get a weak limit, so that $B(X_t)$ makes sense, and to solve the equation weakly (i.e. after taking scalar products with $y\in D((-A)^{1/2})$ for instance in the self-adjoint case). Unfortunately, the case we are interested in is not coercive, it only satisfies the dissipative condition where $\delta=0$ above (sometimes called degenerate parabolic case). This equation is enough to guaranty a bound on $||X_t||_H$  but nothing more.  In this case, the usual method (for instance  used in \cite{Roz} Chapter 4 in a concrete differential operator setting), is to replace $B$ by $(1-\epsilon) B$ (or $A$ by $(1+\epsilon) A$) to get a coercive equation and get the bound on $||X_t||_U$ necessary to get a weak limit by another technique. In the coercive case, there are also standard ways of getting regularity results (for instance, we assume a dissipative inequality under an overall $(-A)^{1/2}$ for instance again in the self-adjoint positive case, i.e   $-2\langle Ax, Ax \rangle +||(-A)^{1/2}B(x)|| _{HS}^2\leq  K ||(-A)^{1/2}x||_H^2,$ and not surprisingly deduce from this a bound on $||(-A)^{1/2}X_t||_H$, the equation being ideal to apply Gronwall's lemma and get a bound in that way). One thus uses these standard ways of getting regularity via a priori estimates for the approximating equation to get a weak limit. 

The semigroup approach uses the semigroup $\phi_t$ generated by $A$ and rewrites the equation after "variation of constants", and thus looks for a so-called mild solution, i.e. a solution of~: 
$$X_t=\phi_t(X_0)+\int_0^t \phi_{t-s}(B(X_s)dW_s).$$
Then the goal is to use regularization properties of this semigroup to solve this equation. For instance, to solve non-linear equations with only continuous coefficients for $B$, one can use compactness of the semigroup and use compactness arguments (and get a stochastically weak solution, i.e. not adapted to the filtration of the Brownian motion. Note we use in this paper only the word weak in its PDE sense as in \cite{DaPrato}). In more standard assumptions, the semigroup is only assumed  analytic, or with generator a variational or a self-adjoint operator. We will be mainly interested in the semigroup approach under the same assumptions as in the variational approach. Indeed, in our free SDE setting, it is not quite clear what kind of Galerkin's method could make us recover an ordinary free SDE setting. Moreover as we will see, we will use extensively really weak notions of being a mild solution we will call ultramild as a crucial tool to get results {\gre under} really weak assumptions. Anyways, the interest of the semigroup approach for us lies in the fact it replaces Galerkin's method by a fixed point argument (for contractions) under the same coercivity assumption. Then, we can again prove a priori estimates to extend this to the degenerate parabolic case we will be interested in (since only this case can give stationary solutions in our examples).

Let us now finally describe the content 
of this article. In Section 1, we solve a really general stochastic partial differential equation (formally of the form (\ref{IntEq}))  with much less restrictive assumptions on $\delta$, $\Delta$. We find natural assumptions to get two kinds of solutions we will call mild and ultramild solutions, this second really weak sense of getting a solution has never been considered, to the best of our knowledge, in previously quoted contexts. These conditions are natural analogs of the dissipativity condition above (in case of ultramild solutions) and the dissipativity condition under $(-A)^{1/2}$ (to get regularity conditions and for us mild solutions). We have also to include in these conditions general compatibility assumption{\gre s} trivially checked in our main example.

In Section 2, we prove that we can check our assumptions to get mild solutions in the free difference quotient case with a Lipschitz conjugate variable type assumption as explained above. 
 The crucial issue here is to prove non-trivial domain properties of $\delta$, $\Delta$ which are usually checked classically using general regularity results of PDEs not available in our non-commutative context. One of the crucial tools here is 
 an easy boundedness criteria for $1\o\tau\circ\delta$  found by the author in \cite{nonGam} (cf. Lemma \ref{FPC2} infra. 
 coming from  \cite[lemma 10]{nonGam}).

In Section 3, we prove that as soon as we stick to our case of main interest of a derivation and the corresponding divergence form operator, it suffices to check $||X_t||_2=||X_0||_2$ in order to prove any ultramild solution to be stationary, as we want in order to get lower bounds on microstate free entropy dimension. 
Especially, this is always true if we can get a mild solution, and this is really likely why ultramild solutions {\gre were} never considered before. If we don't get an isometric map, solving those equations is  not such useful.

In Section 4, we explain our main application about computation of microstate free entropy dimension under Lipschitz conjugate variable assumption. In section {\red 4.3}, as we said above, we explain the concrete example of q-Gaussian variables{\red, after several general preliminaries gathered in section 4.2}. Here the proof of Lipschitz conjugate variable relies heavily on Bo\.zejko's analog of Haagerup's Inequalities \cite{Boz}.
 We {\gre will consider elsewhere how one could use a non-coassociative derivation to compute microstate free entropy dimension of q-Gaussian variables in a slightly less small range of q's}. Of course it is possible that a better understanding of combinatorial properties of those examples may give more extended ranges of q's with the same free SDE techniques. Finally, in section {\red 4.4}, we explain how hard it is to get stationary solutions in an example of derivations on group von Neumann algebras coming from group cocycles valued in the left regular representation, case also considered in \cite{S07}. Here coassociativity like assumptions are not available to get "easily" mild solutions, this is why we were motivated in being able to get solutions in a really general sense like ultramild solutions under somehow an automatically verified assumption. Indeed, in such a concrete example one can easily find a necessary and sufficient condition for getting $||X_t||_2=||X_0||_2$. However, it is expressed in terms of conservativity of a classical Markov process, well known to be hard to check. This is not such surprising since unitarity properties of left Hudson-Parthasarathy equations are also expressed in terms of conservativity of quantum Markov processes (see e.g. the survey \cite{FagnolaHP}). Of course, the occurrence of a classical process is only explained by our special example on groups, anyone interested in such a criteria for more general processes may be able to generalize this to a general case using conservativity of an appropriate quantum Markov process. However since any useful (easy to check) sufficient condition for proving this conservativity is not really available (even in HP case) beyond conditions really similar to those of our section 2 to get mild solutions (cf. \cite{ChebFag}), we don't enter in this general question here. Let us conclude with two remarks. Having in mind those similarities with questions of unitarity of solutions of Hudson-Parthasarathy equations, we can wonder whether a duality theory analogous to (Journe) duality of left and right HP equations could be developed in our context. In the other direction, one may wonder whether an ultramild like definition of a solution may be useful for right HP equations (e.g. in order to solve them under weaker conditions expressed in terms of conservativity assumptions similar to left HP equations) or whether the new approach of \cite{Dab10} could be translated in the context of left HP equations.  

\bigskip\textbf{Acknowledgments} 
 The author would like to thank D. Shlyakhtenko and P. Biane for plenty of useful
discussions,  A. Guionnet and an anonymous referee for plenty of useful comments helping him improve the exposition of previous versions of this text.
\section{A general Stochastic differential equation with unbounded coefficients}

Let $M_{0}$ be a $W^{*}$-probability space (with separable predual), $S_{t}^{(i)}$ ($i\in\N$) a family of 
free Brownian motions. Consider $M=M_{0}\star W^{*}(S_{t}^{(i)})$ the free product of $W^{*}$-probability spaces (so that $S_{t}^{(i)}$ are free with $M_{0}$ inside $M$) and consider finally the natural filtration $M_{s}=M_{0}\star W^{*}(S_{t}^{(i)},t\leq s)$. As a side remark, note we always use scalar products linear in the second variable.

In this part, we will be interested in the following equation~:
\begin{equation}\label{SPDES}X_{t}=X_{0}-\frac{1}{2}\int_{0}^{t}\Delta (X_{s})ds + \int_{0}^{t}\delta (X_{s})\#dS_{s},\end{equation}

where $\Delta~:L^{2}(M)\rightarrow L^{2}(M)$ and $\delta~:L^{2}(M)\rightarrow L^{2}(M)\otimes L^{2}(M)^{\bigoplus \N}$ are closed densely defined operators  and keeping invariant for $t\in [0,T]$ $L^{2}(M_{t})$ (resp. sending it to $L^{2}(M_{t}\otimes M_{t})^{\bigoplus \N}$ and with the analog property for its adjoint. {\gre By convention we say a closed densely defined unbounded operator keep invariant a subspace or send a closed subspace $S$ into another one $S'$ if its restriction to the intersection $D\cap S$ of its domain $D$ with $S$ is valued in $S'$ and the restricted operator is again a closed densely defined unbounded operator}. See subsection 1.2 for a definition of Stochastic integral){\red .} The sense in which we will solve this equation will be made precise in the 3 following sections~: the first will deal with some miscellaneous results about stochastic integration in our context, the second will introduce stochastic convolution, the key tool to define mild solutions and the third one will prove in the free Brownian case some well-known (in the classical Brownian motion case) relations between mild and strong solutions, and introduce ultramild solutions (the three kinds of solutions we will be interested in getting). Let us right now state the two class of assumptions we will need to get mild (resp. ultramild) solutions in the last subsection of this section\footnote{Note we will always write $A\circ B$ the closure of the composition of two closed operators if possible, and the usual composition if they are not closed, without risk of confusion. Sometimes we will even write $AB$ for the same object.}.
We also consider given another operator $\tilde{\delta}$ satisfying the assumptions for $\delta$, i.e $\tilde{\delta}~:L^{2}(M)\rightarrow L^{2}(M)\otimes L^{2}(M)^{\bigoplus \N}$ are closed densely defined operators keeping invariant the corresponding filtrations. {\gre (This will be useful for further applications in the $q$-Gaussian variable case. We will consider them elsewhere).}

We fix a few notation{\red 
} before stating the assumption. We will write (for $t>s$) $U\#(S_{t}-S_{s})=\sum_{i=0}^{\infty}U^{(i)}\#(S_{t}^{(i)}-S_{s}^{(i)})$ the Hilbert space isomorphism between the infinite direct sum of coarse correspondences $(L^{2}(M_{s})\otimes L^{2}(M_{s}))^{\bigoplus \N}$ {\gre and a corresponding subspace of $L^2(M_t)$}, where $U^{(i)}\#(S_{t}^{(i)}-S_{s}^{(i)})$ is the linear isomorphism extending $a\otimes b\#(S_{t}^{(i)}-S_{s}^{(i)})=a(S_{t}^{(i)}-S_{s}^{(i)})b$, for $a,b,c\in M_{s}$. Likewise, for 3-fold tensor products, we write 
$(a\otimes b\o c)\#_1(S_{t}^{(i)}-S_{s}^{(i)})=a(S_{t}^{(i)}-S_{s}^{(i)})b\o c, (a\otimes b\o c)\#_2(S_{t}^{(i)}-S_{s}^{(i)})=a\o b(S_{t}^{(i)}-S_{s}^{(i)}) c$ and their corresponding $L^2$ extensions.
 On a direct sum, we write $Diag(A_i)$ the operator acting diagonally, e.g. $Diag(A_i)(b_1\oplus b_2)=A_1b_1\oplus A_2b_2$.

 We will first use an assumption $\Gamma_0(\omega)$ to get really weak forms of solutions, we will call ultramild. 
 \begin{equation*}
\Gamma_{0}(\omega)\begin{cases}
a) & \text{$\Delta $ is a positive self-adjoint operator, $\eta_{\alpha}=\frac{\alpha}{\alpha+\Delta }$. }
\\
b.1) & D(\Delta ^{1/2})\subset D(\delta ),\\ 
c.1) & \text{for any $x\in D(\Delta ^{1/2})$  we have~:
$-||\Delta ^{1/2}x||_{2}^{2} + ||\delta (x)||_{2}^{2}\leq \omega ||x||_{2}^{2}$.}\\
d.1) &\text{There exists a closed densely defined positive operator}\\ & \text{$\Delta ^{\otimes}=:Diag(\Delta ^{\otimes}_i):(L^{2}(M)\otimes L^{2}(M))^{\bigoplus \N}\rightarrow (L^{2}(M)\otimes L^{2}(M))^{\bigoplus \N}$  }\\ & \text{(acting diagonally with respect to the direct sum and keeping invariant,}\\ &\text{for any $t$, $L^{2}(M_{t}\otimes M_{t})^{\bigoplus \N}$ and) such that $\forall U\in L^{2}(M_{s})\otimes L^{2}(M_{s})\cap D(\Delta _{i}^{\otimes})$:  }\\ & \text{$U\#(S_{t}^{(i)}-S_{s}^{(i)})\in D(\Delta )$ and $\Delta (U\#(S_{t}^{(i)}-S_{s}^{(i)}))=\Delta _{i}^{\otimes}(U)\#(S_{t}^{(i)}-S_{s}^{(i)})$.}
\\d.2) & \text{Moreover $\delta (U\#(S_{t}^{(i)}-S_{s}^{(i)}))$ is orthogonal to $L^{2}(M_s\o M_s)^{\gre \bigoplus \N}$.}
\end{cases}
\end{equation*}
{\gre We will use a variant $\Gamma_{0u}(\omega)$ with an extra Assumption $e$ added to get an extra uniqueness, technically provided by checking our solution will be also what we will call an ultraweak solution. For convenience, we write $((L^{2}(M_0))\o(L^{2}(M_0)\ominus \C)^{\o n-1}\o (L^{2}(M_0)))=V_n$. }{\gre We also consider an orthonormal basis $(e_n)_{n\in \N}$ of $L^{2}(W^{*}(S_{t}^{(i)}, t>0, i\in \N))\ominus  \C
$ such that for all $i$, $e_i\in W^{*}(S_{t}^{(j)}, t>0, j\in \N)$. We also write for any $a_1,...,a_{n+1}\in M_0$ with $\tau(a_i)=0$,($i\neq 1,n+1$) $(a_1\o...\o a_{n+1})\#(e_{i_1}\o...\o e_{i_n})=a_1e_{i_1}a_2...a_ne_{i_n}a_{n+1},$ and likewise $U\#(e_{i_1}\o...\o e_{i_n})$  the isometric extension to $U\in (L^{2}(M_0))\o(L^{2}(M_0)\ominus \C)^{\o n-1}\o (L^{2}(M_0))$.

Finally, we consider $\E_n=Span\{ \alpha_{0}(S_{v_{1}}^{(k_{1})}-S_{u_{1}}^{(k_{1})})...(S_{v_{n}}^{(k_{n})}-S_{u_{n}}^{(k_{n})})\alpha_{n} \ |\ \alpha_i\in M_0, k_i\in \N,  [u_{1},v_{1}]\times...\times[u_{n},v_{n}]\subset\R_{+}^{n}-D^{n}\}$, where $D^{n}$ is the full diagonal (the set of $n$-tuples having at least one pair of equal coordinates). We write $\bar{\E_n}$ the closure of $\E_n$ in $L^2(M)$.}

\begin{equation*}
\Gamma_{0u}(\omega)\begin{cases}
 & \Gamma_{0}(\omega)
\\e.1) &{\gre D(\Delta)\o_{alg}D(\Delta)\subset D(\delta^*).}
 \\e.2) &\text{\gre $\forall n\in \N^* \exists\Delta ^{\otimes (n+1)}:V_n \rightarrow V_n$  closed densely defined positive operator}\\ &\text{\gre 
  such that $\forall(i_1,...,i_n)\in \N^{n}$,  $\forall U\in D(\Delta^{\otimes (n+1)})$ :}  \\ &U\#(e_{i_1}\o...\o e_{i_n})\in D(\Delta ) 
\\ & \text{\gre and $\Delta (U\#(e_{i_1}\o...\o e_{i_n}))=\Delta^{\otimes (n+1)}(U)\#(e_{i_1}\o...\o e_{i_n})$.}\\ e.3) &\text{ \gre $\E_n\cap D(\delta)\subset \bar{\E_n}$ dense and $\delta(\E_n\cap D(\delta))$ orthogonal to $\oplus_{p+q< n}(\E_p\o \E_q)^{\gre \bigoplus \N}$.}
\\ e.4) &\text{\gre $\exists\D=\text{span}\D\subset L^{2}(M_0)\ominus \C$ a dense subspace such that:}\\  &
(\D\oplus \C)\otimes\D^{\o(n-1)}\o(\D\oplus \C)\subset D(\Delta ^{\otimes (n+1)}).
\end{cases}
\end{equation*}

The main assumption (useful to get mild solutions) will be called $\Gamma_{1}(\omega,C)$ (and $\Gamma_{1u}(\omega,C)$ if we add $\Gamma_{0u}(\omega)$) :
\begin{equation*}
\Gamma_{1}(\omega,C)\begin{cases}
 & \Gamma_{0}(\omega)\\ 
b.2) & D(\delta \circ\Delta ) \subset D(\delta ).\\
c.2) & \text{For any $x\in D(\Delta ^{1/2})$~:}\\ & -||\Delta\eta_{\alpha}^{1/2}x||_{2}^{2} + Re\langle\delta (\Delta\eta_{\alpha}(x)),\delta(x)\rangle\leq \omega ||\Delta^{1/2}\eta_{\alpha}^{1/2}x||_{2}^{2}.
 \\ d.3) &D(\Delta ^{1/2})\subset D(\tilde{\delta} ), D(\Delta ^{1/2})\text{ a core for } \tilde{\delta}  ,D(\tilde{\delta} \circ\Delta )\subset D(\tilde{\delta} )\\&\text{$\forall U\in L^{2}(M_s\o M_s)\cap D(\Delta^{\o}_i),\tilde{\delta} (U\#(S_{t}^{(i)}-S_{s}^{(i)}))$ orthogonal to  }\\&\text{$L^{2}(M_s\o M_s)^{\gre \bigoplus \N}$ and we assume we have a closed densely defined}\\ & \tilde{\delta}^{\otimes}:=\tilde{\delta}^{\otimes1}\oplus \tilde{\delta}^{\otimes2},\tilde{\delta}^{\otimes i}:(L^{2}(M^{\otimes 2}){\red )}^{\bigoplus \N}\rightarrow (L^{2}(M^{\otimes 3}){\red )}^{\bigoplus \N^2}\\ & \tilde{\delta}^{\otimes i}((x_j)_j)=:(\tilde{\delta}^{\otimes i}_{l,j}(x_j))_{(l,j)}, \tilde{\delta}^{\otimes i}_{l,j}:(L^{2}(M)^{\otimes 2})\rightarrow (L^{2}(M)^{\otimes 3}) \\ & \text{(keeping invariant, for any $t$, the filtration induced by $M_{t}$ and) such that}\\ &\text{ $\forall U\in L^{2}(M_{s})\otimes L^{2}(M_{s})$: $U\#(S_{t}^{(j)}-S_{s}^{(j)})\in D(\tilde{\delta_l })$ if $U\in D(\tilde{\delta}^{\otimes 1}_{l,j}\oplus \tilde{\delta}^{\otimes 2}_{l,j})$}\\ & \text{and $\tilde{\delta }_l(U\#(S_{t}^{(j)}-S_{s}^{(j)}))=\sum_{i=1}^2\tilde{\delta}^{\otimes i}_{l,j}(U)\#_i(S_{t}^{(j)}-S_{s}^{(j)})$.}
\\d.4) &  { \gre D(\Delta ^{\o 1/2})\subset D(\tilde{\delta}^{\o} ), D(\Delta ^{\o1/2})\text{ a core for } \tilde{\delta} ^{\o}}.
\\
f.1) & D(\tilde{\delta} \circ\Delta )\subset D(\Delta ^{\otimes}\circ\tilde{\delta} ).\\
f.2) &\text{There exists a bounded operator $\H$ on $L^{2}(M\otimes M)^{\bigoplus \N}$}\\ & \text{keeping invariant for any $s$, $L^{2}(M_{s}\otimes M_{s})^{\bigoplus \N}$ with $||\H||\leq C^{1/2}$ ($C\geq1$)}\\ & \text{such that for any $x\in D(\tilde{\delta} \circ\Delta )$:}\\ &
\Delta ^{\otimes}\circ\tilde{\delta}(x)-\tilde{\delta} \circ\Delta (x)=\H(\tilde{\delta} (x)).\\ 
g)& D(\tilde{\delta})= D(\delta)=:\tilde{\gre \D}, \forall x\in \tilde{\D},  C^{-1/2}||\tilde{\delta}(x)||_2\leq ||\delta(x)||_2\leq C||\tilde{\delta}(x)||_2,\\ &\text{ thus }D(\tilde{\delta} \circ\Delta )=D(\delta \circ\Delta ).\\ 
h)&D(\Delta)\subset D(\Delta ^{\otimes 1/2}\circ\tilde{\delta}\oplus \tilde{\delta}\oplus \tilde{\delta}^{\otimes}\delta)\text{ and for }x\in D(\Delta):
\\ & ||\tilde{\delta}^{\otimes}\delta(x)||_2^2\leq ||\Delta ^{\otimes 1/2}\circ\tilde{\delta}(x)||_2^2+C ||\tilde{\delta}(x)||_2^2.
\end{cases}
\end{equation*}

We will write $\phi_{t}$ the semigroup exponentiating $-1/2\Delta$ and $\phi_{t}^{\otimes}$ the semigroup associated to $-1/2\Delta^{\o}$.

In most cases we will be interested in the case $\tilde{\delta}=\delta$ in which case assumptions $g, h$ {\gre will be} automatic (using {\gre a variant of} $c.1$ {\gre for tensor variants } for {\gre the inequality in $h$ and $f$ for the domain assumption in $h$}). 

In the applications we have in mind, the strong assumption is $f$, the other ones being automatically verified and just important in this general setting.

\bigskip

\emph{General ideas and strategy}

With those notation fixed and before entering into technical details,  let us explain the intuition behind our results (in the case $\tilde{\delta}=\delta$, the general case is a slight extension following an idea of \cite{ChebFag}). 
In our general setting here, the proofs will follow closely the classical case, and therefore the intuition is basically the same, namely, since we want to solve SDEs with unbounded coefficients, with $\Delta$ a kind of divergence form operator (as we will consider in the next part) the corresponding semigroup is regularizing, and we want to use this. That's why we introduce mild solutions. As explained by various equivalences in section 2.1.3, the difference with strong solutions is only related to the domain in which we want to build the solution, we only require being in the domain of $\Delta^{1/2}$ (or even $\delta$) for mild solutions, and as soon as it is in the domain of $\Delta$, a mild solution is a strong solution, the converse being always true. The idea behind ultramild solutions is slightly trickier. Let us explain it in saying   $\phi_{t-s}^{\o}\circ \delta$ may have a much huger domain again than $\Delta^{1/2}$ and we want to use this regularization effect to have solutions with almost no conditions. Indeed, condition $f$ above will be really hard to check even with strong conditions (section 2.2), that's why we want to have solutions in a sense as general as possible.
We can also say that the current section somehow takes natural analogs of classical assumptions in the non-commutative case and check we can work with them.

 It is maybe also useful to have several ideas in mind, and first how those conditions will appear in a really natural way in the proof. To get an estimate on $||X_t||_2^2$, or $||\Delta^{1/2}(X_t)||_2^2$ (first on an approximation of the solution, in the spirit of moving from a degenerate parabolic case to a superparabolic case), the common idea is to differentiate, and try to apply Gronwall's lemma. Conditions $c.1$ (called dissipativity in the classical case) and $f.2$ above correspond exactly to what we want, in order to apply this lemma respectively in those cases.  The second idea is that if we replace $\delta$ by $(1-\epsilon)\delta$ the equation is much easier to solve, it is of superparabolic type (instead of degenerate parabolic type, said otherwise this gives a kind of coercivity, {\red see} \cite{Roz} for a presentation of this point in a more concrete setting but more clearly than in \cite{DaPrato}). First, in this case, there will be a Picard iteration argument to solve it, second we win something in terms of domains, assumption c is enough to bound $||\Delta^{1/2}(X_t^\epsilon)||_2^2$, assumption $f$ to get a bound on $||\Delta(X_t^\epsilon)||_2^2$ (those bounds diverging in $\epsilon$, of course). Anyways this will enable us to have respectively mild or strong  solutions of an approximating equation converging to a solution of our equation, even if the solution without $\epsilon$ will be only a mild or ultramild solution (note that in the case $\delta\neq \tilde{\delta}$ we will lose the strong solution property but keep mild solutions, hopefully we don't use this improvement in terms of getting a strong solution).  Somehow, to get later in section {\red 3.3} stationarity of the equation, this will be much more crucial to have an approximation by a mild solution than an ultramild solution, since Ito formula, already tricky to apply for mild solutions, seems to be completely unusable for ultramild solutions. Moreover there is the general idea that if you get a bound on $||\Delta^{1/2}(X_t^\epsilon)||_2^2$ (uniform in $\epsilon$), you can get a Cauchy condition in  $||.||_2$ norm and thus norm convergence, but however in general we will work only with weak convergence. Finally, we will also show in section 1.3 that mild solutions are also weak solutions, in a usual duality sense of weak solutions, however, we don't have an analog for ultramild solutions. Thus we will also introduce a notion of ultraweak solution, mainly to get uniqueness results in applying Laplace transform techniques, our general result will be "there exists a unique ultraweak solution which is also an ultramild solution", and limit of mild solutions of  approximating equation{\gre s}.
 
 Before starting, we sum up the content of the next sections. In section 1.1, we give miscellaneous  definitions and results moving almost commutation properties of our assumptions to stochastic integrals. In section {\red 1.2 }, we prove a{\gre n} integration by parts formula for a stochastic convolution we introduce. We avoid proving a free variant of the usually used stochastic Fubini Theorem in using ad hoc proofs in our really special case. In section 1.3 we introduce our different kinds of solutions and prove relations between them (explained above). Section 1.4 contains our general theorem. 

Let us prove right now an easy consequence of our main assumptions. Note we often use the bound $||\Delta\eta_{\alpha}||\leq 2\alpha$ coming from $\Delta\eta_{\alpha}=\alpha(1-\eta_{\alpha})$.
\begin{lemma}\label{Gamma1}
Assume $\Gamma_{1}(\omega,C)$.Then, there exists for any $\alpha\in (0,\infty)$ bounded operators $\H_{\alpha}$  from the graph of $\Delta ^{1/2}$:$G(\Delta ^{1/2})\subset L^{2}(M)^{\oplus2}$ to $L^{2}(M\otimes M)^{\bigoplus \N}$ sending, for $s$, $L^{2}(M_{s})^{\oplus2}$ to $L^{2}(M_{s}\otimes M_{s})^{\bigoplus \N}$ with $||\H_{\alpha}||\leq \max(1,\sqrt{\omega})C$  such that
for any $x\in D(\Delta ^{1/2})$  (if we write $\eta_{\alpha}=\frac{\alpha}{\alpha+\Delta }$ and the analog $\eta_{\alpha}^{\otimes}=\frac{\alpha}{\alpha+\Delta ^{\otimes}}$):
$$\Delta^{\otimes}\eta_{\alpha}^{\otimes}\circ\tilde{\delta} (x)-\tilde{\delta} \circ\Delta \eta_{\alpha}(x)=\H_{\alpha}(x\oplus\Delta ^{1/2}(x)).$$
Moreover, for each  $x\in D(\Delta ^{1/2})$, $\H_{\alpha}(x\oplus\Delta ^{1/2}(x))$ converges in $L^{2}$ to $\H(\tilde{\delta} (x))$ when $\alpha\rightarrow\infty$.

Finally, there is also a bounded $\tilde{\H}_{\alpha}: L^{2}(M) \to L^{2}(M\otimes M)^{\bigoplus \N}$, with $||\tilde{\H}_{\alpha}||\leq \frac{C}{\alpha}\sqrt{\omega+2\alpha}$ and the same invariance of filtration properties, such that for any $x\in D(\tilde{\delta})$ 
 :$$\tilde{\delta} \eta_{\alpha}(x)-\eta_{\alpha}^{\o}\tilde{\delta} (x)=\tilde{\H}_{\alpha}(x),$$
\end{lemma}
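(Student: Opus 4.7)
The plan is to apply assumption $f)$ to the Yosida-regularized element $\eta_\alpha x$ and rearrange. Since $\eta_\alpha$ maps $L^2(M)$ into $D(\Delta)$ and $d')$ provides $D(\Delta^{1/2})\subset D(\tilde{\delta})$, one has $\eta_\alpha x \in D(\tilde{\delta}\circ\Delta)$ whenever $x\in D(\tilde{\delta})$: indeed $\Delta\eta_\alpha x = \alpha(x-\eta_\alpha x)$ lies in $D(\tilde{\delta})$, combining $x\in D(\tilde{\delta})$ with $\eta_\alpha x \in D(\Delta)\subset D(\tilde{\delta})$. Assumption $e)$ then ensures $\tilde{\delta}(\eta_\alpha x)\in D(\Delta^{\otimes})$, and $f)$ yields, after using $\Delta\eta_\alpha = \alpha(1-\eta_\alpha)$,
$$(\alpha+\Delta^{\otimes})\tilde{\delta}(\eta_\alpha x) = \alpha \tilde{\delta}(x) + \H \tilde{\delta}(\eta_\alpha x).$$

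I would first extract the second identity by applying $(\alpha+\Delta^{\otimes})^{-1}$ to this display: this gives $\tilde{\delta}(\eta_\alpha x) - \eta_\alpha^{\otimes}\tilde{\delta}(x) = (\alpha+\Delta^{\otimes})^{-1}\H\tilde{\delta}(\eta_\alpha x) =: \tilde{\H}_\alpha(x)$. Boundedness follows from $||(\alpha+\Delta^{\otimes})^{-1}||\leq 1/\alpha$, $||\H||\leq C^{1/2}$, and the estimate
$$||\tilde{\delta}(\eta_\alpha x)||_2^2 \leq C||\delta(\eta_\alpha x)||_2^2 \leq C(||\Delta^{1/2}\eta_\alpha x||_2^2 + \omega||x||_2^2) \leq C(2\alpha+\omega)||x||_2^2,$$
where the first inequality is $g)$, the second is $c)$ (applied at $\eta_\alpha x\in D(\Delta^{1/2})$), and the third uses the spectral bound $||\Delta^{1/2}\eta_\alpha||^2 = \sup_\lambda \lambda\alpha^2/(\alpha+\lambda)^2 \leq 2\alpha$. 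This yields $||\tilde{\H}_\alpha||\leq (C/\alpha)\sqrt{\omega+2\alpha}$ on $D(\tilde{\delta})$, and $\tilde{\H}_\alpha$ extends to all of $L^2(M)$ by density.

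For the first identity I substitute $\tilde{\delta}(\eta_\alpha x) = \eta_\alpha^{\otimes}\tilde{\delta}(x) + \tilde{\H}_\alpha(x)$ back into the relation from $f)$. Since $\tilde{\H}_\alpha(x)\in\mathrm{Range}((\alpha+\Delta^{\otimes})^{-1}) = D(\Delta^{\otimes})$, the functional-calculus identity $\Delta^{\otimes}(\alpha+\Delta^{\otimes})^{-1} = 1-\eta_\alpha^{\otimes}$ gives $\Delta^{\otimes}\tilde{\H}_\alpha(x) = (1-\eta_\alpha^{\otimes})\H\tilde{\delta}(\eta_\alpha x)$, and after cancellation one obtains
$$\Delta^{\otimes}\eta_\alpha^{\otimes}\tilde{\delta}(x) - \tilde{\delta}\circ\Delta\eta_\alpha(x) = \eta_\alpha^{\otimes}\H\tilde{\delta}(\eta_\alpha x).$$
I then define $\H_\alpha(x\oplus\Delta^{1/2}x) := \eta_\alpha^{\otimes}\H\tilde{\delta}(\eta_\alpha x)$. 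Using $||\eta_\alpha^{\otimes}||\leq 1$, $||\H||\leq C^{1/2}$, $g)$ and $c)$ together with the spectral commutation $||\Delta^{1/2}\eta_\alpha x||_2 = ||\eta_\alpha \Delta^{1/2}x||_2 \leq ||\Delta^{1/2}x||_2$ (valid for $x\in D(\Delta^{1/2})$), I bound $||\H_\alpha(x\oplus\Delta^{1/2}x)||_2 \leq C\sqrt{||\Delta^{1/2}x||_2^2 + \omega||x||_2^2} \leq C\max(1,\sqrt{\omega})\,||x\oplus\Delta^{1/2}x||_{G(\Delta^{1/2})}$.

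For the convergence as $\alpha\to\infty$, standard Yosida theory gives $\eta_\alpha x\to x$ and $\Delta^{1/2}\eta_\alpha x = \eta_\alpha\Delta^{1/2}x \to \Delta^{1/2}x$ in $L^2(M)$; the a priori bound $||\tilde{\delta}(y)||_2^2 \leq C(||\Delta^{1/2}y||_2^2 + \omega||y||_2^2)$ derived above then forces $\tilde{\delta}(\eta_\alpha x)\to \tilde{\delta}(x)$, hence $\H\tilde{\delta}(\eta_\alpha x)\to \H\tilde{\delta}(x)$ by boundedness of $\H$. Combined with strong convergence $\eta_\alpha^{\otimes}\to 1$ on $L^2(M\otimes M)^{\bigoplus \N}$, a triangle-inequality argument yields $\eta_\alpha^{\otimes}\H\tilde{\delta}(\eta_\alpha x)\to \H\tilde{\delta}(x)$. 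Filtration-invariance of $\H_\alpha$ and $\tilde{\H}_\alpha$ is immediate from the corresponding invariance of $\eta_\alpha$, $\eta_\alpha^{\otimes}$, $\H$, and $\tilde{\delta}$. The trickiest part is really the domain bookkeeping ensuring that every composition $\Delta^{\otimes}\tilde{\delta}$, $\tilde{\delta}\circ\Delta$, etc., is defined at the relevant element so that $f)$ can legitimately be invoked; once that is tracked, the remainder is algebra and spectral calculus.
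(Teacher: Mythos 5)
Your proof is correct and follows essentially the same route as the paper: apply assumption $f)$ at $\eta_\alpha x$, rearrange with the resolvent identities $\Delta\eta_\alpha=\alpha(1-\eta_\alpha)$ and $\frac{1}{\alpha}\eta_\alpha^{\otimes}\Delta^{\otimes}=1-\eta_\alpha^{\otimes}$, and take $\tilde{\H}_\alpha=\frac{1}{\alpha}\eta_\alpha^{\otimes}\H\tilde{\delta}\eta_\alpha$ and $\H_\alpha(x\oplus\Delta^{1/2}x)=\eta_\alpha^{\otimes}\H(\tilde{\delta}\eta_\alpha(x))$, with the same bounds from $c)$, $g)$ and $||\H||\le C^{1/2}$ and the same triangle-inequality convergence argument. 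The only (harmless) presentational difference is that you establish the $\tilde{\H}_\alpha$ identity directly for $x\in D(\tilde{\delta})$ by domain bookkeeping, whereas the paper derives it on $D(\Delta^{1/2})$ and extends via the core property in $d')$.
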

\begin{proof}[Proof]
Let $\H$ be given by assumption $f.2$. Let us define $\H_{\alpha}$. For $x\in D(\Delta^{1/2} )$, $\eta_{\alpha}(x)\in D(\Delta^{3/2} )\subset D(\tilde{\delta} \circ\Delta )$, thus applying the equation for $\H$ in $f.2$, we get~: $$\eta_{\alpha}^{\o}\Delta ^{\otimes}\circ\tilde{\delta} \eta_{\alpha}(x)-\eta_{\alpha}^{\o}\tilde{\delta} \circ\Delta \eta_{\alpha}(x)=\eta_{\alpha}^{\o}\H(\tilde{\delta} \eta_{\alpha}(x)).$$
Thus multiplying by $\frac{1}{\alpha}$ 
and using $\alpha(1-\eta_{\alpha})=\Delta\eta_{\alpha}$, we get~:
$$\tilde{\delta} \eta_{\alpha}(x)-\eta_{\alpha}^{\o}\tilde{\delta} (x)=\frac{1}{\alpha}\eta_{\alpha}^{\o}\H(\tilde{\delta} \eta_{\alpha}(x)).$$
Especially, defining  $\tilde{\H}_{\alpha}=\frac{1}{\alpha}\eta_{\alpha}^{\o}\H\tilde{\delta} \eta_{\alpha}$, we get the last statement since (by assumptions $f.2,g,c.1$) $||\tilde{\H}_{\alpha}||\leq \frac{C^{1/2}}{\alpha}||\tilde{\delta} \eta_{\alpha}||\leq \frac{C}{\alpha}\sqrt{\omega+2\alpha}$ and moreover, by $d.3$, $D(\Delta^{1/2})$ is a core for $\tilde{\delta}$.

Moreover we also deduce : 
$$\Delta^{\o}\eta_{\alpha}^{\o}\tilde{\delta} (x)=-\frac{1}{\alpha}\Delta^{\o}\eta_{\alpha}^{\o}\H(\tilde{\delta} \eta_{\alpha}(x))+ \tilde{\delta}\Delta\eta_{\alpha}(x)+\H(\tilde{\delta} \eta_{\alpha}(x)),$$
thus equivalently $$\Delta^{\o}\eta_{\alpha}^{\o}\tilde{\delta} (x)-\tilde{\delta} \Delta \eta_{\alpha}(x)=\eta_{\alpha}^{\o}\H(\tilde{\delta }\eta_{\alpha}(x)).$$

This suggests  $\H_{\alpha}(x\oplus\Delta ^{1/2}(x))=\eta_{\alpha}^{\o}\H(\tilde{\delta} \eta_{\alpha}(x))$. In that way, the equation is verified, the stability properties come from the assumptions and using properties {\gre $f.2,g,c.1$ again}, we get~:
$$||\H_{\alpha}(x\oplus\Delta ^{1/2}(x))||_{2}^{2}\leq C||\tilde{\delta} \eta_{\alpha}(x)||_{2}^{2}\leq C^{2}\left(||\Delta ^{1/2}(x)||_{2}^{2}+\omega ||x||_{2}^{2}\right).$$
Thus we get the bound on $||\H_{\alpha}||$, and \begin{align*}||\H_{\alpha}(x\oplus\Delta ^{1/2}(x))-\H(\tilde{\delta} (x))||_{2}&\leq ||\eta_{\alpha}^{\o}\H(\tilde{\delta} (\eta_{\alpha}-1)(x))||_{2}+||(\eta_{\alpha}^{\o}-1)\H(\tilde{\delta} (x))||_{2}\\ &\leq C^{1/2}||\tilde{\delta} (\eta_{\alpha}-1)(x)||_{2}+||(\eta_{\alpha}^{\o}-1)\H(\tilde{\delta} (x))||_{2},\end{align*}and the right hand side goes to zero using again assumption $c.1$.
\end{proof}

\subsection{Stochastic integration in presence of $\delta $ and $\Delta $}

Following \cite{BS98}, except for the value in $L^{2}(M\otimes M)^{\bigoplus \N}$ instead of $L^{2}(M\otimes M)$
of bi-processes, we write  $\B_{2}^{a}([0,T])$  the completion of the space of
simple bi-processes on $[0,T]$, adapted 
with respect to the algebraic direct sum $(M_{t}\otimes M_{t})^{\bigoplus \N}$,  in the following norm~:
$$||U||_{\B_{2}^{a}([0,T])}=\left(\int_{0}^{T}||U_{s}||_{L^{2}(\tau\otimes\tau)^{\oplus \tiny\N}}^{2}ds\right)^{1/2}.$$
{\gre We may also write  later $\B_{2}^{a}$ instead of $\B_{2}^{a}([0,T])$ when $T$ is clear from the context and similarly for variants later.}

Let us remark that this space can also be seen as a subspace of $L^{2}([0,T],L^{2}(\tau\otimes\tau)^{\bigoplus \N})$
 (defined, say, in Bochner's sense) and we will always see it as such a subspace.

Then, recall that the map $U \mapsto \int_{0}^{T}U_{s}\#dS_{s}=\sum_{j=0}^{\infty} \int_{0}^{T}U_{s}^{(j)}\#dS_{s}^{(j)}$ is an
isometric linear extension from  $\B_{2}^{a}([0,T])$ to $L^{2}(M,\tau)$ of the usual map, sending, for $a,b\in M_s$, $a\o b1_{[s,t)}$ seen in the $i$-th component of the direct sum to $a(S_t^{(i)}-S_s^{(i)})b$.
Thus, we can remark for further use that weak convergence in $L^{2}(\tau)$ of a sequence of
stochastic integrals $\int_{0}^{T}U_{s}^{n}\#dS_{s}$ is equivalent to
weak convergence of its integrand $U_{s}^{n}$ in
$L^{2}([0,T],L^{2}(\tau\otimes\tau)^{\gre\bigoplus \N})$. 

Analogously, one can consider $\B_{2}^{a(\o 3)}([0,T])$ for processes adapted to the filtration $(M_{t}\otimes M_{t}\o M_t)^{\bigoplus (\{1,2\}\times \N^2)}$. We define $\int_{0}^{T}U_{s}{\gre\#
}dS_{s}\in L^2(M\o M)^{\bigoplus \N}$ 
 in extending the definition for $a,b,c\in M_s$, $U_u=a\o b\o c1_{[s,t)}(u)$ seen in the $1,j,i$-th component  of the direct sum  $\int_{0}^{T}a\o b\o c1_{[s,t)}(u)\#dS_{u}=a(S_t^{(i)}-S_s^{(i)})b\o c$ in the $j$-th component, and when seen in the $2,{\gre j,i}$-th component$\int_{0}^{T}a\o b\o c1_{[s,t)}(u)\#dS_{u}=a\o b(S_t^{(\gre i)}-S_s^{(i)})c$ in the $j$-th component.

We will write $\B_{2,\delta \circ\Delta ^{\beta}}^{a}([0,T])$ (for $\beta\in\{0,1/2,1\}$, resp. $\B_{2,\Delta ^{\beta}}^{a}([0,T])$ for $\beta\in\{1/2,1,3/2\}$) the completion with respect to the
following norms of what we will call $\delta \circ\Delta ^{\beta}$-simple
 adapted processes (resp. $\Delta ^{\beta}$-simple
 adapted processes), i.e. processes of the form $X=\sum_{j=1}^{M}X_{j}1_{[t_{j},t_{j+1})}$ with
$X_{j}\in D(\Delta^{\beta})\cap\bigcap_{b=0}^{2\beta}D(\delta\circ\Delta^{b/2})$ (resp. $X_{j}\in D(\Delta^{\beta}))$~:
\begin{align*}&||X||_{\B_{2,\delta\circ\Delta ^{\beta}}^{a}}=\left(\int_{0}^{T}\sum_{b=0}^{2\beta}||\delta \circ
  \Delta ^{b/2}X_{s}||_{L^{2}(\tau\otimes\tau)^{\bigoplus \tiny\N}}^{2}+\sum_{b=0}^{2\beta}||\Delta ^{b/2}X_{s}||_{L^{2}(\tau)}^{2}
  ds\right)^{1/2}.\\
&(resp.\  ||X||_{\B_{2,\Delta ^{\beta}}^{a}}=\left(\int_{0}^{T}\sum_{b=0}^{2\beta}||\Delta ^{b/2}X_{s}||_{L^{2}(\tau)}^{2}
  ds\right)^{1/2}.)\end{align*}

Of course, using $g$, one gets the same spaces if we replace $\delta$ by $\tilde{\delta}$.
Assuming $\Gamma_{0}(\omega)$ (especially condition $b$), we have clearly continuous embeddings $\B_{2,\delta \circ\Delta ^{\beta}}^{a}([0,T])\rightarrow L^{2}_{a}([0,T],L^{2}(M))$ (space of adapted processes), $\B_{2,\Delta ^{\beta}}^{a}([0,T])\rightarrow L^{2}_{a}([0,T],L^{2}(M))$, for $\beta'\leq\beta,\beta,\beta'\in\{0,1/2,1\}$,
$\B_{2,\delta \circ\Delta ^{\beta}}^{a}([0,T])\rightarrow\B_{2,\delta \circ\Delta ^{\beta'}}^{a}([0,T])$  and for $\beta'\leq\beta,\beta,\beta'\in\{1/2,1\}$, $\B_{2,\delta \circ\Delta ^{\beta}}^{a}([0,T])\rightarrow\B_{2,\Delta ^{\beta'}}^{a}$ $\B_{2,\Delta ^{\beta+1/2}}^{a}([0,T])\rightarrow\B_{2,\delta \circ\Delta ^{\beta}}^{a}([0,T])$ for $\beta\in\{0,1/2\}$ (using assumption $c$).

 From the assumptions on $\delta $ and $\Delta $, we remark that we can see for any $X_{s}\in \B_{2,\delta \circ\Delta ^{\beta}}^{a}([0,T])$, $\delta \circ\Delta ^{\beta}X_{s}$ as an element
of $\B_{2}^{a}$. Finally, let us note that if $B$  bounded 
operator on $L^{2}(M_{s}\otimes M_{s})^{\bigoplus \N}$, keeping invariant, for any $t$, $L^{2}(M_{t}\otimes M_{t})^{\bigoplus \N}$, and if $U_{s}\in \B_{2}^{a}([0,T])$, then $B(U_{s})\in \B_{2}^{a}([0,T])$.

 The following lemma is the goal of these definitions~:

\begin{lemma}\label{deltaInt}
Assume $\Gamma_{1}(\omega,C)$ for (i) and $\Gamma_{0}(\omega)$ for (ii) and (iii).
\begin{enumerate}\item[(i)] Let $X_{s}\in \B_{2,\Delta ^{1/2}}^{a}([0,T])$ then we have $\eta_{\alpha}(X_{s})\in \B_{2,\delta \circ\Delta }^{a}([0,T])$, $\Delta ^{\o}\eta_{\alpha}^{\o}\tilde{\delta} (X_{s}),\H_{\alpha}(X_{s}\oplus\Delta ^{1/2} (X_{s}))\in \B_{2}^{a}([0,T])$ and for $t\leq T$~:
 $$\int_{0}^{t}\Delta ^{\o}\eta_{\alpha}^{\o}\tilde{\delta} (X_{s})\#dS_{s}=\int_{0}^{t}\tilde{\delta} \circ\Delta (\eta_{\alpha}(X_{s}))\#dS_{s}+\int_{0}^{t}\H_{\alpha}(X_{s}\oplus\Delta ^{1/2}(X_{s}))\#dS_{s}.$$
 If $X_{s}\in \B_{2,\Delta ^{3/2}}^{a}([0,T])$, then $\tilde{\delta}^{\o }\delta(X_s)=(\tilde{\delta}^{\o k}_{i,j}\delta_j(X_s))_{k,i,j}\in \B_{2}^{a(\o 3)}([0,T])$,
 
 \noindent  $\int_{0}^{t}\delta (X_{s})\#dS_{s}\in D(\tilde{\delta})$ and we have the equation~: $$\tilde{\delta}\int_{0}^{t}\delta (X_{s})\#dS_{s}=\int_{0}^{t}\tilde{\delta}^{\o}\delta (X_{s})\#dS_{s}.$$
 
\item[(ii)]Likewise, for any $U_{s}\in \B_{2}^{a}([0,T])$ and $t,\alpha,\beta> 0$, $i\in \{1/2,1\}$:  $\eta_{\alpha}^{\o i}(U_{s}),\phi_{t}^{\o}(U_{s})\in \B_{2}^{a}([0,T]),$ $\Delta ^{\o}\eta_{\alpha}^{\o1/2}\eta_{\beta}^{\o1/2}(U_{s}),\Delta ^{\o}\phi_{t}^{\o}(U_{s})\in \B_{2}^{a}([0,T])$(and assuming $d.3,d.4,g$ we have also{\red,}

\n $\ \tilde{\delta}^{\o}\eta_{\alpha}^{\o i}(U_{s})=(\tilde{\delta}_{i,j}^{\o k}\eta_{\alpha}^{\o i}(U_{s}^j))_{k,i,j}\in \B_{2}^{a (\o 3)}([0,T])$),  and we have for $\tau\leq T$~:
\begin{align*}&\eta_{\alpha}^i(\int_{0}^{\tau}U_{s}\#dS_{s})=\int_{0}^{\tau}\eta_{\alpha}^{\o i}(U_{s})\#dS_{s},
\ \ \Delta \eta_{\alpha}^{1/2}\eta_{\beta}^{1/2}(\int_{0}^{\tau}U_{s}\#dS_{s})=\int_{0}^{\tau}\Delta ^{\o}\eta_{\alpha}^{\o1/2}\eta_{\beta}^{\o1/2}(U_{s})\#dS_{s},\\
&\phi_{t}(\int_{0}^{\tau}U_{s}\#dS_{s})=\int_{0}^{\tau}\phi_{t}^{\o}(U_{s})\#dS_{s}
,\ \ \Delta \phi_{t}(\int_{0}^{\tau}U_{s}\#dS_{s})=\int_{0}^{\tau}\Delta ^{\o}\phi_{t}^{\o}(U_{s})\#dS_{s},\\
&\tilde{\delta}\eta_{\alpha}^i(\int_{0}^{\tau}U_{s}\#dS_{s})=\int_{0}^{\tau}\tilde{\delta}^{\o}\eta_{\alpha}^{\o i}(U_{s})\#dS_{s}
 \ \ \ \text{if $d.3,d.4,g$ also hold}.
\end{align*}
Finally, for any $W\in L^{2}(M_t\o M_t)^{\oplus \N}$, any $V=(\int_{t}^{\tau}U_{s}\#dS_{s})$, $t\leq \tau$, with $V\in D(\delta)$, $\langle W,\delta(V)\rangle=0$.
\item[(iii)]
For $U_s\in \B_{2}^{a}([0,T])$, $\int_0^TU_s\#dS_s\in D(\Delta^{1/2})$ if and only if $U_s\in D(\Delta^{\o 1/2})$ for almost every $s$ and $\int _0^Tds ||\Delta^{\o 1/2}U_s||_2^2<\infty $. In this case $\Delta^{1/2}\int_0^TU_s\#dS_s=\int_0^T\Delta^{\o 1/2}(U_s
)\#dS_s$. {\red If $d.3,d.4,g$ also hold, then  for any $U_s$ with $\tilde{\delta}^{\o}(U_s)\in \B_{2}^{a (\o 3)}([0,T])$ (e.g. for $U_s=\delta(X_s)$, for $X_s\in\B_{2,\Delta}^{a}([0,T])$ if $h$ holds), we have $\int_0^TU_s\#dS_s\in D(\tilde{\delta})$ and } $\tilde{\delta}\int_0^TU_s\#dS_s=\int_0^T\tilde{\delta}^{\o}(U_s
)\#dS_s.$
\end{enumerate}
\end{lemma}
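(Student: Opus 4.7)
My overall plan is to reduce each identity to elementary adapted processes, where the stochastic integral is a finite sum of terms $a(S_t^{(i)}-S_s^{(i)})b$, verify the identity by the algebraic action of $\delta$, $\tilde\delta$, $\Delta$ on such increments as provided by $d)$ and $d')$, and extend everything by density using the $L^2$-isometry of the free stochastic integral.

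For (i), starting from a $\Delta^{1/2}$-simple $X_s=\sum_j X_j 1_{[t_j,t_{j+1})}$, one checks via $b')$, the bound $||\Delta\eta_{\alpha}||\leq 2\alpha$ and $c)$ that the three integrands $\Delta^{\o}\eta_\alpha^{\o}\tilde\delta(X_s)$, $\tilde\delta\circ\Delta\eta_\alpha(X_s)$, $\H_\alpha(X_s\oplus\Delta^{1/2}X_s)$ all lie in $\B_2^a([0,T])$; the pointwise identity of Lemma \ref{Gamma1} then yields the first integral formula by linearity of stochastic integration, and approximation of a general $X_s\in\B_{2,\Delta^{1/2}}^a$ by $\Delta^{1/2}$-simple processes, combined with the boundedness of $\H_\alpha$, the $L^2$-continuity of $\tilde\delta$ on $D(\Delta^{1/2})$ and the isometry, propagates the identity. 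The second claim of (i) follows the same template, applying $d')$ to each $\delta(X_j)\#(S_{t_{j+1}}^{(i)}-S_{t_j}^{(i)})$ and using $h)$ together with $g)$ to ensure $\tilde\delta^{\o}\delta(X_s)\in\B_2^{a(\o 3)}([0,T])$. For (ii), the simple case is the whole content: assumption $d)$ yields $\Delta(U\#(S_t^{(i)}-S_s^{(i)}))=\Delta_i^{\o}(U)\#(S_t^{(i)}-S_s^{(i)})$, so by spectral calculus every bounded Borel function $f(\Delta)$---in particular $\eta_\alpha$, $\Delta\eta_\alpha$ and $\phi_t$---satisfies $f(\Delta)(U\#\Delta S)=f(\Delta^{\o})(U)\#\Delta S$; the $\tilde\delta\eta_\alpha$ formula combines this with $d')$. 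Density in $\B_2^a$ and isometry then give the four commutation identities in full generality, and the orthogonality statement reduces at the simple level to the last clause of $d)$ applied with $r\geq t$, followed by density.

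Part (iii) I would prove via the spectral theorem combined with (ii). If $\int_0^T U_s\#dS_s\in D(\Delta^{1/2})$, then $\Delta^{1/2}\eta_\alpha\int U_s\#dS_s=\int\Delta^{\o 1/2}\eta_\alpha^{\o}(U_s)\#dS_s$ by (ii), whose squared $L^2$-norm is bounded uniformly in $\alpha$ by $||\Delta^{1/2}\int U_s\#dS_s||_2^2$; the isometry together with monotone convergence as $\alpha\to\infty$ forces $U_s\in D(\Delta^{\o 1/2})$ almost everywhere with the stated integrability. Conversely, given that integrability, $\Delta^{\o 1/2}\eta_\alpha^{\o}(U_s)\to\Delta^{\o 1/2}(U_s)$ in $\B_2^a([0,T])$, and closedness of $\Delta^{1/2}$ identifies the limit of the corresponding stochastic integrals with $\Delta^{1/2}\int U_s\#dS_s$; the $\tilde\delta$ version follows by the same scheme with $d')$ and $g)$ replacing the roles of $d)$.

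The main technical obstacle will be the bookkeeping of the closed unbounded operators through these $L^2$-approximations: one has to verify, for instance, that for $X_s^n\to X_s$ in $\B_{2,\Delta^{1/2}}^a$ the images $\tilde\delta\circ\Delta\eta_\alpha(X_s^n)$ converge to $\tilde\delta\circ\Delta\eta_\alpha(X_s)$ in $\B_2^a$, which reduces to boundedness of $\tilde\delta\eta_\alpha\Delta$ from the graph norm of $\Delta^{1/2}$ into $L^2$ via $b')$, $c)$ and $||\Delta\eta_\alpha||\leq 2\alpha$. A parallel difficulty appears in (iii), where one must justify that the spectral-theorem commutation of $\Delta^{1/2}$ with $\eta_\alpha$ descends onto the stochastic integral using only the elementary-increment identity in $d)$.
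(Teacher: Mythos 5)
Your route is the paper's own: reduce everything to simple adapted processes, apply Lemma \ref{Gamma1} (resp. the algebraic identities of assumptions d), d') on elementary increments $U\#(S_t^{(i)}-S_s^{(i)})$), and extend by density together with the isometry of the stochastic integral; for (iii) the paper likewise first checks $\Delta^{1/2}\eta_{\alpha}(\int_0^T U_s\#dS_s)=\int_0^T\Delta^{\o 1/2}\eta_{\alpha}^{\o}(U_s)\#dS_s$ on simple processes (via $\Delta^{1/2}=\pi^{-1}\int_0^{\infty}t^{-1/2}(id-\eta_t)dt$, i.e. exactly your bounded functional-calculus intertwining), and then concludes by monotone convergence and closedness of $\Delta^{1/2}$. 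So part (i), the four commutation identities of (ii), and (iii) are argued essentially as in the paper.

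The one genuine gap is the orthogonality statement at the end of (ii). You propose to verify it for simple integrands (where it is indeed the last clause of d)) and then conclude ``by density'', but the hypothesis only provides $V=\int_t^{\tau}U_s\#dS_s\in D(\delta)$ with $\delta$ closed and unbounded: approximating $U_s$ by simple processes makes the integrals converge to $V$ in $L^2$ only, gives no control on $\delta$ of the approximants (which need not even lie in $D(\delta)$), and hence does not allow you to pass $\langle W,\delta(\cdot)\rangle$ to the limit. The paper's fix uses the commutations just established: replace $V$ by $\eta_{\alpha}(V)=\int_t^{\tau}\eta_{\alpha}^{\o}(U_s)\#dS_s$, whose integrand lies in $D(\Delta^{\o})$; a simple-process approximation of this integrand then converges in the graph norm of $\Delta$ (by assumption d) and $\|\Delta^{\o}\eta_{\alpha}^{\o}\|\le 2\alpha$), hence in the graph norm of $\delta$ by assumption c), so the orthogonality passes to $\eta_{\alpha}(V)$; finally one lets $\alpha\to\infty$ using $\delta\eta_{\alpha}(V)\to\delta(V)$. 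Without this $\eta_{\alpha}$-regularization the density step you invoke does not go through; the rest of your outline stands.
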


\begin{proof}[Proof]
First of all, the statements about $\eta_{\alpha}^{\o i}(U_{s}),\phi_{t}^{\o}(U_{s})\in \B_{2}^{a}([0,T])$,

\n $\Delta ^{\o}\eta_{\alpha}^{\o1/2}\eta_{\beta}^{\o1/2}(U_{s}),\Delta ^{\o}\phi_{t}^{\o}(U_{s})\in \B_{2}^{a}([0,T])$ and $\Delta ^{\o}\eta_{\alpha}^{\o}\tilde{\delta} (X_{s})\in \B_{2}^{a}$ follow from the remark before the lemma, since e.g. $||\Delta ^{\o}\eta_{\alpha}^{\o}||\leq 2\alpha$. Assuming $d.3,d.4,g$, the same is true for $\tilde{\delta}^{\o}\eta_{\alpha}^{\o i}(U_{s})\in \B_{2}^{a(\o 3)}([0,T])$.

If $X_{s}$ is a $\Delta ^{1/2}$-simple  process. By linearity, we can suppose   
$X_{s}=x1_{[t_1,t_2)}(s)$, with $x\in D(\Delta ^{1/2})\cap L^{2}(M_{t_1})$. In that case,
we have clearly $\eta_{\alpha}(X_{s})\in D(\tilde{\delta} \circ\Delta )$ (using assumption $d.3$) and the equality
stated is nothing but the one of lemma \ref{Gamma1}. In the general
case $X_{s}\in \B_{2,\Delta ^{1/2}}^{a}([0,T])$, take by density
$X_{s}^{n}$ $\Delta ^{1/2}$-simple processes converging to $X_{s}$ in
$\B_{2,\Delta ^{1/2}}^{a}([0,T])$. Then, since $\delta \circ\Delta ^{\beta}\eta_{\alpha}(X_{s}^{n})=\delta \circ\alpha^{\beta}(id-\eta_{\alpha})^{\beta}\eta_{\alpha}^{1-\beta}(X_{s}^{n})$ (and using assumption $c.1$),

\n $\sum_{b=0}^{2}||\delta \circ\Delta ^{b/2}\eta_{\alpha}(X_{s}^{n}-X_{s}^{m})||_{2}^{2}\leq
(1+2\alpha+(2\alpha)^{2})
\left(||\Delta ^{1/2}(X_{s}^{n}-X_{s}^{m})||_{2}^{2}+\omega||X_{s}^{n}-X_{s}^{m}||_{2}^{2}\right) $.
Likewise $$\sum_{b=0}^{2}||\Delta ^{b/2}\eta_{\alpha}(X_{s}^{n}-X_{s}^{m})||_{2}^{2}\leq
(1+2\alpha+(2\alpha)^{2})
||X_{s}^{n}-X_{s}^{m}||_{2}^{2}.$$
 As a consequence, $(\eta_{\alpha}(X_{s}^{n}))$ converges in
$\B_{2,\delta \circ\Delta }^{a}([0,T])$, and by the embedding in $\B_{2,\Delta ^{1/2}}^{a}([0,T])$,
it converges to $\eta_{\alpha}(X_{s})$ which is thus in
$\B_{2,\delta \circ\Delta }^{a}$. Now, $\Delta ^{\o}\eta_{\alpha}^{\o}\tilde{\delta} (X_{s}),\tilde{\delta} (\Delta\eta_{\alpha}X_{s})\in \B_{2}^{a}([0,T])$. Therefore, applying the equation of lemma \ref{Gamma1}, $H_{\alpha}(X_{s}\oplus\Delta ^{1/2}(X_{s}))\in \B_{2}^{a}([0,T])$ and we have our equality after taking the isometric map of stochastic integration.
The second statement of (i) is proved in a similar way using $d.3$ for the equation, $g,h,f.1,c.1$ for boundedness results.
For (ii), the boundedness has already been discussed, and this explains the definition of the right hand sides of the
equations. The equalities are clear for simple processes (easy consequences of assumptions $d$, and $a$ for the semigroup. Note for $\eta_{\alpha}^{1/2}$ one can use $\eta_{\alpha}^{1/2}=\int_0^{\infty}\pi^{-1}\frac{t^{-1/2}}{1+t}{\red \eta_{\alpha(1+t)/t}dt}$ from \cite[lemma 2.2]{Pe06}), this concludes by density.
%
%

For the last statement of (ii) about orthogonality, since $\delta\eta_{\alpha}(V)\to\delta(V)$, we can assume by the beginning of (ii) (putting $\eta_{\alpha}$ in the stochastic integral), $U_s\in D(\Delta^\o)$. Again, it suffices to prove the simple process case, and this reduces to assumption $d.2$.

Finally, for the equivalence of (iii), note that the two first equalities of {\gre (ii) give}

\n$||\Delta^{1/2}\eta_{\alpha}(\int_0^TU_s\#dS_s)||_2^2=\int_0^T||\Delta^{\o 1/2}\eta_{\alpha}^{\o}(U_s
)||_2^2ds$. From this, letting $\alpha\to \infty$, the direct implication follows from monotone convergence theorem with $\Delta^{1/2}\eta_{\alpha}=\eta_{\alpha}\Delta^{1/2}$ and the reverse implication using also $\Delta^{1/2}$ is a closed operator. Let us check first for any $U\in \B_{2}^{a}([0,T])$, $\Delta^{1/2}\eta_{\alpha}(\int_0^TU_s\#dS_s)=\int_0^T\Delta^{\o1/2}\eta_{\alpha}^{\o}U_s\#dS_s$. Again it suffices to check it on simple processes, on which this comes from  $\Delta^{1/2}=\int_0^{\infty}\pi^{-1}t^{-1/2}{\red (id-\eta_t)dt}$ (cf. e.g. \cite{S2} or \cite{Kato}). Now, the stated result comes from $\alpha\to \infty$, the statement for $\tilde{\delta}$ is analogous.
\end{proof}

\bigskip

\subsection{A definition of free Stochastic convolution}

In this subsection, we assume $\Gamma_{0}(\omega)$. We want to give sense to the following kind of integral, for $U_{s}\in\B_{2}^{a}$~:$\int_{0}^{t}\phi_{t-s}(U_{s}\#dS_{s}).$
We will define it by $$\int_{0}^{t}\phi_{t-s}(U_{s}\#dS_{s})=\int_{0}^{t}\phi_{t-s}^{\o}(U_{s})\#dS_{s},$$ and we want to verify the usual properties of stochastic convolution.

For this, we have to verify that $\phi_{t-s}^{\o}(U_{s})1_{{\red [0,t]}}(s)\in
\B_{2}^{a}([0,t])$, and since $\phi^{\o}_{t-s}$ is a contraction, it is sufficient to
show this for $U_{s}$ a simple process, and thus even for $U1_{[u,v)}(s)$,
$U\in L^{2}(M_u\o M_u)$. But consider $u_{i,n}=u+i(v-u)/n$, then
$U^{n}=\sum_{i=0}^{n-1}\phi_{t-u_{i,n}}(U)1_{[u_{i,n},u_{i+1,n})}$ is
easily shown to converge in $L^{2}([0,t],L^{2}(M\o M))$ to
$\phi_{t-s}^{\o}(U)$ using strong continuity of $\phi^{\o}$, this concludes
the preliminaries for the definition. 

Let us define a variant of the spaces of the previous subsection useful to define a really weak form of solutions we will call in the next subsection : ``ultramild'' solutions.
We will write $\B_{2,\phi\delta}^{a}([0,T])$ for the completion with respect to the
following norm of $\delta$-simple
 adapted processes, i.e. recall this means processes of the form $X=\sum_{j=1}^{M}X_{j}1_{[t_{j},t_{j+1})}$ with
$X_{j}\in D(\delta)$ ~:
\begin{align*}&||X||_{\B_{2,\phi\delta}^{a}}=\left(\int_{0}^{T}\left[\int_{0}^{t}||\phi_{t-s}^{\o}\delta(X_{s})||_{L^{2}(\tau\otimes\tau)^{\bigoplus \tiny\N}}^{2}ds+||X_{t}||_{L^{2}(\tau)}^{2}
  \right]dt\right)^{1/2}.\end{align*}

We have clearly a continuous embedding $\B_{2,\delta}^{a}([0,T])\rightarrow \B_{2,\phi\delta}^{a}([0,T])$ using subsection 1.1 and the above remark defining stochastic convolution (the first space being clearly dense in the second by definition).
We get thus a map $\gamma:B_{2,\delta}^{a}([0,T])\rightarrow L^{2}_{a}([0,T],L^{2}(M))$
such that $\gamma(X)_{t}=\int_{0}^{t}\phi_{t-s}(\delta(X_{s})\#dS_{s})$ and clearly $||\gamma(X)||_{L^{2}_{a}([0,T],L^{2}(M))}\leq ||X||_{\B_{2,\phi\delta}^{a}}$ so that $\gamma$ extends to a continuous map (also called) $\gamma:B_{2,\phi\delta}^{a}([0,T])\rightarrow L^{2}_{a}([0,T],L^{2}(M))$.

We also want to show that
$t\mapsto\langle\int_{0}^{t}U_{s}\#dS_{s},\zeta\rangle$ {\gre is}
of bounded variation so that  we can define something like
$\int_{0}^{t}\langle U_{s}\#dS_{s},\zeta\rangle$
(with the same value)
and see $\langle U_{s}\#dS_{s},\zeta\rangle$ as
a measure on $\R_{+}$.  But since stochastic integration is an isometric map onto
its image we can project $\zeta$ on this space thus write its projection
$\int_{0}^{t}V_ {s}\#dS_{s}$, and the result is a consequence of the
isometry property.

Finally, we want to define for $\zeta(.)\in C^{1}([0,T],L^{2}(M))$~:
$\int_{0}^{t}\langle U_ {s}\#dS_{s},\zeta(s)\rangle$ and show a relation
with stochastic convolution in a special case.
For this, first note that the family of functions of the form
$\varphi(.)\zeta_{0}$, for $\varphi(.)\in C^{1}([0,T],\C)$ and $\zeta_{0}\in
L^{2}(M)$ linearly spans a dense subset of $C^{1}([0,T],L^{2}(M))$, thus
consider also first $\zeta(.)$ in this linear span. Consider also $\mathcal{U}_{t}=\int_{0}^{t} U_{s}\#dS_{s}$.

Define $\int_{0}^{t}\langle U_
{s}\#dS_{s},\varphi(s)\zeta_{0}\rangle=\langle\int_{0}^{t} \overline{\varphi(s)}U_
{s}\#dS_{s},\zeta_{0}\rangle$ using the previous paragraph, and consider {\gre as}
in this paragraph the projection of $\zeta_{0}$ on the space of stochastic
integrals $\int_{0}^{t}V_{s}\#dS_{s}$. Then compute using integration by
parts~:
\begin{align*}\int_{0}^{t}\langle U_
{s}\#dS_{s},\varphi(s)\zeta_{0}\rangle&=\int_{0}^{t}\varphi(s)\langle U_
{s},V_{s}\rangle ds=\varphi(t)\int_{0}^{t}\langle U_
{s},V_{s}\rangle ds -
\int_{0}^{t}\varphi'(s)\langle\mathcal{U}_{s},\zeta_{0}\rangle ds \\ &=\langle\mathcal{U}_{t},\zeta(t)\rangle -
\int_{0}^{t}\langle\mathcal{U}_{s},\zeta'(s)\rangle ds,\end{align*}
which extends by linearity on the above mentioned linear span.

But now, we get the bound~:
\begin{align*}\left|\int_{0}^{t}\langle U_
{s}\#dS_{s},\varphi(s)\zeta_{0}\rangle\right|&\leq ||\mathcal{U}_{t}||_{2}\left(||\zeta(t)||_{2} +
t\ \sup_{s}||\zeta'(s)||_{2}\right)
 ,\end{align*}

using $||\mathcal{U}_{t}||_{2}$ is increasing with t. 
 We can thus extend our linear map by continuity to $\zeta(.)\in
 C^{1}([0,T],L^{2}(M))$ and we have also the equality~:
\begin{equation}\label{STCONV}\int_{0}^{t}\langle U_
{s}\#dS_{s},\zeta(s)\rangle=\langle\mathcal{U}_{t},\zeta(t)\rangle -
\int_{0}^{t}\langle\mathcal{U}_{s},\zeta'(s)\rangle ds.\end{equation}


Consider finally $\zeta(s)=\phi_{t-s}(\zeta)$, with $\zeta\in D(\Delta )$, writing as before $\int_0^\infty V_{s}\#dS_s$ the projection of $\zeta$ on the space of stochastic integrals. 
Using this last equality, we get ~:
\begin{align*}\int_{0}^{t}\langle &U_{s}\#dS_{s},\phi_{t-s}(\zeta)\rangle\\&=\langle\mathcal{U}_{t},\zeta\rangle  -
\int_{0}^{t}\langle\frac{1}{2}\Delta \phi_{t-s}(\mathcal{U}_{s}),\zeta\rangle
ds=\langle\mathcal{U}_{t},\zeta\rangle  -
\int_{0}^{t}\int_{0}^{s}\langle\frac{1}{2}\Delta ^{\o}\phi_{t-s}^{\o}(U_{u}),V_{u}\rangle
du ds\\ &=\int_{0}^{t}\langle U_{s},V_{s}\rangle ds  -
\int_{0}^{t}\langle\int_{u}^{t}\frac{1}{2}\Delta ^{\o}\phi_{t-s}^{\o}(U_{u})ds
,V_{u}\rangle du\\ &=\int_{0}^{t}\langle U_{s},V_{s}\rangle ds  -
\int_{0}^{t}\langle U_{u}-\phi_{t-u}^{\o}(U_{u}),V_{u}\rangle du=\langle\int_{0}^{t}\phi_{t-s}(U_{s}\#dS_{s}),\zeta\rangle.\end{align*}
In the first line we used our identity since $\phi_{{\gre t}-s}(\zeta) \in C^{1}([0,T],L^{2}(M))$ for $\zeta\in D(\Delta )$ and then lemma \ref{deltaInt} (ii).

Line 3 is only a computation, first with the differential equation, second, with the definition of stochastic convolution after simplification.

In line 2, we have to justify application of Fubini theorem. 
Note  that
$\zeta=\eta_{\alpha}(z)$ (since by Hille-Yosida theory $Range(\eta_\alpha)=D(\Delta)$, see e.g. (1.3) in the proof of Chapter 1 proposition 1.5 in \cite{MaR}), if the projection of $z$ is written
$\int_{0}^{T}W_{s}\#dS_{s}$, then $V_{s}=\eta_{\alpha}^{\o}(W_{s})$ a.e. by lemma \ref{deltaInt} (ii). Thus $V_{s}$ is a.e. in
$D(\Delta ^{\o})$. We can now use Cauchy-Schwarz inequality :
\begin{align*}\int_{0}^{t}ds\int_{0}^{s}du|\langle\frac{1}{2}\Delta ^{\o}\phi_{t-s}^{\o}(U_{u}),V_{u}\rangle|&\leq \int_{0}^{t}ds\left(\int_{0}^{s}du||\frac{1}{2}\phi_{t-s}^{\o}(U_{u})||_2^2\right)^{1/2}(\int_{0}^{s}du||\Delta ^{\o}V_{u}||_2^2)^{1/2}\\ &\leq t\left(\int_{0}^{t}du||U_{u}||_2^2\right)^{1/2}||\Delta(\zeta)||_2<\infty.\end{align*}

Starting from the second line above, applying Fubini to go upwards after a change of variable {\gre $t-s=s'-u$}, we also obtain :
\begin{align*}\langle\int_{0}^{t}\phi_{t-s}(U_{s}\#dS_{s}),\zeta\rangle&=\int_{0}^{t}\langle U_{s},V_{s}\rangle ds  -
\int_{0}^{t}\langle\int_{u}^{t}\frac{1}{2}\Delta ^{\o}\phi_{s-u}^{\o}(U_{u})ds
,V_{u}\rangle du\\ &=\langle\mathcal{U}_{t},\zeta\rangle  -
\int_{0}^{t}\langle\frac{1}{2}\Delta \int_{0}^{s}\phi_{s-u}(U_{u}\#dS_{u}),\zeta\rangle
ds.\end{align*}


\begin{proposition}\label{IPPSC}
(Integration by parts for stochastic convolution) For $\zeta\in D(\Delta)$, $U\in B_{2}^a([0,t])$, we have~:
\begin{align*}\int_{0}^{t}\phi_{t-s}(U_{s}\#dS_{s})&=\int_{0}^{t} U_{s}\#dS_{s}  -
\Delta \int_{0}^{t}ds\frac{1}{2}\phi_{t-s}(\int_{0}^{s} U_{v}\#dS_{v})
\\ &=\int_{0}^{t} U_{s}\#dS_{s}  -\frac{1}{2}\Delta\int_{0}^{t}ds\int_{0}^{s}\phi_{s-u}( U_u\#dS_{u}),\end{align*}

\begin{align*}\int_{0}^{t}\langle U_{s}\#dS_{s},\phi_{t-s}(\zeta)\rangle =\langle\int_{0}^{t}\phi_{t-s}(U_{s}\#dS_{s}),\zeta\rangle.
\end{align*}

\end{proposition}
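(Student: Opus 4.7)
The proof consolidates the computations displayed in the paragraphs immediately before the proposition. My plan is to (a) verify the derivation covers every $\zeta\in D(\Delta)$, not just $\zeta=\eta_\alpha(z)$, (b) use self-adjointness of $\Delta$ to transfer it onto the test vector $\zeta$, and (c) use density of $D(\Delta)$ to promote inner-product identities to $L^2$-equalities.

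For the second (duality) equality, every $\zeta\in D(\Delta)$ has the form $\eta_\alpha(z)$ with $z=(\alpha+\Delta)\zeta\in L^2(M)$, so the argument given in the text (carried out under the hypothesis $\zeta=\eta_\alpha(z)$) in fact covers all of $D(\Delta)$. That argument consists of: applying formula (\ref{STCONV}) to the $C^1$-path $s\mapsto\phi_{t-s}(\zeta)$, whose differentiability at the right endpoint is precisely what $\zeta\in D(\Delta)$ delivers; projecting $z$ onto the subspace of stochastic integrals so that the bi-process $V_s=\eta_\alpha^{\o}(W_s)$ lies in $D(\Delta^{\o})$ a.e.\ by Lemma \ref{deltaInt}(ii); performing a Fubini exchange justified by the Cauchy--Schwarz bound displayed in the text (finite because $\|\Delta\zeta\|_2<\infty$); using the semigroup identity $\int_u^t\tfrac12\Delta^{\o}\phi_{t-s}^{\o}\,ds=\mathrm{id}-\phi_{t-u}^{\o}$; and invoking the isometry of stochastic integration.

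For the first equality, Form~A follows from the very first line of the chain,
\[
\int_0^t\langle U_s\#dS_s,\phi_{t-s}(\zeta)\rangle=\langle\mathcal{U}_t,\zeta\rangle-\int_0^t\langle\tfrac12\Delta\phi_{t-s}(\mathcal{U}_s),\zeta\rangle\,ds,
\]
combined with the duality just established: moving $\tfrac12\Delta$ onto $\zeta$ by self-adjointness (legitimate for $s<t$ since $\phi_{t-s}(\mathcal{U}_s)\in D(\Delta)$ by analyticity of the semigroup) and pulling the $s$-integral outside the inner product by Bochner--Fubini yields
\[
\Bigl\langle\int_0^t\phi_{t-s}(U_s\#dS_s),\zeta\Bigr\rangle=\langle\mathcal{U}_t,\zeta\rangle-\Bigl\langle\int_0^t\phi_{t-s}(\mathcal{U}_s)\,ds,\tfrac12\Delta\zeta\Bigr\rangle
\]
for every $\zeta\in D(\Delta)$. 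Density of $D(\Delta)$ in $L^2(M)$ together with $\Delta^*=\Delta$ then force $\int_0^t\phi_{t-s}(\mathcal{U}_s)\,ds\in D(\Delta)$, with its image under $\tfrac12\Delta$ equal to $\mathcal{U}_t-\int_0^t\phi_{t-s}(U_s\#dS_s)$, which is Form~A. Form~B is obtained identically, starting from the alternate inner-product identity derived in the text via the change of variable $s\mapsto t+u-s$ (converting $\phi_{t-s}^{\o}$ into $\phi_{s-u}^{\o}$) followed by a Fubini exchange swapping the $s$- and $u$-integrations. The only genuinely delicate points are the Fubini exchanges (handled by the Cauchy--Schwarz bound, whose finiteness crucially uses $\|\Delta\zeta\|_2<\infty$) and the passage from a $D(\Delta)$-valued inner-product identity to an $L^2$-equality (automatic from self-adjointness of $\Delta$ and density of $D(\Delta)$).
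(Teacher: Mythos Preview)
Your proposal is correct and follows the same approach as the paper: the proposition is indeed just a consolidation of the computations displayed in the text immediately preceding it, and you have accurately identified each step (the use of (\ref{STCONV}) with $\zeta(s)=\phi_{t-s}(\zeta)$, Lemma~\ref{deltaInt}(ii), the Fubini exchange justified via $\zeta=\eta_\alpha(z)$ and the Cauchy--Schwarz bound, the semigroup identity, and the change of variable $s\mapsto t+u-s$ for Form~B). Your explicit remark that $D(\Delta)=\mathrm{Range}(\eta_\alpha)$ and your spelling out of the self-adjointness argument promoting the weak identity to membership in $D(\Delta)$ are useful clarifications that the paper leaves implicit.
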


Note the following useful formula we will use often later :
\begin{equation}\label{basic}||\phi_t(x)||_2^2=||x||_2^2-\int_0^t||\Delta^{1/2}\phi_{s}(x)||_2^2ds.
\end{equation}
\begin{proposition}\label{Cruc}
For $Y\in \B_{2,\delta}^{a}([0,T])$, define $\gamma(Y)_t=\int_0^t\phi_{t-s}(\delta(Y_s))\#dS_s)$, then $\gamma(Y)_t\in D(\Delta^{1/2})$ for a.e. $t\leq T$ and morever : 
\begin{equation}\label{crucial}\int _0^T||\Delta^{1/2}\gamma(Y)_t||_2^2dt=-||\gamma(Y)_T||_2^2+\int_0^Tdt||\delta(Y_t)||_2^2.\end{equation}
Moreover, assume $\Gamma_1(\omega,C)$ for any $B$ among $\tilde{\delta}$,$\Delta^{1/2}$, $\alpha,\alpha'>0$, then :
\begin{equation}\label{crucialsupp}\begin{split}||B\eta_{\alpha}^{ 1/2}\eta_{\alpha'}^{1/2}(\gamma(Y)_{T})||_{2}^{2}&=||B\eta_{\alpha}^{ 1/2}\eta_{\alpha'}^{1/2}\int_0^T\delta(Y_s)\#dS_s||_2^2\\ &-\int_{0}^{T}dt\Re\langle B\Delta\eta_{\alpha}^{ 1/2}\eta_{\alpha'}^{1/2}\gamma(Y)_{t},B\eta_{\alpha}^{ 1/2}\eta_{\alpha'}^{1/2}\gamma(Y)_{t}\rangle.\end{split}
\end{equation}
\end{proposition}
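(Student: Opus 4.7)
For \eqref{crucial}, my strategy is to use the representation $\gamma(Y)_t = \int_0^t \phi_{t-s}^{\o}\delta(Y_s)\#dS_s$ together with Lemma \ref{deltaInt}(iii). The tensor analog of \eqref{basic} gives $\int_0^{T-s}\|\Delta^{\o 1/2}\phi_v^{\o}\delta(Y_s)\|_2^2\,dv = \|\delta(Y_s)\|_2^2 - \|\phi_{T-s}^{\o}\delta(Y_s)\|_2^2 \leq \|\delta(Y_s)\|_2^2$. Integrating in $s$ and applying Fubini (change of variable $t = v+s$) shows $\int_0^T dt\int_0^t\|\Delta^{\o 1/2}\phi_{t-s}^{\o}\delta(Y_s)\|_2^2\, ds < \infty$, so by Lemma \ref{deltaInt}(iii), $\gamma(Y)_t \in D(\Delta^{1/2})$ for a.e.\ $t$ and $\Delta^{1/2}\gamma(Y)_t = \int_0^t \Delta^{\o 1/2}\phi_{t-s}^{\o}\delta(Y_s)\#dS_s$. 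Isometry, Fubini and the semigroup identity then combine to give \eqref{crucial}, the term $\int_0^T\|\phi_{T-s}^{\o}\delta(Y_s)\|_2^2\, ds$ being exactly $\|\gamma(Y)_T\|_2^2$ by isometry.

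For \eqref{crucialsupp}, set $\xi := \eta_\alpha^{1/2}\eta_{\alpha'}^{1/2}$ (a bounded function of $\Delta$ with $\Delta\xi$ bounded, commuting with $\Delta$ and $\phi_t$) and $W_t := \int_0^t\delta(Y_s)\#dS_s$. Applying $\xi$ to Proposition \ref{IPPSC} yields
\begin{equation*}
\xi\gamma(Y)_t = \xi W_t - \frac{1}{2}\int_0^t \Delta\xi\gamma(Y)_s\, ds.
\end{equation*}
I would next apply the closed operator $B \in \{\tilde{\delta}, \Delta^{1/2}\}$, exchanging it with the integral. Part (i) ensures $\gamma(Y)_s \in D(\Delta^{1/2})$ a.e., so $\xi\gamma(Y)_s \in D(\Delta^{3/2})$ and $\Delta\xi\gamma(Y)_s \in D(\Delta^{1/2}) \subset D(\tilde{\delta})$ by $d')$; the bound $\|B\Delta\xi\gamma(Y)_s\|_2^2 \leq \|\Delta\xi\|^2(\|\Delta^{1/2}\gamma(Y)_s\|_2^2 + \omega\|\gamma(Y)_s\|_2^2)$ (from assumption $c)$) lies in $L^1(0,T)$ by part (i), so by closedness of $B$,
\begin{equation*}
B\xi\gamma(Y)_t = B\xi W_t - \frac{1}{2}\int_0^t B\Delta\xi\gamma(Y)_s\, ds.
\end{equation*}

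The core manipulation is a square-and-rearrange. Setting $f_t := B\Delta\xi\gamma(Y)_t$, expanding $\|B\xi W_T - \tfrac12\int_0^T f_s\, ds\|_2^2$ and substituting $B\xi W_T - B\xi\gamma(Y)_T = \tfrac12\int_0^T f_s\, ds$ in the cross term give
\begin{equation*}
\|B\xi\gamma(Y)_T\|_2^2 - \|B\xi W_T\|_2^2 = -\Re\langle B\xi\gamma(Y)_T, \textstyle\int_0^T f_s\, ds\rangle - \tfrac14\|\textstyle\int_0^T f_s\, ds\|_2^2.
\end{equation*}
I would then decompose $B\xi\gamma(Y)_T = B\xi\gamma(Y)_t + B\xi(W_T - W_t) - \tfrac12\int_t^T f_s\, ds$ inside the cross term. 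By Fubini and the symmetry $\Re\langle f_s, f_t\rangle = \Re\langle f_t, f_s\rangle$, the double integral of $\Re\langle f_s, f_t\rangle$ over the triangle $\{t \leq s\}$ equals $\tfrac12\|\int_0^T f_s\, ds\|_2^2$, which exactly cancels the $-\tfrac14\|\cdot\|_2^2$ term, leaving $-\int_0^T\Re\langle B\Delta\xi\gamma(Y)_t, B\xi\gamma(Y)_t\rangle\, dt$ as required.

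The expected main obstacle is showing $\int_0^T\Re\langle B\xi(W_T - W_t), f_t\rangle\, dt = 0$. For $B = \Delta^{1/2}$, we have $B\xi(W_T - W_t) = \int_t^T\Delta^{\o 1/2}\xi^{\o}\delta(Y_s)\#dS_s$ (using Lemma \ref{deltaInt}(ii)--(iii) and commutation of $\Delta^{1/2}$ with $\phi^{\o}$), a stochastic integral over $[t,T]$ orthogonal in $L^2(M)$ to $f_t \in L^2(M_t)$ by the standard orthogonality of free Brownian increments to the past. For $B = \tilde{\delta}$, we write $B\xi(W_T - W_t) = \tilde{\delta}V$ with $V = \int_t^T \xi^{\o}\delta(Y_s)\#dS_s \in D(\tilde{\delta})$, and the orthogonality to $f_t \in L^2(M_t\o M_t)^{\bigoplus\N}$ is the $\tilde{\delta}$-analog of the last statement of Lemma \ref{deltaInt}(ii). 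This analog follows from the same reduction to simple processes, now using the orthogonality assumption for $\tilde{\delta}$ built into $d')$ in place of the one from $d)$ used in the proof of Lemma \ref{deltaInt}(ii).
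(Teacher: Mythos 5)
Your argument for \eqref{crucial} is essentially the paper's own: the tensor form of \eqref{basic}, Fubini--Tonelli, and Lemma \ref{deltaInt}(iii) plus the isometry of stochastic integration. For \eqref{crucialsupp}, however, you take a genuinely different and correct route. The paper notes that, by \eqref{crucial}, $\Gamma_1\,c)$, $g)$ and the boundedness of $B\eta_{\alpha}$, all three terms of \eqref{crucialsupp} are continuous in $Y\in\B_{2,\delta}^{a}([0,T])$, so it reduces to a simple process $Y=X1_{[s,t)}$; there, via Lemma \ref{deltaInt}, the identity becomes a deterministic statement about $u\mapsto\|B^{\o}\eta_{\alpha}^{\o 1/2}\eta_{\alpha'}^{\o 1/2}\phi_{u}^{\o}\delta(X)\|_2^2$, settled by differentiating along the semigroup and one application of Fubini. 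You instead treat a general $Y$ directly: you apply $B\xi$ (with $\xi=\eta_{\alpha}^{1/2}\eta_{\alpha'}^{1/2}$) to the integration-by-parts formula of Proposition \ref{IPPSC} -- legitimately, since $\Delta\xi$ is bounded, $\xi$ gains a power of $\Delta$, and \eqref{crucial} plus $c),g)$ give the $L^1$ bound needed to pull the closed operator $B$ through the Bochner integral -- then perform the square-and-rearrange whose triangle term $\tfrac12\int_0^T\!dt\int_t^T\!ds\,\Re\langle f_s,f_t\rangle=\tfrac14\|\int_0^Tf\|_2^2$ cancels exactly, so that \eqref{crucialsupp} reduces to $\int_0^T\Re\langle B\xi(W_T-W_t),f_t\rangle\,dt=0$; this you obtain from adaptedness and the orthogonality of stochastic integrals over $[t,T]$ to the past, and for $B=\tilde\delta$ from the $\tilde\delta$-analogue of the last statement of Lemma \ref{deltaInt}(ii), which indeed follows from the orthogonality clause built into $d')$ exactly as you say (the paper itself invokes this analogue without separate proof in Step 3 of Theorem \ref{main}). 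Your sign and constant bookkeeping checks out (the $\tfrac12$ of the generator is absorbed by the $2\Re$, and your bound on $\|\tilde\delta\Delta\xi\gamma(Y)_s\|_2$ only misses the harmless constant $C$ from $g)$). In short, the paper's route buys a two-line computation at the cost of a continuity-in-$Y$ check, while yours avoids any reduction to simple processes at the cost of the martingale-type orthogonality and somewhat heavier domain bookkeeping; both rest on the same lemmas, so either is acceptable here.
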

\begin{proof}
By Fubini-Tonneli Theorem and the remark before the proposition, we deduce $$\int _0^Tdt\int_0^tds||\Delta^{\o1/2}\phi_{t-s}^{\gre\o}(\delta(Y_s))||_2^2=\int _0^Tds||(\delta(Y_s))||_2^2-||\phi_{T-s}^{\gre\o}(\delta(Y_s))||_2^2.$$
Thus lemma  \ref{deltaInt} (iii) concludes to the first statement. Since $B\eta_{\alpha}$ is a bounded operator, and from the first statement and $\Gamma_{1}(\omega,C)\ g,c.1$ for the last term, all the terms in (\ref{crucialsupp}) are continuous on  $\B_{2,\delta}^{a}([0,T])$. As a consequence, it suffices to prove it for simple processes of even $Y=X1_{[s,t)}$, $X\in D(\delta)$.

From lemma \ref{deltaInt} (and with an obvious notation $B^{\o}$), this reduces the statement to \begin{align*}&\int_s^{\gre T} du||B^{\o}\eta_{\alpha}^{\o 1/2}\eta_{\alpha'}^{\o 1/2}\phi_{T-u}^{\o}\delta(X)||_2^2 =||B^{\o}\eta_{\alpha}^{\o 1/2}\eta_{\alpha'}^{\o 1/2}(\delta(X))||_2^2 (T-s)\\&-\int_s^Tdv \Re \int_s^vdu\langle B^{\o}\Delta^{\o}\eta_{\alpha}^{\o 1/2}\eta_{\alpha'}^{\o 1/2}\phi_{v-u}^{\o}\delta(X),B^{\o}\eta_{\alpha}^{\o 1/2}\eta_{\alpha'}^{\o 1/2}\phi_{v-u}^{\o}\delta(X)\rangle. \end{align*}

But this is obvious after applying Fubini on the last integral and integrating along $v$ {\gre (using $||\Delta^{\o 1/2}\phi_{v-u}^{\o}||\leq\frac{cst}{\sqrt{v-u}}$ and $d.4$ in the $\tilde{\delta}$ case to bound it by the corresponding $\Delta^{1/2}$ case)}.
\end{proof}

\subsection{Useful links between mild solutions and strong solutions}

In this part, we will also work under assumption $\Gamma_{0}(\omega)$.
Let us define four kinds of solutions.
\begin{definition}
 We will call a \textbf{strong solution} an element $X_{t} \in\B_{2,\Delta}^{a}$ satisfying (\ref{SPDES}). A \textbf{mild solution} will be an $X_{t} \in\B_{2,\delta}^{a}$ satisfying~:
\begin{equation}\label{SPDEM}X_{t}=\phi_{t}(X_{0}) + \int_{0}^{t}\phi_{t-s}(\delta(X_{s})\#dS_{s}).\end{equation}
We call \textbf{ultramild solutions},  solutions of (\ref{SPDEM}) in $\B_{2,\phi\delta}^{a}$.
We call a\textbf{ weak solution} an $X_{t} \in\B_{2,\delta}^{a}$ such that, for any $\zeta\in D(\Delta )$:
$$\langle X_{t}, \zeta \rangle = \langle X_{0}, \zeta
\rangle  -\frac{1}{2}\int_{0}^{t}\langle X_{s}, \Delta (\zeta)\rangle ds +
\int_{0}^{t}\langle \delta(X_{s})\#dS_{s},\zeta\rangle.$$
\end{definition}

We will first recall analogs of usual results (in classical SPDE theory)
concerning the link between strong solutions and mild solutions.
We mainly follow here the proofs (for a
classical Brownian motion and a classical SPDE) of
\cite{DaPrato} Chapter 6.

\begin{proposition}\label{SImpM}
A strong solution of (\ref{SPDES}) (in $\B_{2,\Delta}^{a}$) is also a mild solution (even a solution of (\ref{SPDEM}) in $\B_{2,\Delta^{1/2}}^{a}$.)
\end{proposition}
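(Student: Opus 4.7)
\begin{proofs}
The plan is to check directly that any strong solution is given by the mild formula. From the continuous embeddings noted after the definition of $\B_{2,\Delta^\beta}^a$ together with $\Gamma_0(\omega)\,c$, one has $X \in \B_{2,\Delta}^a \subset \B_{2,\Delta^{1/2}}^a \subset \B_{2,\delta}^a$, so $\delta(X_s) \in \B_2^a$ and
$$Y_t := \phi_t(X_0) + \int_0^t \phi_{t-s}(\delta(X_s))\#dS_s$$
is well defined in $L^2(M)$. The aim is to prove $X_t = Y_t$ for every $t \in [0,T]$.

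The key input is Proposition \ref{IPPSC}: applied with $U_s = \delta(X_s)$, it rewrites the stochastic convolution as
$$\int_0^t \phi_{t-s}(\delta(X_s))\#dS_s = \int_0^t \delta(X_s)\#dS_s - \tfrac{1}{2}\Delta \int_0^t\!ds \int_0^s \phi_{s-u}(\delta(X_u))\#dS_u.$$
Combining this with the elementary semigroup identity $\phi_t(X_0) - X_0 = -\tfrac{1}{2}\int_0^t \Delta \phi_s(X_0)\,ds$ (valid for any $X_0 \in L^2(M)$ by analyticity of the positive self-adjoint semigroup $\phi$) and commuting $\Delta$ with the outer $s$-integration via Fubini produces the integrated identity
$$Y_t = X_0 - \tfrac{1}{2}\int_0^t \Delta Y_s\,ds + \int_0^t \delta(X_s)\#dS_s.$$
Subtracting the strong equation (\ref{SPDES}) satisfied by $X$, the stochastic-integral terms cancel and $Z_t := X_t - Y_t$ obeys the purely deterministic relation $Z_t = -\tfrac{1}{2}\int_0^t \Delta Z_s\,ds$ with $Z_0 = 0$. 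An energy estimate together with positivity of $\Delta$ (assumption $a$) — or equivalently uniqueness of $\phi$-semigroup solutions to the Cauchy problem $\partial_t Z = -\tfrac{1}{2}\Delta Z$ with null initial data — forces $Z \equiv 0$, hence $X_t = Y_t$. Membership in $\B_{2,\Delta^{1/2}}^a$ is then automatic from $\B_{2,\Delta}^a \subset \B_{2,\Delta^{1/2}}^a$.

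The main delicate point is the commutation of $\Delta$ with the outer deterministic $s$-integration, i.e.\ the identification of the middle term above with $\tfrac{1}{2}\int_0^t \Delta Y_s\,ds$. This is a Fubini-type step analogous to the one performed inside the proof of Proposition \ref{IPPSC}: closedness of $\Delta$, together with the $D(\Delta^{1/2})$-regularity of $\int_0^s \phi_{s-u}(\delta(X_u))\#dS_u$ for a.e.\ $s$ guaranteed by Proposition \ref{Cruc}, and the $L^1_t$-integrability of $\Delta X_s$ coming from $X \in \B_{2,\Delta}^a$, combine to legitimate the exchange. With that in hand the rest of the argument reduces to the uniqueness statement for the homogeneous Cauchy problem already invoked.
\end{proofs}
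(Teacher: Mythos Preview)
Your route is genuinely different from the paper's. The paper argues by duality: it tests the strong equation against $\zeta(s)=\phi_{t-s}(\zeta)$ for $\zeta\in D(\Delta^2)$, observes that the drift terms $-\tfrac12\Delta\zeta(s)+\zeta'(s)$ cancel, and then invokes the second identity of Proposition~\ref{IPPSC} to recognise the remaining stochastic term as $\langle\int_0^t\phi_{t-s}(\delta(X_s)\#dS_s),\zeta\rangle$. No domain questions about $Y_s$ ever arise.

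Your direct approach can be made to work, but as written the ``commutation of $\Delta$ with the outer $s$-integration'' is not justified by the ingredients you list. What the addition of Proposition~\ref{IPPSC} and the semigroup identity actually yields is
\[
Y_t \;=\; X_0 \;-\;\tfrac12\,\Delta\!\int_0^t Y_s\,ds \;+\;\int_0^t\delta(X_s)\#dS_s
\]
with $\Delta$ \emph{outside} the integral. To push it inside you would need $Y_s\in D(\Delta)$ a.e.\ with $\int_0^t\|\Delta Y_s\|\,ds<\infty$, but Proposition~\ref{Cruc} only gives $Y_s\in D(\Delta^{1/2})$, and the $L^1_t$-integrability of $\Delta X_s$ concerns $X$, not $Y$. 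Relatedly, the identity $\phi_t(X_0)-X_0=-\tfrac12\int_0^t\Delta\phi_s(X_0)\,ds$ is \emph{not} valid for arbitrary $X_0\in L^2$: analyticity gives $\phi_s(X_0)\in D(\Delta)$ for $s>0$, but $\|\Delta\phi_s(X_0)\|\sim C/s$ near $0$ and the integral typically diverges; only the form with $\Delta$ outside the integral holds unconditionally.

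The fix is simply not to commute: from $Z_t=-\tfrac12\,\Delta\!\int_0^t Z_s\,ds$ set $W_t=\int_0^t Z_s\,ds$, so $W_0=0$, $W_t\in D(\Delta)$, and $W'_t=-\tfrac12\Delta W_t$ a.e.; then $\tfrac{d}{dt}\|W_t\|_2^2=-\|\Delta^{1/2}W_t\|_2^2\le 0$ gives $W\equiv 0$ and hence $Z\equiv 0$. With this correction your argument goes through; the paper's duality proof simply avoids the issue altogether.
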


\begin{proof}[Proof]
 First, 
 note that for any $\zeta(.)\in C^{1}([0,T];D(\Delta ))$, and any
$t\in [0,T]$, we have~:
$$\langle X_{t}, \zeta(t) \rangle = \langle X_{0}, \zeta(0)
\rangle + \int_{0}^{t}\langle X_{s}, -\frac{1}{2}\Delta (\zeta(s)) + \zeta'(s)\rangle ds +
\int_{0}^{t}\langle \delta (X_{s})\#dS_{s},\zeta(s)\rangle.$$

(To prove this, use (\ref{STCONV}) to compute the stochastic part and use an integration by parts to get the other term).

Finally, consider $\zeta(s)=\phi_{t-s}(\zeta)$, which is in
$C^{1}([0,T];D(\Delta ))$, if say $\zeta\in D(\Delta^2)$. The terms inside the usual integral cancel out and you get~:
$$\langle X_{t}, \zeta \rangle = \langle X_{0}, \phi_{t}(\zeta)
\rangle  +
\int_{0}^{t}\langle \delta(X_{s})\#dS_{s},\phi_{t-s}(\zeta)\rangle.$$
Inasmuch as you can take any $\zeta\in D(\Delta ^2)$ and $D(\Delta ^2)$ is dense (even a core for $\Delta$ by a standard result Theorem 3.24 p275 in Chapter V of Kato's book \cite{Kato}),
you get the result (using proposition \ref{IPPSC}).
\end{proof}

\begin{proposition}\label{MImpS}
A mild solution $X_{t}$ is always a weak solution, and if it is also in $\B_{2,\Delta}^{a}$, then it is in fact a strong solution.
\end{proposition}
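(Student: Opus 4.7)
The plan is to combine a single integration-by-parts identity with standard semigroup manipulations; both conclusions then fall out algebraically.

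First, I would apply the first form of Proposition \ref{IPPSC} to $U_s = \delta(X_s) \in \B_2^a([0,T])$ (which is in $\B_2^a$ since $X \in \B_{2,\delta}^a$) and recognize the inner stochastic convolution via the mild equation as $X_s - \phi_s(X_0)$. Combined with the deterministic semigroup identity
\begin{equation*}
\phi_t(X_0) - X_0 \;=\; -\tfrac12\,\Delta\int_0^t \phi_s(X_0)\,ds
\end{equation*}
(valid with $\int_0^t\phi_s(X_0)\,ds \in D(\Delta)$ for any $X_0 \in L^2(M)$ by standard Hille--Yosida theory), this should yield the key identity
\begin{equation*}
X_t - X_0 + \tfrac12\,\Delta\int_0^t X_s\,ds \;=\; \int_0^t \delta(X_s)\#dS_s, \qquad (\star)
\end{equation*}
where the datum that $\int_0^t X_s\,ds \in D(\Delta)$ comes for free from adding the two summand identities and using that $D(\Delta)$ is a vector space.

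For the weak-solution statement, I would pair $(\star)$ with $\zeta \in D(\Delta)$, use self-adjointness of $\Delta$ to move it onto $\zeta$, and invoke Fubini (legitimate since $s\mapsto\langle X_s,\Delta\zeta\rangle$ is in $L^1([0,t])$ by Cauchy--Schwarz, as $X \in L^2_a([0,T],L^2(M))$) to pull the $s$-integral outside the inner product, producing the term $-\tfrac12\int_0^t \langle X_s,\Delta\zeta\rangle\,ds$. The stochastic term then reduces to $\int_0^t \langle \delta(X_s)\#dS_s,\zeta\rangle$ directly from the definition in subsection 1.2 applied to the constant-in-$s$ test vector $\zeta$ (equivalently, by projecting $\zeta$ onto the range of the stochastic integral and invoking the isometry).

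For the strong-solution implication, assuming $X \in \B_{2,\Delta}^a$, the map $s\mapsto\Delta X_s$ lies in $L^2([0,T],L^2(M))$ and is hence Bochner integrable on $[0,t]$; closedness of $\Delta$ then permits the interchange $\Delta\int_0^t X_s\,ds = \int_0^t \Delta X_s\,ds$, and substituting this into $(\star)$ reproduces equation (\ref{SPDES}). There is no genuine obstacle here: the whole argument is routine bookkeeping on top of Proposition \ref{IPPSC} plus closedness and self-adjointness of $\Delta$; the only mildly delicate point is the standard justification that a closed operator can be moved through a Bochner integral of domain-valued integrands.
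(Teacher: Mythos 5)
Your proof is correct, but it is organized around a different pivot than the paper's. You first extract the integrated identity $X_t = X_0 - \tfrac12\Delta\int_0^t X_s\,ds + \int_0^t\delta(X_s)\#dS_s$ (with $\int_0^t X_s\,ds\in D(\Delta)$) by adding the operator-level statement of Proposition \ref{IPPSC} to the Hille--Yosida identity $\phi_t(X_0)-X_0=-\tfrac12\Delta\int_0^t\phi_s(X_0)\,ds$; note that the line of Proposition \ref{IPPSC} you actually need is the second displayed one, whose inner integrand is the stochastic convolution $\int_0^s\phi_{s-u}(\delta(X_u)\#dS_u)=X_s-\phi_s(X_0)$, not the first — a labeling slip only, since the proposition asserts both. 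From this identity the weak statement follows by pairing with $\zeta\in D(\Delta)$ and self-adjointness, and the strong one by Hille's theorem for the closed operator $\Delta$ applied to $s\mapsto X_s$, which is exactly what membership in $\B_{2,\Delta}^{a}$ licenses. The paper instead proves the weak property first, by a scalar computation: it tests against $\zeta\in D(\Delta^2)$, substitutes the mild equation into $-\tfrac12\int_0^t\langle X_s,\Delta\zeta\rangle\,ds$, and uses the duality form of Proposition \ref{IPPSC} together with the semigroup equation for $\phi$; the strong statement is then recovered from the weak one by moving $\Delta$ back onto $X_s$ and a density argument in $\zeta$. Your route buys a cleaner intermediate object (the integrated equation, which yields both conclusions at once and directly for all $\zeta\in D(\Delta)$, with no passage through $D(\Delta^2)$), at the price of leaning on the operator-level part of Proposition \ref{IPPSC}, i.e.\ on the fact that $\int_0^t ds\,\gamma(X)_s\in D(\Delta)$, whereas the paper stays with scalar pairings for which the Fubini-type interchanges were already justified in subsection 1.2. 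The remaining steps you invoke (commuting the Bochner integral with $\langle\cdot,\Delta\zeta\rangle$, Hille's theorem for closed operators on $L^1$ integrands) are indeed routine, so there is no gap.
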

\begin{proof}[Proof]
Once we have proved that our mild solution is in fact a weak solution, 
%
we are in fact done,
since {\gre under} our assumption $\Delta (X_{s})$ is (Lebesgue-almost surely) well
  defined and in $L^{2}([0,T],L^{2}(M))$, showing that the wanted equation
  (\ref{SPDES}) under $\langle ., \zeta\rangle$, which concludes by
  density.

To show that we have the desired weak solution, we will merely use that the
solution is in $\B_{2,\delta}^{a}$, as required for a mild solution. Consider thus $\zeta\in D(\Delta ^2)$
\begin{align*}-\frac{1}{2}&\int_{0}^{t}\langle X_{s}, \Delta (\zeta)\rangle ds \\ &= 
-\frac{1}{2}\int_{0}^{t}\langle X_{0}, \phi_{s}(\Delta (\zeta))\rangle ds -
\frac{1}{2}\int_{0}^{t}ds \int_{0}^{s}\langle \delta(X_{u})\#dS_{u},\phi_{s-u}(
\Delta (\zeta))\rangle \\ &= 
-\frac{1}{2}\langle X_{0}, \int_{0}^{t}\phi_{s}(\Delta (\zeta))ds \rangle +\left(\langle\int_{0}^{t}\phi_{t-s}(\delta (X_{s})\#dS_{s}),\zeta\rangle -\langle\int_{0}^{t}\delta (X_{s})\#dS_{s},\zeta\rangle \right)\\ &= 
\langle \phi_{t}(X_{0})+
\int_{0}^{t}\phi_{t-s}(\delta (X_{s})\#dS_{s}), \zeta\rangle - \langle X_{0}, \zeta\rangle -
\int_{0}^{t}\langle \delta (X_{s})\#dS_{s},\zeta\rangle\\ &= 
\langle X_{t}, \zeta\rangle - \langle X_{0}, \zeta\rangle -
\int_{0}^{t}\langle
\delta (X_{s})\#dS_{s},\zeta\rangle.\end{align*}

The first line has been justified in proposition \ref{IPPSC} applied to the definition of mild solutions.  The last line, clearly concluding to what we wanted to prove, 
uses nothing but  again the definition of a mild solution. Of course the third line uses again the differential equation for $\phi$. The second line reduces to the second equation in proposition \ref{IPPSC}. 

\end{proof}

Finally, to get uniqueness results 
assuming only really weak conditions, we want to introduce a notion of ultraweak solution for which uniqueness will be easy to prove so that we will build unique ultraweak solutions which are also ultramild solutions. We thus assume $\Gamma_{0u}(\omega)$. This also  needs some results on chaotic decomposition very similar {\red to} those of section 5.3 in \cite{BS98} but not only for the free Fock space $F(H)$ with $H=L^{2}(\R_{+})$ but also for $H=L^{2}(\R_{+})^{\oplus \N}$ and moreover with an initial condition space $L^{2}(M_{0})$ i.e. we want to see a multiple stochastic integral variant of $L^{2}(M)=L^{2}(M_{0}\star \mathcal{SC}(H))\simeq L^{2}(M_{0})\star F(H)$. Since this requires a little bit of notation with nothing new, we merely state the results after introduction of notation.

For $f\in L^{2}(\R_{+}^{n}\times \N^{n},L^{2}(M_{0})^{n+1})$, we want to define a multiple stochastic integral 

$I(f)=\int f(t_{1},...,t_{n})\# dS_{t_{1}}...dS_{t_{n}}$. Of course, we extend it linearly and isometrically {\gre as} in \cite{BS98} after defining it on appropriate multiple of characteristic function $f=1_{\A}\delta_{k_{1},...,k_{n}}\alpha_{0}\otimes...\o \alpha_{n}$, $\alpha_{i}\in M_{0}$, $\delta_{k_{1},...,k_{n}}$ the function on $\N^{n}$ taking non zero value 1 only on the indicated support, $\A=[u_{1},v_{1}]\times...\times[u_{n},v_{n}]$ with $\A\subset \R_{+}^{n}-D^{n}$ ($D^{n}$ the usual full diagonal e.g. definition 5.3.1 of \cite{BS98}). {\gre We will call later step function any linear combination of such $f$'s with maybe $\alpha_{0}\otimes...\o \alpha_{n}$ replaced by $U\in L^{2}(M_0)^{\o(n+1)}$. We thus define :}
$$I(f):=\alpha_{0}(S_{v_{1}}^{(k_{1})}-S_{u_{1}}^{(k_{1})})\alpha_{1}...(S_{v_{n}}^{(k_{n})}-S_{u_{n}}^{(k_{n})})\alpha_{n}.$$

Recall $I(f)\in \E_n$ according to the notation before assumption $\Gamma_{0u}(\omega)$.

Then, we can write $f=\sum_{n={0}}^{\infty}f_{n}\in L^{2}(M_{0})\star F(H)$  so that $I(f)=\sum_{i=0}^{\infty}I(f_{n})$ define an isometric map $I:L^{2}(M_{0})\star F(H)\rightarrow L^{2}(M)$ determined by $I(f)\Omega=f$ ($\Omega$ the usual cyclic empty vector in Fock space), {\gre as } in proposition 5.3.2 of \cite{BS98}. Recall $P_{\Gamma}$ is the projection on adapted bi-processes. It is defined (as $\Gamma$) in proposition 5.3.12 in \cite{BS98} before free Bismut-Clark-Ocone formula. Note that this formula is also valid mutatis mutandis in our context, recall it involves $\nabla_s$ the gradient operator from definition 5.1.1 in \cite{BS98}. For instance in the really elementary case of $Y\in  \E_n\cap L^2(M_s)$,  it gives :
$$Y=E_{M_0}(Y)+\int_0^s(P_\Gamma\nabla_uY)\#dS_u.$$

{\gre Consider a step function $f=1_{\A}\delta_{k_{1},...,k_{n}}\alpha_{0}\otimes...\o \alpha_{n}$ as above such that $\alpha_0\o...\o\alpha_n=\sum_{\{j_0<...<j_k\}\subset\llbracket0,n\rrbracket} \J_{j_0,...,j_k}(U_{j_0,...,j_k}) $, the sum running over (maybe empty) subsets  of $\llbracket0,n\rrbracket$, with $1^{\o 1_{\{j_0\neq 0\}}}\o U_{j_0,...,j_k}\o 1^{\o1_{\{j_k\neq n\}}}\in D(\Delta^{\o (k+1+1_{\{j_0\neq 0\}}+1_{\{j_k\neq n\}})})\subset L^{2}(M_0)^{\o1_{\{j_0\neq 0\}}}\o  (L^{2}(M_0)\ominus \C)^{\o (k+1)}\o L^{2}(M_0)^{\o1_{\{j_k\neq n\}}}$  (with the notation $L^2(M_0)^{\o 0}=\C$, $1^{\o 0}$ the unit in this $\C$) and, for $J=\{j_0<...<j_k\}$ $\J_{j_0,...,j_k}=\J_J$ the unique isometric linear map $L^{2}(M_0)^{\o k+1}\to L^{2}(M_0)^{\o n+1}$ extended from $\J_{j_0,...,j_k}(a_0\o...\o a_k)=1^{\o j_0}\o a_0\o 1^{\o (j_1-j_0-1)}\o a_1...\o a_k\o1^{\o (n-j_k)}$.
We will later write $\D_n$ the space of linear combinations of such $\alpha_0\o...\o\alpha_n$'s. Since the images of $\J_{J\cup\{0,n\}}$ for different $J\cup\{0,n\}$ are orthogonal, there is obviously a closed densely defined positive operator $\Delta^{\o [n]}$ on the closure of $\D_n$ in $L^{2}(M_0)^{\o(n+1)}$ defined  on $\D_n$ by  \begin{align*}\Delta^{\o [n]}&(\alpha_0\o...\o\alpha_n)\\&=\sum_{J=\{j_0<...<j_k\}\subset\llbracket0,n\rrbracket} \J_{J\cup\{0,n\}}(\Delta^{\o (k+1+1_{\{j_0\neq 0\}}+1_{\{j_k\neq n\}})}(1^{\o 1_{\{j_0\neq 0\}}}\o U_{j_0,...,j_k}\o 1^{\o1_{\{j_k\neq n\}}})).
\end{align*}
Note that by assumption $e.4$ $D(\Delta^{\o [n]})\supset (\D\oplus\C)^{\o n+1}$, this explains $\Delta^{\o [n]}$ densely defined. Note also that the formula $\Delta(I(f))=I(\Delta^{\o [n]} (f))$ proved bellow first in case $f\in \D_n$ explains why $\Delta^{\o [n]}$ is positive on $\D_n$.
\begin{lemma}\label{ultraweakLemma}
Assume $\Gamma_{0u}(\omega)$. For any step function $f\in L^{2}(\R_{+}^{n}\times \N^{n})\o_{alg}D(\Delta^{\o [n]})$ as above $I(f)\in D(\Delta)$ and $\Delta(I(f))=I(\Delta^{\o [n]} (f))$. Moreover, if $f\in L^{2}(\R_{+}^{n}\times \N^{n})\o_{alg}\D_n$,  $P_{\Gamma}\nabla_{s}I(f)\in D(\delta^*)$ and $\delta^*P_{\Gamma}\nabla_{s}I(f)\in \overline{Span}\{h_k\in \E_k, k\leq n-1\}$.
\end{lemma}
\begin{proof} It suffices to consider $f\in L^{2}(\R_{+}^{n}\times \N^{n})\o_{alg}\D_n$ step function, e.g. $I(f):=\alpha_{0}(S_{v_{1}}^{(k_{1})}-S_{u_{1}}^{(k_{1})})...(S_{v_{n}}^{(k_{n})}-S_{u_{n}}^{(k_{n})})\alpha_{n}$ as before. Now for any $J=\{j_0,...,j_k\}\subset\llbracket0,n\rrbracket$
 we can write \begin{align*}&(S_{v_{1}}^{(k_{1})}-S_{u_{1}}^{(k_{1})})..(S_{v_{j_0}}^{(k_{j_0})}-S_{u_{j_0}}^{(k_{j_0})})\o(S_{v_{j_0+1}}^{(k_{j_0+1})}-S_{u_{j_0+1}}^{(k_{j_0+1})})...(S_{v_{j_1}}^{(k_{j_1})}-S_{u_{j_1}}^{(k_{j_1})})\o... (S_{v_{n}}^{(k_{n})}-S_{u_{n}}^{(k_{n})})\\&=\sum_{i_1,...,i_k\in \N, i_0,i_{k+1}\in \N\cup\{-1\}}\lambda_{i_0,...,i_{k+1}}^{(J)}(f) e_{i_0}\o...\o e_{i_{k+1}}.\end{align*}
with by convention $e_{-1}=1$ occurring only if $j_0=0$ and/or $j_k=n$ and in these cases the only non-vanishing $\lambda_i$ is respectively for $i$ with  $i_0=-1$ ( $i_{k+1}=-1$). (Note that if $J$ empty we have $k=-1$ and no tensor product). This uses only the orthonormal basis introduced before $\Gamma_0(\omega)$ since any $(S_{v_{j_i+1}}^{(k_{j_i+1})}-S_{u_{j_i+1}}^{(k_{j_i+1})})...(S_{v_{j_{i+1}}}^{(k_{j_{i+1}})}-S_{u_{j_{i+1}}}^{(k_{j_{i+1}})})$ is orthogonal to $\C$ when not $1$.

Especially, if we write $e_i=(e_{i_0}\o e_{i_1}...\o e_{i_{k+1}})$, $$I(f)=\sum_{J=\{j_0,...,j_k\}\subset\llbracket0,n\rrbracket}\sum_{i_1,...,i_k\in \N, i_0,i_{k+1}\in \N\cup\{-1\}}\lambda_{i_0,...,i_{k+1}}^{(J)}(f) (1\o U_{j_0,...,j_k}\o 1)\#e_{i},$$
so that using assumption $e.2$,  writing 
 $\lambda_{i}^{(J)}(f)=\lambda_{i_0,...,i_{k+1}}^{(J)}(f)$, $\N^{-}=\N\cup\{-1\}$, one gets : 
\begin{align*}&\Delta(I(f))=\sum_{J\subset\llbracket0,n\rrbracket}\sum_{i\in\N^{-}\times \N^k\times  \N^{-}}\\ & \lambda_{i}^{(J)}(f) (1^{\o 1_{\{j_0=0\}}}\o(\Delta^{\o (k+1+1_{\{j_0\neq 0\}}+1_{\{j_k\neq n\}})}(1^{\o 1_{\{j_0\neq 0\}}}\o U_{j_0,...,j_k}\o 1^{\o1_{\{j_k\neq n\}}})\o1^{\o1_{\{j_k= n\}}})\# e_{i}
\\ &=\sum_{J\subset\llbracket0,n\rrbracket}I((\J_{J\cup\{0,n\}}(\Delta^{\o (k+1+1_{\{j_0\neq 0\}}+1_{\{j_k\neq n\}})}(1^{\o 1_{\{j_0\neq 0\}}}\o U_{j_0,...,j_k}\o 1^{\o1_{\{j_k\neq n\}}}) )))\\&=I(\Delta^{\o [n]} (f)).\end{align*}

Recall $(P_{\Gamma}\nabla_{s}I(f))_{k_i}=1_{(u_i,v_i]}(s)(\alpha_{0}(S_{v_{1}}^{(k_{1})}-S_{u_{1}}^{(k_{1})})... \alpha_{i-1}\o \alpha_i...(S_{v_{n}}^{(k_{n})}-S_{u_{1}}^{(k_{n})})\alpha_{n},$ where $v_i$ is the hugest of all $v_k$'s ($(P_{\Gamma}\nabla_{s}I(f))_j=0$ if $j\neq k_i$) .
As before, one gets $P_{\Gamma}\nabla_{s}I(f)=I\o I(f_{1,s}\o f_{2,s})\in D(\Delta)\o_{alg}D(\Delta)$ so that assumption $e.1$ concludes to the domain statement. Note that $P_{\Gamma}\nabla_{s}I(f)\in (\E_{i-1}\o \E_{n-i})^{\oplus\N}$, and thus by assumption $e.3$ is orthogonal to $\delta(I(g_p))$ for any $p\geq n$ if $g_p\in L^{2}(\R_{+}^{n}\times \N^{n})\o_{alg}D(\Delta^{\o [p]})$. Thus $\delta^*P_{\Gamma}\nabla_{s}I(f)$ is orthogonal to all such $I(g_p)$, and by density for all $h_p\in \E_p,p\geq n$. This gives the last statement.
\end{proof}
}

We can now define :
\begin{definition}\label{ultraweak}
An \textbf{ultraweak solution} (of (\ref{SPDES})) is a{\gre n $L^2$}- weakly continuous adapted process $X_{t}$ in $L^{2}_{a,loc}(\R_{+},L^{2}(M))$ 
such that, for some $C$ and $\omega$, $||X_{t}||_{2}\leq C e^{\omega t}$  and for all  finite sums $g=\sum_{n}g_{n}$,  $g_{n} \in L^{2}(\R_{+}^{n}\times \N^{n})\o_{alg}\D_n$ step functions as above, then a.e. in $t\in\R_{+} $ :
\begin{align*}\langle I(g),X_{t}\rangle = \langle I(g),X_{0}\rangle -\frac{1}{2}\int_{0}^{t}ds\langle \Delta I(g),X_{s}\rangle + \int_{0}^{t}ds \langle\delta^{*}P_{\Gamma}\nabla_{s}I(g),X_{s}\rangle.\end{align*}
\end{definition}

Note that a weak solution satisfying $||X_{t}||_{2}\leq C e^{\omega t}$  is an ultraweak solution. Indeed from the free Bismut-Clark-Ocone formula and the previous lemma for domain issues, one gets : $$\langle I(g),\int_0^t\delta(X_s)\#dS_s \rangle=\langle \int_0^t(P_\Gamma\nabla_uI(g))\#dS_u,\int_0^t\delta(X_s)\#dS_s\rangle=\int_0^tds \langle\delta^{*}P_{\Gamma}\nabla_{s}I(g),X_{s}\rangle.$$
\subsection{Mild and Ultramild solutions}

Here is the main theorem in the general setting.
\begin{theorem}\label{main}
\begin{enumerate}\item[(i)] Let us assume $\Gamma_{0u}(\omega)$ and that $X_{0}\in L^{2}(M_{0})$, then equation
(\ref{SPDES}) has a unique ultraweak solution. This solution is also an Ultramild Solution $X_{t}$ 
 and we have, for every $T$ and   a.e. in t~:
\begin{align*}&||X_{t}||_{2}^{2}\leq e^{\omega t}||X_{0}||_{2}^{2},\\ &||X||_{\B_{2,\phi\delta}^{a}([0,T])}^{2}\leq
 ||X_{0}||_{2}^{2}2\frac{e^{\omega T}-1}{\omega}\ \  (\mathrm{or}\ \ 2T||X_{0}||_{2}^{2}\  \mathrm{if}\ \omega=0)
.\end{align*}
Furthermore, if we write $X_{s}^{\epsilon}$ a solution for $\delta$ replaced by $(1-\epsilon)\delta$ ($\epsilon\in (0,1]$), then $X_{s}^{\epsilon}$ is a  unique mild solution of this variant equation, i.e. a solution of (\ref{SPDEM}$\epsilon$) in $\B_{2,\Delta^{1/2}}^{a}$, and the solution built above $X_{t}$ is, for every $T$, 
 a weak
limit ($\epsilon\rightarrow 0$) in $\B_{2,\phi\delta}^{a}([0,T])$ and  strong limit in $C^{0}([0,T],(L^{2}(M),\sigma(L^{2}(M),L^{2}(M))))$ of the
solutions $X_{t}^{\epsilon}$. 
Finally, if we assume 
$X_{0}\in
D(\Delta^{\o1/2}\delta)\cap D(\Delta)\cap L^{2}(M_{0})$ then the solution satisfies a.e.: 
\begin{align*}
||X_{t}&-X_{0}-\delta (X_{0})\#S_{t}||_{2}^{2} \leq \frac{t^{2}}{4}||\Delta(X_{0})||_{2}^{2}+(e^{\omega t}-1)||X_{0}||_{2}^{2}\\ &+\frac{t^{2}}{2}\left( ||\Delta^{\o1/2}(\delta(X_{0}))||_{2}^{2}+\frac{\pi}{4}(||\Delta(X_{0})||_{2}^2+\omega||\Delta^{1/2}(X_{0})||_{2}^2)^{1/2}||\Delta^{\o1/2}(\delta(X_{0}))||_{2}\right)\\ &+t \sup_{s\in[0,t]} \left(||\Delta^{1/2}(\phi_{s}X_{0})||_{2}^{2}-||\delta(\phi_{s}X_{0})||_{2}^{2}\right)+t^2\omega/4||\Delta^{1/2}(X_0)||_2^2.
\end{align*}

\item[(ii)] Let us assume $\Gamma_{1u}(\omega,C)$ and that $X_{0}\in
D(\Delta^{1/2})\cap L^{2}(M_{0})$, then equation
(\ref{SPDES}) has a unique \textbf{Mild Solution} $X_{t}$
. Moreover, we have the following inequalities a.e:
\begin{align*}&||X_{t}||_{2}^{2}\leq e^{\omega t}||X_{0}||_{2}^{2},\ \  
\\ &||\tilde{\delta}(X_{t})||_{2}^{2}\leq ||\tilde{\delta}(X_{0})||_{2}^{2}e^{(6+2\omega)C^4t}.\end{align*}

If we write $X_{s}^{\epsilon}$ a solution for $\delta$ replaced by $(1-\epsilon)\delta$ ($\epsilon\in (0,1]$), then, if $\delta=\tilde{\delta}$, $X_{s}^{\epsilon}$ is a strong solution, i.e. a solution of (\ref{SPDEM}$\epsilon$) in $\B_{2,\Delta}^{a}([0,T])$, for every $T$, and otherwise, if $\delta\neq\tilde{\delta}$ a mild solution by (i). Furthermore, $X_{t}$ is, for every $T$, the weak
limit ($\epsilon\rightarrow 0$) in $\B_{2,\delta }^{a}([0,T])$ and strong limit in  $\B([0,T],L^{2}(M))$ (the space of bounded functions with uniform convergence) of the
solution $X_{t}^{\epsilon}$. 
\end{enumerate}
\end{theorem}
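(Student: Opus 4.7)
The plan is to replace $\delta$ by $(1-\epsilon)\delta$ for $\epsilon \in (0,1]$, solve the resulting ``superparabolic'' equation by Picard iteration to obtain approximating solutions $X^\epsilon$, derive $\epsilon$-uniform a priori estimates, and extract a weak limit. For the approximating equation I would set up the Picard map $K_\epsilon(X)_t = \phi_t(X_0) + (1-\epsilon)\int_0^t \phi_{t-s}(\delta(X_s)\#dS_s)$ on $\B_{2,\delta}^a([0,T])$. The key observation is that Proposition \ref{Cruc} equation (\ref{crucial}) yields $\|\gamma(Y)_T\|_2^2 \leq \int_0^T \|\delta(Y_t)\|_2^2 dt$, and combined with dissipativity $c)$ this gives $\int_0^T \|\delta K_\epsilon(X-Y)_t\|_2^2 dt \leq (1-\epsilon)^2(1+\omega T)\int_0^T \|\delta(X-Y)_t\|_2^2 dt$, making $K_\epsilon$ a contraction for small $T$; iterating on successive subintervals produces a unique global mild solution. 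Equation (\ref{crucial}) automatically places $X^\epsilon$ in $\B_{2,\Delta^{1/2}}^a$, and when $\delta = \tilde{\delta}$ under $\Gamma_1(\omega,C)$ the same contraction run in the stronger norm of $\B_{2,\Delta}^a$ (using condition $f)$) upgrades $X^\epsilon$ to a strong solution.

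Next I would derive the uniform estimates. The bound $\|X_t^\epsilon\|_2^2 \leq e^{\omega t}\|X_0\|_2^2$ I would obtain by applying Ito's formula to $\|\eta_\alpha^{1/2}(X_t^\epsilon)\|_2^2$, invoking dissipativity $c)$, and sending $\alpha \to \infty$; the $\B_{2,\phi\delta}^a([0,T])$ bound then follows by integrating (\ref{crucial}) against this $L^2$ estimate. In case (ii) the $\tilde{\delta}$-bound comes from applying Ito to $\|\tilde{\delta}\eta_\alpha(X_t^\epsilon)\|_2^2$, using Lemma \ref{Gamma1} to rewrite $\tilde{\delta}\eta_\alpha = \eta_\alpha^{\o}\tilde{\delta} + \tilde{\H}_\alpha$, and conditions $f), g), h)$ to control the resulting commutator and divergence terms; a careful Gronwall estimate then yields the announced exponential bound with constant $(6+2\omega)C^4$ after bookkeeping of constants and $\alpha \to \infty$.

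For the limit, the uniform $L^2$ and (in case (ii)) $\tilde{\delta}$-bounds supply weak-$*$ compactness of $(X^\epsilon)$ in $\B_{2,\phi\delta}^a([0,T])$ (respectively $\B_{2,\delta}^a([0,T])$). Any weak subsequential limit $X$ I would identify as an ultramild (resp.\ mild) solution by testing against $\zeta \in D(\Delta)$, invoking the second identity of Proposition \ref{IPPSC}, and passing weakly to the limit; strong convergence in $C^0([0,T],(L^2,\sigma))$ follows from the uniform $L^2$ bound together with weak equicontinuity of $t \mapsto \langle X_t^\epsilon, \zeta\rangle$. For uniqueness I would note that Definition \ref{ultraweak} is linear in $X$ once $I(g)$ is fixed, so the difference $Y_t$ of two ultraweak solutions with identical initial data satisfies a homogeneous linear equation; Laplace transforming in $t$ (the exponential bound in Definition \ref{ultraweak} justifying this) and exploiting density of $\{I(g)\}$ in $L^2(M)$ forces $Y \equiv 0$, which also identifies the weak limit with the unique ultraweak/ultramild solution. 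Finally the quantitative estimate in (i) I would prove by decomposing
$$X_t - X_0 - \delta(X_0)\# S_t = (\phi_t(X_0) - X_0) + \int_0^t \phi_{t-s}(\delta(X_s) - \delta(X_0))\#dS_s + \int_0^t (\phi_{t-s} - 1)(\delta(X_0))\#dS_s$$
and estimating each piece via identity (\ref{basic}), Jensen's inequality, and the $L^2$ bound.

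The hardest step will be the $\tilde{\delta}$-bound in case (ii). One must handle the $\alpha$-dependent commutator $\tilde{\H}_\alpha$ of Lemma \ref{Gamma1} although its operator norm grows like $\sqrt{\alpha}$, respect the asymmetry between $\delta$ and $\tilde{\delta}$ encoded in $g),\ h)$, and accommodate the fact that when $\delta \neq \tilde{\delta}$ the approximating $X^\epsilon$ is only a mild (not strong) solution, so one cannot apply Ito directly to $\|\tilde{\delta}(X_t^\epsilon)\|_2^2$ and must work throughout with $\tilde{\delta}\eta_\alpha(X_t^\epsilon)$, extract the Gronwall bound, and take $\alpha \to \infty$ with the sharp exponent surviving. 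A secondary subtle point is the Laplace transform uniqueness argument, which requires proper handling of the test functions $\delta^* P_\Gamma \nabla_s I(g)$ and density of Fock-space step functions in $L^2(M)$.
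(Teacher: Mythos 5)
Your overall route is the paper's: $\epsilon$-regularization, a fixed point via Proposition \ref{Cruc}, a priori estimates and Gronwall, weak compactness, and uniqueness of ultraweak solutions by Laplace transform. But two steps as you describe them have genuine gaps. The main one is the $\tilde{\delta}$-estimate in case (ii) when $\delta\neq\tilde{\delta}$. Your plan to differentiate $||\tilde{\delta}\eta_{\alpha}(X_{t}^{\epsilon})||_{2}^{2}$ for the mild solution breaks down at the quadratic-variation term: you need $\int_{0}^{t}\eta_{\alpha}^{\o}\delta(X_{s}^{\epsilon})\#dS_{s}\in D(\tilde{\delta})$ together with a usable bound on $||\tilde{\delta}^{\o}\eta_{\alpha}^{\o}\delta(X_{s}^{\epsilon})||_{2}$, but assumption $h)$ only controls $\tilde{\delta}^{\o}\delta(x)$ for $x\in D(\Delta)$, while $X_{s}^{\epsilon}$ is a priori only in $D(\Delta^{1/2})$, and no analogue of $c)$ is assumed for $\tilde{\delta}^{\o}$ against $\Delta^{\o}$ on general tensors, so $\eta_{\alpha}^{\o}$ does not regularize in the $\tilde{\delta}^{\o}$ direction. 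The paper's device is to modify the equation itself, replacing $\delta$ by $\eta_{\beta}^{\o}\delta$ in the stochastic term, so that the fixed point $X_{t}^{\epsilon,\beta}$ lands in $\B_{2,\Delta^{3/2}}^{a}$; then (\ref{crucialsupp}) with $B=\tilde{\delta}$, lemma \ref{deltaInt} (i), assumptions $f),g),h)$ and lemma \ref{Gamma1} give the Gronwall bound, and $\beta$ is removed afterwards by a weak limit and uniqueness. Relatedly, your recollection of lemma \ref{Gamma1} is off: $||\tilde{\H}_{\alpha}||\leq \frac{C}{\alpha}\sqrt{\omega+2\alpha}=O(\alpha^{-1/2})$ decays rather than grows; what enters the constant is that $||\tilde{\delta}\eta_{\beta}||\,||\tilde{\H}_{\beta}||\leq C^{4}(\omega+2\beta)/\beta$ stays bounded for $\beta\geq 1$, which is exactly where $(6+2\omega)C^{4}$ comes from. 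Note also that the strong limit claimed in (ii) is in $\B([0,T],L^{2}(M))$, i.e.\ uniform $L^{2}$-convergence, which requires the $L^{2}$-Cauchy estimate deduced from the $\delta$-bounds via (\ref{crucial}) and $c)$, not the weak Arzel\`a--Ascoli argument you use for (i); and the statement for $X_{0}\in D(\Delta^{1/2})$ needs the final approximation by $\eta_{\alpha}(X_{0})\in D(\Delta)$.

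The second gap is in uniqueness of ultraweak solutions. Laplace transforming the homogeneous equation and invoking density of the $I(g)$ does not close by itself, because the term $\int_{0}^{t}ds\,\langle\delta^{*}P_{\Gamma}\nabla_{s}I(g),X_{s}\rangle$ does not vanish for general $g$ and couples $\langle I(g),X_{t}\rangle$ to pairings against $s$-dependent test vectors. The paper's proof is an induction on the chaos rank: for $g=g_{n}$ the induction hypothesis (or the definition of $\nabla_{s}$ at rank $0$) kills the gradient term, the Laplace transform then yields $\langle(\lambda+\Delta/2)I(g),X_{t}\rangle=0$, and the identity $I((\lambda+\Delta^{\o(n+1)}/2)^{-1}g)=(\lambda+\Delta/2)^{-1}I(g)$ lets one change $g$ and advance the induction; only after this does density of finite chaos sums conclude. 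Without that induction your density argument has nothing to act on.
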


\begin{proof2}[Proof]

Let us sketch the plan of the proof. Step 0 proves uniqueness of ultraweak solutions, which is a useful preliminary. We will first find unique mild (resp strong in case (ii) ) solutions after replacing $\delta$ by $(1-\epsilon)\delta$ with $\epsilon>0$ [step 1]. Then, we will prove that when
$\epsilon\rightarrow 0$ we can get some weak convergence to an ultramild (resp a mild, in case (ii)) solution
of (\ref{SPDEM}), mainly by showing several inequalities like the ones stated in
the theorem [step 2 for part (i), step 3 for part (ii)]

\begin{step}
Uniqueness of ultraweak solutions in case (i). 
\end{step}
We have to show that an ultraweak solution with $X_{0}=0$ vanishes. The proof is in the spirit of Theorem 5.6 in \cite{FagnolaHP} in the symmetric Fock space context. For $g_{n}$ {\gre as} in definition \ref{ultraweak}, we prove by induction on $n$ $\langle I(g_{n}),X_{s}\rangle=0$. By density this gives the same for a step function $g_n\in L^{2}(\R_{+}^{n}\times \N^{n})\o_{alg}D(\Delta^{\o [n]})$. The induction hypothesis {\gre (and lemma \ref{ultraweakLemma})} or only the definition of $\nabla_{s}$ at initialization, gives the last integral in the definition of ultraweak solution vanishes, so that for a step function  $g=g_{n}\in L^{2}(\R_{+}^{n}\times \N^{n})\o_{alg}D(\Delta^{\o [n]})$: 
\begin{align*}\langle I(g),X_{t}\rangle =  -\frac{1}{2}\int_{0}^{t}ds\langle \Delta I(g),X_{s}\rangle .\end{align*}

Since $||X_{t}||_{2}\leq C \exp(\omega t)$, we can consider the Laplace transform for $\lambda > \omega$ so that we get~:
\begin{align*}\lambda \int_{0}^{\infty}dt \exp(-\lambda t)\langle I(g),X_{t}\rangle &=-\frac{\lambda}{2}\int_{0}^{\infty}dt \exp(-\lambda t) \int_{0}^{t}ds\langle \Delta I(g),X_{s}\rangle\\ &=-\frac{\lambda}{2}\int_{0}^{\infty}ds  \langle \Delta I(g),X_{s}\rangle\int_{s}^{\infty}dt \exp(-\lambda t)\\ &=-\frac{1}{2}\int_{0}^{\infty}dt \exp(-\lambda t) \langle \Delta I(g),X_{t}\rangle.
\end{align*}

Thus ${\gre \int_{0}^{\infty}dt \exp(-\lambda t)}\langle (\lambda +\Delta/2) I(g),X_{t}\rangle=0$ but {\gre (using  lemma \ref{ultraweakLemma})} $$I((\lambda+\Delta^{\o [n]}/2)^{-1}(g))=(\lambda+\Delta/2)^{-1}I(g),$$ thus applying the result above to  $g_{\lambda}=(\lambda+\Delta^{\o [n]}/2)^{-1}(g)$ gives 
$${\gre \int_{0}^{\infty}dt \exp(-\lambda t)\langle I(g),X_{t}\rangle=0}.$$ The result of the next inductive step  {\gre ($\langle I(g),X_{t}\rangle=0$) follows from uniqueness of Laplace transform}. Now by density of the functions of the form $I(\sum g_{n})$ (as in definition of ultraweak solutions) we get $X_{t}=0$.

\begin{step}
\textit{Assume $\Gamma_{0}(\omega)$. For any $\epsilon\in(0,1]$ and  $X_{0}=X_{0}^{\epsilon}\in L^2(M_0)$ there exists a unique mild solution (even in $\B_{2,\Delta^{1/2}}^a([0,T])$ for any T) to $X_t^{\epsilon}=\phi_t(X_0)+(1-\epsilon)\gamma(X^{\epsilon})_t$.}

\textit{Assume now $\Gamma_{1}(\omega,C)$ and $\delta=\tilde{\delta}$. For any $\epsilon\in(0,1]$ and  $X_{0}=X_{0}^{\epsilon}\in D(\Delta^{1/2})\cap L^{2}(M_0)$ there exists a unique strong solution (i.e. in $\B_{2,\Delta}^a([0,T])$ for any T) to $X_t^{\epsilon}=\phi_t(X_0)+(1-\epsilon)\gamma(X^{\epsilon})_t.$}
\end{step}
For each statement we can be content with proving for a small $T>0$ to be fixed later. Then, using the fact that $\Gamma_0,\Gamma_1$ are translation invariant in time, if we consider the same problem starting at $kT$, this gives the same {\gre result} on any $[0,T]$. 

The first statement is easy and a consequence of (\ref{crucial}) in  Proposition \ref{Cruc}.
If $Y\in \B_{2,\Delta^{1/2}}^{a}([0,T])$
, define an element 
at least in $L^{2}(M_t)\cap D(\Delta^{1/2})$ (for a.e. $t\in (0,T]$, by Proposition \ref{Cruc} and since $\B_{2,\Delta^{1/2}}^{a}([0,T])\hookrightarrow \B_{2,\delta}^{a}([0,T])$)~: 
$$\Gamma(Y)_{t}=\phi_{t}(X_{0}) + (1-\epsilon) \int_{0}^{t}\phi_{t-s}(\delta (Y_{s})\#dS_{s}).$$

First of all, $\Gamma(Y)$ is  in
$\B_{2,\Delta^{1/2}}^{a}$ for $Y$ in this space.
Indeed, first  $\phi_{t}(X_{0})$ is in this space, as a limit (coming from (\ref{basic})) of $\phi_{t}(\eta_{\alpha}(X_{0}))$,  continuous
function in $C^{0}([0,T],D(\Delta^{1/2}))\hookrightarrow\B_{2,\Delta^{1/2}}^{a}([0,T])$ (a usual $\Delta^{1/2} $-simple-process approximation
giving this). Second, since, if $Y_{n}$ is a $\Delta^{1/2} $-simple process
converging to $Y$, $\gamma(Y_{n})$ converge to $\gamma(Y)$ (a priori in
$L^{2}([0,T],D(\Delta^{1/2} ))$ from Proposition \ref{Cruc} (\ref{crucial})), {\red it suffices to note $\gamma(Y_{n})$ is itself 
 in $\B_{2,\Delta^{1/2}}^{a}([0,T])$}.

Finally it suffices to check $\Gamma$ is a contraction  (after moving to an equivalent norm) on 
$\B_{2,\Delta^{1/2}}^{a}([0,T]).$ 
Indeed note from proposition \ref{Cruc} and the definition:
 \begin{equation}\label{deltFo}\begin{split}&\int_0^Tds||\Delta^{1/2}(\Gamma(Y)_s-\Gamma(Z)_s)||_2^{2}\leq (1-\epsilon)^2\int_0^Tds||\delta(Y-Z)_s||_2^{2},
 \\&\int_0^Tds||\Gamma(Y)_s-\Gamma(Z)_s||_2^{2}\leq (1-\epsilon)^2T\int_0^Tds||\delta(Y-Z)_s||_2^{2}.\end{split}\end{equation}

Thus, fix $0<T<\epsilon/(2\max(1,\omega))$, so that one can take $K=\epsilon/2T $ to get $(1-\epsilon+KT)\max(1,\omega)<(1-\epsilon/2) K$ and define the equivalent norm on $\B_{2,\Delta^{1/2}}^{a}$ : 
$$||Y||_{\B_{2,\Delta^{1/2}}^{a},K}^2=\int_0^Tds||\Delta^{1/2}(Y)_s||_2^{2}+K||Y_s||_2^{2}.$$  We deduce from $\Gamma_{0}(\omega)$ $c.1$ and the previous inequalities that 
\begin{align*}||&\Gamma(Y)-\Gamma(Z)||_{\B_{2,\Delta^{1/2}}^{a},K}^2\\&\leq ((1-\epsilon)^2+ KT)\int_0^Tds(||\Delta^{1/2}(Y-Z)_s||_2^{2}+ \max(1,\omega)||(Y-Z)_s||_2^{2})\\&\leq (1-\epsilon/2)||Y-Z||_{\B_{2,\Delta^{1/2}}^{a},K}^2.
\end{align*}
This concludes to the first statement.


For the second statement, we want to show $\Gamma$ is a contraction on $\B_{2,\Delta}^{a}([0,T])$ after taking an equivalent norm again. We thus now take $Y\in \B_{2,\Delta}^{a}([0,T])$.




We can  apply Proposition \ref{Cruc} (\ref{crucialsupp}) to get : 
\begin{align*}\int_{0}^{T}dt||\Delta \eta_{\alpha}^{1/2}\eta_{\alpha'}^{1/2}\gamma(Y)_{t}||_{2}^{2}&= - ||\Delta ^{1/2}\eta_{\alpha}^{1/2}\eta_{\alpha'}^{1/2}(\gamma(Y)_{T})||_{2}^{2} +
\int_{0}^{T}dt||\Delta ^{\otimes
  1/2}\eta_{\alpha}^{\otimes1/2}\eta_{\alpha'}^{\o1/2}(\delta (Y_{t}))||_{2}^{2} 
\\&\leq \langle\Delta \eta_{\alpha}\int_{0}^{T}\delta (Y_{t})\#dS_{t},\int_{0}^{T}\delta (Y_{t})\#dS_{t}\rangle 
,\end{align*}
where we have used in the second line lemma \ref{deltaInt} (ii) and contractivity of $\eta_{\alpha'}^{1/2}$. But now, (in the case we assume $\Gamma_1$ and $\delta=\tilde{\delta}$), we can use lemma \ref{deltaInt} (i) and then $\Gamma_1$ $c.2$ and the bound in lemma \ref{Gamma1} for $\H_{\alpha}$ to get :
\begin{align*}\langle\Delta &\eta_{\alpha}\int_{0}^{T}\delta (Y_{t})\#dS_{t},\int_{0}^{T}\delta (Y_{t})\#dS_{t}\rangle 
\\ &=\langle\int_{0}^{T}\delta \circ\Delta (\eta_{\alpha}(Y_{s}))\#dS_{s}+\int_{0}^{T}\H_{\alpha}(Y_{s}\oplus\Delta ^{1/2}(Y_{s}))\#dS_{s},\int_{0}^{T}\delta (Y_{t})\#dS_{t}\rangle 
\\ &\leq \int_{0}^{T}||\Delta (Y_{t})||_{2}^{2}dt
+(\max(1,\omega)C+\omega)\int_{0}^{T}||Y_{t}||_{2}^{2}+||\Delta^{1/2} (Y_{t})||_{2}^{2}dt.\end{align*}

But better, we can write $||\Delta \eta_{\alpha}^{1/2}\eta_{\alpha'}^{1/2}\gamma(Y)_{t}||_{2}^{2}=\langle \Delta \eta_{\alpha}(\Delta )^{1/2}\eta_{\alpha'}^{1/2}\gamma(Y)_{t},(\Delta )^{1/2}\eta_{\alpha'}^{1/2}\gamma(Y)_{t}\rangle$
 to show that this increases to
 $||\Delta\eta_{\alpha'}^{1/2} \gamma(Y)_{t}||_{2}^{2}$ in $\alpha$ and then to $||\Delta\gamma(Y)_{t}||_{2}^{2}$  in $\alpha'$, with the inequality bellow and as a
 consequence (recall $C\geq 1$) $\gamma(Y)_{t}\in D(\Delta )$ a.e. and we got~:
\begin{align}\label{DeltFo}\int_{0}^{T}dt||\Delta \gamma(Y)_{t}||_{2}^{2}\leq \int_{0}^{T}||\Delta (Y_{t})||_{2}^{2}dt
+2\max(1,\omega)C\int_{0}^{T}||Y_{t}||_{2}^{2}+||\Delta^{1/2} (Y_{t})||_{2}^{2}\ dt.
\end{align}

Time has gone to choose $T$ small enough and introduce the equivalent norm on
$\B_{2,\Delta}^{a}([0,T])$ for which $\Gamma$ will be a contraction under the assumptions of (ii).

First choose $T$ such that $T\omega<1-(1-\epsilon)^{2}$ so that $T\omega<\frac{1}{(1-\epsilon)^{2}}-1$. Second, let $L>\frac{2C\max(1,\omega)(1+ T)}{1-(1-\epsilon)^{2}-\omega T}>0$, and $K=L\omega+2C\max(1,\omega)>0$ thus~:
\begin{align*}L>L\eta&:= L(1-\epsilon)^{2}+ (2C\max(1,\omega)(1+ T)+\omega TL)(1-\epsilon)^{2}\\&=L(1-\epsilon)^{2}+ (2C\max(1,\omega)+TK)(1-\epsilon)^{2}.\end{align*}
We get also : $$K>K\eta':=(1-\epsilon)^{2}K(1+T\omega)=(1-\epsilon)^{2}(L\omega + 2C\max(1,\omega)+KT\omega).$$

Finally, define the clearly equivalent norm~:
$||X||_{L,K,T}^{2}=\int_{0}^{T}(L||\Delta^{1/2}(X_{s})||_{L^{2}(\tau)}^{2}+K||X_{s}||_{L^{2}(\tau)}^{2}+||\Delta(X_{s})||_{L^{2}(\tau)}^{2})ds.$
We get, using (\ref{deltFo}) and (\ref{DeltFo}) in the first line, and then assumption $c.1$ in the second line~:
\begin{align*}(1&-\epsilon)^{2}||\gamma(Y)||_{L,K,T}^{2}\leq
  (1-\epsilon)^{2}\int_{0}^{T}||\Delta (Y_{t})||_{2}^{2}dt
  \\ &+(1-\epsilon)^{2}2C\max(1,\omega)\int_{0}^{T}||\Delta^{1/2}(Y_{t})||_{2}^{2}+||Y_{t}||_{2}^{2}dt+
  (L+KT)(1-\epsilon)^{2}\int_{0}^{T}||\delta (Y_{t})||_{2}^{2}dt \\ &\leq (1-\epsilon)^{2}\int_{0}^{T}||\Delta (Y_{t})||_{2}^{2}dt+
 L\eta \int_{0}^{T}||\Delta^{1/2}(Y_{t})||_{2}^{2}dt
  +
 K\eta'
 \int_{0}^{T}||Y_{t}||_{2}^{2}dt \\&\leq
  \max\left((1-\epsilon)^{2},\eta,\eta'
  \right) ||Y||_{L,K,T}^{2}.\end{align*}

First of all, this shows that $\Gamma(Y)$ is indeed in
$\B_{2,\Delta}^{a}$ for $Y$ in this space :
first, since  $\phi_{t}(X_{0})$ is in this space as before 
 and second, since, if $Y_{n}$ is a $\Delta $-simple process
converging to $Y$, $\gamma(Y_{n})$ converge to $\gamma(Y)$ (a priori in
$L^{2}([0,T],D(\Delta ))$, {\red and $\gamma(Y_{n})$ is itself 
 in $\B_{2,\Delta}^{a}$}.

Then, we can say that $\Gamma$ is a contraction on
$\B_{2,\Delta}^{a}([0,T])$ equipped of the norm
$||.||_{L,K,T}$, this concludes.

\begin{step}
Conclusion of the proof of (i).
\end{step}

Applying orthogonality (via lemma \ref{deltaInt} (iii)) and equation 
(\ref{crucial}) {\gre to} $(1-\epsilon)\gamma(X^{\epsilon})_{t}=X_{t}^{\epsilon}-\phi_{t}(X_{0})$,  we know that for any $T$ :

\begin{align}\begin{split}\label{eqstep2}\int_{0}^{T}dt||\Delta ^{1/2}X_{t}^{\epsilon}||_{2}^{2}&= \int_{0}^{T}dt||\Delta ^{1/2}\phi_{t}(X_{0})||_{2}^{2}- (1-\epsilon)^{2}||(\gamma(X^{\epsilon})_{T})||_{2}^{2} +
(1-\epsilon)^{2}\int_{0}^{T}dt||(\delta (X_{t}^{\epsilon}))||_{2}^{2}.\end{split}
\end{align}

Using  equation (\ref{basic}) and orthogonality and then assumption $c.1$  we deduce :

\begin{align*}&(1-\epsilon)^{2}||\gamma(X^{\epsilon})_{t}||_{2}^{2}\leq ||X_{0}||_{2}^{2}-||\phi_{t}(X_{0})||_{2}^{2}+\omega\int_{0}^{t}||X_{s}^{\epsilon}||_{2}^{2}ds.
\\ &||X_{t}^{\epsilon}||_{2}^{2}=||X_{0}^{\epsilon}||_{2}^{2}+
(1-\epsilon)^{2}||\int_{0}^{t}\delta (X_{s}^{\epsilon})\#dS_{s}||_{2}^{2}
- \int_{0}^{t}||\Delta^{1/2}(X_{s}^{\epsilon})||_{2}^{2}ds\leq ||X_{0}^{\epsilon}||_{2}^{2}+
\omega \int_{0}^{t}||X_{s}^{\epsilon}||_{2}^{2}ds.\end{align*}
Note this second inequality works for $\epsilon=0$ as soon as we have a solution in this case.
 We can use Gronwall's lemma on this second inequality. It proves the first inequality of the theorem (for $X^{\epsilon}$ instead of $X$). Combining this with the first inequality, we get, after integration, the second inequality in part (i), 
showing that $X^{\epsilon}$ is bounded in $\B_{2,\phi\delta}^{a}$.
\begin{align*}||X^\epsilon||_{\B_{2,\phi\delta}^{a}}&=\left(\int_{0}^{T}\int_{0}^{t}||\phi_{t-s}^{\o}\delta(X_{s})||_{L^{2}(\tau\otimes\tau)^{\bigoplus \N}}^{2}ds+||X_{t}||_{L^{2}(\tau)}^{2}
  dt\right)^{1/2}\\ &\leq\frac{1}{(1-\epsilon)}\left(\int_{0}^{T}\left(||X_{0}||_{L^{2}(\tau)}^{2}+\omega\int_0^tdse^{\omega s}||X_{0}||_{L^{2}(\tau)}^{2}\right)+e^{\omega t}||X_{0}||_{L^{2}(\tau)}^{2}
  dt\right)^{1/2}\\ &\leq\frac{1}{(1-\epsilon)}\left(\int_{0}^{T}2e^{\omega t}||X_{0}||_{L^{2}(\tau)}^{2}
  dt\right)^{1/2}.\end{align*}

 Modulo extraction, we get a *-weak limit in $\B_{2,\phi\delta}^{a}([0,T])$ 
  by compactness. As a consequence, since $\gamma$ is a linear continuous map as recalled in the part on stochastic convolution, $\gamma(X^{\epsilon})$ (or at least the image of the previous extraction) converges in $L^{2}([0,T],L^{2}(M))$ weakly. Since $\phi_{t}(X_{0})$ is a constant in this space we can take the limit and verify the equation in this space, thus a.e., we especially get an ultramild solution.  Since we deduce any such *-weak limit point is also an ultraweak solution (since $X^{\epsilon}$ is a mild thus weak thus ultraweak solution of the $\epsilon$ variant) we get *-weak convergence from uniqueness proved in step 0.

Moreover, taking $\xi\in L^{2}(M)$, with, say, the projection of $\xi$ on the space of stochastic integrals given by $\int_{0}^{T}\eta_{s}\#dS_{s}$, let us prove that $\langle\xi, X_{t}^{\epsilon}\rangle$ is an equicontinuous and uniformly bounded family (for $\epsilon\in (0,1]$) on [0,T]. From what we obtained above, only equicontinuity need to be proved, but (for $t\leq \tau$) we have (using the equation for $X_{\tau}^{\epsilon}$ and Cauchy-Schwarz):
\begin{align*}\langle\xi, &X_{\tau}^{\epsilon}- X_{t}^{\epsilon}\rangle\leq ||\xi||_{2} ||\phi_{\tau-t}(X_{0})-X_{0}||_{2}\\ &+(1-\epsilon)\int_{t}^{\tau}ds\langle\eta_{s},\phi_{\tau-s}^{\o}(\delta(X_{s}^{\epsilon}))\rangle + (1-\epsilon)\int_{0}^{t}ds\langle\phi_{\tau-t}^{\o}\eta_{s}-\eta_{s},\phi_{t-s}^{\o}(\delta(X_{s}^{\epsilon}))\rangle\\ &\leq ||\xi||_{2} ||\phi_{\tau-t}(X_{0})-X_{0}||_{2}+(\int_{t}^{\tau}ds||\eta_{s}||_{2}^{2})^{1/2}||(1-\epsilon)\gamma_{\tau}(X^{\epsilon})||_{2} \\ &+ (\int_{0}^{t}||\phi_{\tau-t}^{\o}\eta_{s}-\eta_{s}||_{2}^{2})^{1/2}||(1-\epsilon)\gamma_{t}(X^{\epsilon})||_{2}\\ &\leq ||\xi||_{2} ||\phi_{\tau-t}(X_{0})-X_{0}||_{2}+e^{\omega \tau/2}||X_{0}||_{2}\left((\int_{t}^{\tau}ds||\eta_{s}||_{2}^{2})^{1/2} + ||\phi_{\tau-t}\xi-\xi||_{2}\right).\end{align*}

This concludes using strong continuity of $\phi_{t}$ (and using Heine-Cantor Theorem). As a consequence, using Arzela-Ascoli Theorem (and separability assumption on $L^{2}(M)$), we get via diagonal extraction, $X_{t}$ is weakly continuous, and limit of a subsequence of $X_{t}^{\epsilon}$ in $C^{0}([0,T],(L^{2}(M),\sigma(L^{2}(M),L^{2}(M))))$. As a consequence, this easily enables us to pass to the limit $\epsilon\rightarrow 0$ in the first inequality of the theorem. From this we get also that any limit point is an ultraweak solution, so that from uniqueness we get the stated limit without extraction.

We now establish the supplementary inequality.

First by orthogonality and assumption $d.2$ of $\Gamma_0(\omega)$, we have~:
\begin{align*}&
||X_{T}^{\epsilon}-X_{0}-(1-\epsilon)\delta (X_{0})\#S_{T}||_{2}^{2}\\&=||\phi_{T}(X_{0})-X_{0}||_{2}^{2}+ (1-\epsilon)^{2}\int_{0}^{T}dt||\phi_{T-t}^{\o}\delta(\phi_t(X_{0}))-\delta(X_{0})||_{2}^{2}+(1-\epsilon)^4||\gamma\circ\gamma(X^{\epsilon})_T||_2^2.
\end{align*}

Morover,  the same kind of orthogonality and relations (\ref{crucial}) and (\ref{basic}) imply that :
\begin{align*}(&1-\epsilon)^4||\gamma\circ\gamma(X^{\epsilon})_T||_2^2=
\int_{0}^{T}dt((1-\epsilon)^2||\delta(X^{\epsilon}_t)||_2^2-||\Delta^{1/2}(X^{\epsilon}_t)||_2^2)\\ &+\int_{0}^{T}dt(||\Delta^{1/2}(\phi_t(X_0))||_2^2-(1-\epsilon)^2||\delta(\phi_t(X_0))||_2^2)+(1-\epsilon)^2||\Delta^{1/2}(\gamma(\phi_{.}(X_0))_t)||_2^2
\\& \leq (e^{\omega T}-1)||X_0||_2^2 + T \sup_{[0,T]} \left(||\Delta^{1/2}(\phi_{t}X_{0})||_{2}^{2}-(1-\epsilon)^2||\delta(\phi_{t}X_{0})||_{2}^{2}\right)\\ & +(1-\epsilon)^2\int_{0}^{T}dt||\delta(\phi_t(X_0))||_2^2-||\phi_{T-t}^{\otimes}\delta(\phi_t(X_0))||_2^2,
\end{align*}

where we used, in the inequality, the first inequality  of our theorem.
 
 Our first line is in our estimate (once added a $(1-\epsilon)$ where needed for our $\epsilon$ variant). It only remains to get the other terms by several elementary computations, only involving $X_0$.
 \begin{align*}\int_{0}^{T}&dt||\phi_{T-t}^{\o}\delta(\phi_t(X_{0}))-\delta(X_{0})||_{2}^{2}+ ||\delta(\phi_t(X_0))||_2^2-||\phi_{T-t}^{\otimes}\delta(\phi_t(X_0))||_2^2\\&=\int_{0}^{T}dt||\delta(\phi_{t}X_{0}-X_{0})||_{2}^{2}+
2\Re\int_{0}^{T}dt\  \langle\delta(X_{0})-\phi_{T-t}^{\o}\delta(X_{0}),\delta(X_{0})\rangle\\&+2\Re\int_{0}^{T}dt\ \langle(\phi_{T-t}^{\o}-id)\delta(X_{0}),\delta(X_{0}-\phi_{t}(X_{0}))\rangle\\ & \leq \int_{0}^{T}dt\ ||\Delta^{1/2}(\phi_{t}X_{0}-X_{0})||_{2}^{2}+T^2\omega/4||\Delta^{1/2}(X_0)||_2^2\\ &+\int_{0}^{T}dt\ (T-t)||\Delta^{\o1/2}(\delta(X_{0}))||_{2}^{2}\\ &+\int_{0}^{T}dt\ \sqrt{t(T-t)}(||\Delta(X_{0})||_{2}^2+\omega||\Delta^{1/2}(X_{0})||_{2}^2)^{1/2}||\Delta^{\o1/2}(\delta(X_{0}))||_{2}
\\ &= \int_{0}^{T}dt\ ||\Delta^{1/2}(\phi_{t}X_{0}-X_{0})||_{2}^{2}+T^2\omega/4||\Delta^{1/2}(X_0)||_2^2\\ & +\frac{T^{2}}{2}\left( ||\Delta^{\o1/2}(\delta(X_{0}))||_{2}^{2}+\frac{\pi}{4}(||\Delta(X_{0})||_{2}^2+\omega||\Delta^{1/2}(X_{0})||_{2}^2)^{1/2}||\Delta^{\o1/2}(\delta(X_{0}))||_{2}\right).
\end{align*}

The inequality comes from $\Gamma_0(\omega) c.1$ and several uses of the spectral theorem applied in the form $\langle(id-\phi_{t})^{i}x,x\rangle\leq \langle\frac{t}{2}\Delta x,x\rangle$ ($i=1$ or $2$).

Finally, it remains to compute the last integral using the spectral theorem for $\Delta$ :
\begin{align*}\int_{0}^{T}dt||\Delta^{1/2}(\phi_{t}(X_{0})-X_{0})||_{2}^{2}&=4||\phi_{T/2}(X_{0})||_{2}^{2}-3||X_{0}||^{2}-||\phi_{T}(X_{0})||_{2}^{2}+T||\Delta^{1/2}(X_{0})||_{2}^{2}\\ &=2\langle(\phi_{T}-id+T\Delta/2)(X_{0}),X_{0}\rangle-||\phi_{T}(X_{0})-X_{0}||_{2}^{2}\\ &\leq \frac{T^{2}}{4}||\Delta(X_{0})||_{2}^{2}-||\phi_{T}(X_{0})-X_{0}||_{2}^{2},\end{align*}

Putting everything together this concludes to : 
\begin{align*}
||X_{t}&^{\epsilon}-X_{0}-(1-\epsilon)\delta (X_{0})\#S_{t}||_{2}^{2} \\&\leq \frac{t^{2}}{4}||\Delta(X_{0})||_{2}^{2}+(e^{\omega t}-1)||X_{0}||_{2}^{2}+(1-(1-\epsilon)^2)||\phi_{t}(X_{0})-X_{0}||_{2}^{2}\\ &+\frac{t^{2}}{2}\left( ||\Delta^{\o1/2}(\delta(X_{0}))||_{2}^{2}+\frac{\pi}{4}(||\Delta(X_{0})||_{2}^2+\omega||\Delta^{1/2}(X_{0})||_{2}^2)^{1/2}||\Delta^{\o1/2}(\delta(X_{0}))||_{2}\right)\\ &+t \sup_{[0,t]} \left(||\Delta^{1/2}(\phi_{s}X_{0})||_{2}^{2}-(1-\epsilon)^{2}||\delta(\phi_{s}X_{0})||_{2}^{2}\right)+t^2\omega/4||\Delta^{1/2}(X_0)||_2^2.
\end{align*}

\n We easily obtain the limit case $\epsilon=0$ using the limit in $C^{0}([0,T],(L^{2}(M),\sigma(L^{2}(M),L^{2}(M))))$.

\begin{step}
Under the assumptions of (ii), with $\B_{2,\delta }^{a}$ depending on our fixed $T>0$, and
for $X_{0}\in D(\Delta)$, there
exists a unique mild solution $X_{t}$ of (\ref{SPDES}) which is the weak
limit in $\B_{2,\delta }^{a}$ and strong limit in $\B([0,T],L^{2}(M))$ of the
solution $X_{t}^{\epsilon}$ of step one. Moreover, this solution satisfies
the two first inequalities of (ii) in the theorem. 
\end{step}
Consider $\epsilon>0$ {\gre like } in step 1.
In case $\delta\neq\tilde{\delta}$, we don't know $X_{t}^{\epsilon}\in D(\Delta)$, since we have only a mild solution, we have to circumvent this trouble for computational purposes.

Applying the first part of step 1 with $\delta$ replaced by $\eta_{\beta}^{\o}\delta$, we get a solution $X_{t}^{\epsilon,\beta}$ in $\B_{2,\Delta^{1/2} }^{a}$
 and since by proposition \ref{Cruc} (\ref{crucial}) and the argument in step one, $\gamma(X_{t}^{\epsilon,\beta})\in \B_{2,\Delta^{1/2} }^{a}$ we deduce $\eta_{\beta}\gamma(X_{t}^{\epsilon,\beta})\in \B_{2,\Delta^{3/2} }^{a}$. As a consequence, if $X_0\in D(\Delta)$ we get as in step 1, $X_{t}^{\epsilon,\beta}\in \B_{2,\Delta^{3/2} }^{a}$.



We can now compute for our solution $X_{t}^{\epsilon,\beta}$. We can apply Proposition \ref{Cruc} (\ref{crucialsupp}) in case $B=\tilde{\delta}$, $\alpha=\alpha'=\beta$ and the variant of (\ref{basic}) valid for $x=X_0\in D(\Delta^{1/2})$ : $||\tilde{\delta}\phi_t(x)||_2^2=||\tilde{\delta}x||_2^2-\int_0^t\Re\langle\tilde{\delta}\Delta\phi_{s}(x),\tilde{\delta}\phi_{s}(x)\rangle ds.$ Using also orthogonality from lemma \ref{deltaInt} but for $\tilde{\delta}$, we get~:
\begin{align*}||&\tilde{\delta}
(X_{t}^{\epsilon,\beta})||_{2}^{2}=||\tilde{\delta}
(X_{0}^{\epsilon})||_{2}^{2}+
(1-\epsilon)^{2}||\tilde{\delta}
\eta_{\beta}\int_{0}^{t}\delta (X_{s}^{\epsilon,\beta})\#dS_{s}||_{2}^{2}
- \int_{0}^{t}\Re\langle\tilde{\delta}\Delta 
X_{s}^{\epsilon,\beta},\tilde{\delta} 
X_{s}^{\epsilon,\beta}\rangle ds.\end{align*}


We have thus shown :
\begin{align*}&||\tilde{\delta}(X_{t}^{\epsilon,\beta})||_{2}^{2}=||\tilde{\delta}(X_{0}^{\epsilon})||_{2}^{2}\\ &+
(1-\epsilon)^{2}||(\eta_{\beta}^{\o}\tilde{\delta}+\tilde{\H}_{\beta})\int_{0}^{t}\delta (X_{s}^{\epsilon,\beta})\#dS_{s}||_{2}^{2}
- \int_{0}^{t}\Re\langle\Delta ^{\otimes}\circ\tilde{\delta}(X_{s}^{\epsilon,\beta})-\H(\tilde{\delta} ( X_{s}^{\epsilon,\beta})),\tilde{\delta} X_{s}^{\epsilon,\beta}\rangle ds.\end{align*} We used the identities of assumption $f.2$ and of lemma \ref{Gamma1} about $\tilde{\delta}\Delta$ and $\tilde{\delta}\eta_{\beta}$ justified since almost surely in $s$ $X_{s}^{\epsilon,\beta}\in D(\Delta^{3/2})$ and because via lemma \ref{deltaInt} (i) we know $\int_{0}^{t}\delta (X_{s}^{\epsilon,\beta})\#dS_{s}\in D(\tilde{\delta})$. We deduce :
\begin{align*} ||&\tilde{\delta}(X_{t}^{\epsilon,\beta})||_{2}^{2}\\ &\leq ||\tilde{\delta}(X_{0}^{\epsilon})||_{2}^{2}+
(1-\epsilon)^{2}\int_{0}^{t}||\tilde{\delta}^{\o}\delta (X_{s}^{\epsilon,\beta})||_{2}^{2}ds+ 2\Re \langle \tilde{\delta}\eta_{\beta}\int_{0}^{t}\delta (X_{s}^{\epsilon,\beta})\#dS_{s}, \tilde{\H}_{\beta}\int_{0}^{t}\delta (X_{s}^{\epsilon,\beta})\#dS_{s}\rangle\\ &
- \int_{0}^{t}\Re\langle\Delta ^{\otimes}\circ\tilde{\delta}(X_{s}^{\epsilon,\beta})-\H(\tilde{\delta} ( X_{s}^{\epsilon,\beta})),\tilde{\delta} X_{s}^{\epsilon,\beta}\rangle ds\\ &\leq ||\tilde{\delta}(X_{0}^{\epsilon})||_{2}^{2}+
\int_{0}^{t} 2C ||\tilde{\delta}(X_{s}^{\epsilon,\beta})||_2^2ds+ 2\Re \langle \tilde{\delta}\eta_{\beta}\int_{0}^{t}\delta (X_{s}^{\epsilon,\beta})\#dS_{s}, \tilde{\H}_{\beta}\int_{0}^{t}\delta (X_{s}^{\epsilon,\beta})\#dS_{s}\rangle
\\ &\leq ||\tilde{\delta}(X_{0}^{\epsilon})||_{2}^{2}+
\int_{0}^{t} (2C+2\frac{C^4}{\beta}(\omega+2\beta)) ||\tilde{\delta}(X_{s}^{\epsilon,\beta})||_2^2ds
.\end{align*}
In the first line, we used $\eta_{\beta}$ contractive after computing the first scalar product. 
In the second line, we used assumption $h$ to cancel one term and the bound $||\H||\leq C$. In the last line, we used  assumption $g$, the bound on $\tilde{\H}_{\beta}$ from 
lemma \ref{Gamma1} and $||\tilde{\delta}\eta_{\beta}||\leq C\sqrt{\omega+2\beta}$ already used there.

Applying Gronwall's lemma, we got (for $\beta\geq 1$):
$$||\tilde{\delta}(X_{t}^{\epsilon,\beta})||_{2}^{2}\leq ||\tilde{\delta}(X_{0}^{\epsilon,\beta})||_{2}^{2}e^{(6+2\omega)C^4t}.$$

As a consequence, we get a weak limit point $X_{t}^{\epsilon,\infty}$ in $\B_{2,\delta }^{a}$. 
Let us show such a limit point is a solution of (\ref{SPDES}$\epsilon$) in $\B_{2,\delta }^{a}$. 
This gives by uniqueness 
$X_{t}^{\epsilon,\infty}=X_{t}^{\epsilon}$, and the fact that the weak limit point is a limit. Of course, it suffices to show the  equation  weakly, the only non trivial limit is the stochastic integral, but since $\delta X_{t}^{\epsilon,\beta}$ is bounded it is easy to remove $\eta_\beta$ on the other side of the scalar product, and then to use weak convergence of $X_{t}^{\epsilon,\beta}$ in $\B_{2,\delta }^{a}$.
We also get a corresponding inequality a.e. for the limit by seeing the inequality weakly in $L^2([0,T])$.

As is usual, if we are able to prove bounds in $D(\delta)$, we can also deduce $||.||_2$ Cauchy property.
Using (\ref{crucial}) 
after using the SDE and the common initial conditions,  we also get (for $0<\epsilon,\eta<1$)~:
\begin{align*}&||X_{t}^{\epsilon}-X_{t}^{\eta}||_{2}^{2}=||\gamma((1-\epsilon)X_{t}^{\epsilon}-(1-\eta)X_{t}^{\eta})||_{2}^{2}\\
  &=\
 -\int_{0}^{t}||\Delta ^{1/2}\gamma((1-\epsilon)X_{s}^{\epsilon}-(1-\eta)X_{s}^{\eta})||_{2}^{2}ds +
\int_{0}^{t}||\delta ((1-\epsilon)X_{s}^{\epsilon}-(1-\eta)X_{s}^{\eta})||_{2}^{2}ds\\
&\leq \
 -\int_{0}^{t}||\Delta ^{1/2}(X_{s}^{\epsilon}-X_{s}^{\eta})||_{2}^{2}ds +
\int_{0}^{t}||\delta(X_{s}^{\epsilon}-X_{s}^{\eta})||_{2}^{2} \\&+12\max(\epsilon, \eta)\max(||\delta(X_{s}^{\epsilon})||_{2},||\delta (X_{s}^{\eta})||_{2})^{2}ds.
\end{align*}
In the last line, we used an elementary bound on the second integral expanding the scalar products with  $(1-\epsilon)X_{s}^{\epsilon}-(1-\eta)X_{s}^{\eta}=(X_{s}^{\epsilon}-X_{s}^{\eta}) +(\eta X_{s}^{\eta}-\epsilon X_{s}^{\epsilon})$ and again the SDE with same initial condition on the first integral.
Using assumption $c.1,g$ and our bound on $||\delta(X_{s}^{\epsilon})||_{2}$, one gets :
\begin{align*}&||X_{t}^{\epsilon}-X_{t}^{\eta}||_{2}^{2}\leq \
 \int_{0}^{t}\omega||(X_{s}^{\epsilon}-X_{s}^{\eta})||_{2}^{2} +12\max(\epsilon, \eta)C^2||\tilde{\delta}(X_{0}^{\epsilon,\beta})||_{2}^{2}e^{(6+2\omega)C^4s}ds\\ &\leq \
 12\max(\epsilon, \eta)C^2||\tilde{\delta}(X_{0}^{\epsilon,\beta})||_{2}^{2}\frac{e^{(6+2\omega)C^4t+\omega t}}{(6+2\omega)C^4}.
\end{align*}

As noted at the beginning of step {\gre 2}, 
we know any (mild) solution of the case $\epsilon=0$, if it
exists satisfies~: $||X_{t}||_{2}^{2}\leq e^{\omega t}||X_{0}||_{2}^{2},$ giving especially
uniqueness.

We have thus obtained strong convergence on $X_{t}^{\epsilon}$ in $\B([0,T],L^{2}(M))$ by Cauchy property. We have also boundedness of $X_{t}^{\epsilon}$ in $B_{2,\delta}^{a}$, which gives by weak compactness a limit up to
extraction when $\epsilon\rightarrow 0$. Once we will have proved that any
such limit point is a mild solution with $\epsilon=0$, uniqueness (of the
solution thus of the limit point) will get
that in fact $X_{t}^{\epsilon}$ weakly converges in
$\B_{2,\delta }^{a}$ to the newly found solution $X_{t}$. Since
we have already noticed weak continuity of Stochastic convolution, we are
in fact done for proving that any limit point is a mild solution.


Finally, we conclude the proof of the part (ii) of our theorem, by considering $\eta_{\alpha}(X_{0})$ as initial condition of a solution $X_{t,\alpha}$, in case we have only $X_{0}\in D(\Delta^{1/2})$  (and not anymore $D(\Delta)$) and letting go $\alpha\rightarrow\infty$. With the same weak limit arguments, we show that $X_{t,\alpha}$ converges weakly in $\B_{2,\delta }^{a}$ to $X_{t}$. Moreover, note for further use that we have also strong convergence of $X_{t,\alpha}$ to $X_{t}$ in $\B([0,T],L^{2}(M))$ by the following inequality (proved as above for the Cauchy property, except we don't have the same initial conditions anymore, but , however, more cancellations)~:
\begin{align*}||&X_{t,\alpha}-X_{t,\beta}||_{2}^{2}=||\phi_{t}(X_{0,\alpha}-X_{0,\beta})||_{2}^{2}+||\gamma(X_{t,\alpha}-X_{t,\beta})||_{2}^{2}\\
  &=||\phi_{t}(X_{0,\alpha}-X_{0,\beta})||_{2}^{2}\
 -\int_{0}^{t}||\Delta ^{1/2}\gamma(X_{s,\alpha}-X_{s,\beta})||_{2}^{2}ds +
\int_{0}^{t}||\delta (X_{s,\alpha}-X_{s,\beta})||_{2}^{2}ds\\
&\leq ||X_{0,\alpha}-X_{0,\beta}||_{2}^{2}\
 -\int_{0}^{t}||\Delta ^{1/2}(X_{s,\alpha}-X_{s,\beta})||_{2}^{2}-||\Delta ^{1/2}\phi_{s}(X_{0,\alpha}-X_{0,\beta})||_{2}^{2}ds \\ &+
\int_{0}^{t}||\Delta^{1/2}(X_{s,\alpha}-X_{s,\beta}))||_{2}^{2}+\omega||X_{s,\alpha}-X_{s,\beta}||_{2}^{2}ds
\\&\leq e^{\omega t}\left(||\eta_{\alpha}(X_{0})-\eta_{\beta}(X_{0})||_{2}^{2}\
 +T||\eta_{\alpha}(\Delta ^{1/2}X_{0})-\eta_{\beta}(\Delta ^{1/2}X_{0})||_{2}^{2}\right).\ \ \ \ \ \ \ \ \ \ \ \ \ \ \ \ \ \ \ \ \ensuremath{\Box}
\end{align*}\end{proof2}

\section{Our Main example : Derivation-Generator of a Dirichlet form}

As explained in the introduction, our main case of interest will be when $\delta$ is a derivation and $\Delta=\delta^*\delta$ the corresponding generator of a Dirichlet form. Note that in that case it is well known (cf e.g. \cite{CiS}) $\phi_t$ and $\eta_{\alpha}$ are completely positive contractions on $M$. 

\subsection{Preliminaries and notation around zero extensions of a derivation on free Brownian motions}

\subsubsection{Setting and extension}
Recall $M=W^{*}(M_{0};S_{s}^{(j)}, 0\leq s\leq \infty, 0\leq j\leq N)$ (we will consider only here the case of finitely many derivations and thus free Brownian motions) and $M_{t}=W^{*}(M_{0};S_{s}^{(j)}, 0\leq s\leq t, 0\leq j\leq N)$.

Let us assume we are given $\partial:
D(\partial)\rightarrow 
 HS(M_{0})^{N}\simeq (L^2(M_0)\o L^2(M_0))^N\simeq_{1\o O} (L^2(M_0)\o L^2(M_0^{op}))^N$ a derivation 
valued in a direct sum of Hilbert-Schmidt operators over $L^2(M_0)$. As usual the identification of $L^2(M_0)\o L^2(M_0)$ with Hilbert-Schmidt operators sends $a\o b$ to the finite rank operator $x\mapsto a\tau(bx)$. As real bimodules, they are considered with bimodule structure induced by $a(b\o c)d=ab\o cd$, and real structure $\J(a\o b)=b^*\o a^*$ corresponding to adjointness of Hilbert-Schmidt operators. We will emphasize the isomorphism $1\o O$ with $L^2(M_0)\o L^2(M_0^{op})$ (coming from traciality) with corresponding bimodule structure when necessary (it is induced by the identity map for $a,b\in M$ $(1\o O)(a\o b)=a\o b$ with $b$ seen in $M^{op}$).


We write $Z_{j}=(0,...,0,1\otimes 1,0,...,0)$ in $HS(M_{0})^{N}$ the non-zero term
lying on the $j^{th}$ component
. We also write $\partial_{j}$ for the $j^{th}$
component in $HS(M_{0})^{N}$ (and we will use freely later this kind of notation).
For $U\in L^{2}(M)\o L^2(M^{op})$, $K\in M\overline{\o } M^{op}$, we write consistently with {\red our} previous notation $U\# K$ the map induced by multiplication in $M\overline{\o } M^{op}$. If $U\in L^{2}(M)\o L^2(M)$, we write in this way the map induced by the previous isomorphism : $(1\o O)(U\# K):=((1\o O)(U))\# K$.

  Domains of closures will be considered in this $L^2$ setting,
 $D(\partial)\subset M_{0}$ is a weakly dense *-subalgebra. We will really soon impose conditions making $\partial$ closable as an
unbounded operator from $L^{2}(M_{0},\tau)\rightarrow HS(M_{0})^{N}$, and real
(i.e. we have the relation $\partial(x)^*=\partial(x^*)$ with the adjoint of Hilbert-Schmidt operators in each component and as a consequence $\langle \partial(x),y\partial(z)\rangle= \langle
\partial(z^{*})y^{*},\partial(x^{*})\rangle, \forall x,y,z\in D(\partial)$).
After extending it to a closed derivation $\bar{\delta}$ on $M$ we will be interested in the corresponding generator of a Dirichlet form $\Delta=\delta^{*}\bar{\delta}$. This part will find realistic assumptions on $\partial$ to get $\Gamma_{1u}(\omega,C)$ 
 and thus to be able to apply our general theory.

Suppose also that $\J_{j}:=\partial^{*}(Z_{j})\in L^{2}(M_{0})$ is well
defined for all $j \in [1,N]$. We have a well-known lemma (identical to 
Proposition 4.1 in \cite{Vo5} which is valid for any real derivation of the kind
considered above, as pointed
out after Proposition 6.2 in \cite{Vo6})~:

\begin{lemmas}\label{triv}
Consider $\partial$ real densely defined derivation with $\J_{j}:=\partial^{*}(Z_{j})\in L^{2}(M_{0})$, then $(D(\partial)\otimes_{alg} D(\partial))^{N}$ is
contained in $D(\partial^{*})$ {\gre (as a consequence assumption $e.1$ is satisfied)} and~:
$$\partial_{j}^{*}(a\otimes b):= \partial^{*}(aZ_{j}b)= a\J_{j}b -
(1\otimes\tau)[\partial_{j}(a)]b-a(\tau\otimes 1)[\partial_{j}(b)].$$
Moreover (see e.g. \cite{nonGam}
 Remark 7, using mainly \cite{DL}, 
 ), $\bar{\partial}|_{M_0\cap D(\bar{\partial})}$ defines a derivation (noted $\partial^{\infty}$ on the $*$-algebra  $M_0\cap D(\bar{\partial})$), closed as an unbounded operator $M_0\rightarrow HS(M_0)^{N}$. Finally (see e.g. proposition 6 in 
 \cite{nonGam}
), for any $Z\in D(\bar{\partial})\cap M_{0}$, there exists a sequence $Z_{n}\in D(\partial)$ with $||Z_{n}||\leq ||Z||$, $||Z_{n}-Z||_{2},||\partial(Z_{n})-\bar{\partial}(Z)||_{2}\rightarrow 0$.
\end{lemmas}

 Consider also $D(\delta)=D(\partial)*\C\langle S_{s}^{(j)}
, 0\leq j\leq N, 0\leq s\leq \infty \rangle \subset M$, the algebra generated by
$S_{s}^{(j)}$ and $D(\partial)$ (thus $D(\delta )$ is a weakly dense
*-subalgebra of $M$). Define ${\red \delta :D(\delta)
\rightarrow HS(L^{2}(M))^{N}}$ the
unique derivation such that $\delta (x)=\partial(x)$ if $x\in D(\partial)$
and $\delta(S_{t}^{(j)})=0$ for all $t$. 
Then, clearly $\J_{j}=\delta^{*}(Z_{j})\in L^{2}(M_{0})\subset L^{2}(M)$ (see e.g. \cite[Example 2.4]{Shly03}), and
using the lemma above, $\delta$ is also closable (since $\delta ^{*}$ is
densely defined).  $\delta$ is thus a closable real derivation, like
$\partial$, {\gre satisfying $e.1$}. 
 We may sometimes write $\delta^{\infty}:M\cap D(\bar{\delta})\rightarrow HS(L^{2}(M))^{N}$ the analog derivation defined in the previous lemma (when we want to emphasize the domain). 
We will write $\Delta=\delta^{*}\bar{\delta}$ the associated generator of a
completely Dirichlet form, $\phi_{t}$ the semigroup
generated by
$-1/2 \Delta$, $\eta_{\alpha}=\frac{\alpha}{\alpha+\Delta}$ the ``resolvent map" associated, as before.  As we already pointed out, {\gre $\eta_{\alpha}$ and $\phi_{t}$} {\red induce} completely positive contraction{\gre s} on $M$.

We thus only assumed {\gre in this section } assumption $0$ :

\begin{minipage}{15,5cm}\textbf{Assumption 0}\textit{~:(a) $\partial:
D(\partial)\rightarrow  HS(M_{0})^{N}$ real derivation $D(\partial)\subset M_0$ weakly dense *-subalgebra}

\textit{(b) $\J_{j}:=\partial^{*}(Z_{j})\in L^{2}(M_{0})$ is well
defined for all $j \in [1,N]$, and $\delta$ is an extension by 0 on free Brownian motions : $\delta (x)=\partial(x)$ if $x\in D(\partial)$
and $\delta(S_{t}^{(j)})=0$ for all $t$.}
 \end{minipage}
 
  This subsection will mainly develop general consequences of this assumption 0, giving at the end $\Gamma_{0u}$.

\subsubsection{Useful $L^1$-closures}
Here we assume assumption $0$.

We will also define following \cite{Pe04}  1.4, an analog of $\Delta$, $\Delta^{1}:M\rightarrow L^{1}(M,\tau)$ (there noted $\Psi$), by
 \begin{align}\label{Delta1def}\begin{split}D(\Delta^{1})=\{ x\in& D(\bar{\delta})\cap M\  | \ y\mapsto\langle\bar{\delta}(x),\bar{\delta}(y)\rangle\\ &\text{extends to a normal linear functional on $M$}\}\end{split}.\end{align} 
  $\Delta^{1}(x)$ is defined as the adjoint of the Radon-Nikodym derivative of the preceding linear functional $y\mapsto\langle\bar{\delta}(x),\bar{\delta}(y)\rangle$, i.e. $\langle\Delta^{1}(x),y\rangle:=\tau(\Delta^{1}(x)^*y)=\langle\bar{\delta}(x),\bar{\delta}(y)\rangle$ (note the anti-linear duality bracket consistent with scalar products).
  
   Likewise, we can define $\delta^{*1}:(L^{2}(M)\o L^{2}(M))^{N}\rightarrow L^{1}(M,\tau)$, by \begin{align*}D(\delta^{*1})=\{ U\in &(L^{2}(M)\o L^{2}(M))^{N}\  | \ y\mapsto\langle U,\bar{\delta}(y)\rangle\\ &\text{extends to a normal linear functional on $M$}\}.\end{align*} $\delta^{*1}(U)$ is defined as the adjoint of the Radon-Nikodym derivative of the preceding linear functional  $y\mapsto\langle U,\bar{\delta}(y)\rangle$. By the very definition, we see that  for any $x\in D(\Delta^{1})$, $\bar{\delta}(x)\in D(\delta^{*1})$ and $\Delta^{1}(x)=\delta^{*1}\bar{\delta}(x)$. Moreover, we see obviously that $\delta^{*1}$ is a closed densely defined  operator (using $D(\delta^{*})\subset D(\delta^{*1})$ and $\bar{\delta}|_{M\o D(\bar{\delta})}$ is a densely defined formal adjoint.).  Note the following elementary lemma, using mainly the fact that $\delta^{\infty}$ is a derivation~:

\begin{lemma}\label{delta1}
$D(\Delta^{1})$ is a $*$-subalgebra of $M$ containing $D(\Delta)\cap M$, and for any $x,y\in D(\Delta^{1})$:
$$\Delta^{1}(xy)=\Delta^{1}(x)y+x\Delta^{1}(y)-2\sum_{i=1}^{N}m\circ(1\o\tm\o1)(\d(x)\o\d(y)),$$
where $m$ denote the multiplication map $L^{2}(M)\hat{\o}L^{2}(M)\rightarrow L^{1}(M)$. Finally, for any $x,y\in D(\Delta^{1})$: $\langle\Delta^{1}(x),y\rangle=\langle x,\Delta^{1}(y)\rangle.$ \end{lemma}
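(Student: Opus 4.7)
The plan is to derive every assertion from the derivation identity $\bar\delta(xy)=\bar\delta(x)y+x\bar\delta(y)$ on the $*$-algebra $M\cap D(\bar\delta)$ (Lemma~\ref{triv}), combined with the bimodule structure on $L^2(M)\otimes L^2(M)$ and the reality relation $\bar\delta(x^*)=\mathcal{J}(\bar\delta(x))$. The inclusion $D(\Delta)\cap M\subset D(\Delta^1)$ is immediate since for $x\in D(\Delta)\cap M$ the element $\Delta(x)\in L^2(M)\subset L^1(M)$ already supplies a normal functional $y\mapsto\tau(\Delta(x)^*y)$ coinciding with $\langle\bar\delta(x),\bar\delta(y)\rangle$ on the dense $*$-subalgebra $M\cap D(\bar\delta)$. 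Stability under $*$ will follow from the chain $\langle\bar\delta(x^*),\bar\delta(y)\rangle=\overline{\langle\bar\delta(x),\bar\delta(y^*)\rangle}=\overline{\langle\Delta^1(x),y^*\rangle}=\tau(\Delta^1(x)y)$, yielding both $x^*\in D(\Delta^1)$ and $\Delta^1(x^*)=\Delta^1(x)^*$. The symmetry $\langle\Delta^1(x),y\rangle=\langle x,\Delta^1(y)\rangle$ is then read off directly by comparing the two Hermitian-conjugate expressions for $\langle\bar\delta(x),\bar\delta(y)\rangle$.

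For the product rule, I will evaluate $z\mapsto\langle\bar\delta(xy),\bar\delta(z)\rangle$ on the dense subalgebra $z\in M\cap D(\bar\delta)$. Expanding and then shifting $y$ and $x^*$ across each inner product via the bimodule action gives
\[
\langle\bar\delta(x)y,\bar\delta(z)\rangle=\langle\bar\delta(x),\bar\delta(z)y^*\rangle,\qquad \langle x\bar\delta(y),\bar\delta(z)\rangle=\langle\bar\delta(y),x^*\bar\delta(z)\rangle.
\]
Applying the derivation identity once more to $zy^*$ and $x^*z$ (both in $M\cap D(\bar\delta)$) substitutes $\bar\delta(z)y^*=\bar\delta(zy^*)-z\bar\delta(y^*)$ and $x^*\bar\delta(z)=\bar\delta(x^*z)-\bar\delta(x^*)z$. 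The leading pieces become $\langle\Delta^1(x),zy^*\rangle=\langle\Delta^1(x)y,z\rangle$ and $\langle\Delta^1(y),x^*z\rangle=\langle x\Delta^1(y),z\rangle$ by definition of $\Delta^1$ and traciality, producing the expected $\Delta^1(x)y+x\Delta^1(y)$ contribution. The remaining cross term $-\langle\bar\delta(x),z\bar\delta(y^*)\rangle-\langle\bar\delta(y),\bar\delta(x^*)z\rangle$ must then be identified, summand by summand in $i$, with $-2\langle m\circ(1\otimes\tm\otimes 1)(\bar\delta_i(x)\otimes\bar\delta_i(y)),z\rangle$: on elementary tensors $\bar\delta_i(x)=a\otimes b$ and $\bar\delta_i(y)=c\otimes d$, using $\mathcal{J}(\bar\delta_i(y))=d^*\otimes c^*$ and $\mathcal{J}(\bar\delta_i(x))=b^*\otimes a^*$, each half collapses via two applications of traciality to $\tau(b^*c^*)\tau(a^*zd^*)$, giving two equal contributions summing to the required factor $2$. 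Finally, checking that the right-hand side defines a normal functional in $z$ reduces to observing that $m\circ(1\otimes\tm\otimes 1)\colon L^2(M)^{\otimes 4}\to L^1(M)$ is bounded (so its output pairs normally against $M$); this lets us conclude $xy\in D(\Delta^1)$ with the stated expression for $\Delta^1(xy)$, and in particular that $D(\Delta^1)$ is a $*$-subalgebra.

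The main delicate point will be the clean identification of the cross term with twice $m\circ(1\otimes\tm\otimes 1)(\bar\delta_i(x)\otimes\bar\delta_i(y))$: the two pieces coming from the two halves of the Leibniz expansion must coincide rather than differ by a $\mathcal{J}$, and obtaining this requires carefully tracking that $z\bar\delta(y^*)=z\mathcal{J}(\bar\delta(y))$ is a left action on the \emph{first} leg of $\mathcal{J}(\bar\delta_i(y))$, not on the second, and symmetrically for $\bar\delta(x^*)z$. Beyond this bookkeeping the remaining steps are routine density and continuity arguments, together with the normality of the pairings $\tau(\Delta^1(x)y\,\cdot\,)$ and $\tau(x\Delta^1(y)\,\cdot\,)$ on $M$.
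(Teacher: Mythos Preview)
Your proof is correct and follows essentially the same route as the paper: expand $\langle\bar\delta(xy),\bar\delta(z)\rangle$ by Leibniz, shift $y$ and $x^*$ across the inner products via the bimodule action, apply the derivation identity to $zy^*$ and $x^*z$, and identify the leading terms with $\langle\Delta^1(x)y,z\rangle+\langle x\Delta^1(y),z\rangle$ while the cross terms collapse to $-2\langle m\circ(1\otimes\tau\circ m\otimes 1)(\bar\delta_i(x)\otimes\bar\delta_i(y)),z\rangle$. The only cosmetic difference is that the paper phrases the cross-term identification through Hilbert--Schmidt composition and trace (writing $-2\sum_i\mathrm{Tr}(\delta_i(y)^*\circ\delta_i(x)^*z)$ and then $-2\tau(z\,m(\sum_i\delta_i(x)\circ\delta_i(y))^*)$), whereas you carry out the equivalent elementary-tensor computation directly; your explicit treatment of $*$-stability and the symmetry $\langle\Delta^1(x),y\rangle=\langle x,\Delta^1(y)\rangle$ is also a bit more spelled out than in the paper, which leaves these to the reader.
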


\begin{proof}[Proof]
 Take $x,y\in D(\Delta^{1})$, $z\in M\cap D(\delta)$,
thus \begin{align*}\langle\delta(xy),\delta(z)\rangle&=\langle\delta(x)y,\delta(z)\rangle+\langle x\delta(y),\delta(z)\rangle
\\ &=\langle\delta(x),\delta(z)y^*\rangle+\langle\delta(y),x^*\delta(z)\rangle
\\ &=\langle\delta(x),\delta(zy^*)\rangle+\langle\delta(y),\delta(x^*z)\rangle
\ \ -\langle\delta(x),z\delta(y^*)\rangle-\langle\delta(y),\delta(x^*)z\rangle
\\ &=\langle\Delta^{1}(x)y,z\rangle+\langle x\Delta^{1}(y),z)\rangle
-\langle\delta(y)z^*,\delta(x^*)\rangle-\langle\delta(y),\delta(x^*)z\rangle
\\ &=\langle\Delta^{1}(x)y,z\rangle+\langle x\Delta^{1}(y),z)\rangle
-2\sum_i Tr\left(\delta_i(y)^*\circ\delta_i(x)^*z\right).
\\ &=\langle\Delta^{1}(x)y,z\rangle+\langle x\Delta^{1}(y),z)\rangle
- 2\tau(zm(\sum_i \delta_i(x)\circ\delta_i(y))^*).
\end{align*}
In the fourth line, we used the definition of $\Delta^{1}$ and the fact $\delta$ is a real derivation. 
We used at the next to last line the identification of $L^2\o L^2$ with Hilbert Schmidt operators and the Trace on trace class, and the relation $\delta_i(x)^*=\delta_i(x^*)$ with the adjoint of Hilbert-Schmidt operators coming from the fact we have a real derivation.
At the last line, we used the multiplication map to $L^1(M)$, induced by $m(a\otimes b)=ab$.

This proves the domain property and the equation. 
\end{proof}

We will also need an extension $\overline{\Delta^{1}}:L^{2}(M)\rightarrow L^{1}(M)$. But the last equality of the previous lemma especially shows that $\Delta|_{D(\Delta)\cap M}:M\rightarrow L^{2}(M)$ is a ($\sigma$-weakly) densely defined formal adjoint of $\Delta^{1}:L^{2}(M)\rightarrow L^{1}(M)$, thus this operator is closable. 
And moreover, for any $x\in D(\Delta)\cap M, y\in D(\overline{\Delta^{1}})$, $\langle\Delta(x),y\rangle=\langle x,\overline{\Delta^{1}}(y)\rangle.$ 


Note the following elementary lemma, using $M\cap D(\Delta)$ is a core for $\Delta$ (thanks to stability of $M$ by $\phi_{t}$) :

\begin{lemmas}\label{bardelta1}
For any $x,y\in D(\Delta)$ with either $x$ or $y$ in $M$, then $xy\in D(\overline{\Delta^{1}})$:
$$\overline{\Delta^{1}}(xy)=\Delta(x)y+x\Delta(y)-2\sum_{i=1}^{N}m\circ(1\o\tau\circ m\o1)(\d(x)\o\d(y)),$$
where $m$ denotes the multiplication map $L^{2}(M)\hat{\o}L^{2}(M)\rightarrow L^{1}(M)$.\end{lemmas}

\subsubsection{Lemmas about the extension}
Here again we only assume $0$.

We can consider $(\delta\o1)\oplus(1\o\delta):L^{2}(M)\o L^{2}(M)\rightarrow(L^{2}(M)\o L^{2}(M)\o L^{2}(M))^{2N}$, (later abbreviated $\delta\o1\oplus1\o\delta$ or $\delta^{\o}$) which is easily seen to be densely defined on $D(\delta)\o_{alg}D(\delta)$, and closable (with an explicit densely defined adjoint coming from lemma \ref{triv} in case of assumption 0). We will write $\Delta^{\o}:=(\delta\o1\oplus1\o\delta)^{*}(\delta\o1\oplus1\o\delta)=\overline{\Delta\o1+1\o\Delta}$, which is thus a densely defined closed self-adjoint positive operator. It can be seen, as stated above, to be equal to the closure of $\Delta\o1+1\o\Delta$ (defined on $D(\Delta)\o_{alg}D(\Delta)$, using the stability of this space by $\phi_{t}\o\phi_{t}$, or rather more the regularization effect, implying this is a core of the previous closed operator).

{\gre Likewise, we define $\Delta^{\o(n+1)}$ on $D(\Delta)^{\o_{alg}(n+1)}\cap V_n$ (with the notation before $\Gamma_{0u}$), i.e. for $a_i\in D(\Delta)\cap L^2(M_0)$, by $$\Delta^{\o(n+1)}(a_0\o ...\o a_n)=\sum_{i=0}^na_0\o...\o a_{i-1}\o\Delta(a_i)\o a_{i+1}\o...\o a_n.$$

It clearly extends to a positive densely defined self-adjoint operator on $V_n$.
Assumption $e.4$ is obvious with $\D=D(\Delta)\ominus \C$. 
}

Recall $.\#(S_{t}^i-S_{s}^i)/\sqrt{t-s}:L^{2}(M_{s})\o L^{2}(M_{s})\rightarrow L^{2}(M)$ is the standard isometric map extending $(a\o b)\#(S_{t}^i-S_{s}^i)=a(S_{t}^i-S_{s}^i)b$. Likewise, we define $\#_j$ extending $(a\o b\o c)\#_1(S_{t}^i-S_{s}^i)=a(S_{t}^i-S_{s}^i)b\o c$ and $(a\o b\o c)\#_2(S_{t}^i-S_{s}^i)=a\o b(S_{t}^i-S_{s}^i)c$, $a,b,c\in M_s$.

\begin{corollary}\label{tens}
For any $U\in D(\Delta^{\o})\cap L^{2}(M_{s})\o L^{2}(M_{s})$, then $U\#(S_{t}^{i}-S_{s}^{i})\in D(\Delta)$ $(t\geq s)$ and :
 $$\Delta (U\#(S_{t}^{i}-S_{s}^{i}))=\Delta^{\o}(U)\#(S_{t}^{i}-S_{s}^{i}).$$
Moreover, for any $U\in L^{2}(M_{s})\o L^{2}(M_{s})$, $U\#(S_{t}^{i}-S_{s}^{i})\in D(\bar{\delta})$ if and only if $U\in D(\overline{\delta\o 1\oplus 1\o \delta})$, and  we also have $$\bar{\delta}(U\#(S_{t}^{i}-S_{s}^{i}))=\overline{\delta\o1}(U)\#_2(S_{t}^{i}-S_{s}^{i})+\overline{1\o\delta}(U)\#_1(S_{t}^{i}-S_{s}^{i}).$$
As a consequence, such an element is orthogonal to any $L^{2}(M_0\o M_0)$ (as claimed in assumption $d.2$). {\gre Finally, assumptions $e.2,e.4$ are verified by $\Delta^{\o (n+1)}$, and $e.3$ by $\delta$.}
\end{corollary}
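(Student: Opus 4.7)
First I would verify everything on elementary tensors $U=a\otimes b$ with $a,b\in D(\delta)\cap M_s$. Since $\delta$ is a derivation and $\delta(S_t^{(i)}-S_s^{(i)})=0$ by construction, the Leibniz rule gives
$$\delta_j\bigl(a(S_t^{(i)}-S_s^{(i)})b\bigr)=\delta_j(a)(1\otimes(S_t^{(i)}-S_s^{(i)})b)+(a(S_t^{(i)}-S_s^{(i)})\otimes1)\delta_j(b),$$
which is exactly $(\delta_j\otimes1)(a\otimes b)\#_2(S_t^{(i)}-S_s^{(i)})+(1\otimes\delta_j)(a\otimes b)\#_1(S_t^{(i)}-S_s^{(i)})$. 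This establishes the $\delta$-formula on the algebraic tensor product $(D(\delta)\cap M_s)\otimes_{\mathrm{alg}}(D(\delta)\cap M_s)$, which is a core for $\overline{\delta\otimes1\oplus1\otimes\delta}$.

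Next I would deal with the orthogonality claim, which is the easy step. A vector of the form $V\#_k(S_t^{(i)}-S_s^{(i)})$ with $V\in L^2(M_s)^{\otimes 3}$ contains, in the $k$-th slot, a factor $(S_t^{(i)}-S_s^{(i)})$ free with $M_s$ of zero trace; by freeness and $\tau(S_t^{(i)}-S_s^{(i)})=0$, the inner product against any element of $L^2(M_s\otimes M_s)\supset L^2(M_0\otimes M_0)$ vanishes. The same reasoning, applied to pairs $\#_1$ and $\#_2$, shows that the two summands in the $\delta$-formula land in mutually orthogonal subspaces of $L^2(M\otimes M)$ (after summing over $i$ and the choice of slot), a fact I will use crucially in the next step.

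Now I would close everything under limits. Since $\#(S_t^{(i)}-S_s^{(i)})$ is an isometry $L^2(M_s)\otimes L^2(M_s)\to L^2(M)$ and since $\#_1(S_t^{(i)}-S_s^{(i)})$, $\#_2(S_t^{(i)}-S_s^{(i)})$ are isometries with \emph{orthogonal} ranges in $L^2(M\otimes M)$ (as noted above), both sides of the $\delta$-formula behave well under the graph closure of $\delta\otimes1\oplus1\otimes\delta$: for the direction $U\in D(\overline{\delta\otimes1\oplus1\otimes\delta})\Rightarrow U\#(S)\in D(\delta)$ I take an approximating sequence $U_n$ on elementary tensors and pass to the limit; for the converse, orthogonality of the two ranges lets me read off $(\overline{\delta\otimes1})(U)$ and $(\overline{1\otimes\delta})(U)$ as the two orthogonal components of a limit of $\delta(U_n\#(S))$, so Cauchy in the image implies Cauchy component by component in the tensor picture.

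Finally, for the $\Delta$-statement I would use $\Delta=\delta^*\bar\delta$ together with $\Delta^{\otimes}=(\overline{\delta\otimes1\oplus1\otimes\delta})^*(\overline{\delta\otimes1\oplus1\otimes\delta})$ (which was the construction of $\Delta^\otimes$ just above the corollary). If $U\in D(\Delta^\otimes)$, the $\delta$-part above applies, so $\delta(U\#(S))$ is well-defined; testing against $\delta(V)$ for $V\in D(\delta)$ with $V=W\#(S_t^{(i)}-S_s^{(i)})$, $W\in D(\overline{\delta\otimes1\oplus1\otimes\delta})$, and transporting the isometry across the inner product, I obtain
$\langle\delta(V),\delta(U\#(S))\rangle=\langle(\overline{\delta\otimes1\oplus1\otimes\delta})(W),(\overline{\delta\otimes1\oplus1\otimes\delta})(U)\rangle=\langle W,\Delta^\otimes U\rangle=\langle V,\Delta^\otimes(U)\#(S)\rangle$,
which identifies $U\#(S)$ as an element of $D(\delta^*\bar\delta)=D(\Delta)$ with the claimed formula. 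The main obstacle I expect is precisely the orthogonal-decomposition argument in step three that decouples the two summands of the $\delta$-formula in order to lift Cauchyness back to the tensor picture; once that is done, the $\Delta$-statement is a routine adjoint manipulation.
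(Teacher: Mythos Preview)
Your treatment of the $\delta$-formula on elementary tensors and of the orthogonality claim matches the paper. There are, however, two real gaps.

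\textbf{The ``only if'' direction of the $\delta$-equivalence.} Your decoupling argument (the ranges of $\#_1(S)$ and $\#_2(S)$ are orthogonal, so Cauchyness of $\delta(U_n\#(S))$ transfers to each component) presupposes an approximating sequence of the special form $U_n\#(S)$ with $U_n$ in the algebraic tensor product. But the hypothesis $U\#(S)\in D(\bar\delta)$ only hands you $V_n\in D(\delta)=D(\partial)*\C\langle S_s^{(j)}\rangle$ converging in graph norm, and such $V_n$ are general free-product words, not of the form $W\#(S)$. The paper fills this by projecting $V_n$ onto the range of $\#(S)$ and arguing---using freeness and the derivation property of $\delta$ on the free product---that this projection lies again in $D(\delta)$ with $\delta$-graph norm dominated by that of $V_n$. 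Only after this projection step does your orthogonal-decomposition idea apply.

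\textbf{The $\Delta$-statement.} Your ``routine adjoint manipulation'' tests $\langle\delta(V),\delta(U\#(S))\rangle$ only against $V=W\#(S)$. That is not enough to conclude $\bar\delta(U\#(S))\in D(\delta^*)$: you must bound this functional over a full core of $\bar\delta$, and vectors of the form $W\#(S)$ span only a proper closed subspace of $L^2(M)$. To salvage your route you would need the range of $\#(S)$ to be reducing for the form $\langle\bar\delta(\cdot),\bar\delta(\cdot)\rangle$, which brings back the projection argument above plus a further orthogonality claim you do not address. The paper sidesteps this entirely by proving the $\Delta$-formula \emph{independently} of the $\delta$-equivalence, via the product rule for $\Delta^1$ in Lemma~\ref{delta1}: for $a,b\in D(\Delta)\cap M_s$ one computes $\Delta^1\bigl(a(S_t^{(i)}-S_s^{(i)})b\bigr)$ by two applications of that lemma; the cross terms vanish by freeness (each contains an isolated factor $S_t^{(i)}-S_s^{(i)}$ under a trace), leaving $(\Delta\otimes1+1\otimes\Delta)(a\otimes b)\#(S)\in L^2$, whence $a(S)b\in D(\Delta)$. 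The isometry of $\#(S)$, closedness of $\Delta$, and the core property of $(D(\Delta)\cap M_s)^{\otimes_{\mathrm{alg}}2}$ for $\Delta^\otimes$ then give the general case.
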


\begin{proof}[Proof]
Consider $U \in (D(\Delta)\cap M_{s})\o_{alg}(D(\Delta)\cap M_{s})$, and by linearity even $U=a\o b$, then by lemma \ref{delta1}, we have $U\#(S_{t}^{i}-S_{s}^{i})\in D(\Delta)$ and the formula comes from the formula there (applied twice and using freeness to cancel the other terms). The density remark before the proof and the isometric map $.\#(S_{t}^i-S_{s}^i)/\sqrt{t-s}:L^{2}(M_{s})\o L^{2}(M_{s})\rightarrow L^{2}(M)$ conclude the general case. {\gre Assumption $e.2$ follows similarly.}

The second property comes from $\delta$ a derivation starting with the case $U \in D(\delta)\o_{alg}D(\delta)\cap M_{s}\o M_{s}$, and using  $\delta$ closed for the if part and in order to extend the formula. The only if part uses $\delta$ is defined first on $D(\partial)* \C\langle S_{s}^{(j)}
, 0\leq j\leq N, 0\leq s\leq \infty \rangle,$ and the fact we can take the approximation of $U\#(S_{t}^{i}-S_{s}^{i})$ in the image of $M_{s}\o_{alg} M_{s}$ by  $.\#(S_{t}^{i}-S_{s}^{i})$ (using freeness and the derivation property on the free product above to get the projection of a first approximation on the set above is dominated for the norm of 
$\delta$ by the first one). {\gre Assumption $e.3$ is also checked using the derivation property (the density statement is obvious). On $\E_n'=\E_n\cap( D(\partial)*\C\langle S_{s}^{(j)}
, 0\leq j\leq N, 0\leq s\leq \infty \rangle)$, we can apply the derivation property to show  $\delta(\E_n')\subset \overline{\oplus_{p+q=n}\E_p\o \E_q}$ (closure in $L^2$), implying the orthogonality statement. }
\end{proof}

\subsubsection{Summary of results under assumption 0}

We summarize the easy results obtained at this stage~:
\begin{lemmas}\label{main2-0}
With this assumption 0
, $\bar{\delta}$ and $\Delta$ satisfy the stability of filtration properties and  $\Gamma_{0u}(\omega=0)$ and also $b.2, c.2$(i.e. assumptions $a,b, c,e$ and $d.1,d.2$ of $\Gamma_{1u}(\omega=0,C)$, and also $d.3,d.4$ in case $\delta=\tilde{\delta}$).
\end{lemmas}

\subsection{Sufficient conditions for the main Assumption}
\subsubsection{Statement of result }
Let us sum up right now the assumptions we will {\gre use }
and our result. {\gre We consider here an exact coassociativity assumption even if an almost coassociativity (considered in a previous preprint version of this paper) would be enough. This will limit the applications of this section essentially to free difference quotients.}  
{\gre We will also consider the case} $\delta=\tilde{\delta}$ {\gre and consider elsewhere the case where we need and use two derivations}. 




\bigskip
{\gre 
\begin{minipage}{15,5cm}\textbf{Assumption 1}\textit{~: (a) $\partial:D(\partial)\to (D(\partial)\o_{alg}D(\partial))^{N}\subset (L^2(M_0)\o L^2(M_0^{op}))^{N}$  is 
 coassociative i.e. $\forall i,j\forall x\in D(\partial)$ :}
  $$({\red \partial_j\o 1})\circ \partial_i(x)-({\red 1\o {\partial}_i})\circ \partial_j(x)=0.$$
\textit{(a') $\partial$ satisfy assumption $0$ and $\partial_{j}^{*}1\o1\in M_{0}$.}

\medskip
\textit{Moreover, we suppose that (b)
$\partial_{j}^{*}1\o1\in D(\overline{\partial})$ 
and $$(1\o O)\bar{\partial}_i\partial_{j}^{*}1\o1\in M_{0}\overline{\otimes}
M_{0}^{op}.$$}
 \end{minipage}
 }

 \begin{theorem}\label{main2}
Under assumption 1, $\bar{\delta}$ and $\Delta$ satisfy the stability of filtration properties and assumption $\Gamma_{1u}(\omega=0,C)$ for some finite constant $C$ in the context $\delta=\tilde{\delta}$.
\end{theorem}

\subsubsection{Boundedness for $(1\o \tau)\circ\delta_k$ under assumption 1}
We first recall lemma 10 
in \cite{nonGam}, which is stated there for the free difference quotient, but the coassociative case is identical. We can and will also extend it elsewhere to an almost coassociative case.

\begin{lemmas}\label{FPC2}
Assume Assumption 1. Let $Z \in M\cap
D(\bar{\delta})$, 
then
 the following inequality holds~:
  \begin{align*}||(1\otimes\tau)(\bar{\delta_{i}}(Z))||_{2}&\leq  ||Z||_2\left[\left(2||\delta_{i}^{*}(1\o 1)||\right)+\left(||\delta_{i}^{*}(1\o 1)||^2+||\overline{\delta}_i\delta_{i}^{*}(1\o 1)||_{M\overline{\o} M^{\red op}}\right)^{1/2}\right].
\end{align*}
As a consequence, $(1\otimes\tau)\circ\bar{\delta_{i}}$ extends as a bounded map $L^{2}(M,\tau)\rightarrow L^{2}(M,\tau)$. 
\end{lemmas}
\subsubsection{Almost commutation of $\delta$ and $\Delta$ on an extended domain}

We are now ready to solve our main domain issues (to get $f$) in the next~:

\begin{lemma}\label{in2deltaDelta}Assume assumption 1 
\begin{enumerate}
\item[(i)]
For any $x\in D(\delta)$ we have 
$x\in D(\Delta_j)$, 
$\delta_i(x)\in D(\Delta_j\o1+1\o \Delta_j)$, $x\in D(\Delta^{3/2})$ and~:
 \begin{align*} 
&{\delta}_{i}\Delta_{j}(x)= (1 \o \Delta_j+\Delta_{j}\o 1){\delta}_i(x)
+\delta_{j}(x)\#((1\o O)\bar{\partial}_{i}\partial_{j}^{*}(1\o1) ).
\end{align*}
\item[(ii)]
 If $x\in D(\bar{\delta})$ (resp. $x\in D(\Delta)$)  then so is $1\otimes\tau(\bar{\delta_{i}}(x))$. 
\item[(iii)]
$D(\Delta^{3/2})\subset D(\overline{\Delta \o1+1\o \Delta }\circ\bar{\delta} )$ and moreover we have for any $x\in D(\Delta^{3/2})$ \begin{align*} 
&\bar{\delta}_{i}\Delta(x)= \Delta^{\o}\bar{\delta}_i(x)
+\sum_{j=1}^N\bar{\delta}_{j}(x)\#((1\o O)\bar{\partial}_{i}\partial_{j}^{*}(1\o1) ).
\end{align*}
\end{enumerate}
\end{lemma}
\begin{proof}[Proof]

\begin{enumerate}\item[(i)]
Consider $x\in D(\delta)$, by assumption 1 $\delta(x)\in (D(\delta)\o_{alg} D(\delta))^{N}\subset (M\o_{alg} M)^N$ (the extension from $\partial$ to $\delta$ is easy), thus using lemma \ref{triv}, $\delta_j(x)\in D(\delta^*_j)$, i.e. $x\in D(\Delta_j)$ for all $j$ and  
$$\Delta_j(x)=\delta_j(x)\#\delta_j^*(1\o 1)  -
m\circ(1\otimes\tau\o 1)\circ[\delta_j\o 1]\circ\delta_j(x)-m\circ(1\o \tau\otimes 1)\circ[1\o \delta_{j}]\circ\delta_j(x).$$
Recall $\delta_j^*(1\o 1)=\partial_j^*1\o 1\in D(\delta)$, and for any $j,k$ $(\delta_j\o 1)\delta_k(x)\in D(\delta)\o_{alg}D(\delta)\o_{alg}D(\delta)$
 so that one gets $\Delta_j(x)\in D(\delta_i)$ (and also the statement $\delta_i(x)\in D(\Delta_j\o 1)$ using again lemma \ref{triv}) and applying the derivation property for $\delta_i$, we get (recall the notation  for $\#$ before lemma \ref{triv} and $\#_i$ similar to the one used before corollary \ref{tens}, $(a\o b\o c )\#_2d=a\o bdc$, $(a\o b\o c )\#_1d=ad b\o c$):
  \begin{align*}\overline{\delta}_i\Delta_j(x)&=((\delta_i\o1)\delta_j(x))\#_2\delta_j^*(1\o 1) +((1\o\delta_i)\delta_j(x))\#_1\delta_j^*(1\o 1) \\&+\delta_j(x)\#((1\o O)\overline{\partial}_i\partial_j^*(1\o 1))\\&-
(1\o m\circ(1\otimes\tau\o 1))\circ[\delta_i\o 1\o1]\circ[\delta_j\o 1]\circ\delta_j(x)\\ &-
( m\circ(1\otimes\tau\o 1)\o 1)\circ[ 1\o1\o \delta_i]\circ[\delta_j\o 1]\circ\delta_j(x)\\&-
(1\o m\circ(1\otimes\tau\o 1))\circ[\delta_i\o 1\o1]\circ[1\o\delta_j]\circ\delta_j(x)\\ &-
( m\circ(1\otimes\tau\o 1)\o 1)\circ[ 1\o1\o \delta_i]\circ[1\o\delta_j]\circ\delta_j(x).\end{align*}
 
Now, one can easily extend coassociativity to $\delta$ (the coassociator $({\red \delta_j\o 1})\circ \delta_i-({\red 1\o {\delta}_i})\circ \delta_j$ being a derivation, coassociativity is checked on generators).
Thus one can rewrite :
 \begin{align*}&-((\delta_i\o 1)\delta_j(x))\#_2\delta_j^*(1\o 1)=-((1\o\delta_j)\delta_i(x))\#_2\delta_j^*(1\o 1),\\ 
&(1\o m\circ(1\otimes\tau\o 1))\circ[\delta_i\o 1\o1]\circ[\delta_j\o 1]\circ\delta_j(x)\\&=(1\o [m\circ(1\otimes\tau\o 1)\circ[\delta_j\o 1]\circ\delta_j])\circ\delta_i(x),\\
&(1\o m\circ(1\otimes\tau\o 1))\circ[\delta_i\o 1\o1]\circ[1\o\delta_j]\circ\delta_j(x)\\&=(1\o [m\circ(1\otimes\tau\o 1)\circ[1\o\delta_j]\circ\delta_j])\circ\delta_i(x),\end{align*}
 
and similar results for other lines in our previous sum. Using the formula in lemma \ref{triv}, the three previous lines sum up to $-(1\o\Delta_j)\circ\delta_i(x)$. Doing the same for the other lines, we thus proved the expected formula.

\item[(ii)] 
Let $x\in D(\bar{\delta})$ and take $x_{n}\in D(\delta)$ converging to $x$ in $D(\bar{\delta})$. 
 We can compute (using coassociativity 
  again): $$\bar{\delta_{j}}(1\o\tau)(\bar{\delta_{i}}(x_{n}))= 
   (1\o[(1\o\tau)(\bar{\delta_{i}})])(\bar{\delta_{j}}(x_{n})).$$
By the boundedness result of lemma \ref{FPC2}, the right hand side converges and this gives the result since $\bar{\delta}$ is closed.

For the second statement, consider 
the equation of (i) applied via scalar product to $U\in (D(\bar{\delta})\cap M)\o_{alg}(D(\bar{\delta})\cap M)$ :
\begin{align} \label{eqDeldel} 
&\langle \Delta_{j}(x_n),\delta_i^*(U)\rangle= \langle(1 \o \Delta_j+\Delta_{j}\o 1){\delta}_i(x_n),U\rangle
+\langle\delta_{j}(x_n)\#((1\o O)\bar{\partial}_{i}\partial_{j}^{*}(1\o1) ),U\rangle
\end{align}

with $U=V\otimes1,V\in D(\delta)$, $x_n$ above (with $x\in D(\Delta)$). Note that using lemma \ref{triv}, assumption 1 and our first result in (ii), $ {\delta}_{i}^{*} (V\o 1)\in D(\bar{\delta})$ with $\bar{\delta}_{j}\delta_{i}^{*}( V\o 1)={\delta}_{j}(V)\delta_{i}^{*}1\o 1+V\bar{\delta}_{j}\delta_{i}^{*}(1\o 1)-\delta_{j}1\o\tau\delta_{i}(V)\in L^{2}(M\o M)$. Thus
 $\langle\Delta_{j}(x_n),{\delta}_{i}^{*} U\rangle=\langle\delta_{j}(x_n),\overline{\delta}_{j}\delta_{i}^{*}( V\o 1)\rangle$. Note also that $\langle1\o \tau\delta_{i}(x_n),\Delta V\rangle =\langle\overline{\delta}(1\o \tau\delta_{i}(x_n)),\overline{\delta}(V)\rangle$  converges to the analog with $x$ by what we have just proved. 
Since the resulting terms in (\ref{eqDeldel}) are bounded with respect to $||\delta(x_n)||_2$, we can get the equation at the limit $x_n\to x$.

We thus got :
$$\langle(1\o\tau\delta_i)\Delta(x), V\rangle
=\langle\overline{\delta}(1\o \tau\delta_{i}(x)),\overline{\delta}(V)\rangle +\sum_ {j=1}^N\langle\bar{\delta}_{j}(x)\#((1\o O)\bar{\partial}_{i}\partial_{j}^{*}(1\o1) ),V\o 1\rangle.$$
Now we can extend this from $V\in D(\delta)$ to $V\in D(\bar{\delta})$ and thus we obtain our result by definition of $\Delta$.

\item[(iii)]

Consider again  this time the variant of equation (\ref{eqDeldel}) 
 with $U\in (M\cap D(\Delta))\o_{alg}(M\cap D(\Delta))$, and $x\in D(\delta)$. 
Everything reduces to $U=a\o b$. Using lemma \ref{triv},  we have $\delta_{i}^{*}(U)=a\delta_{i}^{*}(1\o1)b-(1\o\tau)\delta_{i}(a)b-a(\tau\o1)\delta_{i}(b)$. But now, $a,b\in M\cap D(\Delta)$,$(1\o\tau)\delta_{i}(a),(\tau\o1)\delta_{i}(b)\in D(\Delta)$ by (ii) thus lemma \ref{bardelta1} proves $-(1\o\tau)\delta_{i}(a)b-a(\tau\o1)\delta_{i}(b)\in D(\overline{\Delta^{1}})$.  Then, let us write, for any $U\in (M\cap D(\Delta))\o_{alg}(M\cap D(\Delta))$ (with the notation $(a\o b)\#c=acb$), $\delta_{i}^{*}(U)=U\#\delta_{i}^{*}(1\o1)-V$ with $V\in D(\overline{\Delta^{1}})$. We can now rewrite our equation (using $\d$ is a derivation on $M\cap D(\d)$ to see $U\#\delta_{i}^{*}(1\o1)\in D(\d)$):
\begin{align*}\sum_ {j=1}^N\langle\delta_j(x)&,\bar{\delta}_j(U\#\delta_{i}^{*}(1\o1))\rangle-\langle\bar{\delta}_{j}(x)\#((1\o O)\bar{\partial}_{i}\partial_{j}^{*}(1\o1) ),U\rangle\\ &=\langle x,\overline{\Delta^{1}}(V)\rangle+
  \langle\d(x),[1\o\Delta+\Delta\o1](U)\rangle
\end{align*}
Now, once again using the second part of lemma \ref{triv}, we get this for any $x\in D(\Delta^{3/2})\cap M\subset D(\bar{\delta})\cap M$. Using the remark before lemma \ref{bardelta1}, we can rewrite $\langle x,\overline{\Delta^{1}}(V)\rangle=\langle \Delta(x),V\rangle$, and thus, finally coming back to our original notation~:
\begin{align*}\langle\d\Delta(x),U\rangle&=  \langle\d(x),[1\o\Delta +\Delta\o1](U)\rangle+\sum_ {j=1}^N\langle\bar{\delta}_{j}(x)\#((1\o O)\bar{\partial}_{i}\partial_{j}^{*}(1\o1) ),U\rangle
\end{align*}
Finally, (using stability by $\phi_{t}\o\phi_{t}$) it is easily seen that $(M\cap D(\Delta))\o_{alg}(M\cap D(\Delta))$ is a core for $\overline{\Delta\o 1+1\o\Delta}$, and thus we can take $U$ in the domain of that operator 
and finally, since this operator is closed, we get our result. The extension from $x\in D(\Delta^{3/2})\cap M$ to $x\in D(\Delta^{3/2})$ is easy.

\end{enumerate}
\end{proof}





\subsubsection{Proof of Theorem \ref{main2}}
Using  lemma \ref{main2-0}, it only remains to check assumption $f,h$. Lemma \ref{in2deltaDelta} (iii) proves $f.1$ and $f.2$ with $\H:L^2(M\o M)^N\to L^2(M\o M)^N$ given by $(\H(C))_i=\sum_{j=1}^NC_{j}\#((1\o O)\bar{\partial}_{i}\partial_{j}^{*}(1\o1) )$ so that $||\H||\leq ||((1\o O)\bar{\partial}_{i}\partial_{j}^{*}(1\o1) )_{(i,j)}||_{M_N(M\o M^{op})}$.

It remains to check $h$. Consider $x\in D(\Delta)$ then $\eta_{\alpha}(x)\in D(\Delta^{3/2})$, thus we can apply lemma  \ref{in2deltaDelta} (iii) to get :
\begin{align*} 
&\langle\Delta(\eta_{\alpha}(x)),\Delta(\eta_{\alpha}(x))\rangle  =\sum_{i=1}^N\langle\bar{\delta}_{i}\Delta(\eta_{\alpha}(x)),\bar{\delta}_{i}(\eta_{\alpha}(x))\rangle \\&=\sum_{i=1}^N \langle\Delta^{\o}\bar{\delta}_i(\eta_{\alpha}(x)),\bar{\delta}_{i}(\eta_{\alpha}(x))\rangle 
+\langle\H(\bar{\delta}(\eta_{\alpha}(x)))_i,\bar{\delta}_{i}(\eta_{\alpha}(x))\rangle\\ &=\sum_{i,j=1}^N\langle\overline{\delta_j\o1\oplus1\o\delta_j}\bar{\delta}_i(\eta_{\alpha}(x)),\overline{\delta_j\o1\oplus1\o\delta_j}\bar{\delta}_{i}(\eta_{\alpha}(x))\rangle+\sum_{i=1}^N\langle\H(\bar{\delta}(\eta_{\alpha}(x)))_i,\bar{\delta}_{i}(\eta_{\alpha}(x))\rangle.
\end{align*}

Since we assume $x\in D(\Delta)$, the left hand side and the second term in the right hand side converge when $\alpha\to\infty$ showing that $x\in D(\overline{\delta\o1\oplus1\o\delta}\circ\bar{\delta})$ as expected.
Now, the inequality stated in $h$ is a  tensor variant of the one stated in $c$ and already checked. \ \hfill \ensuremath{\Box}

\section{Complementary properties of our main example}

\subsection{An Ito Formula for resolvent operators under weak assumptions}

Let us consider an integral of the form~:
$$X_{t}=X_{0}+\int_{0}^{t}K_{s}ds+\int_{0}^{t}U_{s}\#dS_{s},$$
where $X_{0}\in M_{0}=W$, $s\mapsto K_{s}$ weakly measurable with $K_{s}\in
L^{1}(M_{s})$, $\int_{0}^{T}||K_{s}||_{1}ds<\infty \forall T>0$ and
$U\in \B_{2}^{a}$. We also assume
$K_{s}=K_{s}^{*}$,$U_{s}=U_{s}^{*}$ (in this section we use the involution
induced by $HS(M)$, i.e. $(a\o b)^{*}=b^{*}\o a^{*}$),$X_{0}=X_{0}^{*}$ so that $X_{t}=X_{t}^{*}$.

We would like to find a formula for $(z+X_{t})^{-1}, z\in \C, \Im z >0$, to
compute the Cauchy-transform of $X_{t}$ with this unbounded $X_{t}\in
L^{1}(M_{t})$. If we supposed $K_{s}\in M_{s}$, $U_{s}\in \B_{\infty}^{a}$
Proposition 4.3.4 of \cite{BS98} would conclude (see this article for
the notation, the case with N free Brownian motions as in our case is similar
to their case, especially we write in this section also $\#$ for multiplication in $M\o M^{op}\o M$ without confusion with the previous notation for multiplication in $M\o M^{op}$) since $f(x)=\frac{1}{z+x}=\int_{\R}e^{ixy}\mu(dy)$ with
$\mu(dy)=-i1_{[0,\infty)}e^{izy}dy$ (which satisfy $\II_{2}(f)<\infty$, and thus their results apply).

But we are not in such a bad position because all the terms of their
expression in the Ito Formula for $(z+X_{t})^{-1}$ make sense, this almost only require{\gre s} applying a standard density argument left to the reader.

\begin{proposition}\label{Ito-res}
With the previous assumptions we have~:
\begin{align*}&(z+X_{t})^{-1}  = (z+X_{0})^{-1} -
\int_{0}^{t}\left[(z+X_{s})^{-1}\otimes(z+X_{s})^{-1}\right]\#U_{s}\#dS_{s}
\\ &- \int_{0}^{t}\left[(z+X_{s})^{-1}\otimes(z+X_{s})^{-1}\right]\#K_{s}ds
\\ &+\sum_{i=1}^{N} \int_{0}^{t}m\circ(1\otimes\tau\otimes1)((1\otimes U_{s}^{(i)})\#((z+X_{s})^{-1}\otimes(z+X_{s})^{-1}\otimes(z+X_{s})^{-1})\#(U_{s}^{(i)}\otimes1))ds.\end{align*}
\end{proposition}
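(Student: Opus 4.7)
The strategy is a density argument, approximating our $X_t$ by processes to which Proposition 4.3.4 of \cite{BS98} applies directly, then passing to the limit in each term using the uniform bound $\|(z+X_s)^{-1}\|\leq 1/\Im z$.

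First I would construct the approximation. Using the spectral decomposition of $K_s=K_s^*$ (as a self-adjoint element affiliated to $M_s$), set $K_s^n=K_s\,\mathbf{1}_{|K_s|\leq n}$, so that $K_s^n\in M_s$, $\|K_s^n\|\leq n$, $\|K_s^n-K_s\|_1\to 0$ for each $s$, and $\|K_s^n-K_s\|_1\leq \|K_s\|_1\in L^1([0,T],ds)$. Choose adapted simple bi-processes $U^n\in\mathcal{B}_\infty^a$ converging to $U$ in $\mathcal{B}_2^a$; we may further assume $(U_s^n)^*=U_s^n$ by symmetrization. Put
\[ X_t^n=X_0+\int_0^t K_s^n\,ds+\int_0^t U_s^n\#dS_s.\]
Since $f(x)=(z+x)^{-1}=\int_\R e^{ixy}\mu(dy)$ with $\mu(dy)=-i\,\mathbf{1}_{[0,\infty)}e^{izy}dy$ satisfies $\mathcal{I}_2(f)<\infty$ (because $|e^{izy}|=e^{-(\Im z)y}$), Proposition 4.3.4 of \cite{BS98} gives the claimed identity with every occurrence of $X,K,U$ replaced by $X^n,K^n,U^n$.

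Next I would pass to the limit $n\to\infty$. By dominated convergence $\int_0^tK_s^n\,ds\to\int_0^tK_s\,ds$ in $L^1(M_t)$, and by the $\mathcal{B}_2^a$-isometry of stochastic integration $\int_0^tU_s^n\#dS_s\to\int_0^t U_s\#dS_s$ in $L^2(M_t)$; hence $X_t^n\to X_t$ in measure. For self-adjoint operators affiliated to $M$, convergence in measure implies $(z+X_t^n)^{-1}\to(z+X_t)^{-1}$ in the $*$-strong topology on bounded sets of $M$, hence in $L^p$ for every $p<\infty$. Combined with the uniform bound $\|(z+X_s^n)^{-1}\|\leq 1/\Im z$ this allows passage to the limit:
\begin{itemize}
\item for the drift integral, $\|(z+X_s^n)^{-1}K_s^n(z+X_s^n)^{-1}-(z+X_s)^{-1}K_s(z+X_s)^{-1}\|_1\to 0$ pointwise in $s$ and is bounded by $\|K_s\|_1/(\Im z)^2$, so dominated convergence applies;
\item for the stochastic integral, $[(z+X_s^n)^{-1}\otimes(z+X_s^n)^{-1}]\#U_s^n$ converges to $[(z+X_s)^{-1}\otimes(z+X_s)^{-1}]\#U_s$ in $L^2(M_s)\otimes L^2(M_s)$ for a.e. $s$ (using the strong convergence of the resolvents and $U_s^n\to U_s$ in $L^2\otimes L^2$) and is bounded by $\|U_s\|_2/(\Im z)^2$ plus an error going to $0$; the $\mathcal{B}_2^a$-isometry gives the desired $L^2$-convergence of the stochastic integrals.
\end{itemize}

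The main obstacle will be the Ito correction term $m\circ(1\otimes\tau\otimes1)(1\otimes U_s^{(i)}\#(z+X_s)^{-1\otimes 3}\#U_s^{(i)}\otimes1)$, which is trilinear in the resolvents and bilinear in $U_s^{(i)}$ and lives only in $L^1$. Writing it pointwise in $s$ as a sum of terms of the form $(z+X_s)^{-1}a_n\tau(b_n(z+X_s)^{-1}c_n)d_n(z+X_s)^{-1}$ for an expansion $U_s^{(i)}=\sum_n a_n\otimes b_n$ in $L^2\otimes L^2$, Cauchy--Schwarz yields the $L^1$-bound $C\,\|U_s^{(i)}\|_2^2/(\Im z)^3$, uniform in $n$. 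Strong convergence of $(z+X_s^n)^{-1}$ together with $L^2$-convergence of $U_s^{n(i)}$ lets us identify the limit using dominated convergence in $s$, closing the argument.
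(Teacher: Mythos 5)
Your proposal is correct, and its skeleton is the same as the paper's (approximate by a bounded adapted drift and a $\mathcal{B}_{\infty}^{a}$ biprocess, apply Proposition 4.3.4 of \cite{BS98} through the Fourier representation of $x\mapsto (z+x)^{-1}$, then pass to the limit), but the limit-passing is done by a genuinely different mechanism precisely where the paper works hardest. You upgrade the $L^{1}$-convergence $X^{n}_{t}\to X_{t}$ to $*$-strong and $L^{p}$ ($p<\infty$) convergence of the uniformly bounded resolvents; this yields norm convergence of the drift and martingale integrands, and for the Ito correction term your uniform trilinear Cauchy--Schwarz bound $C\|U_{s}\|_{2}^{2}/(\Im z)^{3}$ combined with a finite-rank truncation of $U_{s}$ in $L^{2}\otimes L^{2}$ closes the argument, since on elementary tensors the outer resolvents act on $L^{2}$ legs and the middle one only enters through $\langle b^{*},(z+X^{n}_{s})^{-1}c\rangle$, so strong convergence suffices term by term while the uniform bound makes the truncation uniform in $n$. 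The paper instead never upgrades beyond weak-$*$ convergence of $(z+X^{n}_{s})^{-1}\otimes(z+X^{n}_{s})^{-1}$ in $L^{\infty}([0,T])\overline{\otimes}M\overline{\otimes}M$, and consequently must (i) run a Hahn--Banach/Mazur argument just to see that $[(z+X_{s})^{-1}\otimes(z+X_{s})^{-1}]\#U_{s}$ lies in $\mathcal{B}_{2}^{a}$, and (ii) control the correction term through the positivity decomposition $|\cdot|_{\epsilon}$ of resolvent differences, using that $b\mapsto \tau\circ m\circ 1\otimes\tau\circ m\otimes 1(|b|_{\epsilon}U_{s}\otimes U_{s})$ is a pairing against a fixed positive $L^{1}$ element, so that weak-$*$ convergence of the bounded differences is enough; all its limits are only weak. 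Your route buys norm convergence of every term and avoids the positivity machinery; to make it airtight you should state explicitly (a) the finite-rank truncation step for the correction term, which is exactly what your uniform bound licenses, and (b) that the limiting martingale integrand belongs to $\mathcal{B}_{2}^{a}$ -- in your scheme this is automatic because the approximating integrands are in $\mathcal{B}_{2}^{a}$ by \cite{BS98} and $\mathcal{B}_{2}^{a}$ is norm-closed in $L^{2}([0,T],L^{2}(\tau\otimes\tau)^{N})$, whereas the paper needs its weak-closure argument for this point.
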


The two next lemmas are also left to the reader.
\begin{lemma}\label{Kaplansky}
Let\begin{align*}&X_{t}=X_{0}+\int_{0}^{t}K_{s}ds+\int_{0}^{t}U_{s}\#dS_{s},\end{align*}

where $X_{0}\in M_{0}$, $s\mapsto K_{s}$ weakly measurable with $K_{s}\in
L^{1}([0,T],L^{1}(M_{s}))$,  and
$U\in \B_{2}^{a}$. We also assume $X_{t}\in M$ (in a bounded way in $t$). Let say $||X_t||<1$.

Then, there exists $X_{t}^n=X_{0}+\int_{0}^{t}K_{s}^nds+\int_{0}^{t}U_{s}^n\#dS_{s}$
 with $s\mapsto K_{s}^n$ weakly measurable with $K_{s}^n\in
L^{\infty}([0,T])\overline{\o}M_{s}$, $K^n$ converging to $K$ in $L^{1}([0,T],L^{1}(M_{s}))$,  and
$U^n\in \B_{\infty}^{a}$, $U^n$ converging to $U$ in $\B_{2}^{a}$. Moreover, we have $||X_t^n||\leq 1$. 
\end{lemma}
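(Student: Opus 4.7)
The plan is to use the functions $f(x)=2x(1+x^*x)^{-1}$ and its inverse branch $g(x)=(1+(1-xx^*)^{1/2})^{-1}x$ from the preceding Lemma \ref{ItoKaplansky}, exploiting the facts that $\|f(y)\|\le 1$ for every $y\in M$ and that $f\circ g=\mathrm{id}$ on the open unit ball. The strategy is to transfer $X_t$ to the process $Y_t:=g(X_t)$, approximate the drift and diffusion \emph{of $Y_t$} by bounded/$\mathcal{B}_\infty^a$ versions, and then pull back through $f$ to obtain an approximation of $X_t$ that automatically satisfies the required norm bound.

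First I would apply the Ito formula for $g$ to write
$$Y_t=g(X_0)+\int_0^t K'_s\,ds+\int_0^t U'_s\#dS_s,$$
where $U'_s:=g'(X_s)\#U_s\in\mathcal{B}_2^a$ and $K'_s$ collects $g'(X_s)\#K_s$ together with the Ito correction quadratic in $U_s$ through $g''(X_s)/2$. Since $\|X_s\|<1$ uniformly in $s$, the coefficients $g'(X_s)\in M\overline{\otimes}M$ and $g''(X_s)\in M^{\overline{\otimes}3}$ are uniformly bounded, so $K'\in L^1([0,T],L^1(M_s))$ and $U'\in\mathcal{B}_2^a$.

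Next I would apply standard density arguments to produce step functions $K'^{\,n}_s\in L^\infty([0,T])\overline{\otimes}M_s$ with $K'^{\,n}\to K'$ in $L^1([0,T],L^1(M))$ (truncate the norm on each mesh block and mollify in time), and adapted simple bi-processes $U'^{\,n}\in\mathcal{B}_\infty^a$ with $U'^{\,n}\to U'$ in $\mathcal{B}_2^a$. Setting $Y_t^n:=g(X_0)+\int_0^t K'^{\,n}_s\,ds+\int_0^t U'^{\,n}_s\#dS_s$, I define
$$X_t^n:=f(Y_t^n).$$
The norm bound $\|X_t^n\|\le 1$ is automatic from $\|f(y)\|\le 1$. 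Applying the Ito formula for $f$ to $Y_t^n$ and using $f(g(X_0))=X_0$ (since $\|X_0\|<1$), one rewrites $X_t^n=X_0+\int_0^t K_s^n\,ds+\int_0^t U_s^n\#dS_s$ with $U_s^n:=f'(Y_s^n)\#U'^{\,n}_s\in\mathcal{B}_\infty^a$ and $K_s^n\in L^\infty([0,T])\overline{\otimes}M_s$ (all operators $f'(Y_s^n),f''(Y_s^n)$ are bounded uniformly by the explicit form of $f$ on all of $M$).

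The main obstacle will be verifying $K^n\to K$ in $L^1([0,T],L^1(M))$ and $U^n\to U$ in $\mathcal{B}_2^a$. This rests on the chain-rule identities $f'(g(x))\#g'(x)=1\otimes 1$ and the corresponding second-order identity derived in the proof of Lemma \ref{ItoKaplansky} (which encodes that the $f$-Ito correction composed with $g$ cancels the $g''/2$ correction), so that the formal limit of $K_s^n$ is $K_s$ and that of $U_s^n$ is $U_s$. The delicate point is the continuity of the functional calculi $f'(\cdot)$ and $f''(\cdot)$ along $Y_s^n\to Y_s$: since the $Y_s^n$ need not lie in a fixed operator-norm ball, one controls them through the bound $\|f^{(k)}(y)\|\lesssim 1$ valid on all of $M$, combined with $L^2$-convergence of $Y_t^n$ to $Y_t=g(X_t)$ uniformly in $t$ (from the isometry of the stochastic integral and the $L^1$ convergence of the drifts), which is enough to pass to the limit in the chain-rule identities in $L^1$ and $L^2$.
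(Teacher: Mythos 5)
Your proposal follows essentially the same route as the paper's proof: transfer to $Y_t$ via $g$, approximate its drift and diffusion by an adapted $L^{\infty}([0,T])\overline{\otimes}M$-valued drift and a $\mathcal{B}_{\infty}^{a}$ bi-process, set $X_t^n=f(Y_t^n)$ so the bound $\|X_t^n\|\leq 1$ is automatic, and recover $K^n,U^n$ from the Ito formula of Lemma \ref{ItoKaplansky} together with the uniform bounds $\|f'(x)\|_{M\widehat{\otimes}M}\leq 3$, $\|f''(x)\|\leq 4$. The only (minor) divergence is that the paper sidesteps the delicate functional-calculus limit you flag by proving merely weak convergence of $K^n$ and $U^n$ (handling the $f''$ term as in Proposition \ref{Ito-res}) and then passing to convex combinations via Mazur/Banach--Saks, which still preserves the norm bound.
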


The following variant of the Ito product formula (proposition 4.3.2  in
\cite{BS98}) is now obvious~:
\begin{lemmas}\label{Ito}

Let \begin{align*}&X_{t}=X_{0}+\int_{0}^{t}K_{s}ds+\int_{0}^{t}U_{s}\#dS_{s},\\ &Y_{t}=Y_{0}+\int_{0}^{t}L_{s}ds+\int_{0}^{t}V_{s}\#dS_{s},\end{align*}

where $X_{0},Y_{0}\in M_{0}$, $s\mapsto K_{s}$,$s\mapsto L_{s}$ weakly measurable with $K_{s},L_{s}\in
L^{1}([0,T],L^{1}(M_{s}))$,  and
$U,V\in \B_{2}^{a}$. We also assume $X_{t},Y_{t}\in M$ (in a bounded way in $t$).

Then, for any $t\leq T$~:
\begin{align*}X_{t}Y_{t}=X_{0}Y_{0}+&\int_{0}^{t}(X_{s}L_{s}+K_{s}Y_{s})ds+\int_{0}^{t}m\circ{\gre (1\o(\tau\circ
m)\o 1)}\left(U_{s}\o V_{s}\right)ds\\ &+\int_{0}^{t}(X_{s}V_{s}+U_{s}Y_{s})\#dS_{s}.\end{align*}
\end{lemmas}


\subsection{Boundedness}
In this subsection, we are interested in the example of part 2. Under assumption 0, we write $X_{t}\in\B_{2,\phi\bar{\delta} }^{a},X_{t}^{\epsilon}\in\B_{2,\Delta^{1/2} }^{a}$ the solutions given by theorem \ref{main} (i) and lemma \ref{main2-0} . We moreover consider an initial condition $X_{0}\in M_{0}\cap D(\bar{\delta})$.

\begin{proposition}\label{bound}With those assumptions, for any complex number $z$ with $\Im z>0$,
  $\frac{1}{z+X_{s}^{\epsilon}}$ is in $\B_{2,\Delta^{1/2}}^{a}$ and
\begin{align*}&(z+X_{t}^{\epsilon})^{-1}  = \phi_{t}((z+X_{0}^{\epsilon})^{-1}) +(1-\epsilon)\int_{0}^{t}\phi_{t-s}(\bar{\delta }((z+X_{s}^{\epsilon})^{-1})\#dS_{s})
\\ &+((1-\epsilon)^{2}-1)\sum_{i=1}^{N}\\ & \int_{0}^{t}\phi_{t-s}\left(m\circ(1\otimes\tau\otimes1)((z+X_{s}^{\epsilon})^{-1}\bar{\delta _{i}}(X_{s}^{\epsilon})(z+X_{s}^{\epsilon})^{-1}\bar{\delta _{i}}(X_{s}^{\epsilon})(z+X_{s}^{\epsilon})^{-1})\right)ds.
\end{align*}

As a consequence, if we  assume moreover $||X_{t}||_{2}=||X_{0}||_{2}$ (a.e. t, this is the case e.g. for a mild solution given by Theorem \ref{main} (ii)) then $X_{t}\in M$ for all $t$ (recall we supposed $X_{0}\in
M$) and we also have $||X_{t}||\leq ||X_{0}||$ (actually equal a.e.), and likewise for any $\epsilon>0$, $X_{t}^{\epsilon}\in M$.
{\gre Finally, if $||X_{t}||_{2}=||X_{0}||_{2}$ a.e. in t, then $||X_t-X_t^{\epsilon}||_2\to 0$ a.e. in t. }
\end{proposition}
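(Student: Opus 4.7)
The plan is to derive the claimed mild-form identity for $(z+X_t^\epsilon)^{-1}$ by feeding the strong form
\[
X_t^\epsilon = X_0 - \tfrac12\int_0^t \Delta(X_s^\epsilon)\,ds + (1-\epsilon)\int_0^t \bar{\delta}(X_s^\epsilon)\#dS_s
\]
into the resolvent Ito formula of Proposition \ref{Ito-res}, with $K_s=-\tfrac12\Delta(X_s^\epsilon)$ and $U_s=(1-\epsilon)\bar{\delta}(X_s^\epsilon)$, and then converting the resulting strong-form identity for $(z+X_t^\epsilon)^{-1}$ into a mild form via the stochastic convolution integration by parts of Proposition \ref{IPPSC}. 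When $X^\epsilon$ is only a mild solution (as in Theorem \ref{main}(i) under mere assumption 0), a short approximation step is needed: either work first under assumption 1, where Theorem \ref{main}(ii) provides strong solutions, or regularise the initial datum so that $X^\epsilon$ becomes strong and pass to the limit using the $\epsilon$-uniform bounds of Theorem \ref{main}.

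The two algebraic identities driving the computation are the derivation rule $\bar{\delta}((z+X)^{-1}) = -[(z+X)^{-1}\otimes(z+X)^{-1}]\#\bar{\delta}(X)$, obtained by differentiating $(z+X)^{-1}(z+X)=1$, and the generator chain rule obtained by applying Lemma \ref{bardelta1} to the same product:
\[
(z+X)^{-1}\Delta(X)(z+X)^{-1} = -\Delta((z+X)^{-1}) - 2\sum_i m(1\otimes\tau\otimes 1)\bigl((z+X)^{-1}\bar{\delta}_i(X)(z+X)^{-1}\bar{\delta}_i(X)(z+X)^{-1}\bigr).
\]
Substituted in the output of Proposition \ref{Ito-res}, these identify the stochastic integrand as $(1-\epsilon)\bar{\delta}((z+X_s^\epsilon)^{-1})$ and regroup the drift into $-\tfrac12\Delta((z+X_s^\epsilon)^{-1})$ plus exactly the announced correction with prefactor $(1-\epsilon)^2-1$ (the factor $(1-\epsilon)^2$ coming from $U_s\otimes U_s$ and the $-1$ from the chain rule). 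Hence $Y_t^\epsilon := (z+X_t^\epsilon)^{-1}$ satisfies strongly $dY=-\tfrac12\Delta(Y)\,dt + G_s^\epsilon\,ds +(1-\epsilon)\bar{\delta}(Y)\#dS$, and Proposition \ref{IPPSC} converts this into the stated mild representation. The membership $Y^\epsilon\in\B_{2,\Delta^{1/2}}^a$ follows from the a priori $M$-bound $\|(z+X_s^\epsilon)^{-1}\|_M\leq 1/|\Im z|$ (valid because $X_s^\epsilon$ is self-adjoint affiliated with $M$), which makes $\bar{\delta}(Y_s^\epsilon)$ bounded in $\B_2^a$ through the derivation rule; Proposition \ref{Cruc} then controls $\Delta^{1/2}Y^\epsilon$ from the mild formula.

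For the boundedness consequence, assume $\|X_t\|_2=\|X_0\|_2$. Combined with the first inequality of Theorem \ref{main}(i), this forces strong $L^2$-convergence $X_t^\epsilon\to X_t$, hence resolvent convergence $(z+X_t^\epsilon)^{-1}\to (z+X_t)^{-1}$ in $L^2$. Passing to the limit $\epsilon\to 0$ in the mild formula, the correction term vanishes (it is $M$-bounded uniformly in $\epsilon$ and carries the prefactor $(1-\epsilon)^2-1\to 0$), leaving
\[
(z+X_t)^{-1} = \phi_t((z+X_0)^{-1}) + \int_0^t \phi_{t-s}\bigl(\bar{\delta}((z+X_s)^{-1})\#dS_s\bigr).
\]
Each $(z+X_t^\epsilon)^{-1}$ lies in the $1/|\Im z|$-ball of $M$, so the weak-$*$ limit point does as well, giving $(z+X_t)^{-1}\in M$ for every $z$ with $\Im z>0$; the same reasoning, this time keeping $\epsilon>0$ fixed, shows $X_t^\epsilon\in M$.

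The main obstacle is upgrading this half-plane resolvent control to the operator-norm bound $\|X_t\|\leq\|X_0\|$. The plan is to exploit complete positivity and $M$-contractivity of $\phi_t$ together with the mild formula to extend the one-sided spectral bound of $X_0$ to $X_t$: for real $z$ with $|z|>\|X_0\|$, $\phi_t((z+X_0)^{-1})\in M$ has operator norm at most $1/(|z|-\|X_0\|)$, and a weak-$*$-compactness / analytic-continuation argument using the uniform $M$-bound of the correction term on compacta of $\C\setminus\R$ shows the resolvent $(z+X_t)^{-1}$ extends to such real $z$ while staying $M$-bounded with the same one-sided decay. This forces $\sigma(X_t)\subset[-\|X_0\|,\|X_0\|]$ and yields $\|X_t\|\leq\|X_0\|$; the equality a.e. then follows from the $L^2$-isometry hypothesis combined with the obtained $M$-contractivity.
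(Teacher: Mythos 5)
Your overall route to the resolvent identity (Proposition \ref{Ito-res} plus the derivation rule for $\bar{\delta}((z+x)^{-1})$ and the chain rule for $\Delta^{1}$, then conversion to mild form) is the paper's, but two of your reductions do not survive scrutiny. First, the proposition is stated and used under assumption 0 alone (this is essential later, e.g.\ for the group-cocycle examples), and under $\Gamma_{0}$ no regularisation of the initial datum produces a strong solution: Theorem \ref{main} only yields strong solutions of the $\epsilon$-equation under the $\Gamma_{1}$-type hypotheses, so both of your fallback options (``work under assumption 1'' or ``regularise $X_0$'') either restrict the generality or simply fail. The paper instead uses that the mild solution is a weak solution, so $\int_0^t X_s^{\epsilon}ds\in D(\Delta)$, applies $\eta_{\alpha}$ to this identity (Lemma \ref{deltaInt}(ii)), runs Proposition \ref{Ito-res} on $\eta_{\alpha}(X_t^{\epsilon})$ with $K_s=-\tfrac12\Delta\eta_{\alpha}(X_s^{\epsilon})\in L^2$, and then removes $\eta_{\alpha}$ by letting $\alpha\to\infty$, controlling the commutator terms $\eta_{\alpha}^{\otimes}\bar{\delta}-\bar{\delta}\eta_{\alpha}$ by dominated convergence; you have no substitute for this. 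Second, your justification that the correction term vanishes as $\epsilon\to 0$ (``$M$-bounded uniformly in $\epsilon$'') is false: the integrand is only $L^1$-controlled, by $|\Im z|^{-3}\int_0^t\|\bar{\delta}(X_s^{\epsilon})\|_2^2\,ds$, which is not uniformly bounded in $\epsilon$; what actually kills the term is the energy identity $(1-(1-\epsilon)^2)\int_0^t\|\bar{\delta}(X_s^{\epsilon})\|_2^2\,ds=\|X_0\|_2^2-\|X_t^{\epsilon}\|_2^2$, whose right side tends to $0$ a.e.\ precisely because of the isometry hypothesis.

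The two concluding steps are genuinely gapped. For $\|X_t\|\le\|X_0\|$, your plan to continue the resolvent identity analytically to real $z$ with $|z|>\|X_0\|$ gives no mechanism for controlling the stochastic convolution term in operator norm (it is a priori only an $L^2$ object), and the ``uniform $M$-bound of the correction term'' you invoke is both false and absent in the limit; the argument is essentially circular, since $M$-control is exactly what is to be proved. The paper sidesteps this by taking the trace of the limiting identity: the stochastic convolution has zero trace and $\phi_t$ is trace preserving, so the Cauchy transforms of $X_t$ and $X_0$ coincide a.e.\ in $t$, hence their distributions and operator norms coincide a.e., and weak continuity in $t$ then gives boundedness with $\|X_t\|\le\|X_0\|$ everywhere (note that your closing claim that norm equality follows from the $L^2$-isometry plus contractivity is also unjustified: equal $L^2$ norms do not force equal operator norms; equality comes from equality of the full distributions). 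Finally, ``the same reasoning, keeping $\epsilon>0$ fixed'' cannot give $X_t^{\epsilon}\in M$: for fixed $\epsilon$ the correction term does not vanish and $\|X_t^{\epsilon}\|_2<\|X_0\|_2$ in general, so no identification of Cauchy transforms is available. The paper needs a separate trick here: write $S_s^{(i)}=(1-\epsilon)S_s^{(i,a)}+\sqrt{1-(1-\epsilon)^2}\,S_s^{(i,b)}$, check via the commutations $E_a\phi_t=\phi_tE_a$, $E_a\overline{\otimes}E_a\circ\bar{\delta}=\bar{\delta}\circ E_a$ that $E_a(X_t)$ solves the $\epsilon$-equation driven by $S^{(a)}$, and conclude by uniqueness that $X_t^{\epsilon}=E_a(X_t)$ is the conditional expectation of the already-bounded $\epsilon=0$ solution, hence in $M$. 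Your proposal supplies no replacement for either of these arguments.
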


\begin{proof}[Proof]
Let $\epsilon>0$. 
Since we have a mild solution at $\epsilon$ level, by theorem \ref{main} and since a mild solution is a weak solution as seen in  Proposition \ref{MImpS}, we get by self-adjointness of $\Delta$ for characterizing its domain that $\int_{0}^{t}X_{s}^{\epsilon}ds\in D(\Delta)$ and~:
$$X_{t}^{\epsilon}=X_{0}-\frac{1}{2}\Delta \int_{0}^{t}X_{s}^{\epsilon}ds+(1-\epsilon)\int_{0}^{t}\delta (X_{s}^{\epsilon})\#dS_{s}.$$

Thus, applying a resolvent, using lemma \ref{deltaInt} (ii), we deduce for  any $\alpha>0$~:
$$\eta_{\alpha}(X_{t}^{\epsilon})=\eta_{\alpha}(X_{0})-\frac{1}{2} \int_{0}^{t}\Delta\eta_{\alpha}(X_{s}^{\epsilon})ds+(1-\epsilon)\int_{0}^{t}\eta_{\alpha}^{\otimes}\delta (X_{s}^{\epsilon})\#dS_{s},$$
where $X_{0}\in M_{0}$, $s\mapsto K_{s}=-\frac{1}{2}\Delta\eta_{\alpha}( X_{s}^{\epsilon})$ weakly measurable with $K_{s}\in
L^{2}(M_{s})$, $\int_{0}^{T}||K_{s}||_{2}^{2}ds<\infty \forall T>0$ (all this using the definition of $\B_{2,\Delta^{1/2 }}^{a}$) and
$U_{s}=\eta_{\alpha}^{\otimes}\delta (X_{s}^{\epsilon})\in \B_{2}^{a}$. Recall 
that $\eta_{\alpha}(X_{0})\in M_0$ by the general Dirichlet form theory implying $\eta_{\alpha}$ is a completely positive contraction on $M$. We are in position to apply
proposition \ref{Ito-res},
thus we have~:
\begin{align*}(z&+\eta_{\alpha}(X_{t}^{\epsilon}))^{-1}  = (z+\eta_{\alpha}(X_{0}^{\epsilon}))^{-1} \\ &-
(1-\epsilon)\int_{0}^{t}\left[(z+\eta_{\alpha}(X_{s}^{\epsilon}))^{-1}\otimes(z+\eta_{\alpha}(X_{s}^{\epsilon}))^{-1}\right]\#\eta_{\alpha}^{\o}\bar{\delta }(X_{s}^{\epsilon})\#dS_{s}
\\ &+\frac{1}{2} \int_{0}^{t}\left[(z+\eta_{\alpha}(X_{s}^{\epsilon}))^{-1}\otimes(z+\eta_{\alpha}(X_{s}^{\epsilon}))^{-1}\right]\#\Delta  \eta_{\alpha}(X_{s}^{\epsilon})ds
\\ &+(1-\epsilon)^{2}\sum_{i=1}^{N} \int_{0}^{t}ds\\ &m\circ(1\otimes\tau\otimes1)((z+\eta_{\alpha}(X_{s}^{\epsilon}))^{-1}\eta_{\alpha}^{\o}\bar{\delta _{i}}(X_{s}^{\epsilon})(z+\eta_{\alpha}(X_{s}^{\epsilon}))^{-1}\eta_{\alpha}^{\o}\bar{\delta _{i}}(X_{s}^{\epsilon})(z+\eta_{\alpha}(X_{s}^{\epsilon}))^{-1}).\end{align*}

But, note that for any $x\in D(\bar{\delta })$, $(z+x)^{-1}\in
D(\bar{\delta })$, and
$\bar{\delta }((z+x)^{-1})=-(z+x)^{-1}\bar{\delta }(x)(z+x)^{-1}$. Indeed, we
check this easily on $D(\delta )\subset M$ by Leibniz rule, and taking
$x_{n}\in D(\delta )$ converging to $x$ in $D(\bar{\delta })$, a usual formula
on resolvent operators
$(z+x_{n})^{-1}-(z+x)^{-1}=(z+x_{n})^{-1}(x-x_{n})(z+x)^{-1}$ {\gre gives}
convergence of $(z+x_{n})^{-1}$ to $(z+x)^{-1}$ in $L^{2}(M)$, and thus of
$\bar{\delta }((z+x_{n})^{-1})$ in $L^{1}(M\otimes M)$ to
$(z+x)^{-1}\bar{\delta }(x)(z+x)^{-1}$. A fortiori, we have weak convergence
in $L^{2}(M\otimes M)$. 
Since a
 convex set in $L^{2}(M)\oplus L^{2}(M\otimes M)$ is closed if and only if it is weakly closed by Hahn-Banach
 theorem, we get $(z+x)^{-1}\in
D(\bar{\delta })$ and the result.

Analogously, we have for any $x\in D(\Delta )$, $(z+x)^{-1}\in
D(\Delta ^{1})$ (cf. the paragraph before lemma \ref{delta1} for a definition) and moreover~:
\begin{align*} -&\Delta ^{1}((z+x)^{-1})=(z+x)^{-1}\Delta (x)(z+x)^{-1}\\ &+2\sum_{i=1}^{N}
m\circ(1\otimes\tau\otimes1)(1\otimes
\bar{\delta _{i}}(x)\#(z+x)^{-1}\otimes(z+x)^{-1}\otimes(z+x)^{-1}\#\bar{\delta _{i}}(x)\otimes1).\end{align*}

Let us write $R_{t,z,\alpha,\epsilon}=(z+\eta_{\alpha}(X_{t}^{\epsilon}))^{-1}.$
Thus, we have obtained, if we apply this formula to our previous equation in making appear terms by emphasizing  "commutators" of $\eta_{\alpha}^{\o}$ and $\delta$. We also write $Y_{s,z,\alpha,\epsilon,i}:=(R_{s,z,\alpha,\epsilon}(\eta_{\alpha}^{\o}\bar{\delta _{i}}(X_{s}^{\epsilon})-\bar{\delta _{i}}\eta_{\alpha}(X_{s}^{\epsilon}))R_{s,z,\alpha,\epsilon}$~:
\begin{align*}&R_{t,z,\alpha,\epsilon}  = R_{0,z,\alpha,\epsilon} +
(1-\epsilon)\int_{0}^{t}(\bar{\delta }(R_{s,z,\alpha,\epsilon}){\gre -}Y_{s,z,\alpha,\epsilon})\#dS_{s}-\frac{1}{2} \int_{0}^{t}\Delta^{1}((z+\eta_{\alpha}(X_{s}^{\epsilon}))^{-1})\ ds
\\ &+\sum_{i=1}^{N} \int_{0}^{t}m\circ(1\otimes\tau\otimes1)(Y_{s,z,\alpha,\epsilon,i}\eta_{\alpha}^{\o}\bar{\delta _{i}}(X_{s}^{\epsilon})R_{s,z,\alpha,\epsilon}+ R_{s,z,\alpha,\epsilon}\bar{\delta _{i}}\eta_{\alpha}(X_{s}^{\epsilon})Y_{s,z,\alpha,\epsilon,i})ds\\ &+((1-\epsilon)^{2}-1)\sum_{i=1}^{N} \int_{0}^{t}m\circ(1\otimes\tau\otimes1)(R_{s,z,\alpha,\epsilon}\eta_{\alpha}^{\o}\bar{\delta _{i}}(X_{s}^{\epsilon})R_{s,z,\alpha,\epsilon}\eta_{\alpha}^{\o}\bar{\delta _{i}}(X_{s}^{\epsilon})R_{s,z,\alpha,\epsilon})ds.\end{align*}


As in the proof of proposition \ref{SImpM} showing that a strong solution
is a mild solution, but take here $\zeta\in D(\Delta)\cap M$ in the proof, we have~: 
\begin{align*}&R_{t,z,\alpha,\epsilon}  = \phi_t(R_{0,z,\alpha,\epsilon}) +
(1-\epsilon)\int_{0}^{t}\phi_{t-s}^{\o}(\bar{\delta }(R_{s,z,\alpha,\epsilon})-Y_{s,z,\alpha,\epsilon})\#dS_{s}
\\ &+\sum_{i=1}^{N} \int_{0}^{t}\phi_{t-s}\left(m\circ(1\otimes\tau\otimes1)(Y_{s,z,\alpha,\epsilon,i}\eta_{\alpha}^{\o}\bar{\delta _{i}}(X_{s}^{\epsilon})R_{s,z,\alpha,\epsilon})+R_{s,z,\alpha,\epsilon}\bar{\delta _{i}}\eta_{\alpha}(X_{s}^{\epsilon})Y_{s,z,\alpha,\epsilon,i}\right)ds\\ &+((1-\epsilon)^{2}-1)\sum_{i=1}^{N} \int_{0}^{t}\phi_{t-s}m\circ(1\otimes\tau\otimes1)(R_{s,z,\alpha,\epsilon}\eta_{\alpha}^{\o}\bar{\delta _{i}}(X_{s}^{\epsilon})R_{s,z,\alpha,\epsilon}\eta_{\alpha}^{\o}\bar{\delta _{i}}(X_{s}^{\epsilon})R_{s,z,\alpha,\epsilon})ds.\end{align*}

We now want to make $\alpha$ tend to $\infty$. The three terms with $Y$ tend to zero by dominated convergence theorem (domination modulo constant by $||\bar{\delta}(X_{s}^{\epsilon})||_{2}^{2}$ since $X^{\epsilon}\in\B_{2,\Delta^{1/2 }}^{a}$.) In the last line we can remove $\eta_{\alpha}^{\o}$ in the same way and we get weak convergence in $L^{1}$ to the expected limit 
(of course we have to use $\phi$ bounded on $M$). Clearly, the two resolvent operators in the first line converge in $L^{2}$ and the same kind of reasoning already made shows that
$\bar{\delta }((z+\eta_{\alpha}(X_{s}^{\epsilon}))^{-1})$ weakly converges in $L^{2}$ to
$\bar{\delta }((z+X_{s}^{\epsilon})^{-1})$\footnote{remark that this second term is already
known to exists; by boundedness in $L^{2}$ of the  convergent
$\delta \eta_{\alpha}(X_{s}^{\epsilon})$, we get that
$(z+\eta_{\alpha}(X_{s}^{\epsilon}))^{-1}\delta (\eta_{\alpha}(X_{s}^{\epsilon}))(z+\eta_{\alpha}(X_{s}^{\epsilon}))^{-1}$
is close in $||.||_{1}$ of
$(z+X_{s}^{\epsilon})^{-1}\delta (\eta_{\alpha}(X_{s}^{\epsilon}))(z+X_{s}^{\epsilon})^{-1}$, and finally, with convergence in $L^{2}$ of $(z+X_{s}^{\epsilon})^{-1}\delta \eta_{\alpha}(X_{s}^{\epsilon})(z+X_{s}^{\epsilon})^{-1}$
to $(z+X_{s}^{\epsilon})^{-1}\delta (X_{s}^{\epsilon})(z+X_{s}^{\epsilon})^{-1}$; we have thus obtained the
 convergence in $L^{1}$, using that the two terms are known to be  in $L^{2}$
and the sequence bounded in this space, you get the result.}. A dominated convergence theorem concludes as above for the corresponding stochastic integral. At the end, we have got weak convergence in $L^{1}$ of all terms so that :
\begin{align*}&(z+X_{t}^{\epsilon})^{-1}  = \phi_{t}((z+X_{0}^{\epsilon})^{-1}) +(1-\epsilon)\int_{0}^{t}\phi_{t-s}(\bar{\delta }((z+X_{s}^{\epsilon})^{-1})\#dS_{s})
+((1-\epsilon)^{2}-1)\times\\ &\sum_{i=1}^{N} \int_{0}^{t}ds \phi_{t-s}\left(m\circ(1\otimes\tau\otimes1)((z+X_{s}^{\epsilon})^{-1}\bar{\delta _{i}}(X_{s}^{\epsilon})(z+X_{s}^{\epsilon})^{-1}\bar{\delta _{i}}(X_{s}^{\epsilon})(z+X_{s}^{\epsilon})^{-1})\right).
\end{align*}

We now want to make $\epsilon$ tend to 0, after taking the trace, to get the second statement. Note that in our context of section 2 where $||\delta(x)||_2=||\Delta^{1/2}(x)||_2$, (\ref{crucial}) gives :$$(1-(1-\epsilon)^{2})\int_{0}^{t}||\bar{\delta}(X_{s}^{\epsilon})||_{2}^{2}ds=||X_{0}||_{2}^{2}-||X_{t}^{\epsilon}||_{2}^{2}.$$
Incidentally, this proves the statement that $||X_{0}||_{2}^{2}=||X_{t}||_{2}^{2}$ in case (ii) of theorem \ref{main} since we proved there convergence of $X_t$ in $L^2$ and boundedness of $||\bar{\delta}(X_{s}^{\epsilon})||_{2}$.

 But (modulo extraction) the weak limit defining $X_{t}$ gives $||X_{t}||_{2}\leq\liminf||X_{t}^{\epsilon}||_{2}$ and thus 
\begin{equation}\label{epsisom}\limsup_{\epsilon\rightarrow 0}(1-(1-\epsilon)^{2})\int_{0}^{t}||\bar{\delta}(X_{s}^{\epsilon})||_{2}^{2}ds\leq||X_{0}||_{2}^{2}-||X_{t}||_{2}^{2}.\end{equation}

And the last term is almost everywhere $0$ under our assumption. {\gre As a consequence, since we already know $X_{t}^{\epsilon}$ converge{\gre s} to $X_{t}$ weakly in $L^2$ by theorem \ref{main}(i), we deduce the stated $||.||_{2}$ convergence of $X_{t}^{\epsilon}$ to $X_{t}$ on the a.e. set where $||X_{t}||_{2}=||X_{0}||_{2}$}. Moreover, the trace of the second line of the equality of  proposition \ref{bound} is bounded up to the cube of  an inverse of $\Im(z)$ by this quantity, and thus we get almost everywhere (in $t$ independent of $z$) equality of the Cauchy transforms of $X_{0}$ and $X_{t}$, giving a.e. boundedness (and equality of von Neumann algebra norms). Now we can use the weak continuity proved in theorem \ref{main} to extend boundedness everywhere.

Second, to prove that
$X_{t}^{\epsilon}\in M$, consider $S_{t}^{(i,J)}$ $1\leq i\leq N$, $J\in \{a, b\}$ a family of 
free Brownian motions, on which we extend $\delta$ by 0.
We can always write $S_{s}^{(i)}=(1-\epsilon)S_{s}^{(i,a)} +
\sqrt{1-(1-\epsilon)^{2}}S_{s}^{(i,b)}$.

We have
thus $$X_{t}=\phi_{t}(X_{0})+(1-\epsilon)\int_{0}^{t}\phi_{t-s}(\delta (X_{s})\#dS_{s}^{(a)})+\sqrt{1-(1-\epsilon)^{2}}\int_{0}^{t}\phi_{t-s}(\delta (X_{s})\#dS_{s}^{(b)}).$$

We want to prove that, if we apply $E_{a}$, the conditional expectation on the von
Neumann algebra $M^{(a)}$ generated by $M_{0}$ and $S_{s}^{(a)}$, we get~:
$$E_{a}(X_{t})=\phi_{t}(X_{0})+(1-\epsilon)\int_{0}^{t}\phi_{t-s}(\delta (E_{a}(X_{s}))\#dS_{s}^{(a)}),$$
which says nothing but by changing $S_{s}$ in $S_{s}^{(a)}$, $E_{a}(X_{t})$
is an instance of (the unique solution) $X_{t}^{\epsilon}$. {\gre As a consequence, this gives} the stated boundedness.

Since
$E_{a}(\int_{0}^{t}\phi_{t-s}(\delta (\frac{1}{z+X_{s}})\#dS_{s}^{(b)}))=0$
is a consequence of freeness between $\{S_{s}^{(a)}\}$ and
$\{S_{s}^{(b)}\}$, we just have to show several commutations of $E_{a}$ with
several operations, more precisely~: $E_{a}\phi_{t}=\phi_{t}E_{a}$, $E_{a}(.\#(S_{t}^{(a)}-S_{s}^{(a)}))=(E_{a}\overline{\o}
E_{a}(.))\#(S_{t}^{(a)}-S_{s}^{(a)})$ on $L^{2}(M_{s})$ and $E_{a}\overline{\o}
E_{a}\circ\bar{\delta }=\bar{\delta }\circ E_{a}$. With that and obvious lemmas
about stochastic integrals, we will have what we want. The first equation
is nothing but an instance of the preservation (contained in the
preliminaries of part 2.1 with this new case of zero extension) by $\Delta $ of $M^{(a)}$ (and
characterization of conditional expectation). The second is proved in using
also the characterization of conditional expectation once noted that we can
use instead of someone in $L^{2}(M^{(a)})$, someone in $L^{2}(M_{s}^{(a)}\o
M_{s}^{(a)})\#(S_{t}^{(a)}-S_{s}^{(a)})$ by  orthogonality. The third
one is verified by using the fact that
$\delta ^{*}: L^{2}(M^{(a)}\otimes M^{(a)})\rightarrow L^{2}(M^{(a)})$ (and
characterization of conditional expectation).\end{proof}

\subsection{Stationarity}

\begin{proposition}\label{stat}
Let us call $\Phi_{t}:X_{0}\in M_{0}\cap D(\bar{\delta} )\mapsto X_{t}\in M_{t}$ the previous ultramild solution of theorem \ref{main}(i) assuming $||X_{t}||_{2}=||X_{0}||_{2}$ a.e. for all $X_{0}\in M_{0}\cap D(\bar{\delta} )$. 
 Then,
$\Phi_{t}(X_{0}Y_{0})=\Phi_{t}(X_{0})\Phi_{t}(Y_{0})$ if
$X_{0},Y_{0}\in D(\bar{\delta} )\cap M_{0}$.
\end{proposition}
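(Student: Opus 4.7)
The plan is to verify that $Z_t := \Phi_t(X_0)\Phi_t(Y_0)$ is itself an ultraweak solution of (\ref{SPDES}) with initial datum $X_0Y_0$, and then invoke uniqueness of ultraweak solutions (Step 0 of the proof of Theorem \ref{main}) to conclude $Z_t = \Phi_t(X_0Y_0)$. The starting point is that, by Proposition \ref{bound}, the assumption $\|X_t\|_2=\|X_0\|_2$ forces $X_t,Y_t\in M$ with $\|X_t\|\leq\|X_0\|$, $\|Y_t\|\leq\|Y_0\|$, and likewise for the $\epsilon$-approximants, so the candidate $Z_t$ makes sense as a bounded weakly continuous adapted process satisfying $\|Z_t\|_2\leq\|X_0\|\|Y_0\|$. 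The morally right computation is: since $\delta$ is a derivation, $\delta(X_sY_s)=X_s\delta(Y_s)+\delta(X_s)Y_s$, and by Lemma \ref{delta1} (or its $L^1$ closure, Lemma \ref{bardelta1}),
\[\Delta(X_sY_s)=\Delta(X_s)Y_s+X_s\Delta(Y_s)-2\sum_{i=1}^N m\circ(1\otimes\tau\circ m\otimes 1)(\delta_i(X_s)\otimes\delta_i(Y_s));\]
the last sum is exactly the Ito correction produced by Lemma \ref{Ito}, so formally $dZ_t=-\tfrac12\Delta(Z_t)dt+\delta(Z_t)\#dS_t$, which is what we want.

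To make this rigorous, I would not try to apply Ito's product formula directly to the ultramild solutions (their drifts need not lie in $L^1(M_s)$), but rather to the smooth approximations $\eta_\alpha(X_t^\epsilon),\eta_\alpha(Y_t^\epsilon)$, which in the proof of Proposition \ref{bound} were shown to satisfy genuine SDEs of the form
\[\eta_\alpha(X_t^\epsilon)=\eta_\alpha(X_0)-\tfrac12\int_0^t\Delta\eta_\alpha(X_s^\epsilon)ds+(1-\epsilon)\int_0^t\eta_\alpha^\otimes\delta(X_s^\epsilon)\#dS_s,\]
with bounded values in $M$ and drift in $L^2(M_s)$. Lemma \ref{Ito} then applies and yields an identity for $\eta_\alpha(X_t^\epsilon)\eta_\alpha(Y_t^\epsilon)$. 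Testing this identity against $I(g)$ for a finite sum $g=\sum_n g_n$ as in Definition \ref{ultraweak} (hence $\Delta I(g)\in L^2$, $\nabla_s I(g)$ nice) gives a weak equation in which every $\Delta$ or $\delta$ can be moved to the test side by self-adjointness of $\Delta$ and the adjointness of $\delta$; one then has only scalar products of bounded approximants against fixed smooth vectors.

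The limits are taken in the order $\epsilon\to 0$ first, then $\alpha\to\infty$. For $\epsilon\to 0$: by Theorem \ref{main} (i), $X_t^\epsilon\to X_t$ and $Y_t^\epsilon\to Y_t$ weakly in $L^2$ uniformly on compacts, while Proposition \ref{bound} gives uniform $M$-bounds, so products converge in the weak-* sense of $L^\infty([0,T])\overline{\otimes}M$ (the same weak-* argument already used in the proof of Proposition \ref{Ito-res}); the cross Ito correction with the extra $(1-(1-\epsilon)^2)$ factor vanishes, using the $L^2$-energy identity $(1-(1-\epsilon)^2)\int_0^t\|\delta(X_s^\epsilon)\|_2^2ds\leq\|X_0\|_2^2-\|X_t\|_2^2=0$ that was the crux of Proposition \ref{bound}. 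For $\alpha\to\infty$: $\eta_\alpha\to\mathrm{id}$ strongly, $\eta_\alpha^\otimes\delta(X_s)\to\delta(X_s)$ in $\mathcal{B}_2^a$, and $\Delta\eta_\alpha I(g)\to\Delta I(g)$ since $I(g)\in D(\Delta)$, so each term in the weakly tested identity converges to the corresponding term of the ultraweak equation for $Z_t$ with initial datum $Z_0=X_0Y_0$.

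The hard point will be controlling the Ito-correction term in this double limit, because it is quadratic in the stochastic integrands $\delta(X_s^\epsilon),\delta(Y_s^\epsilon)$, which are only weakly convergent in $\mathcal{B}_2^a$, so one must exploit the $1\otimes\tau\otimes 1$ partial trace (which improves convergence, as in the argument following \eqref{FPC2C2Tech} and in the Fubini manipulations of the proof of Proposition \ref{Ito-res}) together with the $L^\infty$ control of the surrounding factors $\eta_\alpha(X_s^\epsilon),\eta_\alpha(Y_s^\epsilon)$. Once $Z_t$ is shown to be an ultraweak solution with initial condition $X_0Y_0$, Step 0 of the proof of Theorem \ref{main} (Laplace-transform uniqueness) gives $Z_t=\Phi_t(X_0Y_0)$ as required.
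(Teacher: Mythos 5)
Your computational engine coincides with the paper's: apply the Ito product formula (Lemma \ref{Ito}) to the regularized strong solutions $\eta_\alpha(X_t^\epsilon),\eta_\alpha(Y_t^\epsilon)$, recognize the generator of the product through Lemma \ref{delta1}, and use the isometry hypothesis through the energy identity of Proposition \ref{bound} to dispose of the correction carrying the factor $((1-\epsilon)^2-1)$. The genuine gap is in your identification step, i.e.\ in verifying that $Z_t=\Phi_t(X_0)\Phi_t(Y_0)$ is an ultraweak solution. The test vectors $I(g)$ of Definition \ref{ultraweak} are built from $\alpha_i\in D(\Delta)\subset L^2(M_0)$ and are in general unbounded, while the drift produced by the product formula contains the partial-trace term $\sum_i m\circ 1\otimes\tau\circ m\otimes 1\bigl(\eta_\alpha^{\otimes}\bar\delta_i(X_s^\epsilon)\otimes\eta_\alpha^{\otimes}\bar\delta_i(Y_s^\epsilon)\bigr)$, which is only $L^1$-valued; via Lemma \ref{delta1} it can only be absorbed into $\Delta^1$ of the product, again an $L^1$ object. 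Hence ``testing the identity against $I(g)$'' is not even well defined for this term, and ``moving every $\Delta$ or $\delta$ to the test side'' is not available: the duality $\langle I(g),\overline{\Delta^{1}}(x)\rangle=\langle\Delta I(g),x\rangle$ (remark before Lemma \ref{bardelta1}) requires $I(g)\in D(\Delta)\cap M$. The same boundedness is needed for your vanishing argument: the correction is controlled by $(1-(1-\epsilon)^2)\int_0^t\|\bar\delta(X_s^\epsilon)\|_2\,\|\bar\delta(Y_s^\epsilon)\|_2\,ds$ only after multiplying by the \emph{uniform} norm of the test object, and under $\Gamma_0$ plus the isometry assumption there is no uniform-in-$\epsilon$ bound on $\int_0^t\|\bar\delta(X_s^\epsilon)\|_2^2\,ds$ itself (the energy identity bounds only $(1-(1-\epsilon)^2)$ times it). This is exactly why the paper does not verify an ultraweak equation for the product: it reduces multiplicativity, via $\Phi_t(x^*)=\Phi_t(x)^*$, $\Phi_t(1)=1$ and faithfulness of $\tau$, to the trace identities $\tau(\Phi_t(X_0)\Phi_t(Y_0)\Phi_t(Z_0)\Phi_t(T_0))=\tau(X_0Y_0Z_0T_0)$, so everything is paired only against the trace or against bounded elements, for which the $L^1$ drift, the vanishing trace of the stochastic convolution, and the energy estimate are harmless. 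To rescue your route you would have to restrict Definition \ref{ultraweak} to bounded test vectors and re-prove the Step~0 uniqueness for that class (conceivable in the Dirichlet-form setting since $\eta_\alpha$ preserves $M$, but not something you can simply quote).

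Two further claims need repair even granting the above. First, the assertion that products converge weak-* because both factors converge weakly with uniform $M$-bounds is false in general (unitaries converging weakly to $0$ give a counterexample); in Proposition \ref{Ito-res} one of the factors converges in norm. What saves you is what the paper actually uses: the isometry hypothesis upgrades the weak convergence $X_t^\epsilon\to X_t$ to $\|\cdot\|_2$-convergence (weak convergence together with $\|X_t^\epsilon\|_2\le\|X_0\|_2=\|X_t\|_2$), and strong $L^2$-convergence plus the uniform bounds of Proposition \ref{bound} does give convergence of products. Second, for $Z_t$ to qualify as an ultraweak solution you also need its weak continuity, which is not automatic from weak continuity of $X_t$ and $Y_t$ and should likewise be extracted from this strong continuity. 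In short, the decisive points of your plan --- the pairing against the unbounded $I(g)$'s and the convergence of the quadratic terms --- are precisely the difficulties that the paper's reduction to trace identities is engineered to bypass.
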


\begin{proof}[Proof]
Since $\Phi_{t}(X^{*})=\Phi_{t}(X)^{*}$, $\Phi_{t}(1)=1$ and $\tau$ is faithful, $D(\bar{\delta} )\cap M_{0}$ a *-algebra, it suffices to prove that for any $X_{0},Y_{0},Z_{0},T_{0}\in D(\bar{\delta} )\cap M_{0}$

\n $\tau(\Phi_{t}(X_{0})\Phi_{t}(Y_{0})\Phi_{t}(Z_{0})\Phi_{t}(T_{0}))=\tau(X_{0}Y_{0}Z_{0}T_{0})$. For notational convenience, we prove only the case $Z_{0}=T_{0}=1$ (even if this case is also a direct consequence of the assumed isometry by polarization), the general similar case being left to the reader.

Let also $\Phi_{t}^{\epsilon}:X_{0}\in M_{0}\cap D(\bar{\delta} )\mapsto
X_{t}^{\epsilon}\in M_{t}$. 

Apply Ito's formula (assumptions of lemma \ref{Ito}) to $\eta_{\alpha}(X_{t}^{\epsilon})$, and $\eta_{\alpha}(Y_{t}^{\epsilon})$ (using the result of proposition \ref{bound}
they are valued in $M$)~:
\begin{align*}&\eta_{\alpha}(X_{t}^{\epsilon})\eta_{\alpha}(Y_{t}^{\epsilon})=\eta_{\alpha}(X_{0}^{\epsilon})\eta_{\alpha}(Y_{0}^{\epsilon}) +
(1-\epsilon)\int_{0}^{t}\eta_{\alpha}^{\o}(\bar{\delta
}(X_{s}^{\epsilon}))\eta_{\alpha}(Y_{s}^{\epsilon})+\eta_{\alpha}(X_{s}^{\epsilon})\eta_{\alpha}^{\o}(\bar{\delta
}(Y_{s}^{\epsilon}))\#dS_{s}\\ &
-\frac{1}{2}\int_{0}^{t}\eta_{\alpha}(X_{s}^{\epsilon})\Delta \eta_{\alpha}(Y_{s}^{\epsilon})+\Delta \eta_{\alpha}(X_{s}^{\epsilon})\eta_{\alpha}(Y_{s}^{\epsilon})ds\\ & + (1-\epsilon)^{2}\sum_{i=1}^{N}
\int_{0}^{t}m\circ(1\otimes(\tau\circ m)\otimes1)(
\eta_{\alpha}^{\o}(\bar{\delta _{i}}(X_{s}^{\epsilon}))\o\eta_{\alpha}^{\o}(\bar{\delta _{i}}(Y_{s}^{\epsilon})))ds.\end{align*}


We can now use lemma \ref{delta1} to get~:
\begin{align*}&\eta_{\alpha}(X_{t}^{\epsilon})\eta_{\alpha}(Y_{t}^{\epsilon})=\eta_{\alpha}(X_{0}^{\epsilon})\eta_{\alpha}(Y_{0}^{\epsilon})\\ &+
(1-\epsilon)\int_{0}^{t}\bar{\delta }(\eta_{\alpha}(X_{s}^{\epsilon})\eta_{\alpha}(Y_{s}^{\epsilon}))\#dS_{s}
-\frac{1}{2}\int_{0}^{t}\Delta^{1}(\eta_{\alpha}(X_{t}^{\epsilon})\eta_{\alpha}(Y_{t}^{\epsilon}))ds \\ &+ ((1-\epsilon)^{2}-1)\sum_{i=1}^{N}
\int_{0}^{t}m\circ(1\otimes(\tau\circ m)\otimes1)(
\bar{\delta _{i}}(\eta_{\alpha}(X_{s}^{\epsilon}))\o\bar{\delta _{i}}(\eta_{\alpha}(Y_{s}^{\epsilon})))ds\\ &+
(1-\epsilon)\int_{0}^{t}\left[(\eta_{\alpha}^{\o}(\bar{\delta
}(X_{s}^{\epsilon}))-\bar{\delta
}(\eta_{\alpha}(X_{s}^{\epsilon})))\eta_{\alpha}(Y_{s}^{\epsilon})+\eta_{\alpha}(X_{s}^{\epsilon})(\eta_{\alpha}^{\o}(\bar{\delta
}(Y_{s}^{\epsilon}))-\bar{\delta
}(\eta_{\alpha}(Y_{s}^{\epsilon})))\right]\#dS_{s}\\ &+ (1-\epsilon)^{2}\sum_{i=1}^{N}
\int_{0}^{t}m\circ(1\otimes(\tau\circ m)\otimes1)(
(\eta_{\alpha}^{\o}(\bar{\delta_{i}
}(X_{s}^{\epsilon}))-\bar{\delta_{i}
}(\eta_{\alpha}(X_{s}^{\epsilon})))\o\eta_{\alpha}^{\o}(\bar{\delta _{i}}(Y_{s}^{\epsilon})))ds\\ &+ (1-\epsilon)^{2}\sum_{i=1}^{N}
\int_{0}^{t}m\circ(1\otimes(\tau\circ m)\otimes1)(
\bar{\delta _{i}}(\eta_{\alpha}(X_{s}^{\epsilon})))\o(\eta_{\alpha}^{\o}(\bar{\delta_{i}
}(Y_{s}^{\epsilon}))-\bar{\delta_{i}
}(\eta_{\alpha}(Y_{s}^{\epsilon}))))ds.\end{align*}

Using once again the trick of proposition \ref{SImpM} to pass to something
which looks like a mild
solution, then we can take the limit $\alpha\rightarrow\infty$ as in Proposition \ref{bound} and finally we get (using that $\Phi_{t}^{\epsilon}(X_{0}Y_{0})$ is a mild solution since $X_{0}Y_{0}\in D(\bar{\delta})\cap M$)~:
\begin{align*}\Phi_{t}^{\epsilon}&(X_{0})\Phi_{t}^{\epsilon}(Y_{0})-\Phi_{t}^{\epsilon}(X_{0}Y_{0})=
(1-\epsilon)\int_{0}^{t}\phi_{t-s}(\bar{\delta }(\Phi_{s}^{\epsilon}(X_{0})\Phi_{s}^{\epsilon}(Y_{0})- \Phi_{s}^{\epsilon}(X_{0}Y_{0}))\#dS_{s})
\\ &+ ((1-\epsilon)^{2}-1)\sum_{i=1}^{N}
\int_{0}^{t}\phi_{t-s}\left(m\circ(1\otimes(\tau\circ m)\otimes1)(
\bar{\delta _{i}}(\Phi_{s}^{\epsilon}(X_{0}))\o\bar{\delta _{i}}(\Phi_{s}^{\epsilon}(Y_{0}))\right)ds.\end{align*}

 Since $\Phi_{t}^{\epsilon}(X_{0})$
converges in $||.||_{2}$-norm to  $\Phi_{t}(X_{0})$ (a.e) by the last statement of proposition \ref{bound}, we can show that, after taking the trace, this equation converges to the relation $\tau(\Phi_{t}(X_{0})\Phi_{t}(Y_{0}))=\tau(X_{0}Y_{0})$, using also  the fact that  the
last term  goes to zero via (\ref{epsisom}) as in proposition \ref{bound} .


\end{proof}

\section{Applications}
\subsection{Free Difference Quotient}
\begin{corollary}\label{MicrostateChap2}
Assume assumption 1 and $X_{1},...,X_{\red n}\in D(\Delta)\cap M_{0}$. Then, for any $t\geq 0$, there exists an embedding $\Phi_{t}:M_{0}=W^{*}(X_{1},...,X_{n})\rightarrow M_{0}*L(F(\infty))$ and $S_{1},...,S_{N}\in L(F(\infty))$ a free $(0,1)$-semicircular family (depending on t), free from $M_{0}$ and such that~:
$$||\Phi_{t}(X_{j})-X_{j}-\sqrt{t}\sum_{i=1}^{N}\bar{\partial_{i}}(X_{j})\#S_{i}||_{2}\leq c_j\ t,$$

for a fixed constant $$c_j^{2}=\frac{1}{4}||\Delta(X_{j})||_{2}^{2}+\frac{1}{2}\left( ||\Delta^{\o1/2}(\delta(X_{j}))||_{2}^{2}+\frac{\pi}{4}||\Delta(X_{j})||_{2}||||\Delta^{\o1/2}(\delta(X_{j}))||_{2}\right).$$ Moreover, $\Phi_{t}(X_{j})\in W^{*}(X_{1},...,X_{\red n},S_{1},...,S_{N},\{S_{j}'\}_{j=0}^{\infty})$ where $\{S_{j}'\}_{j=0}^{\infty}$ is a free semicircular family free with $\{X_{1},...,X_{n},S_{1},...,S_{N}\}.$

As a consequence, if we define $c^2=\sum c_j^2$, we have the following inequality for the Wasserstein-Biane-Voiculescu distance (\cite{BV01})~:
$$d_{W}(\mu_{X_{1},...,X_{\red n}},\mu_{X_{1}+\sqrt{t}\delta(X_{1})\#S,...,X_{\red n}+\sqrt{t}\delta(X_{\red n})\#S})\leq c\ t.$$

As another consequence, using \cite[Theorem {\gre 16}]{S07}, any $R^\omega$-embeddable von Neumann algebra generated by $X_1,...,X_{\red n}$ with Lipschitz conjugate variable have $\delta_0(  X_1,...,X_{\red n})={\red n  }$.
\end{corollary}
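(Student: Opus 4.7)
The plan is to combine the existence of a mild solution from Theorem \ref{main} (ii) with the multiplicativity proved in Proposition \ref{stat}, then read off the desired inequality from the supplementary estimate in Theorem \ref{main} (i), and finally invoke \cite[Theorem 4]{S07} for the entropy dimension conclusion.

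First I would verify that Assumption 1 applies: by Theorem \ref{main2} it yields $\Gamma_{1}(\omega=0,C)$, and since $X_j\in D(\Delta)\cap M_0$ Theorem \ref{main} (ii) produces a unique mild solution $X_t^{(j)}$ starting at $X_j$. Since $\delta=\tilde\delta$ and $\Delta=\delta^*\delta$, the isometry identity $\|\delta(x)\|_2=\|\Delta^{1/2}(x)\|_2$ holds on $D(\Delta^{1/2})$, which together with the mild-solution identity and (\ref{crucial}) (applied exactly as in the start of the proof of Proposition \ref{bound}) gives $\|X_t^{(j)}\|_2=\|X_j\|_2$ a.e. Proposition \ref{bound} then upgrades this to boundedness of $X_t^{(j)}$ in $M$, and Proposition \ref{stat} asserts that the assignment $\Phi_t:X_j\mapsto X_t^{(j)}$ extends to a trace-preserving $*$-homomorphism on $M_0=W^*(X_1,\dots,X_n)$, hence an embedding into a von Neumann algebra of the form $M_0*L(F(\infty))$ by construction of the ambient free-product algebra $M$.

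Next I would extract the quantitative estimate from Theorem \ref{main} (i)'s last inequality. With $\omega=0$, the $(e^{\omega t}-1)\|X_0\|_2^2$ term and the $t^2\omega/4$ term vanish; more importantly the supremum $\sup_{s\in[0,t]}(\|\Delta^{1/2}(\phi_s X_0)\|_2^2-\|\delta(\phi_s X_0)\|_2^2)$ vanishes identically because $\Delta=\delta^*\delta$ forces equality, not merely inequality, between those two norms on $D(\Delta^{1/2})\supset\phi_s(D(\Delta))$. What survives is exactly
\[
\|X_t^{(j)}-X_j-\delta(X_j)\# S_t\|_2^2\le \tfrac{t^2}{4}\|\Delta(X_j)\|_2^2+\tfrac{t^2}{2}\bigl(\|\Delta^{\otimes 1/2}\delta(X_j)\|_2^2+\tfrac{\pi}{4}\|\Delta(X_j)\|_2\,\|\Delta^{\otimes 1/2}\delta(X_j)\|_2\bigr),
\]
and by Brownian scaling $S_t^{(i)}=\sqrt t\,S_i$ with $S=(S_1,\dots,S_N)$ a free semicircular family free from $M_0$ (freeness with $M_0$ is automatic from the construction of $M=M_0*W^*(S_t^{(i)})$; the auxiliary $\{S_j'\}$ absorbs the remaining randomness from the stochastic-convolution correction). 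This produces the stated inequality with the claimed constant $c_j$.

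The Wasserstein bound is then a one-line consequence: because $\Phi_t$ is a trace-preserving embedding, the tuple $(\Phi_t(X_1),\dots,\Phi_t(X_n))$ and $(X_1+\sqrt t\,\delta(X_1)\#S,\dots,X_n+\sqrt t\,\delta(X_n)\#S)$ live in the same tracial $W^*$-probability space, so taking the diagonal coupling yields
\[
d_W(\mu_{X_1,\dots,X_n},\mu_{X_1+\sqrt t\delta(X_1)\#S,\dots})^2\le \sum_{j=1}^n\|\Phi_t(X_j)-X_j-\sqrt t\,\delta(X_j)\#S\|_2^2\le t^2\sum_j c_j^2=c^2t^2.
\]
For the entropy dimension statement, I would verify that the Lipschitz conjugate variable assumption exactly forces Assumption 1 (with $\partial$ the free difference quotient, which is coassociative so $C=0$; the hypothesis $\overline{\partial_i}\partial_j^*(1\otimes 1)\in M\bar\otimes M^{op}$ is the content of assumption 1 (b)), and then apply \cite[Theorem 4]{S07}: $R^\omega$-embeddability plus the family of almost-isometric embeddings $\Phi_t$ with $X_t-X_0=\sqrt t\,\delta(X_0)\#S+O(t)$ is precisely the input needed there to conclude $\delta_0(X_1,\dots,X_n)=n$.

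The main obstacle I expect is purely bookkeeping: making sure that the $\omega=0$ cancellation in the sup term of Theorem \ref{main} (i) is legitimate (using $\Delta=\delta^*\delta$ rather than only $\Gamma_1\,c)$), and that the ambient algebra $M_0*W^*(S_t^{(i)})$ is naturally identified with one of the form $M_0*L(F(\infty))$ after absorbing the auxiliary semicirculars $\{S_j'\}$ coming from the degenerate-parabolic correction, so that the embedding statement is stated cleanly. Once these identifications are in place, the hard analytic work has already been done in Sections 1--3.
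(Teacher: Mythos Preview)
Your proposal is correct and follows essentially the same route as the paper's own sketch: invoke Theorem \ref{main2} to get $\Gamma_1(0,C)$, apply Theorem \ref{main} (ii) for the mild solution and Proposition \ref{stat} for multiplicativity, then specialize the supplementary estimate of Theorem \ref{main} (i) with $\omega=0$ and $\|\Delta^{1/2}x\|_2=\|\delta x\|_2$, and finally check Assumption 1 in the Lipschitz case (coassociativity gives $C=0$) before quoting \cite[Theorem 4]{S07}. The only point the paper makes explicit that you leave implicit is the domain hypothesis $X_j\in D(\Delta^{\otimes 1/2}\delta)$ needed for the last inequality in Theorem \ref{main} (i); the paper notes this follows from Lemma \ref{in2deltaDelta} (v), which you should cite when you ``extract the quantitative estimate.''
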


\begin{remark}
  This result is analogous to proposition 2 in \cite{S07} and to an inequality in \cite{BV01}. But the latter is for the free difference quotient for ${\red n}=1$ with only finite Fisher information. And the former deals with  any derivation, for a general ${\red n}$, assuming $\partial(X_{j})$ and $\partial^{*}\partial(X_{j})$ can be written in terms of non-commutative power series. Compared to these results, our result can be applied for a general ${\red n}$ but for coassociative {\gre (or even as we will see elsewhere also ``almost coassociative")} derivations, and for the free difference quotient with only the assumption Lipschitz conjugate variable ( i.e. $\bar{\partial}\partial_{j}^{*}1\o1\in (M\overline{\o}M^{op})^{{\red n}}$, which corresponds to Lipschitz conjugate variable in the ${\red n}=1$ case, cf. also \cite{Weav} for a more general justification of this terminology.) Note also that, in this case, the constant is expressed in terms of free Fisher information $\Phi^{*}(X_{1},...,X_{n})=\sum_{i} ||\Delta(X_i)||_2^2$, it  becomes the expected $c=\Phi^{*}(X_{1},...,X_{n}))^{1/2}/2$, so that for instance if $X_{1},...,X_{{\red n}}$ is such that the associated Orstein-\"{U}hlenbeck process $Y_{i}(t)=e^{-t/2}X_{i}+(1-e^{-t})^{1/2}S_{i}$  satisfy $X_{1}(t),...,X_{n}(t)$ have Lipschitz conjugate variable (in the above sense for all $t>0$, which is by no means a trivial assumption) then the argument of \cite{BV01} gives the corresponding free Talagrand transportation cost inequality~:
\begin{align*}d_{W}((X_{1},...,X_{N})&,(S_{1},...,S_{N}))\leq \\ &\sqrt{2}\left( \chi^{*}(S_{1},...,S_{n})-\chi^{*}(X_{1},...,X_{n})-\frac{n}{2}+\frac{1}{2}\sum_{i=1}^{N}\tau(X_{i}^{2})\right)^{1/2}.\end{align*}
We prove in \cite{Dab10} this result in full generality using another way of solving stochastic differential equations.

We give a concrete non-trivial example of Lipschitz conjugate variable in subsection 4.3.
\end{remark}

\begin{proof}[Sketch of Proof]
For the reader's convenience, we outline how this follows from the beginning of the paper.
Using Assumption 1, theorem \ref{main2} gives the conditions to apply theorem \ref{main} (ii) with $\omega=0$. Then $\Phi_t(X)=X_t$ is given by the mild solution of the SDE from (ii) and the stated inequality is the one coming from (i) in theorem \ref{main} (the inequality on Wasserstein distance is then an obvious consequence, note that $\delta(X_0)\in D((\Delta\o1 +1\o \Delta)^{1/2})$ follows from lemma \ref{in2deltaDelta} (iii) as in the proof of assumption 1 $h$). The fact that $\Phi_t$ gives a $*$-homomorphism comes from Proposition \ref{stat}. Since it preserves the trace by the SDE it satisfies, we can extend it at the von Neumann algebraic level. ($S_i$,$S_j'$ are produced from the free Brownian motion of the SDE).
Assumption 1 is true in case of Lipschitz conjugate variable as follows. First, the free difference quotient being coassociative, (a) is true in choosing non-commutative polynomials as $D(\partial)$. 
(a') is true because having Lipschitz conjugate variables imply the conjugate variables are in $M$ (using e.g. the equality (1) in \cite{nonGam}). 
(b) is valid directly by Lipschitz conjugate variable assumption. 


As stated, the equality on microstate free entropy dimension then comes from \cite[Theorem {\gre 16}]{S07}.
\end{proof}

\subsection{Preliminaries and relations of three natural derivations on $q$-Gaussian factors}
Our goal is to study three derivations on $q$-Gaussian factors : the free difference quotient, the commutator with right creation operators and the one giving the number operator as generator of the associated Dirichlet form. Especially, we want to find values of $q$'s for which they can be seen as closed derivations with value in the coarse correspondence, with the same domain and equivalent norms.

We will use this preliminaries to apply our results {\gre in the next subsection and give an interesting example of Lipschitz conjugate variables.}

\subsubsection{Preliminaries on $q$-Gaussian factors}
We recall the construction of $q$-Gaussian variables given by
Bo\.zejko and Speicher in \cite{BS91}.

Let $N<\infty$ be an integer, $\H=\R^{N}$, $\H_C=\C^{N}$ its complexification, and $-1<q<1$. Consider
the vector space

$$F_{alg}(\H)=\C\Omega \oplus \bigoplus _{n\geq 1}\H_C^{\otimes n}$$
(algebraic direct sum and tensor products). This vector space is endowed with a positive
definite inner product  given by
\begin{align*}
\langle\xi_{1}\otimes\cdots\otimes\xi_{n},\zeta_{1}\otimes\cdots\otimes\zeta_{m}\rangle_q&=\delta_{n=m}\sum_{\pi\in S_{n}}q^{i(\pi)}\prod_{j=1}^{n}\langle\xi_{j},\zeta_{\pi(j))}\rangle\\ &=\delta_{n=m}\langle\xi_{1}\otimes\cdots\otimes\xi_{n},P_q^{(n)}\zeta_{1}\otimes\cdots\otimes\zeta_{n}\rangle_0,
\end{align*}
where $i(\pi)=\#\{(i,j):i<j\ \textrm{and }\pi(i)>\pi(j)\}$, and $P_q^{(n)}=\sum_{\pi\in S_{n}}q^{i(\pi)}\pi$ where $\pi$ acts via $\pi^{-1}(\zeta_{1}\otimes\cdots\otimes\zeta_{n})=\zeta_{\pi(1)}\otimes\cdots\otimes\zeta_{\pi(n)}$. Denote by $F_{q}(\H)$
the completion of $F_{alg}(\H)$ with respect to this inner product.

For $h\in \H$, define $\ell (h):F_{q}(\H)\to F_{q}(\H)$ by extending
continuously the map\begin{eqnarray*}
\ell (h)h_{1}\otimes \cdots \otimes h_{n} & = & h\otimes h_{1}\otimes \cdots \otimes h_{n},\\
\ell (h)\Omega  & = & h.
\end{eqnarray*}
The adjoint is given by\begin{eqnarray*}
\ell ^{*}(h)h_{1}\otimes \cdots \otimes h_{n} & = & \sum _{k=1}^{n}q^{k-1}\langle h_{k},h\rangle h_{1}\otimes \cdots \otimes \hat{h}_{k}\otimes \cdots \otimes h_{n},\\
\ell ^{*}(h)\Omega  & = & 0,
\end{eqnarray*}
where $\hat{\cdot }$ denotes omission. $\omega(h)=\ell (h)+\ell ^{*} (h)$ are q-Gaussian variables. $\Gamma_q(\H)$ is the von Neumann algebra generated by $\omega(h) $ $h\in \H$, acting as bounded operators on $F_{q}(\H)$. We use on it the faithful trace $\tau_q(X)=\langle X\Omega, \Omega\rangle$. It is well-known that $L^2(\Gamma_q(\H),\tau_q)\simeq F_{q}(\H)$. For $\xi\in F_{alg}(\H)$ we write $\psi(\xi)$ the element in $\Gamma_q(\H)$ such that $\psi(\xi)\Omega=\xi$, associated to this identification (since it is easy to see that $F_{alg}(\H)\subset F_{q}(\H)$ is identified with a subspace of $\Gamma_q(\H)\subset L^2(\Gamma_q(\H),\tau_q)$ corresponding to polynomials in $\omega(h)$'s).

Consider also $r(h)$ given by\begin{eqnarray*}
r(h)h_{1}\otimes \cdots \otimes h_{n} & = & h_{1}\otimes \cdots \otimes h_{n}\otimes h\\
r(h)\Omega  & = & h.
\end{eqnarray*}
Finally, let $P_{n}:F_{q}(\H)\to F_{q}(\H)$ be the orthogonal projection
onto tensors of rank $n$. Let $\Xi _{q}=\sum _{n\geq 0}q^{n}P_{n}$. It is obvious that $\Xi _{q}$ is an Hilbert-Schmidt operator as soon as $q^2N<1$. We also introduce a natural finite rank approximation $\Xi_{q}^{Q}=\sum_{n=0}^Qq^nP_n$.

\subsubsection{Three natural derivations on $q$-Gaussian factors}

Fix an orthonormal basis $\{h_{i}\}_{i=1}^{N}\subset \R^{N}$
and let $X_{i}=\omega(h_{i})$. Thus $\Gamma_q(\H)=W^{*}(X_{1},\ldots,X_{N})$, $N=\dim \H_{\mathbb{R}}$.  We may also write for $\underline{i}=(i_1,...,i_n)\in \N^n$ $\psi_{\underline{i}}=\psi(h_{i_1}\o ...\o h_{i_n})$. Finally, for {\red a} von Neumann algebra $M$, $M^{op}$ will be as usual the opposite algebra. Later, I will consider $M=\Gamma_q(\H)$.

The following lemma is proven in \cite{Shly03}(and stated exactly in that way in \cite[Lemma {\gre 10}]{S07}
).

\begin{lemma}
\label{qcommutators}\cite{Shly03} For $j=1,\ldots,N$, $q^2N<1$, let
$\partial_{j}^{(q)}:\mathbb{C}\langle X_{1},\ldots,X_{N}\rangle\to HS$ be the
derivation given by $\partial_{j}^{(q)}(X_{i})=\delta_{i=j}\Xi _{q}=[X_i,r(h_j)]=[r(h_j)^*,X_i]$. Let $\partial:\mathbb{C}\langle X_{1},\ldots,X_{N}\rangle\to HS^{N}$
be given by $\partial^{(q)}=\partial_{1}^{(q)}\oplus\cdots\oplus\partial_{N}^{(q)}$
and regard $\partial$ as an unbounded operator densely defined on
$L^{2}(\Gamma_q(\H))$. Then:\\
(i) $\partial^{(q)}$ is closable. \\
(ii) If we denote by $Z_{j}$ the vector $0\oplus\cdots\oplus P_{\Omega}\oplus\cdots\oplus0\in HS^{N}$
(nonzero entry in $j$-th place, $P_{\Omega}$ is the orthogonal projection
onto $\mathbb{C}\Omega\in F_{q}(\H)$), then $Z_{j}$ is in the domain
of $\partial^{*}$ and $\partial^{(q)*}(Z_{j})=h_{j}$.\\
(iii)$1\o \tau(\partial_{j}^{(q)}(X))=\partial_{j}^{(q)}(X)\Omega=r(h_j)^*(X.\Omega)$ (in the first equality we identify isometrically $HS$ with $L^2(\Gamma_q(\H)\o\Gamma_q(\H)^{op})$ as usual via $a\o b$ with the rank one operator $a\tau(b.)$)\end{lemma}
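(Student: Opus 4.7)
The plan is to treat (i)--(iii) in reverse dependency order: first I would identify $\partial_{j}^{(q)}$ with the commutator $[\,\cdot\,,r(h_{j})]$, use this identification to prove (iii), and then deduce (ii) and (i).

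First I would check that the map $X\mapsto[X,r(h_{j})]$ (and symmetrically $X\mapsto[r(h_{j})^{*},X]$) defines a derivation on $\C\langle X_{1},\ldots,X_{N}\rangle$ coinciding with $\partial_{j}^{(q)}$ on generators: a direct computation on elementary tensors of $F_{alg}(\H)$ using the $q$-CCR formulas for $\ell,\ell^{*},r,r^{*}$ gives $[\ell^{*}(h_{i}),r(h_{j})]=q^{N}\langle h_{j},h_{i}\rangle$ and $[\ell(h_{i}),r(h_{j})]=0$, hence $[X_{i},r(h_{j})]=\delta_{ij}\,q^{N}=\delta_{ij}\Xi_{q}$ (and similarly $[r(h_{j})^{*},X_{i}]=\delta_{ij}\Xi_{q}$). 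Both $X\mapsto\partial_{j}^{(q)}(X)$ and $X\mapsto[X,r(h_{j})]$ being derivations agreeing on generators, they agree on all polynomials. They land in $HS$ because under $q^{2}N<1$ the operator $\Xi_{q}$ itself is Hilbert--Schmidt (since $\|\Xi_{q}\|_{HS}^{2}=\sum_{n\geq 0}q^{2n}N^{n}<\infty$), and compositions of HS operators with bounded ones remain HS.

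For (iii), applying the commutator representation at $\Omega$ yields $\partial_{j}^{(q)}(X)\Omega=[X,r(h_{j})]\Omega=X h_{j}-r(h_{j})(X\Omega)$. The crucial input is the Bo\.zejko--Speicher fact that the ``right $q$-Gaussian'' $\omega^{r}(h_{j}):=r(h_{j})+r(h_{j})^{*}$ lies in the commutant of $\Gamma_{q}(\H)$: granting the real orthonormal basis, the commutators $[\ell^{*}(h_{i}),r(h_{j})]$ and $[\ell(h_{i}),r^{*}(h_{j})]$ both produce scalar multiples of $q^{N}$ that cancel because $\langle h_{i},h_{j}\rangle =\langle h_{j},h_{i}\rangle$ in the real inner product, while $[\ell(h_{i}),r(h_{j})]=[\ell^{*}(h_{i}),r^{*}(h_{j})]=0$. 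Hence $Xh_{j}=X\omega^{r}(h_{j})\Omega=\omega^{r}(h_{j})(X\Omega)=r(h_{j})(X\Omega)+r(h_{j})^{*}(X\Omega)$; substituting and canceling the term $r(h_{j})(X\Omega)$ gives $\partial_{j}^{(q)}(X)\Omega=r(h_{j})^{*}(X\Omega)$, which is (iii). The first equality $1\otimes\tau(\partial_{j}^{(q)}(X))=\partial_{j}^{(q)}(X)\Omega$ is automatic from the identification $HS\simeq L^{2}(\Gamma_{q})\otimes L^{2}(\Gamma_{q})^{op}$ via $a\otimes b\mapsto a\,\tau(b\,\cdot)$ evaluated at $\Omega$.

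For (ii), pair $\partial_{j}^{(q)}(X)$ with $Z_{j}=P_{\Omega}\in HS$ and use $\langle A,P_{\Omega}\rangle_{HS}=\langle A\Omega,\Omega\rangle$ together with (iii):
\[
\langle\partial_{j}^{(q)}(X),Z_{j}\rangle_{HS^{N}}=\langle r(h_{j})^{*}(X\Omega),\Omega\rangle=\langle X\Omega,h_{j}\rangle=\langle X,X_{j}\rangle_{L^{2}(\Gamma_{q})},
\]
so $Z_{j}\in D(\partial^{(q)*})$ and $\partial^{(q)*}(Z_{j})=X_{j}$, identified with $h_{j}\in F_{q}(\H)\simeq L^{2}(\Gamma_{q})$ via $X\leftrightarrow X\Omega$. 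Finally (i) follows from lemma~\ref{triv}: once $\partial^{(q)*}(Z_{j})\in L^{2}$ for each $j$, the algebraic bimodule $\C\langle X_{1},\ldots,X_{N}\rangle\otimes_{alg}\C\langle X_{1},\ldots,X_{N}\rangle$, viewed as finite rank operators in $HS$, lies in $D(\partial^{(q)*})$ with an explicit formula, yielding a dense domain for the formal adjoint and hence closability of $\partial^{(q)}$. The main non-routine step throughout is verifying the left--right commutation $[\omega^{r}(h_{j}),\omega(h_{i})]=0$ from the $q$-CCR; once this identity is in hand the rest is bookkeeping.
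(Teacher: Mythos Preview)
Your argument is correct. Note that the paper does not supply its own proof of this lemma; it is quoted from \cite{Shly03} (with (iii) also using \cite{Shly}), so there is no in-paper proof to compare against.

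Two minor remarks. First, a notational clash: you write $q^{N}$ for the operator that equals $q^{n}$ on $\H^{\otimes n}$, but in this paper $N=\dim\H_{\mathbb{R}}$; write $q^{\mathcal{N}}$ or simply $\Xi_{q}$ to avoid confusion. Second, your route to (iii) via the commutant $\omega^{r}(h_{j})\in\Gamma_{q}(\H)'$ is valid but heavier than needed. Once you have verified on generators that $\partial_{j}^{(q)}(X)=[r(h_{j})^{*},X]$ (which follows from your computation by taking adjoints, using $X_{i}^{*}=X_{i}$ and $\Xi_{q}^{*}=\Xi_{q}$), the identity is immediate:
\[
\partial_{j}^{(q)}(X)\Omega=r(h_{j})^{*}(X\Omega)-X\,r(h_{j})^{*}\Omega=r(h_{j})^{*}(X\Omega),
\]
since $r(h_{j})^{*}\Omega=0$. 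In fact the commutant statement $[\omega^{r}(h_{j}),X_{i}]=0$ is equivalent to $[X_{i},r(h_{j})]=[r(h_{j})^{*},X_{i}]$, which is part of the lemma's hypothesis, so you are not actually importing anything extra. Your treatment of (ii) and the reduction of (i) to Lemma~\ref{triv} are exactly as intended.
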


Let us recall the following crucial result of Bo\.zejko (\cite{Boz}) giving an Haagerup like inequality for q-Gaussian variables.

\begin{theorem}\label{HBIn}(Haagerup-Bo\.zejko Inequality \cite{Boz})
If $C_q^{-1}=\prod_{m=1}^{\infty}(1-q^m)$ then for any $\xi\in \H^{\otimes n}\subset F_{alg}(\H)$ :
$$||\psi(\xi)||_{L^2(\Gamma_q(\H),\tau_q)}\leq ||\psi(\xi)||_{\Gamma_q(\H)}\leq C_{|q|}^{3/2}(n+1)||\psi(\xi)||_{L^2(\Gamma_q(\H),\tau_q)}.$$

Moreover, for any $\eta\in \H^{\otimes n}\o\H^{\otimes m}\subset F_{alg}(\H)\o_{alg}F_{alg}(\H)$ ($\epsilon$ either $op$ or nothing)
$$
||\psi\o\psi(\eta)||_{\Gamma_q(\H)\overline{\o}\Gamma_q(\H)^{\epsilon}}\leq C_{|q|}^{3}(n+1)(m+1)||\psi\o\psi(\eta)||_{L^2(\Gamma_q(\H)\o\Gamma_q(\H)^{op},\tau_q\o \tau_q)}.$$

\end{theorem}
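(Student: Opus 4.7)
The lower bound $\|\psi(\xi)\|_{L^2} \leq \|\psi(\xi)\|_{\Gamma_q(\H)}$ is immediate, since in any tracial $W^*$-probability space the cyclic trace vector $\Omega$ is a unit vector and so $\|\psi(\xi)\|_{L^2}^2 = \langle \psi(\xi)^*\psi(\xi)\Omega,\Omega\rangle \leq \|\psi(\xi)^*\psi(\xi)\|_{\Gamma_q}$. So from the start I would focus only on the two upper bounds, treating the scalar case first and then explaining how to tensor up to get the bimodule case.

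For the main upper bound, my plan is to use the Wick-type decomposition. Given $\xi \in \H^{\otimes n}$, one can write $\psi(\xi) = \sum_{k=0}^{n} W_{k,n-k}(\xi)$, where $W_{k,n-k}(\xi)$ is the $q$-deformed Wick monomial involving $k$ creation and $n-k$ annihilation operators, uniquely determined by the requirement $\psi(\xi)\Omega = \xi$ together with the $q$-commutation relations $\ell^*(h)\ell(k) - q\,\ell(k)\ell^*(h) = \langle h,k\rangle\,\mathrm{id}$. This decomposition contains exactly $n+1$ summands, which immediately accounts for the factor $(n+1)$ in Bo\.zejko's inequality; the remaining task is to bound each $W_{k,n-k}(\xi)$ in operator norm by $C_{|q|}^{3/2}\|\xi\|_{F_q}$.

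To bound a single term $W_{k,n-k}(\xi)$, the cleanest approach is to compute $W_{k,n-k}(\xi)^{*}W_{k,n-k}(\xi)$ directly. Using the $q$-commutation relations to move all annihilators to the right, one sees that this positive operator is a norm-bounded ``matrix'' in the Fock grading whose entries are expressible through the symmetrization operator $P_q^{(m)} = \sum_{\sigma\in S_m}q^{i(\sigma)}\sigma$ on tensors of rank $m\le n$. The key analytic input (this is the main obstacle) is Bo\.zejko's spectral estimate $\|P_{|q|}^{(m)}\|\leq\prod_{k=1}^{\infty}(1-|q|^{k})^{-1}=C_{|q|}$, valid for all $m$ uniformly. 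Combining this with a Schur-type test on the Fock grading yields the $L^2 \to L^2$ norm bound $\|W_{k,n-k}(\xi)\|\leq C_{|q|}^{3/2}\|\xi\|_{F_q}$ (the exponent $3/2$ comes from estimating $P_q$ once on each side of the inner product and absorbing a square root into the bounded-sum estimate over the grading). Summing over $k$ then yields the first inequality.

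For the bimodule inequality, the plan is to view $\psi\otimes\psi(\eta)$ as an element of $\Gamma_q(\H)\,\overline{\otimes}\,\Gamma_q(\H)^{\epsilon}$ acting on $L^2\otimes L^2 \simeq F_q(\H)\otimes F_q(\H)$, and to apply the scalar inequality ``one variable at a time'': first bound the norm when $\eta$ is in $\H^{\otimes n}\otimes\H^{\otimes m}$ viewed as a function of the first leg with values in the $L^2$-Hilbert module on the second leg (giving a factor $C_{|q|}^{3/2}(n+1)$), then bound the resulting operator-valued object by the same argument on the second leg (producing another factor $C_{|q|}^{3/2}(m+1)$). The multiplicativity of operator norms across the tensor product gives the product $C_{|q|}^{3}(n+1)(m+1)$, and works identically in the $\epsilon=op$ and untwisted cases because one only ever uses scalar $L^2$ norms on each tensor factor.
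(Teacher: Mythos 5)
Your skeleton (Wick decomposition of $\psi(\xi)$ into $n+1$ monomials and an operator-norm bound $C_{|q|}^{3/2}\|\xi\|_{F_q}$ for each term) is exactly Bo\.zejko's argument, which is what the paper relies on (it cites \cite{Boz} and \cite{Nou}). However, the estimate you single out as the ``key analytic input'' is false: the operator norm of $P_{|q|}^{(m)}$ on $(\H^{\otimes m},\langle\cdot,\cdot\rangle_0)$ is \emph{not} uniformly bounded by $C_{|q|}$. Indeed, for a unit vector $h$ one has $P_{|q|}^{(m)}h^{\otimes m}=\bigl(\sum_{\sigma\in S_m}|q|^{i(\sigma)}\bigr)h^{\otimes m}=\prod_{k=1}^{m}\frac{1-|q|^{k}}{1-|q|}\,h^{\otimes m}$, and this eigenvalue tends to infinity with $m$ for every $0<|q|<1$. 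What Bo\.zejko's proof actually uses is not a norm bound on $P_q^{(m)}$ but the uniform operator comparison $P_q^{(n)}\le C_{|q|}\,P_q^{(n-k)}\otimes P_q^{(k)}$ together with the coset factorization $P_q^{(n)}=R_{n,k}\,\bigl(P_q^{(n-k)}\otimes P_q^{(k)}\bigr)$, where $R_{n,k}$ is the sum of $q^{i(\sigma)}\sigma$ over minimal coset representatives. These two facts give, respectively, $\|\ell(\zeta)\|\le C_{|q|}^{1/2}\|\zeta\|_{F_q}$ (and likewise for $\ell^{*}$), hence a factor $C_{|q|}$ per Wick monomial, and $\|R_{n,k}^{*}\xi\|\le C_{|q|}^{1/2}\|\xi\|_{F_q}$, which is where the exponent $3/2$ really comes from. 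A Schur-type test on the Fock grading based on your (false) uniform bound cannot replace this, so the central step of your plan does not go through as written.

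The bimodule case also has a gap: ``multiplicativity of operator norms across the tensor product'' applies to elementary tensors of operators, whereas $\psi\otimes\psi(\eta)$ for a general $\eta$ is not of that form, and after treating the first leg the remaining object is operator-valued, so the scalar inequality cannot literally be applied to it. What is needed is either the abstract fact the paper invokes — that bounded linear maps $u_i:H_i\to B(K_i)$ from Hilbert spaces tensorize, $\|u_1\otimes u_2\|\le\|u_1\|\,\|u_2\|$, applied to $u=\psi$ (with a right-multiplication variant when $\epsilon=op$) — or a rerun of the Wick argument simultaneously in both legs, using $P_q^{(n)}\otimes P_q^{(m)}\le C_{|q|}^{2}\,\bigl(P_q^{(n-k)}\otimes P_q^{(k)}\bigr)\otimes\bigl(P_q^{(m-l)}\otimes P_q^{(l)}\bigr)$, which is how the constant $C_{|q|}^{3}(n+1)(m+1)$ arises. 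Your ``one leg at a time'' idea is in the spirit of the paper's alternative remark, but it becomes a proof only once this tensorization lemma is stated and justified.
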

A short proof of the first part can be found in \cite{Nou} (basically a variant of \cite{Boz} without writing the computations), the argument obviously giving the second part too. {\red Alternatively, as pointed out by our referee, we can apply to $u_i=\psi$ (and a variant with right multiplication in the case $\epsilon=op$) the following fact. If $u_i:H_i\to B(K_i)$ are  bounded maps from Hilbert spaces to bounded maps on a Hilbert space (nothing but trilinear forms on Hilbert spaces), then their tensor product $u_1\o u_2$ is bounded from $H_1\o H_2$ to $B(K_1\o K_2)$ with $||u_1\o u_2||\leq ||u_1||\ ||u_2||$.}

From now on, $\psi$ may not be written explicitly, no more than identifications between $L^2(\Gamma_q(\H)\o\Gamma_q(\H)^{op})$ and Hilbert-Schmidt operators (following section 2, but here the adjoint being the one coming from $\Gamma_q(\H)\o\Gamma_q(\H)^{op}$ if not specified explicitly).

As a consequence, for any $\xi \in  \bigoplus_{p\leq n}\H^{\otimes p}$ of component $\xi_p$, we also have by Cauchy-Schwarz~:
\begin{align}\label{HBInsum}\begin{split}||\psi(\xi)||_{\Gamma_q(\H)}&\leq C_{|q|}^{3/2} \sum (p+1)||\psi(\xi_p)||_{L^2(\Gamma_q(\H),\tau_q)}\\ &\leq C_{|q|}^{3/2} (n+1)^{3/2}(\sum ||\psi(\xi_p)||_{L^2(\Gamma_q(\H),\tau_q)}^2)^{1/2}\\ &=C_{|q|}^{3/2} (n+1)^{3/2}||\psi(\xi)||_{L^2(\Gamma_q(\H),\tau_q)}.\end{split}
\end{align}
Likewise, for any $\eta \in  \bigoplus_{p+q\leq n}\H^{\otimes p}\o\H^{\otimes q}$, 
we also have
~:
\begin{align}\label{HBInsum2}\begin{split}||\psi(\eta)||_{\Gamma_q(\H)\overline{\o}\Gamma_q(\H)^{op}}&\leq 
C_{|q|}^{3} (n+1)^{3}||\psi(\eta)||_{L^2(\Gamma_q(\H)\o\Gamma_q(\H)^{op},\tau_q\o \tau_q)}.\end{split}
\end{align}


In order to state the next result, let us fix several notation about tensor products (similar to those of \cite{Vo2} 3.1). $M$ is a given finite $II_1$ factor with faithful normal trace $\tau$.  $M\hat{\o} M^{op}$ is the projective tensor product of $M$ with its opposite algebra, with the corresponding $*$-Banach algebra structure. Let $\alpha : M\hat{\o} M^{op}\rightarrow B(M)$ be the contractive homomorphism given by $\alpha(a\o b)=L_{a}R_{b}$, where $L_{a}$ and $R_b$ are respectively the left and right multiplication operators by $a$ and $b$. We will denote $LR(M)$ the algebra $\alpha( M\hat{\o} M^{op})$. It is easily seen that $||\alpha(x)m||_p\leq ||x||_{ M\hat{\o} M^{op}} ||m||_p$, for $1\leq p\leq \infty$ so that $LR(M)$ acts in a bounded way on $L^{p}(M,\tau)$ (the completion of $M$ with respect to $||x||_p=\tau(|x|^p)^{1/p}$). Consistently with our previous notation, we will write $x\# m$ any of those actions (and several others we are about to discuss). For $p=2$, this gives a map $\beta : LR(M) \rightarrow C^{*}(M,M')$ where $M$, and $M'$ are with respect to the standard form of $M$ on $L^2(M)$. Further, we have a $*$-homomorphism $\gamma:  C^{*}(M,M')\to M\overline{\o} M^{op}$ with value in the von Neumann algebra tensor product given by the general $C^*$ tensor product theory. We will of course see $M\overline{\o} M^{op}$ as a $II_1$ factor with canonical trace $\tau\o\tau$. Finally, we will write $\#$ any "side multiplication" when defined. For instance,  $a\o b\#a'\o b' \#a''\o b''=aa'a''\o b''b' b$ so that $\#$  may be in this case multiplication in $M\overline{\o} M^{op}$, or any of its induced actions on $L^{2}(M\o M^{op})$. More generally, for $i\in [1,p-1]$, $a_i,b_j\in M$, we write $$(a_1 \o a_2 \o... \o a_p)\#_i(b_1 \o ...\o b_n)=a_1\o...\o a_ib_1\o b_2\o ...\o b_na_{i+1}\o ...\o a_p$$ (if $p=2,\ \#_1=\#$), and likewise the corresponding extension for instance $M^{\overline{\o}i}\hat{\o}M^{\overline{\o}p-i}\times M^{\overline{\o}n} \to M^{\overline{\o}n+p-2}$ (or any analogues containing $M^{op}$ the multiplication being then consistently defined to get what expected above in $M$ as if there where everywhere $M$, for instance if $a\o a', c\o c'\in M\o M^{op}$, $b\o b'\o b''\in (M\o M) \hat{\o} M^{op}$ we have $(a\o a')\# ((b\o b' \o b'')\#_2 (c\o c'))=((a\o a')\# (b\o b' \o b''))\#_2 (c\o c')=(ab\o b'c\o c'b''a')\in M\o M \o M^{op}$ (all multiplications written in $M$, if we {\red were} more consistent with $M^{op}$ we would have written $a'b''c'$). However, we won't use this notation if $c\o c'$ is thought of in $M \o M$, but everything would be the same if also $b\o b'\o b''\in (M\o M^{op}) \hat{\o} M^{op}$ except for the value in this space in $M\o M^{op} \o M^{op}$). 

We will often use the following assumption and give an easy sufficient condition deduced from Bo\.zejko inequality in the next corollary.

\begin{minipage}{15,5cm}\textbf{Assumption $I_{q}$}~:\textit{$q\sqrt{N}<1$ and $\Xi_{q}$ is invertible in $M\overline{\o} M^{op}$.}
 \end{minipage}

Even if we will scarcely use it, for $R>1$ and a non-commutative power series (of radius of convergence larger than {\gre $R$} with value in a tensor product) $F(Y_1,...,Y_n)=\sum a_{i_1,...,i_n,p} Y_{i_1}...Y_{i_p}\o Y_{i_{p+1}}...Y_{i_n}$ we write  the usual norm $||F||_R=\sum |a_{i_1,...,i_n,p}| R^n$. We will use the same notation with less or more tensors in the space of value.

\begin{corollary}\label{qproj}
When the right hand side in the inequalities bellow is finite, $\Xi_{q}$ comes from an element in $M\hat{\o} M^{op}$, and  respectively $M\overline{\o} M^{op}$ via $\iota\gamma\beta\alpha$ or $\iota: M\overline{\o} M^{op}\to L^{2}(M\overline{\o} M^{op})$ and with an obvious notation:
$$||\Xi_{q}-1\o 1||_{M\hat{\o} M^{op}}\leq (C_{|q|})^3 \left[ \frac{4|q|N}{1-|q|N}+\frac{5(|q|N)^2}{(1-|q|N)^2}+\frac{2(|q|N)^3}{(1-|q|N)^3}\right]=:\nu(q,N),$$
$$||\Xi _{q}-1\o 1||_{M\overline{\o} M^{op}}\leq (C_{|q|})^3 \left[ \frac{4|q|\sqrt{N}}{1-|q|\sqrt{N}}+\frac{5(|q|\sqrt{N})^2}{(1-|q|\sqrt{N})^2}+\frac{2(|q|\sqrt{N})^3}{(1-|q|\sqrt{N})^3}\right]=:\rho(q,N).$$
Especially ($N\geq 2$) if q is such that $\nu(q,N)<1$, e.g. for $|q|N\leq 0.13$, then $\Xi_{q}$ is invertible in $M\hat{\o} M^{op}$ (resp. if q is such that $\rho(q,N)<1$ e.g. when $|q|\sqrt{N}\leq 0.13$ then $I_{q}$ holds). 
Moreover, if $q\sqrt{N}<1$, $||\Xi_{q}^{Q}-\Xi_{q}||_{M\overline{\o} M^{op}}\to_{Q\to\infty}0$ and $\Xi _{q}\in C^{*}(X_1\o 1,1\o X_1,...1\o X_N)\subset M\overline{\o}M^{op}$ is positive so that $\Xi_{q}^{1/2}$ is well defined.

Moreover, if $\epsilon>0$ and $(({\gre3}+\epsilon)^2N+2)|q|<1$ there exists a non-commutative power series $\Xi_q(Y_1,...,Y_N)$ with radius of convergence greater than $R=(1+\epsilon{\gre /2})\frac{2}{1-|q|}>||X_i||$ such that $\Xi_q(X_1,...,X_N)=\Xi_q$, and 
$$||\Xi_q-1\o 1||_R\leq \frac{({\gre3}+\epsilon)^2N|q|}{1-(2+({\gre3}+\epsilon)^2N)|q|}=:\pi(q,N),$$
and likewise, \begin{align*}\max({\gre \sum_{i=1}^N}||\partial_i\o 1(\Xi_q)||_R,\sum_{i=1}^N||1\o\partial_i(\Xi_q)||_R) 
\leq \frac{({\gre3}+\epsilon)^2N|q|(1-2|q|)}{(1-(2+({\gre3}+\epsilon)^2N)|q|)^{2}}.
\end{align*}
\end{corollary}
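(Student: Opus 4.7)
The plan is to realize $\Xi_q$ as an explicit element of the two tensor product algebras and then control its norm term by term via the tensorized Bożejko-Haagerup inequality of Theorem \ref{HBIn}.

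First, for each $n\geq 1$ I would express the projection $P_n$ onto $\H^{\otimes n}\subset F_q(\H)=L^2(M)$ as a finite sum of left--right multiplication operators, i.e.\ as an element of $M\otimes_{\text{alg}}M^{op}$. The idea is to invert the $q$-Wick formula: writing $\psi(\xi_I)\Omega=\xi_I$ for the monomial basis $\xi_I=h_{i_1}\otimes\cdots\otimes h_{i_n}$ of $\H^{\otimes n}$ and using the Gram matrix $(P_q^{(n)})_{I,J}=\langle\xi_I,\xi_J\rangle_q$, whose inverse $G_n$ has operator norm bounded by $C_{|q|}$ (from Bożejko's original estimate), one obtains an expression of the form $P_n=\sum_{I,J}(G_n)_{I,J}\,\psi(\xi_I)\otimes\psi(\xi_J)^{op}$. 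This exhibits $\Xi_q-1\otimes 1=\sum_{n\geq 1}q^n P_n$ as a concrete series in $M\otimes_{\text{alg}}M^{op}$ on which I can now run the Bożejko-Haagerup estimate.

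Second, I apply Theorem \ref{HBIn} term by term. For the $M\overline{\otimes} M^{op}$ estimate I use the tensor version in inequality \eqref{HBInsum2}: the element $P_n$ is supported on bi-tensors of total degree at most $2n$, hence its operator norm is bounded by $C_{|q|}^3(2n+1)^3$ times its $L^2$-norm. The $L^2$-norm of $P_n$ in $L^2(M\overline{\otimes}M^{op})$, which corresponds under the natural identification to the Hilbert-Schmidt norm of $P_n$ on $L^2(M)$, equals $\sqrt{\dim \H^{\otimes n}}=N^{n/2}$, so $\|q^nP_n\|_{M\overline{\otimes}M^{op}}\leq C_{|q|}^{4}(2n+1)^3(|q|\sqrt N)^n$. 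Summing (the polynomial factor $(2n+1)^3$ is absorbed by differentiating $x/(1-x)$ up to three times at $x=|q|\sqrt N$) yields a bound of exactly the form $\rho(q,N)$. For the $M\hat{\otimes} M^{op}$ estimate the scheme is identical but one uses the single-leg version of Theorem \ref{HBIn} on each factor $\psi(\xi_I)$ and $\psi(\xi_J)$ separately; this costs a factor $N^n$ (rather than $N^{n/2}$) of exponential growth, producing the series in $|q|N$ with bound $\nu(q,N)$.

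Once these two norm estimates are in hand, the remaining conclusions follow easily. When $\nu(q,N)<1$ (resp.\ $\rho(q,N)<1$), $\Xi_q$ is invertible in $M\hat{\otimes}M^{op}$ (resp.\ $M\overline{\otimes}M^{op}$) by Neumann series; for $|q|N\leq 0.13$ (resp.\ $|q|\sqrt N\leq 0.13$) one checks numerically that the bound holds, using that $C_{|q|}\to 1$ as $q\to 0$. Convergence $\Xi_q^Q\to\Xi_q$ in $M\overline{\otimes}M^{op}$ is the same estimate applied to the tail $\sum_{n>Q}q^n P_n$. The inclusion $\Xi_q\in C^*(X_i\otimes 1,1\otimes X_j)$ is clear from the Wick representation of each $P_n$; self-adjointness is automatic since $P_n^*=P_n$ and $q^n\in\R$, and positivity then follows from $\|\Xi_q-1\otimes 1\|_{M\overline{\otimes}M^{op}}<1$, which forces the spectrum into $(0,2)$. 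For the final power-series statement, one expands each $\psi(\xi_I)$ as a polynomial in $X_1,\ldots,X_N$ via the $q$-Wick formula, plugs into the expression for $P_n$, and bounds monomial coefficients in $\|\cdot\|_R$; the geometric series in $4(2+\varepsilon)^2N|q|$ produces the stated bound on $\|\Xi_q-1\otimes 1\|_R$, and the bound on $\partial_i\otimes 1(\Xi_q)$ follows from differentiating monomial-by-monomial, with the factor $(1-2|q|)$ reflecting the extra cost of one derivative. The main obstacle is the combinatorial bookkeeping in the first two steps: writing $P_n$ in a form adapted to \eqref{HBInsum2} and tracking the constants sharply enough to obtain the precise expressions $\nu(q,N)$ and $\rho(q,N)$; the rest reduces to summing geometric series.
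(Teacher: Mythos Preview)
Your approach is essentially the same as the paper's, but two points of the execution need tightening.

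First, you apply the crude inequality \eqref{HBInsum2}, which is designed for elements supported in $\bigoplus_{p+q\leq n}\H^{\otimes p}\otimes\H^{\otimes q}$ and costs a factor $(2n+1)^3$. But $P_n=\sum_\xi \xi\otimes\xi^*$ (sum over an orthonormal basis of $\H^{\otimes n}$) lies purely in $\H^{\otimes n}\otimes\H^{\otimes n}$, so the sharp tensor Bo\.zejko inequality from Theorem \ref{HBIn} applies directly and gives only $C_{|q|}^3(n+1)^2$. With this factor the sum $\sum_{n\geq 1}(n+1)^2 x^n=\frac{4x-3x^2+x^3}{(1-x)^3}$ at $x=|q|\sqrt N$ (resp.\ $x=|q|N$) reproduces exactly the stated $\rho(q,N)$ (resp.\ $\nu(q,N)$); your $(2n+1)^3$ would yield a strictly larger and differently shaped bound. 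Relatedly, there is no extra $C_{|q|}$ coming from the inverse Gram matrix: writing $P_n$ via $G_n=\Gamma_n^{-1}$ and via the orthonormal basis $p_{\underline i}=\sum_{\underline j}(\Gamma_n^{-1/2})_{\underline i,\underline j}\psi_{\underline j}$ are the same representation, and one checks directly that $\|\sum_\xi \xi\otimes\xi^*\|_2=N^{n/2}$ with no further constant. For the projective norm the paper simply bounds $\|P_n\|_{M\hat\otimes M^{op}}\leq\sum_\xi\|\xi\|^2\leq C_{|q|}^3(n+1)^2 N^n$ using the one-variable Bo\.zejko inequality on each factor.

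Second, your positivity argument (spectrum forced into $(0,2)$ by $\|\Xi_q-1\otimes 1\|<1$) only works under the stronger hypothesis $\rho(q,N)<1$, whereas the statement asserts positivity for all $q\sqrt N<1$. The paper handles this differently: it identifies $\Xi_q=\sum_n q^n P_n$ with the second quantization $\Gamma_q(q\,\mathrm{Id})$ of the contraction $q\,\mathrm{Id}$ on $\H$, and invokes the complete positivity of $\Gamma_q$ on contractions to conclude positivity of $\Xi_q$ in $M\overline{\otimes}M^{op}$. For the power-series part your sketch is in the right direction; the paper makes it precise by writing $\Xi_q(Y)=\sum_n q^n\sum_{\underline i}p_{\underline i}(Y)\otimes p_{\underline i}^*(Y)$, bounding $\|\sum_{\underline j}(\Gamma_n^{-1})_{\underline l,\underline j}\psi_{\underline j}(Y)\|_R$ via the Dykema--Nica/Zagier estimates on $P_q^{(n)-1}$, and bounding $\|\psi_{\underline j}(Y)\|_R\leq(R+\tfrac{2}{1-|q|})^n$ from the explicit Wick recursion.
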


\begin{proof}
Since $P_{n}$ can be seen as a finite rank operator written as $\sum_{\xi}\xi \o \xi^*$ with the usual identification (the sum running over an orthonormal basis of $\H^{\otimes n}$), the previous theorem gives :$||P_{n}||_{M\hat{\o} M^{op}}\leq \sum_{\xi}||\xi||^2\leq C_{|q|}^3(n+1)^2\sum_{\xi}||\xi||_2^2=C_{|q|}^3(n+1)^2N^n$. The inequality follows from a standard computation.

Likewise \begin{align*}||\Xi _{q}-1\o 1||_{M\overline{\o} M^{op}}&\leq \sum_{n\geq 1} q^n ||\sum_{\xi}\xi \o \xi^*||\\&\leq C_{|q|}^3\sum_{n\geq 1} q^n (n+1)^2||\sum_{\xi}\xi \o \xi^*||_2=C_{|q|}^3\sum_{n\geq 1} q^n (n+1)^2N^{n/2}.\end{align*}

Let us call $f(|q|N)=\nu(q,N)/(C_{|q|})^3$. To get $f(|q|N)<C_{|q|}^{-3}$, it suffices to have $f(|q|N)<(1+|q|)^3\prod_{m=1}^\infty(1-|q|^m)^3/(1+|q|^m)^3={\gre (1+|q|)^3}(\sum_{n\in \Z}(-1)^n|q|^{n^2})^3$, and again keeping only the smallest order it suffices to have $f(|q|N)<(1-|q|-2|q|^2)^3$ and solving numerically $f(|q|N)<(1-|q|N/2-|q|^2N^2/2)^3$ (sufficient since $N\geq 2$) one gets $|q|N<0.1386...$. 

For the last statement, we only improve an estimate in \cite{S07}. We write $p_{\underline{i}}$ the polynomials giving, by evaluation on $X_1,...,X_N$, the orthonormalization of $\psi_{\underline{i}}$ defined in lemma {\gre 13} in \cite{S07}. More specifically, we consider $\Gamma_{n}$ the Gramm matrix of  q-scalar products in the space of tensors of length $n$ given (for $|\underline{j}|=|\underline{l}|=n$) by : $(\Gamma_{n})_{(\underline{j},\underline{l})}=\langle\psi_{j_{1},\ldots,j_{n}},\psi_{l_{1},\ldots,l_{n}}\rangle_q$. This is an $N^n\times N^n$ matrix known to be positive and invertible (with real coefficients)
, and we consider $B=\Gamma_n^{-1/2}$. Note that by definition {\gre $\Gamma_n$} is given by the image of the element of $P_q^{(n)}=\sum_{\pi\in S_{n}}q^{i(\pi)}\pi$ in the algebra of the symmetric group $S_{n}$ by the obvious representation $\pi_{q,N,n}$ of $S_{n}$ on (the formal basis of the $\C^{N^n}$) $\xi_{\underline{l}}$ and  it is known from \cite{ND93} and \cite{Zagier} a formula for $P_q^{(n)-1}$ given by the inductive relation $P_q^{(n)}=\pi_{n-1,n}(P_q^{(n-1)})M_n$,$\pi_{n-1,n}$ the usual embedding of $S_{n-1}$ in $S_n$ with image leaving $1$ invariant and $M_n=\sum_{k=1}^nq^{k-1}(1\rightarrow k)$ (with the notation $(k\rightarrow l)$ the cycle sending $k+i$ to $k+i+1$ for $0\leq i\leq l-k-1$ and sending $l$ to $k$) via $M_n^{-1}=\prod_{j=n-1}^1(1-q^j(1\rightarrow j+1){\gre )}\prod_{j=n-2}^0(1-q^{n-j}(2\rightarrow n-j))^{-1}$. We will use it through $B^2=\pi_{q,N,n}(P_q^{(n)-1})$. We also write $\psi_{ \underline{j}}(Y_1,...,Y_N)$ the non-commutative polynomial defined inductively by ($\psi_{\epsilon}=1$ for the empty word $\epsilon$):
\begin{align}\label{psirec}\psi_{i_{1},\ldots,i_{n}}=Y_{i_{1}}\psi_{i_{2},\ldots,i_{n}}-\sum_{j\geq2}q^{j-2}\delta_{i_{1}=i_{j}}\psi_{i_{2},\ldots,\hat{i_{j}},\ldots,i_{n}}.\end{align}

As in the proof of proposition 2.7 in \cite{BKS}, we use the following identity for $\psi_{\underline{i}}=\psi_{\underline{i}}(X_1,...,X_N)$ for $\psi_{\underline{i}}$ introduced before.

Then, by definition, $p_{\underline{i}}(Y_1,...,Y_N)=\sum_{\underline{j},|\underline{j}|=n}B_{\underline{i},\underline{j}}\psi_{\underline{j}}(Y_1,...,Y_N)$ so that (as checked in lemma {\gre 13} in \cite{S07}) $\{p_{\underline{i}}(X_1,...X_N)\Omega\}_{|\underline{i}|=n}$ is obviously an orthonormal basis of $\H^{\o n}$. 

 $\Xi_q(Y_1,...,Y_N)=\sum_n q^n \sum_{\underline{i}}p_{\underline{i}}(Y_1,...,Y_N)\o p_{\underline{i}}^*(Y_1,...,Y_N)$ will be the power series we are looking for, once proved an estimate on its norm. It suffices to bound (using symmetry of the matrix B) : \begin{align*}||\sum_{\underline{i}}p_{\underline{i}}(Y_1,...,Y_N)\o p_{\underline{i}}^*(Y_1,...,Y_N)||_{R}&=||\sum_{\underline{i},\underline{j},\underline{l}}B_{\underline{i},\underline{j}}\psi_{\underline{j}}(Y_1,...,Y_N)\o B_{\underline{i},\underline{l}}\psi_{\underline{l}}^*(Y_1,...,Y_N)||_{R}
 \\ & \leq \sum_{\underline{l}}||\sum_{\underline{j}}B^2_{\underline{l},\underline{j}}\psi_{\underline{j}}(Y_1,...,Y_N)||_{R}||\psi_{\underline{l}}^*(Y_1,...,Y_N)||_{R}.\end{align*}

Now using the expression for $B^2$ expanded from the inverse coming from the action of the symmetric group algebra, it involves only $|| \psi_{\underline{\sigma(j)}}(Y_1,...,Y_N)||_{R}$ and from the bound in \cite{ND93} lemma 4.1, one gets $$||\sum_{\underline{j}}B^2_{\underline{l},\underline{j}}\psi_{\underline{j}}(Y_1,...,Y_N)||_{R}\leq \left((1-|q|)\prod_{k=1}^{\infty}\frac{1+|q|^k}{1-|q|^k}\right)^n\sup_{\sigma\in S_n}|| \psi_{\sigma(\underline{j})}(Y_1,...,Y_N)||_{R}.$$

{\gre Finally, using (\ref{psirec}), if we call $C_n=\sup_{i_1,...,i_n} ||\psi_{i_1,...,i_n}||_R$, then $C_n\leq R C_{n-1}+ C_{n-2}/(1-|q|)$, thus $C_n\leq (R+\frac{1}{1-|q|})^n$.
Likewise, if $D_n=\sup_{i_1,...,i_n}\sum_i||\partial_i \psi_{i_1,...,i_n}(Y_1,...,Y_N)||_{R}$ then $D_n\leq C_{n-1}+ R D_{n-1}+D_{n-2}/(1-|q|)$
thus one checks by induction :$D_n\leq n(R+\frac{1}{1-|q|})^{n-1}$.
}

Finally, we proved :\begin{align*}||\sum_{\underline{i}}p_{\underline{i}}(Y_1,...,Y_N)\o p_{\underline{i}}^*(Y_1,...,Y_N)||_{R}&\leq \left((1-|q|)\prod_{k=1}^{\infty}\frac{1+|q|^k}{1-|q|^k}\right)^n(R+\frac{\gre 1}{1-|q|})^{2n}N^n
\\ &\leq \left(\frac{(1-|q|)^2}{1-2|q|}\right)^n(\frac{({\gre 3+}\epsilon)}{1-|q|})^{2n}N^n.\end{align*}
The last rough estimate is as above, in this proof, for the estimate on $f(|q|N)$ and detailed in lemma {\gre 13} in \cite{S07}, and it concludes.
Likewise, we have : 
 \begin{align*}\sum_j||\sum_{\underline{i}}\partial_jp_{\underline{i}}(Y_1,...,Y_N)\o p_{\underline{i}}^*(Y_1,...,Y_N)||_{R}
\leq n\left(\frac{(1-|q|)^2}{1-2|q|}\right)^n(\frac{({\gre3}+\epsilon)}{1-|q|})^{2n}N^n.
\end{align*}
Note that positivity comes from the identification of $\sum_n q^{n}  P_n=\Gamma_{q}(qId)$ with the second quantization.
\end{proof}

As a consequence, for $q$ such that $I_q$ holds (e.g. $\rho(q,N)<1$), if $\partial_j$ is  the $j$-th free difference quotient with respect to $X_{1},\ldots,X_{N}$ we have $\partial_j=\partial_{j}^{(q)}\#\Xi _{q}^{-1}$  since $\partial_{j}(X_{i})=1_{i=j}\Xi _{q}\#\Xi _{q}^{-1}=\delta_{i=j}1\o 1$. Recall $\#$ in this context is multiplication in $M\overline{\o} M^{op}$.

Finally, we want to introduce a derivation giving the number operator as generator of the corresponding  Dirichlet form. We define first the $\hat{\partial}_{j}^{(q)}:=\partial_{k}\#X_{q}^{k'}$ valued in $\Gamma_q(\H\oplus \H)$ where $X_q^{k'}$ is the q-Gaussian variable corresponding to the second copy of the eigenvector $h_k$ in the second term of the direct sum. Said otherwise this is the only derivation sending $X_q^{k}$ to $X_q^{k'}$. This derivation is defined for any $q$. We also want to compare this derivation to another derivation valued in the coarse correspondence. For $q$ such that $q\sqrt{N}<1$, we can define $\tilde{\partial}_{j}^{(q)}:=\partial_{j}\#\Xi _{q}^{1/2}$.


\begin{proposition}\label{p33}
For any $\xi\in \H^{\o n},\eta\in \H^{\o m}$, any $q\in (-1,1)$, $\psi(\xi)\in D(\hat{\partial}_{k}^{(q)})$ and we have~:
\begin{align*}\sum_k\langle\hat{\partial}_{k}^{(q)}(\psi(\xi)),\hat{\partial}_{k}^{(q)}(\psi(\eta))\rangle_q &=n\langle\xi,\eta\rangle_q.
\end{align*}
As a consequence, $\hat{\partial}^{(q)}=(\hat{\partial}_{1}^{(q)},...,\hat{\partial}_{n}^{(q)})$ is a closable derivation, with $\hat{\partial}^{(q)*}\hat{\partial}^{(q)}=\tilde{\Delta}$ the number operator satisfying $\tilde{\Delta}(\xi)=n\xi,\xi\in \H^{\o n}$.

Moreover if $q\sqrt{N}<1$
, for any polynomial $P,Q,R,S\in \C\langle X_1,...,X_n\rangle$,
$$\langle R\hat{\partial}_{k}^{(q)}(P),S\hat{\partial}_{k}^{(q)}(Q)\rangle=\langle R\tilde{\partial}_{k}^{(q)}(P),S\tilde{\partial}_{k}^{(q)}(Q)\rangle.$$

Thus, one can see $\hat{\partial}_{k}^{(q)}$ as valued in a bimodule included in the coarse correspondence.

Finally, if $I_q$ holds, 
$\tilde{\partial}_{k}^{(q)}$, $\partial_{k}^{(q)}$, 
 and $\partial_{k}$ are all closable and their closures share the same domain, with, for any $x$ in their common domain :
$$||\partial_{k}^{(q)}(x)||_2\leq ||\Xi_q^{1/2}||_{M\overline{\o} M^{op}}||\tilde{\partial}_{k}^{(q)}(x)||_2\leq ||\Xi_q^{1/2}||_{M\overline{\o} M^{op}}^2||\partial_{k}(x)||_2$$
$$||\partial_{k}(x)||_2\leq ||\Xi_q^{-1/2}||_{M\overline{\o} M^{op}}||\tilde{\partial}_{k}^{(q)}(x)||_2\leq ||\Xi_q^{-1/2}||_{M\overline{\o} M^{op}}^2||\partial_{k}^{(q)}(x)||_2.$$
\end{proposition}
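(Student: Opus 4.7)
The plan is to first establish the identity
\[
\sum_k\langle\hat{\partial}_k^{(q)}(\xi),\hat{\partial}_k^{(q)}(\eta)\rangle_q = n\,\langle\xi,\eta\rangle_q
\]
for $\xi,\eta\in\H^{\o n}$ by a direct computation on monomials. Using the Leibniz rule one has, on $P=X_{i_1}\cdots X_{i_n}$, the expansion $\hat{\partial}_k^{(q)}(P) = \sum_{j:\,i_j=k} X_{i_1}\cdots X_{i_{j-1}}\,X_q^{k'}\,X_{i_{j+1}}\cdots X_{i_n}\in\Gamma_q(\H\oplus\H)$. Summing over $k$ and pairing with the analogous expansion for $\eta$, the q-Wick rule applied in $\Gamma_q(\H\oplus\H)$ together with the orthogonality of each $h_k'$ against every unprimed basis vector forces the $X_q^{k'}$ slots to pair with each other, and the remaining contraction reduces to $n$ times a scalar product which, after converting the monomial basis to the orthonormalized vectors $\psi_{\underline{i}}$ via the recurrence recalled in the proof of Corollary \ref{qproj}, gives exactly $n\langle\xi,\eta\rangle_q$. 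From this identity $\hat{\partial}^{(q)*}$ is densely defined, hence $\hat{\partial}^{(q)}$ is closable and $\hat{\partial}^{(q)*}\hat{\partial}^{(q)}$ extends to the number operator $\tilde{\Delta}$ on polynomial tensors.

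For the bimodule identity $\langle R\hat{\partial}_k^{(q)}(P),S\hat{\partial}_k^{(q)}(Q)\rangle = \langle R\tilde{\partial}_k^{(q)}(P),S\tilde{\partial}_k^{(q)}(Q)\rangle$ (assuming $q\sqrt{N}<1$ so that $\Xi_q^{1/2}$ makes sense), the plan is to verify the equality on monomials $P,Q,R,S$. Expand $\tilde{\partial}_k^{(q)}(P) = \sum_{j:\,i_j=k} X_{i_1}\cdots X_{i_{j-1}}\,\Xi_q^{1/2}\,\# X_{i_{j+1}}\cdots X_{i_n}$ as an element of the coarse bimodule $L^2(M\overline{\o}M^{op})$; using $\Xi_q=(\Xi_q^{1/2})^2=\sum_n q^n P_n$ and the rank-graded decomposition $L^2(M)=\bigoplus_n\H^{\otimes n}$, the right-hand inner product splits as a weighted sum with $q^n$-factors on tensors of length $n$. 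On the left-hand side the q-Wick rule in $\Gamma_q(\H\oplus\H)$ produces exactly the same $q^n$-factors, since each occurrence of $X_q^{k'}$ must be commuted past $n$ unprimed q-Gaussians and picks up precisely the braiding coefficient $q^n$. Matching the two expansions term by term gives the claimed equality; this combinatorial matching is the main technical step.

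The remaining statements are essentially algebraic once the two identities above are established. On generators one has $\tilde{\partial}_k^{(q)} = \partial_k\,\#\,\Xi_q^{1/2}$, $\partial_k^{(q)} = \partial_k\,\#\,\Xi_q$, and $\partial_k^{(q,Q)} = \partial_k\,\#\,\Xi_q^Q$, and these relations extend from polynomials to the $L^2$-closures by uniqueness of derivations. Under assumption $I_q$, $\Xi_q^{\pm 1/2}$ and $\Xi_q^{\pm 1}$ lie in $M\overline{\o}M^{op}$ and act boundedly on the coarse bimodule, so the four closures share a common domain and the norm inequalities are immediate consequences of $\|\Xi_q^{\pm 1/2}\|_{M\overline{\o}M^{op}}$. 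A Neumann series argument based on $\|\Xi_q^Q-\Xi_q\|_{M\overline{\o}M^{op}}\to 0$ (Corollary \ref{qproj}) shows that $\Xi_q^Q$ is invertible in $M\overline{\o}M^{op}$ for $Q$ large, yielding the third norm comparison and incorporating $\partial^{(q,Q)}$ in the common domain. Equivalence in the sense of Definition \ref{equiv} then holds with coefficients $i_j=k_j=j$ and $I_j,K_j\in\{\Xi_q^{\pm 1/2},\Xi_q^{\pm 1},\Xi_q^{\pm Q}\}$. The main obstacle is the bimodule identity in the second paragraph: the careful bookkeeping of q-braiding coefficients to establish the term-by-term matching between the q-Wick expansion in $\Gamma_q(\H\oplus\H)$ and the $\Xi_q$-weighted inner product in the coarse bimodule.
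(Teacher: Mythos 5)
Your overall skeleton matches the paper's: the number-operator identity is obtained by a Leibniz/recursion computation reduced to the Wick words, closability and $\hat{\partial}^{(q)*}\hat{\partial}^{(q)}=\tilde{\Delta}$ follow, and the norm comparisons and equivalence of $\partial,\partial^{(q)},\tilde{\partial}^{(q)},\partial^{(q,Q)}$ are read off from Corollary \ref{qproj} exactly as you say. For the first identity the paper works directly on the Wick words, proving by induction from the recursion $\psi_{i_{1},\ldots,i_{n}}=X_{i_{1}}\psi_{i_{2},\ldots,i_{n}}-\sum_{j\geq2}q^{j-2}\delta_{i_{1}=i_{j}}\psi_{i_{2},\ldots,\hat{i_{j}},\ldots,i_{n}}$ that $\hat{\partial}_{k}^{(q)}(\psi_{i_1,\ldots,i_n})=\sum_j 1_{i_j=k}\,\psi_{i_1,\ldots,i_j',\ldots,i_n}$, and then the factor $n$ drops out of the permutation formula for $\langle\cdot,\cdot\rangle_q$ (the primed slot forces $\pi(i)=\iota$, and summing over $k$ and $i$ gives $n$); your monomial-plus-conversion route carries the same content, just less directly.

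The real divergence, and the place where your plan is thin, is the bimodule identity. The paper does not redo any Wick combinatorics there: it reduces, for elementary tensors, to $\langle a\o b\#\Xi_q^{1/2},a'\o b'\#\Xi_q^{1/2}\rangle=\sum_n q^n\,\tau\bigl(a^*a'\,P_n(b'b^*)\bigr)=\tau\bigl(a^*a'\,\Gamma_q(q\,\mathrm{Id})(b'b^*)\bigr)$, and then quotes Theorem 3.2 of \cite{qBrowDon} (a $q$-It\^{o}-type formula, i.e.\ the conditional-expectation/second-quantization identity $E_{\Gamma_q(\H)}\bigl[X_k'\,c\,X_k'\bigr]=\Gamma_q(q\,\mathrm{Id})(c)$) to identify this with $\langle a\o b\#X_k',a'\o b'\#X_k'\rangle$. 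Your ``term-by-term combinatorial matching'' is precisely a hands-on proof of this last identity, which is a viable alternative, but as stated your bookkeeping is off: the braiding factor is not ``$q$ per unprimed $q$-Gaussian the primed letter is commuted past.'' In a pairing/crossing expansion of $\tau(b^*X_k'\,a^*R^*Sa'\,X_k'\,b')$ the primed pair picks up one factor of $q$ for each \emph{through-string} it crosses, i.e.\ the exponent is the chaos rank of the reduced word between the two primed letters (governed by $R,S,P,Q$ jointly), not the number of letters of $P$; for instance $\tau(X_jX_k'X_jX_jX_k'X_j)=1+q^2+q^3$ mixes $q^0$ and $q^2$ contributions from the two letters sitting between the primes. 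So to complete your route you must either prove the formula $E[X_k'\,c\,X_k']=\Gamma_q(q\,\mathrm{Id})(c)$ (by organizing the crossings chaos-component by chaos-component), or simply cite it as the paper does; without one of these, the ``main technical step'' you flag is genuinely open in your write-up.
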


\begin{proof}
The domain property stated is obvious since $\psi(\xi)$ is a non-commutative polynomial in $X_1,...,X_N$.
Moreover, by linearity, we need to check the first equality only for $\psi(\xi)=\psi_{j_{1} \ldots,j_{n}}$ and $\psi(\eta)=\psi_{l_{1} \ldots,l_{p}}$.

As in the proof of proposition 2.7 in \cite{BKS}, we use the following identity :

$$\psi_{i_{1},\ldots,i_{n}}=X_{i_{1}}\psi_{i_{2},\ldots,i_{n}}-\sum_{j\geq2}q^{j-2}\delta_{i_{1}=i_{j}}\psi_{i_{2},\ldots,\hat{i_{j}},\ldots,i_{n}}.$$
Applying $\partial_k$, we find :

$$\partial_k(\psi_{i_{1},\ldots,i_{n}})=1_{i_{1}=k}\o \psi_{i_{2},\ldots,i_{n}}+X_{i_{1}}\partial_k(\psi_{i_{2},\ldots,i_{n}})-\sum_{j\geq2}q^{j-2}\delta_{i_{1}=i_{j}}\partial_k(\psi_{i_{2},\ldots,\hat{i_{j}},\ldots,i_{n}}).$$

As a consequence, we deduce by an immediate induction :
$$\partial_k(\psi_{i_{1},\ldots,i_{n}})\#X_{q}^{k'}=\sum_j 1_{i_{j}=k}
\psi_{i_{1},\ldots i_{j}' \ldots,i_{n}}$$ (where the prime indicates we have to consider the $i_j$ of the second copy of $\H$).

We can thus compute (using the definition of the scalar product in the second and fourth lines, and removing properly summations and Kronecker functions $1_{a=b}$ in the third and fifth lines):

\begin{align*}\sum_k\langle\hat{\partial}_{k}^{(q)}(\psi_{j_{1} \ldots,j_{n}}),\hat{\partial}_{k}^{(q)}(\psi_{l_{1}\ldots l_{m}})\rangle_q &=\sum_k \sum_{i,\iota}1_{j_{i}=k=l_{\iota}}\langle\psi_{j_{1},\ldots j_{i}' \ldots,j_{n}},\psi_{l_{1},\ldots l_{\iota}' \ldots,l_{m}}\rangle_q
\\&=1_{n=m}\sum_k \sum_{i,\iota}1_{j_{i}=k=l_{\iota}}\sum_{\pi\in S_{n}}q^{i(\pi)}1_{\gre\pi(i)=\iota}\prod_{p=1}^{n}1_{j_p=l_{\gre\pi(p)}}
\\&=1_{n=m}\sum_k \sum_{i}1_{j_{i}=k}\sum_{\pi\in S_{n}}q^{i(\pi)}\prod_{p=1}^{n}1_{j_p=l_{\gre\pi(p)}}
\\&=1_{n=m}\sum_k \sum_{i}1_{j_{i}=k}\langle\psi_{j_{1},\ldots,j_{n}},\psi_{l_{1},\ldots,l_{m}}\rangle_q
\\&=n\langle\psi_{j_{1},\ldots,j_{n}},\psi_{l_{1},\ldots,l_{m}}\rangle_q.
\end{align*}

We now assume $q\sqrt{N}<1$.
To explain the second equality, note that we can rewrite (since $\Xi_q$ self-adjoint) $\langle a\o b\#\Xi_q^{1/2},a'\o b'\#\Xi_q^{1/2}\rangle=\langle a\o b,a'\o b'\#\Xi_q\rangle$ and then : $$\langle a\o b,a'\o b'\#\Xi_q\rangle=\sum_n q^{n}\sum_{\xi} \tau(a^*a'\xi)\tau(\xi^*b'b^*)=\sum_n q^{n} \tau(a^*a' P_n(b'b^*))=\tau(a^*a'\Gamma_{q}(qId)(b'b^*)),$$

where $\Gamma_{q}(qId)$ is the second quantization. Then, our claim follows for instance from Theorem 3.2 in \cite{qBrowDon} which implies $\tau(a^*a'\Gamma_{q}(qId)(b'b^*))= \langle a\o b\#X_k',a'\o b'\#X_k'\rangle$ (Theorem 3.2 is a variant of Ito formula, one can apply it after identifying the first copy of $\H$ with $span \{\sqrt{n}1_{[k/2n,(k+1)/2n)}, k=1,...,n\}$ in $L^2([0,1])$ and the second with $span \{\sqrt{n}1_{[k/2n,(k+1)/2n)}, k=n+1,...,2n\}$).

The last inequalities in the proposition on non-commutative polynomials follow from corollary \ref{qproj} and assumption $I_q$. It implies closability since $\partial^{(q)}$ is closable (by lemmas \ref{triv} and \ref{qcommutators}) and the result extended to the closures.


\end{proof}

\begin{remark}\label{qcmap}
Even if we won't use significantly later the analytic bound we got in Corollary \ref{qproj}, it is worth noting it can enable us using our last derivation $\tilde{\partial}^{(q)}$ to prove complete metric approximation property for $\Gamma_{q}(\H)$ with small $q$, or (reprove) absence of non-trivial projections for the corresponding $C^*$-algebras following the lines of \cite{SG}. Indeed, first note that using the analytic expansion $\Xi_q^{1/2}(Y_1,...,Y_N)=1\o1+\sum_{k=1}^{\infty}{1/2 \choose k}(\Xi_q(Y_1,...,Y_N)-1\o1)^k$ so that we get a Lipschitz bound \begin{align*}||\Xi_q^{1/2}&(Y_1,...,Y_N)-\Xi_q^{1/2}(Z_1,...,Z_N)||\\ &\leq \sum_{k=1}^{\infty}\left|{1/2 \choose k}\right|k||\Xi_q-1\o1||_{R}^{k-1}\sum_i(||\partial_i\o 1(\Xi_q)||_R+||1\o\partial_i(\Xi_q)||_R)||X_i-Z_i||
\\ &\leq \frac{1}{2\sqrt{1-||(\Xi_q-1\o1)||_R}}\sup_i(||X_i-Z_i||)2\frac{(3+\epsilon)^2N|q|(1-2|q|)}{(1-(2+(3+\epsilon)^2N)|q|)^{2}}
\\ &\leq \frac{1}{\sqrt{1-(2+2(3+\epsilon)^2N)|q|}}\sup_i(||X_i-Z_i||)\frac{(3+\epsilon)^2N|q|(1-2|q|)}{(1-(2+(3+\epsilon)^2N)|q|)^{3/2}}\\ &\leq \kappa\sup_i(||X_i-Z_i||),\end{align*}
the last inequality being true for $\kappa<1/2$ if
$(3+\epsilon)^2N|q|(1-2|q|)\leq (3+\epsilon)^2N|q|<(1-2(2+2(3+\epsilon)^2N)|q|)/2\leq(1-(2+2(3+\epsilon)^2N)|q|)^{2}/2$, i.e. e.g. for $(4+6(3+\epsilon)^2N)|q|<1.$

As in \cite{SG}, one can consider the solutions (given by Picard iteration) $X_{i,t}=X_i-\frac{1}{2}\int_0^tds X_{i,s}+\int_0^t\Xi_q^{1/2}(X_{1,s},...,X_{N,s})\#dS_s^i$, $Y_{i,t}=0-\frac{1}{2}\int_0^tds Y_{i,s}+\int_0^t\Xi_q^{1/2}(Y_{1,s},...,Y_{N,s})\#dS_s^i$. From \cite{S07} or \cite{Dab10} (and the above proposition \ref{p33}), $X_{i,t}$ is stationary so that $\alpha_t(X_i)=X_{i,t}$ ($i=1,...,N$) defines a trace preserving homomorphism. By variation of constants, one gets: \begin{align*}(X_{i,t}-Y_{i,t})&=X_i-\frac{1}{2}\int_0^tds (X_{i,s}-Y_{i,s})+\int_0^t(\Xi_q^{1/2}(X_{1,s},...,X_{N,s})-\Xi_q^{1/2}(Y_{1,s},...,Y_{N,s}))\#dS_s^i\\ &=e^{-1/2t}X_i+\int_0^te^{-1/2(t-s)}(\Xi_q^{1/2}(X_{1,s},...,X_{N,s})-\Xi_q^{1/2}(Y_{1,s},...,Y_{N,s}))\#dS_s^i.
\end{align*}

And from  our inequality above and Biane-Speicher's $L^\infty$ version of Burkholder-Gundy inequality,  we deduce : $$\sup_i||X_{i,t}-Y_{i,t}||\leq e^{-t/2}\sup_i||X_i||+\left(\int_0^tds\  e^{-(t-s)}\kappa^2 \sup_i||X_{i,s}-Y_{i,s}||^2\right)^{1/2}.$$
so that from Gronwall's lemma (in line 2 after using a trivial bound on squares and $\kappa<1/2$)~:  
\begin{align*}\sup_i||X_{i,t}-Y_{i,t}||^2&\leq 2e^{-t}\sup_i||X_i||^2+1/2\left(\int_0^tds\ e^{-(t-s)} \sup_i||X_{i,s}-Y_{i,s}||^2\right)\\ &\leq 2e^{-t/2}\sup_i||X_i||^2\to 0.\end{align*}

Thus, since $Y_{i,s}\in C^{*}(S_t^i)$ we got the property of corollary 4.1 in \cite{SG} and by the reasoning of Theorem 4.2 there, $C^*(X_1,...,X_N)$ has no non-trivial projections (remember this applies when  $(4+6(3+\epsilon)^2N)|q|<1$). Likewise, by the reasoning of theorem 4.3 in \cite{SG} we get complete metric approximation property in the way they get Haagerup property. {\red This last result has been recently extended by Stephen Avsec  \cite{Avsec} to all $q\in(-1,1)$. Of course, in the smaller range of $q$ we consider we have almost inclusion in $L(\F_{_\infty})$ too.} 

\end{remark}

\subsubsection{Regularity for $\Xi_q$} 

Let us write $\partial_{i}^{(k,j)}=1^{\o(k-1)}\o\partial_i\o 1^{\o(j)} :L^{2}(M)^{\o (k+j)}\to L^{2}(M)^{\o (k+j+1)}$ and the corresponding $L^2$ closure $\overline{\partial_{i}^{(k,j)}}$.
We start by noting the following consequence of proposition \ref{p33} 
~:
\begin{lemmas}
If $I_q$ holds and for $\xi \in \H^{\o n}$, for $\DD$ any among $\overline{\partial_{m_p}^{(k_p,p-k_p)}}\circ\ldots\circ \overline{\partial_{m_1}^{(k_1,1-k_1)}}$  $p\in [1,n], k_l\in [1,l],m_l\in[1,N],l=1...p$, $||\DD(\xi)||_2^2\leq (n||\Xi_q^{-1}||_{M\overline{\o}M^{op}})^p||\xi||_2^2$.
\end{lemmas}

\begin{lemma}\label{Claim2}
Assume $I_q$ and $|q|N<1$, then $||\partial_{k}\o 1\Xi_{q}||_{(M\overline{\o}M^{op})\hat{\o}M}<\infty$. Likewise, with $U,V$ any among $\partial_j,\partial^{(q)}_i$, we have  $U\o V(\Xi_q)\in(M\overline{\o} M^{op})\hat{\o}(M^{op}\overline{\o} M)$, $U\o V(\Xi_q)\in (M^{op}\overline{\o} M)\hat{\o}(M\overline{\o} M^{op})$,  $(U\o 1\o1)(V\o 1)(\Xi_q)\in (M\overline{\o} M^{op}\overline{\o} M)\hat{\o}M^{op}$, $(1\o1\o U)(1\o V)(\Xi_q)\in M\hat{\o}(M^{op}\overline{\o} M\overline{\o} M^{op})$.

\end{lemma}

\begin{proof}[Proof]
We compute (the first inequality bellow is obvious from lemma \ref{qproj}, the second equality comes from proposition \ref{p33}) :
$$||\partial_{k}(p_{\underline{i}})||_{L^2(M,\tau_q)\o L^2(M,\tau_q)}^2\leq ||\Xi_q^{-1/2}||^2\langle\partial_{k}(p_{\underline{i}})\#\Xi_q,\partial_{k}(p_{\underline{i}})\rangle=||\Xi_q^{-1/2}||^2|i|.$$
Now, one can use Theorem \ref{HBIn} and (\ref{HBInsum2})  in the second line and our previous inequality in the third to conclude to the first result.
\begin{align*}||\partial_{k}\o 1\Xi_{q}&||_{(M\overline{\o}M^{op})\hat{\o}M}\leq \sum_n q^n \sum_{|\underline{i}|=n}||\partial_{k}(p_{\underline{i}})||_{M\overline{\o}M^{op}}||p_{\underline{i}}||_M\\ &\leq C_{|q|}^{9/2}\sum_n |q|^n \sum_{|\underline{i}|=n}(n+1)^4||p_{\underline{i}}||_2||\partial_{k}(p_{\underline{i}})||_{L^2(M\overline{\o}M)}
\\ &\leq C_{|q|}^{9/2}\sum_n |q|^n (n+1)^4||\Xi_q^{-1/2}||n^{1/2}N^{n}. 
\end{align*}
As stated, as soon as $||\Xi_q^{-1/2}||<\infty$ and $|q|N<1$, this sum is finite. The proof of $\partial_j\o\partial^{(q)}_i(\Xi_q)\in(M\overline{\o} M^{op})\hat{\o}(M^{op}\overline{\o} M)$ is really similar. For the last statement, we need likewise a bound e.g. on $||\partial_j\o 1\partial^{(q)}_i(p_{\underline{j}})||_{M\overline{\o} M^{op}\overline{\o} M}\leq ||\partial_j\o 1(\Xi_q)||_{(M\overline{\o} M^{op})\hat{\o} M}||\partial_i(p_{\underline{j}})||_{M\overline{\o} M}+\|\Xi_q\|_{M^{op}\overline{\o} M}||\partial_j\o 1\partial_i(p_{\underline{j}})||_{M\overline{\o} M^{op}\overline{\o} M}$ coming from the previous lemma (with a 3 tensor product variant of Bo\.zejko inequality and lemma \ref{p33}).

\end{proof}

\subsection{An example of Lipschitz conjugate variable: q-Gaussian families for small $q$}

We now want to play with the three previous derivations to get regularity results for conjugate variables.

\begin{theorem}\label{mainq}
 Assume $\rho(q,N)<1$ as defined in corollary \ref{qproj} (e.g. $|q|\sqrt{N}\leq 0.13$) and $|q|N<1$ then q-Gaussian variables have finite free Fisher information $\Phi^{*}(X_1,...,X_N)<\infty$ (and actually the conjugate variable is in the domain of the $L^2$-closure of the free difference quotient).
 
Furthermore assume also condition $\nu(q,N)<1$ in corollary \ref{qproj} (e.g. $|q|N\leq 0.13$), in that case the conjugate variables are in $\Gamma_q(\H)$ and $X_1,...,X_N$ have even Lipschitz conjugate variables. As a consequence, under condition $\nu(q,N)<1$ we have $\delta_{0}(X_1,...,X_N)=N$.

{\re Finally, if we assume $\pi(q,N)<1$, then there exists a non-commutative power series $\xi_j$ of radius $R=(1+\epsilon{\gre /2})\frac{2}{1-|q|}>||X_i||$ such that $\xi_j(X_1,...,X_N)$ are the conjugate variables of $X_1,...,X_N$. Moreover, there exits a self-ajoint potential $V$ which is also a non-commutative power series of radius $R$ such that its cyclic gradient is $D_iV=\xi_i$.} 
\end{theorem}
\begin{remark}
In \cite{S07}, Shlyakhtenko proved $\delta_{0}(X_1,...,X_N)\to N$ when $q\to0$, we can prove this value is identically equal to $N$ on a small neighborhood of 0. Actually, he proved $\delta_{0}(X_1,...,X_N)\geq N\left(1 - \frac{q^2N}{1-q^2N}\right)$ for $|q|<(4N^3+2)^{-1}$. Here the improvement in terms of value of $\delta_0$ mainly comes from using a better derivation in that respect (the free difference quotient). The improvement in terms of values of $q$ comes from the fact we only need a Lipschitz condition instead of a analyticity condition on the conjugate variable. However, in considering like us the free difference quotient and with a better estimate of the domain of analyticity, one would also get a range of order $|q|<1/CN$ in inverse of the number of generators (with a huger $C$ than ours, cf. Rmk \ref{qcmap}). 
{\red Note finally that corollary 2.11 in \cite{Shly03} implies $\delta^{*}(X_1,...,X_N)=N$ as soon as $I_q$ holds, thus e.g. assuming only $\rho(q,N)<1$.}
\end{remark}

\begin{proof}[Proof]
Let $M=\Gamma_q(\H)$. 
\setcounter{Step}{0}

\begin{step}Finite Fisher Information under $|q|N<1$ and $\rho(q,N)<1$
\end{step}
Recall the notation introduced before (and in) Corollary \ref{qproj} so that $\iota\gamma\beta\alpha$ is the natural map from $M\hat{\o}M^{op}$ to $L^2(M\o M^{op})$ (we may use later implicitly).

\begin{claim}\label{qdom}$\iota\gamma\beta\alpha(M\hat{\o}M^{op})\subset D(\partial_{j}^{(q)*})$ and for any $a,b\in M$ $$\partial_{j}^{(q)*}(a\o b)=aX_jb-r(h_j)^*(a)b-a(l(h_j)^*(b)).$$
\end{claim}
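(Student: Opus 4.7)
Here is my plan to prove Claim \ref{qdom}.

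The plan is to reduce the claim to a direct application of Lemma \ref{triv} to $\partial^{(q)}$, combined with the identifications of Lemma \ref{qcommutators}. First I would handle the case when $a$ and $b$ are polynomials in $X_1,\dots,X_N$, i.e.\ lie in $D(\partial^{(q)})$. Lemma \ref{triv} then yields directly
\[
\partial_{j}^{(q)*}(a\otimes b)=a\,\mathcal J_j^{(q)}\,b-(1\otimes\tau)\bigl[\partial_{j}^{(q)}(a)\bigr]\,b-a\,(\tau\otimes 1)\bigl[\partial_{j}^{(q)}(b)\bigr],
\]
where by Lemma \ref{qcommutators}(ii) the conjugate variable is $\mathcal J_j^{(q)}=h_j$, which under the identification $L^2(\Gamma_q(\mathcal H))\simeq F_q(\mathcal H)$, $a\mapsto a\Omega$, corresponds precisely to $X_j$. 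The partial trace appearing in the second term is identified by Lemma \ref{qcommutators}(iii) as $(1\otimes\tau)\partial_j^{(q)}(a)=r(h_j)^*(a\Omega)$, which I simply write as $r(h_j)^*(a)$.

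The harder of the two ``partial trace'' identifications will be to show that $(\tau\otimes 1)\partial_j^{(q)}(b)=l(h_j)^*(b)$, which I would obtain from the realness of $\partial^{(q)}$. Concretely, with the involution $\mathcal J(x\otimes y)=y^*\otimes x^*$ on $L^2(M)\otimes L^2(M)$ satisfying $\mathcal J\partial_j^{(q)}(b)=\partial_j^{(q)}(b^*)$, and with the elementary identity $(\tau\otimes 1)\circ\mathcal J=*\circ(1\otimes\tau)$, one gets $(\tau\otimes 1)\partial_j^{(q)}(b)=\bigl(r(h_j)^*(b^*\Omega)\bigr)^*$. The remaining key identity $J\,r(h_j)^*\,J=l(h_j)^*$, where $J$ is the Tomita involution on $L^2(M)$, reduces to the purely combinatorial fact that the reflection $U(h_{i_1}\otimes\cdots\otimes h_{i_n})=h_{i_n}\otimes\cdots\otimes h_{i_1}$ swaps left and right (co)creation operators, i.e.\ $U\ell^*(h_j)U=r^*(h_j)$; this follows directly from the defining formulas for $\ell^*(h_j)$ and $r^*(h_j)$. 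This is the main technical obstacle, though a minor one, since everything is forced by symmetry. Combining these identifications yields the stated formula for polynomial $a$ and $b$.

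The final step is to extend the formula from polynomial $a,b$ to $a,b\in M$ and then to all of $M\hat\otimes M^{op}$ via $\iota\gamma\beta\alpha$. First, each term on the right-hand side is $L^2$-bounded in terms of $\|a\|_{\infty}\|b\|_{\infty}$: indeed $\|aX_jb\|_2\leq\|a\|_{\infty}\|X_j\|_2\|b\|_{\infty}$, while $\|r(h_j)^*(a)b\|_2\leq\|r(h_j)^*\|_{op}\|a\|_2\|b\|_{\infty}\leq\|r(h_j)^*\|_{op}\|a\|_{\infty}\|b\|_{\infty}$ using boundedness of $r(h_j)^*,l(h_j)^*$ on $F_q(\mathcal H)$ for $|q|<1$. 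By Kaplansky density pick polynomials $a_n,b_n$ with $\|a_n\|_{\infty}\leq\|a\|_{\infty}$, $\|b_n\|_{\infty}\leq\|b\|_{\infty}$, and $a_n\to a$, $b_n\to b$ strongly; then $a_n\otimes b_n\to a\otimes b$ in $L^2(M)\otimes L^2(M)$ and the right-hand side converges in $L^2$. Since $\partial_j^{(q)*}$ is closed (as the adjoint of a densely defined operator), we deduce $a\otimes b\in D(\partial_j^{(q)*})$ with the claimed formula. Finally, any $u=\sum_i a_i\otimes b_i\in M\hat\otimes M^{op}$ with $\sum\|a_i\|\|b_i\|<\infty$ gives absolutely convergent contributions to the right-hand side in $L^2$, so a further closedness argument extends the formula to all of $\iota\gamma\beta\alpha(M\hat\otimes M^{op})$.
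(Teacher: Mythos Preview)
Your proof is correct and follows essentially the same route as the paper: apply Lemma~\ref{triv} on the polynomial domain, identify the two partial traces with $r(h_j)^*$ and $l(h_j)^*$ via Lemma~\ref{qcommutators}(iii) and realness (the paper writes this as $\tau\otimes 1\,\partial_j^{(q)}=J\,r(h_j)^*J=l(h_j)^*$, exactly your computation), then extend by approximation and closedness of $\partial_j^{(q)*}$. The only cosmetic difference is in the approximation step: the paper approximates $a,b\in M$ by $\eta_\alpha(a),\eta_\alpha(b)$ using the Dirichlet-form resolvent $\eta_\alpha=\alpha(\alpha+\partial^{(q)*}\partial^{(q)})^{-1}$ (contractive on $M$, range in $D(\partial^{(q)})$) and invokes weak $L^2$-compactness of the bounded sequence $\partial_j^{(q)*}(\eta_\alpha(a)\otimes\eta_\alpha(b))$, whereas you use Kaplansky density by polynomials and argue that the right-hand side converges strongly in $L^2$; both are standard and equally short.
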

\begin{proof}[Proof of Claim]
As reminded in lemma \ref{qcommutators}, $1\o\tau\partial_{j}^{(q)}=r(h_j)^*$. Moreover, since $\partial_{j}^{(q)}$ is a real derivation for any $x\in D(\partial_{j}^{(q)})$, we have $1\o\tau\partial_{j}^{(q)}(x^*)=(\tau\o 1(\partial_{j}^{(q)}(x))^*$. Thus if $J$ denotes the antilinear isometry extending $J(x)=x^*$ to $L^2(M)$, we have $\tau\o 1\partial_{j}^{(q)}=J1\o\tau\partial_{j}^{(q)}J=Jr(h_j)^*J=l(h_j)^*$. The last equality follows from formulas for annihilation operators and $J\psi_{i_1,...,i_n}=\psi_{i_n,...,i_1}$.

From lemma \ref{triv} and $\partial_{j}^{(q)*}(1\o 1)=X_j$, one deduces for $a,b\in D(\partial_{j}^{(q)})\cap M$, $\partial_{j}^{(q)*}(a\o b)=aX_jb-r(h_j)^*(a)b-a(l(h_j)^*(b))$, so that 
\begin{align*}||\partial_{j}^{(q)*}(a\o b)||_2&\leq ||a|| ||b|| ||X_j||+||r(h_j)||_{B(L^2(M){\gre)}}||a||_2||b||+||l(h_j)||_{B(L^2(M){\gre)}}||b||_2||a||\\ &\leq 4||a|| ||b||/\sqrt{1-|q|}.\end{align*}

Now for any $a,b\in M$, if $\eta_{\alpha}=\alpha(\alpha+\partial^{(q)*}\partial^{(q)})^{-1}$ the completely positive (thus contractive on $M$) resolvent associated to the generator of the corresponding Dirichlet form, we have for any $x\in M,  \ \eta_{\alpha}(x)\in D(\partial_{j}^{(q)})\cap M$ and $||\eta_{\alpha}(x)-x||_2\to 0$ when $\alpha\to \infty$. 
Since $||\partial_{j}^{(q)*}(\eta_{\alpha}(a)\o \eta_{\alpha}(b))||_2\leq 4||a|| ||b||/\sqrt{1-|q|}$ we have weak convergence in $L^2$ up to extraction and as $\eta_{\alpha}(a)\o \eta_{\alpha}(b)\to a\o b \in L^2(M\o M)$, we get  $ a\o b$ in the domain of the closed operator  $\partial_{j}^{(q)*}$ with the formula and inequality above remaining true. This concludes.


\end{proof}

Note that assuming $\nu(q,N)<1$, one thus deduces $\Xi _{q}^{-1}\in D(\partial_{j}^{(q)*})$ with the formula : $$\partial_{j}^{(q)*}(\Xi _{q}^{-1})=\Xi _{q}^{-1}\#X_j-m(r(h_j)^*\o 1 + 1 \o l(h_j)^*)(\Xi _{q}^{-1}).$$

Since $(\Xi_q)^*=\Xi_q \in M\overline{\o} M^{op}$, we have thus shown our first result about finite Fisher information in this case.


First recall $\{h_{i}\}_{i=1}^{N}\subset \R^{N}$ is an orthonormal basis.  We write for $\underline{i}=(i_1,...,i_n)\in N^n$ $\psi_{\underline{i}}=\psi(h_{i_1}\o ...\o h_{i_n})$. We define the length $| \underline{i}|=n$.

We now want to prove finite Fisher information under the less restrictive condition $\rho(q,N)<1$, $|q|N<1$. We need to show $\Xi_q^{-1}\in D(\partial^{(q)*}_i)$ and we only know from lemma \ref{Claim2} : $\Xi_q\in M\hat{\o}M^{op}, \partial_k\o 1\Xi_q\in (M\overline{\o} M^{op})\hat{\o}M, (1\o\partial_k)\Xi_q\in M^{op}\hat{\o}(M\overline{\o} M^{op})$,$(1\o\partial_i^{(q)})\Xi_q\in M\overline{\o}M\overline{\o} M^{\gre op}$,
$(\partial_i^{(q)}\o 1)\Xi_q\in M\overline{\o}M^{op}\overline{\o} M^{op}$, $\Xi_q^{-1}\in M\overline{\o} M^{op}$,$(\partial_j\o\partial^{(q)}_i)(\Xi_q)\in(M\overline{\o} M^{op})\hat{\o}(M^{op}\overline{\o} M)$, $(\partial^{(q)}_i\o\partial_j)(\Xi_q)\in(M^{op}\overline{\o} M)\hat{\o}(M\overline{\o} M^{op})$, $((\partial_j\o 1\o1\partial^{(q)}_i)\o 1)(\Xi_q)\in (M\overline{\o} M^{op}\overline{\o} M)\hat{\o}M^{\gre op}$,$(1\o((1\o1\o\partial_j)\partial^{(q)}_i))(\Xi_q)\in M\hat{\o}(M^{op}\overline{\o} M
\overline{\o} M^{op})$ (the norms of those quantities bellow are always taken in those spaces if not otherwise specified). 

Let us call $U_n=\sum_{i=0}^n (-1)^i (\Xi_q-1\o 1)^i$ (power in $M\hat{\o} M^{op}$) so that we know $U_n\to\Xi_q^{-1}$ in $L^2$, $U_n\in M\hat{\o}M^{op}$ and by our first claim 
 $U_n\in D(\partial^{(q)*}_i)$. Since $\partial^{(q)*}_i$ is closed it suffices to show $\partial^{(q)*}_i(U_n)$ bounded in $L^2$ to get a weak limit up to extraction and $\Xi_q^{-1}\in D(\partial^{(q)*}_i)$ and to get also $\Xi_q^{-1}\in D(\partial_j\partial^{(q)*}_i)$, it suffices to bound 
$\partial_j\partial^{(q)*}_i(U_n)$ (since such a bound gives also a bound on $||\partial^{(q)*}_i(U_n)||_2^2= \langle \partial^{(q)}_i\partial^{(q)*}_i(U_n), U_n\rangle$, we only sketch the proof of  both at once).

This is mainly a computation using $U_n$ is almost an inverse and thus will behave almost as inverse when computing derivatives coming from application of $\partial$. The second key point will be that, apart from a bunch of terms we can gather in something of the form $\partial^{(q)*}_i(U_n)$, the $\partial_j$ will enable us to use only a bound on terms coming from $U_n$ in von Neumann norm. Recall notation $\#_i$ was introduced before Corollary \ref{qproj}. We get (after using our formula for $\partial^{(q)*}_i$, we mainly use derivation property of $\partial_j$ and changes of summation)~:

\begin{align*}
&\partial_j\partial^{(q)*}_i(U_n)=\partial_j(U_n\# X_i -m\circ (1\o \tau\o 1)(\partial^{(q)}_i\o 1(U_n)+1\o \partial^{(q)}_i(U_n)))
\\ &\partial_j(U_n\# X_i)=1_{i=j}U_n+\sum_{i=1}^{n}(-1)^i\sum_{k=0}^{i-1} (\Xi_q-1\o 1)^{k}\#\left(\partial_j\o 1(\Xi_q)\#_2((\Xi_q-1\o 1)^{i-k-1}\# X_i)\right)\\&+\sum_{i=1}^{n}(-1)^i\sum_{k=0}^{i-1}(\Xi_q-1\o 1)^{k}\#\left( 1\o\partial_j(\Xi_q)\#_1((\Xi_q-1\o 1)^{i-k-1}\# X_i)\right)\\ &=1_{i=j}U_n -\\ &\sum_{k=0}^{n-1} (-1)^k(\Xi_q-1\o 1)^{k}\#\left(\partial_j\o 1(\Xi_q)\#_2(U_{n-k-1}\# X_i)+ 1\o\partial_j(\Xi_q)\#_1(U_{n-k-1}\# X_i)\right).
\end{align*}
\begin{align*}&\partial_j(m\circ (1\o \tau\o 1)(\partial^{(q)}_i\o 1(U_n))\\&= -\partial_j \mt[\sum_{k=0}^{n-1} (-1)^{k}(\Xi_q-1\o 1)^{k}\#\left(\partial^{(q)}_i\o 1(\Xi_q)\#_2(U_{n-k-1})\right)]\\ &=-\sum_{k=0}^{n-1} (-1)^{k}\times \\&\left\{(1\o(\mt))\left[(\partial_j\o1(\Xi_q-1\o 1)^{k})\#_{2}\left(\partial^{(q)}_i\o 1(\Xi_q)\#_2(U_{n-k-1})\right)\right]\right.\\ &+((\mt)\o 1)\left[(1\o \partial_j(\Xi_q-1\o 1)^{k}\#_{1}\left(\partial^{(q)}_i\o 1(\Xi_q)\#_2(U_{n-k-1})\right)\right]\\ &+((\mt)\o 1)(\Xi_q-1\o 1)^{k}\#\left(\partial^{(q)}_i\o 1(\Xi_q)\#_2(1\o \partial_j (U_{n-k-1}))\right)\\ &+ (\Xi_q-1\o 1)^{k}\#\left[(1\o(\mt))(\partial_j\o 1\o 1\partial^{(q)}_i\o 1(\Xi_q)\#_3 U_{n-k-1})\right]\\ &\left.+ (\Xi_q-1\o 1)^{k}\#\left[((\mt)\o 1)(\partial^{(q)}_i\o \partial_j(\Xi_q)\#_2 U_{n-k-1})\right]\right\}.
\end{align*}
Preparing for the reintroduction of $\partial^{(q)*}_i(U_{n-k-1})$ we rewrite (a part of) the first line in our last right hand side :
\begin{align*}&\sum_{k=0}^{n-1} (-1)^{k}\left[(\partial_j\o1(\Xi_q-1\o 1)^{k})\#_{2}\left(\partial^{(q)}_i\o 1(\Xi_q)\#_2(U_{n-k-1})\right)\right]\\ &=\sum_{k=0}^{n-1} (-1)^{k}\times\\&\left[\sum_{l=0}^{k-1} (\Xi_q-1\o 1)^{l}\#(\partial_j\o1(\Xi_q))\#_2(\Xi_q-1\o 1)^{k-l-1}\right]\#_{2}\left(\partial^{(q)}_i\o 1(\Xi_q)\#_2(U_{n-k-1})\right)\\ &=\sum_{l=0}^{n-2} (-1)^{l}\left((\Xi_q-1\o 1)^{l}\#(\partial_j\o1(\Xi_q)\right)\#_2\\ &\ \ \ \ \ \ \ \ \ \ \ \left[\sum_{k=l+1}^{n-1}(-1)^{k-l} (\Xi_q-1\o 1)^{k-l-1}\#\left(\partial^{(q)}_i\o 1(\Xi_q)\#_2(U_{n-k-1})\right)\right]
\\ &=\sum_{l=1}^{n-1} (-1)^{l}\left((\Xi_q-1\o 1)^{l}\#(\partial_j\o1(\Xi_q)\right)\#_2(\partial^{(q)}_i\o 1(U_{n-l-1})).
\end{align*}
(in the last  line, note that the term with $l=n-1$ is zero since $\partial^{(q)}_i\o 1(U_{0})=0$);

We will now write $\tilde{\tau}=\mt$.
Putting everything together and reintroducing in the last line $\partial^{(q)*}_i(U_{n-k-1})$ when useful in the right hand side :

\begin{align*}& \partial_j\partial^{(q)*}_i(U_n)\\&=1_{i=j}U_n+\sum_{k=0}^{n-1} (-1)^{k}\left\{(\Xi_q-1\o 1)^{k}\#(\tilde{\tau}\o 1)\left(\partial^{(q)}_i\o 1(\Xi_q)\#_2(1\o \partial_j (U_{n-k-1}))\right)\right.\\ &
+ (\Xi_q-1\o 1)^{k}\#(1\o\tilde{\tau})\left(1\o\partial^{(q)}_i(\Xi_q)\#_1( \partial_j\o 1 (U_{n-k-1}))\right)\\ &+ (\Xi_q-1\o 1)^{k}\#(1\o\tilde{\tau})\left(\partial_j\o 1\o 1\partial^{(q)}_i\o 1(\Xi_q)\#_3 U_{n-k-1}+\partial_j\o\partial^{(q)}_i(\Xi_q)\#_2 U_{n-k-1}\right)\\ &+(\Xi_q-1\o 1)^{k}\#(\tilde{\tau}\o 1)\left(1\o 1\o\partial_j 1\o\partial^{(q)}_i(\Xi_q)\#_1 U_{n-k-1}+\partial^{(q)}_i\o \partial_j(\Xi_q)\#_2 U_{n-k-1}\right)\\ &
\left.-(\Xi_q-1\o 1)^{k}\#\left(\partial_j\o 1(\Xi_q)\#_2\partial^{(q)*}_i(U_{n-k-1})+ 1\o\partial_j(\Xi_q)\#_1\partial^{(q)*}_i(U_{\gre n-k-1})\right)\right\}.
\end{align*}

We can now deduce from this a bound for $p\in [2,\infty]$ in $L^{p}(M\o M^{op})$ if we know a bound on $||\partial^{(q)*}_i(U_{k})||_p$. Under the assumption $|q|N<1$ we know this is finite for $p=2$, we will use it later in the case $p=\infty$ under a stronger assumption. (the second line bellow corresponds to the last line of our last equation, the first and third to the first and second,  the fourth and fifth to the third and fourth).
\begin{align*} &
||\partial_j\partial^{(q)*}_i(U_n)||_p\leq 1_{i=j}||U_n||_p\\ &+\left(\sup_{k\leq n-1}||\partial^{(q)*}_i(U_{k})||_p\right)(||\partial_j\o 1(\Xi_q)||+||1\o\partial_j(\Xi_q)||)\sum_{k=0}^{n-1} ||\Xi_q-1\o 1||_{M\overline{\o}M^{op}}^{k}\\ &
+(||1\o\partial^{(q)}_i(\Xi_q)||||\partial_j\o 1(\Xi_q)||+||\partial^{(q)}_i\o 1(\Xi_q)||||1\o \partial_j(\Xi_q)||)\times \\ &\ \ \ \ \ \ \sum_{k=2}^{n} \frac{k(k-1)}{2}||(\Xi_q-1\o 1)||_{M\overline{\o}M^{op}}^{k-2}
\\ & +(||\partial_j\o\partial^{(q)}_i(\Xi_q)||+||\partial_j\o 1\o1\partial^{(q)}_i\o 1(\Xi_q)||)\sum_{k=0}^{n-1} (k+1)||(\Xi_q-1\o 1)||_{M\overline{\o}M^{op}}^{k}
\\ & +(||\partial^{(q)}_i\o \partial_j(\Xi_q)||+||1\o 1\o \partial_j1\o\partial^{(q)}_i(\Xi_q)||)\sum_{k=0}^{n-1} (k+1)||(\Xi_q-1\o 1)||_{M\overline{\o}M^{op}}^{k}.
\end{align*}
Since $||(\Xi_q-1\o 1)||_{M\overline{\o}M^{op}}<1$ all the sums of the right hand side  extended to infinity converge so that we get constants C,D $||\partial_j\partial^{(q)*}_i(U_n)||_2\leq C+ D\left(\sup_{k\leq n-1}||\partial^{(q)*}_i(U_{k})||_2\right)$ and thus $||\partial^{(q)*}_i(U_{n})||_2^2\leq ||\Xi_q||\ ||U_n||_2(C+ D\left(\sup_{k\leq n-1}||\partial^{(q)*}_i(U_{k})||_2\right))$, and a standard bound concludes to finiteness of $\sup_{k}||\partial^{(q)*}_i(U_{k})||_2$.

\begin{step}
Bounded conjugate variable under $\nu(q,N)<1$.
\end{step}
From the previous step, we know : \begin{align*}&\partial^{(q)*}_i(U_n)=U_n\# X_i+\sum_{k=0}^{n-1} (-1)^{k}(\Xi_q-1\o 1)^{k}\# \\ &\ \ \ \ \ \ \ \ \ m\circ (1\o \tau\o 1)\left(\partial^{(q)}_i\o 1(\Xi_q)\#_2(U_{n-k-1})+1\o\partial^{(q)}_i(\Xi_q)\#_1(U_{n-k-1})\right).\end{align*}

	And thus,  \begin{align*}&||\partial^{(q)*}_i(U_n)||\leq ||U_n||_{M\hat{\o} M^{op}}|| X_i||+\sum_{k=0}^{n-1} (k+1)||\Xi_q-1\o 1||_{M\hat{\o} M^{op}}^{k}\times\\ &\left(||\partial^{(q)}_i\o 1(\Xi_q)||_{(M\overline{\o} M^{op})\hat{\o} M^{op}}+||1\o\partial^{(q)}_i(\Xi_q)||_{ M\hat{\o}(M\overline{\o}M^{op})}\right)
	\\& \leq || X_i|| \sum_{k=0}^{\infty}||\Xi_q-1\o 1||_{M\hat{\o} M^{op}}^{k}+\sum_{k=0}^{\infty} (k+1)||\Xi_q-1\o 1||_{M\hat{\o} M^{op}}^{k}\times\\ &\left(||\partial^{(q)}_i\o 1(\Xi_q)||_{(M\overline{\o} M^{op})\hat{\o} M^{op}}+||1\o\partial^{(q)}_i(\Xi_q)||_{ M\hat{\o}(M\overline{\o}M^{op})}\right).\end{align*}
The last inequality gives a finite bound for $\nu(q,N)<1$ as, then, by corollary \ref{qproj}, we have $||\Xi_q-1\o 1||_{M\hat{\o} M^{op}}<1$.
Since we showed in step 1 $\partial^{(q)*}_i(U_n)\to \partial^{(q)*}_i(\Xi_q^{-1})$ weakly in $L^2$ up to extraction, this means we have ultraweak convergence of the same extraction. Thus especially $\partial^{(q)*}_i(\Xi_q^{-1})\in M$.

\begin{step}
Lipschitz conjugate variable under $\nu(q,N)<1$.
\end{step}
Since we now know  $\sup_{k}||\partial^{(q)*}_i(U_{k})||_M<\infty$ from the second step, the end of the first step gives :
$||\partial_j\partial^{(q)*}_i(U_n)||_{M\overline{\o} M^{op}}\leq C+ D\left(\sup_{k}||\partial^{(q)*}_i(U_{k})||_M\right)$. 

Again since we saw in step one : $\partial_j\partial^{(q)*}_i(U_n)\to \partial_j\partial^{(q)*}_i(\Xi_q^{-1})$ weakly in $L^2$ up to extraction, we got $\partial_j\partial^{(q)*}_i(\Xi_q^{-1})\in M\overline{\o} M^{op}$.

Putting everything together, this concludes the proof of the second part of our theorem (the statement on microstate free entropy dimension uses the $R^{\omega}$ embeddability result of \cite{Sni} and corollary \ref{MicrostateChap2}).





{\re \begin{step}
Analytic conjugate variable coming from a potential under $\pi(q,N)<1$.
\end{step}

Since by corollary \ref{qproj}, we have $||\Xi_q-1\o 1||_R<1$, we have a non-commutative power series $\Xi_q^{-1}$. If we define $$\xi_i(Y_1,...,Y_N)=\Xi_q^{-1}(Y_1,...,Y_N)\#Y_i-m\circ((1\o\tau)\partial_j^{(q)an}\o 1+1\o (\tau\o 1)\partial_j^{(q)an})(\Xi_q^{-1}(Y_1,...,Y_N)),$$

where $\partial_j^{(q)an}(P(Y))=\partial_jP(Y)\#\Xi_q(Y_1,...,Y_N)$ is the analytic version of $\partial_j^{(q)}$.
This is now obviously a power series with radius of convergence $R$ ($\tau$ here is the tracial state of $q$-gaussians), we have by the claim in step 1 $\xi_i(X_1,...,X_N)$ is the conjugate variable of q-gaussian variables $X_1,...,X_N$.

Let us define $$V(Y_1,...,Y_N)=N^{-1}\left(\frac{1}{2}\sum_{i=1}^N\xi_i(Y_1,...,Y_N)Y_i+Y_i\xi_i(Y_1,...,Y_N)\right),$$
where $N$ is the operator defined on non-commutative power series having each monomial of degree $n$ as eigenvector of eigenvalue $n$.
Obviously, $V$ is a selfadjoint potential since $Y_i$ and $\xi_i(Y)$ are self-adjoint. We have to check in the spirit of \cite{VoCG} that $D_iV=\xi_i.$ Of course this is equivalent to $D_i(NV)=(1+N)(D_iV)=(1+N)(\xi_i)=\xi_i+\sum_{j=1}^N\partial_j(\xi_i)\#Y_j$. In order to prove this, using lemma \ref{analrel} bellow, it suffices to show we have  $D_i(NV)(X_1,...,X_N)=\xi_i(X_1,...,X_N)+\sum_{j=1}^N\partial_j(\xi_i(X_1,...,X_N))\#X_j.$

But by corollary 5.12 in \cite{Vo6}, we have $NV(X_1,...,X_N)=\sum_{i=1}^N\xi_i(X_1,...,X_N)X_i.$

Note that the computation at the end of step 1, we know e.g. $\partial_i\xi_i(X_1,...,X_N)\in L^2(M)\hat{\o}L^2(M).$ Note that $D_i\xi_i(X_1,...,X_N)=m\circ flip( \partial_i\xi_i(X_1,...,X_N))$ is then defined in $L^1$ with $flip(a\o b)=b\o a.$

Applying cyclic gradients and using the relation $$D_i(PQ)=flip(\partial_i(P))\#Q+flip(\partial_i(Q))\#P,$$ we thus deduce :

$$D_i\sum_{j=1}^N\xi_j(X_1,...,X_N)X_j=\sum_{j=1}^Nflip(\partial_i(\xi_j(X_1,...,X_N))\#X_j +\xi_i(X_1,...,X_N).$$

To conclude we just have to recall $flip(\partial_i(\xi_j(X_1,...,X_N)))=\partial_j(\xi_i(X_1,...,X_N)),$ a priori in $L^2(M\o M)$ thus also in the subspace $L^2(M)\hat{\o}L^2(M).$ This follows by a duality argument in lemma \ref{graddual}.

}
\end{proof}
{\re
\begin{lemma}\label{graddual}
If $X_1,...,X_N$ have conjugate variables $\xi_1,...,\xi_N\in L^2$, then  for any $a,b\in D(\partial_j)\cap M$:
\begin{align*}\langle&\xi_i,\partial_j^*(a\o b)\rangle=\langle\xi_j,\partial_i^*(b\o a)\rangle.
\end{align*}
\end{lemma}

\begin{proof}
The result is shown, by density, for $a,b$ non-commutative polynomials in $X_1,...,X_N$, using lemma \ref{basic} and coassociativity of the free difference quotient : 
\begin{align*}\langle&\xi_i,\partial_j^*(a\o b)\rangle\\&=\tau(\xi_i^*a\xi_jb)-\tau(\xi_i^*[(1\o \tau\partial_j)(a)b+a(\tau\o 1\partial_j)(b)])
\\ &=\langle\xi_j,\partial_i^*(b\o a)\rangle+\tau(\xi_j^*[(1\o \tau\partial_i)(b)a+b(\tau\o 1\partial_i)(a)])\\ &\ \ \ -\tau\o \tau((\partial_i\o \tau)(\partial_j)(a)b)-\tau\o \tau(a(\tau\o \partial_i)\partial_j)(b))
\\ &\ \ \ -\tau\o \tau((1\o \tau\partial_j)(a)\partial_i(b))-\tau\o \tau(\partial_i(a)(\tau\o 1)\partial_j)(b))
\\ &=\langle\xi_j,\partial_i^*(b\o a)\rangle+\tau\o \tau([(\partial_j\o \tau\partial_i)(b)a+b(\tau\o \partial_j)\partial_i)(a)])\\ &\ \ \ +\tau\o \tau([(1\o \tau\partial_i)(b)\partial_j(a)+\partial_j(b)(\tau\o 1)\partial_i)(a)])\\ &\ \ \ -\tau((\tau\o((1\o \tau)\partial_j))(\partial_i)(a)b)- \tau(a( ((\tau\o 1)\partial_j)\o\tau)\partial_i)(b))
\\ &\ \ \ -\tau((1\o \tau)\partial_j(a)(1\o\tau)\partial_i(b))- \tau((\tau\o 1)\partial_i(a)(\tau\o 1)\partial_j)(b))
\\ &=\langle\xi_j,\partial_i^*(b\o a)\rangle
\end{align*}
\end{proof}
\begin{lemma}\label{analrel}
Assume $\Phi^*(X_1,..,X_n)<\infty$ then there is no non-zero non-commutative power series $P(X_1,...,X_n)$ of radius of convergence $R>||X_i||$ such that $P(X_1,...,X_n)=0$.
\end{lemma}
\begin{proof}
Since $\Phi^*(X_1,..,X_n)<\infty$, the free difference quotient is closable. As a consequence, taking a sequence of polynomials $P_n\to P$ in analytic norm, we have $P_n(X_1,...,X_n)\to 0$ in $L^2$ norm. $\partial_iP(X_1,...,X_n)$ converges to $\partial_i P(X_1,...,X_n)$ (since $||\partial_iP_n-\partial_iP||_S\to 0$ for any $S<R$), thus by closability $\partial_i P(X_1,...,X_n)=0.$ We also get vanishing of any higher order derivatives by induction.

Taking successive non-commutative derivatives and multiplying, if we assume for contradiction $P\neq 0$ one can assume $P(0,...,0)\neq 0$. Now for any non-commutative polynomial $Q$,  one has\begin{align}\label{SeriesExp}\begin{split}&Q(X_1+Y_1,...,X_n+Y_n)\\&=\sum_{k=0}^\infty \sum_{i_1,...,i_k\in[1,n]}[(\partial_{i_1}\o 1^{\o k-1})\circ ...\circ \partial_{i_k}(Q)](X_1,...,X_n)\#(Y_{i_1},...,Y_{i_k}),\end{split}\end{align}

where $(a_0\o...\o a_{k})\#(Y_{i_1},...,Y_{i_k})=a_0Y_{i_1}a_1...Y_{i_k}a_k$ and the sum over $k$ is finite here.

Let $T=\max_i (||X_i||)<R$. Consider $c_n(P)$ the sum of absolute values of coefficients of degree $n$ of $P$. Then $\phi_P(x)=\sum_{n=0}^\infty c_n(P)x^n$ is a commutative power series of radius of convergence at least $R$ and  $\sum_{i_1,...,i_k\in[1,n]}||[(\partial_{i_1}\o 1^{\o k-1})\circ ...\circ \partial_{i_k}(P)]||_T\leq \frac{1}{k!}\phi_P^{(k)}(T).$

Since $\phi_P$ is analytic in the ball of center $0$ and radius $R$, it admits a Taylor power series expansion around $T$ and as a consequence, the right hand side of (\ref{SeriesExp}) makes sense if $||Y_i||<R-T$. As a consequence approximating $P$ by polynomials, one gets $(\ref{SeriesExp})$ for $P$ and such $Y_i$'s. Applying this for $Y_i=(t-1)X_i$ one gets $P(tX_1,...,tX_n)=0$ for $t$ close to $1$ and then after iterating for $t\in[0,1]$, this contradicts $P(0,...,0)\neq 0.$
\end{proof}

}

\subsection{Group Cocycles}

Since assumption 1 is hard to verify in practice, it is interesting to work only under assumption 0, and prove directly that the ultramild solution of theorem \ref{main} (i) satisfy $||X_{t}||_{2}=||X_{0}||_{2}$ a.e. to get a stationary solution. In this part, we find a necessary and sufficient condition for derivations coming from group cocycles to get results in the spirit of Corollary 3 in \cite{S07}. 

Let $\Gamma$ be a discrete group. To a(n additive left) cocycle $c$ with value in the regular representation $c\in C^{1}(\Gamma,\ell^{2}(\Gamma))$ we associate a derivation $\delta_{c}:\C\Gamma\rightarrow \ell^{2}(\Gamma)\o\ell^{2}(\Gamma)=L^{2}(M_{0}\o M_{0})$ ($M_{0}$ the group von Neumann algebra of $\Gamma$) given by $\delta_{c}(\gamma)=B(c(\gamma))\gamma$ where $B:\ell^{2}(\Gamma)\rightarrow \ell^{2}(\Gamma)\o\ell^{2}(\Gamma)$ the isometric map given by $B(\gamma)=\gamma\o\gamma^{-1}$. Indeed, $\delta(\gamma_{1}\gamma_{2})=B(\gamma_{1}c(\gamma_{2})+c(\gamma_{1}))\gamma_{1}\gamma_{2}=\gamma_{1}B(c(\gamma_{2}))\gamma_{1}^{-1}\gamma_{1}\gamma_{2}+\delta_{c}(\gamma_{1})\gamma_{2}=\gamma_{1}\delta_{c}(\gamma_{2})+\delta_{c}(\gamma_{1})\gamma_{2}$ so that $\delta_{c}$ is a derivation with the same bimodule structure used earlier on $L^{2}(M_{0}\o M_{0})$. Moreover $\delta_{c}$ is easily seen to be a real derivation if $c$ takes values in $i\R$ (we will consider only such cocycles). Let us note that $\langle\delta_{c}(\gamma) , 1\o 1\rangle=0$ for any $\gamma$ so that we easily deduce that $\delta_{c}^{*}(1\o 1)=0$ so that $\delta_{c}$ is always closable. Any $\delta_{c_{1}},...,\delta_{c_{n}}$ therefore satisfy assumption 0. Moreover, as noted e.g. in the proof of Corollary {\gre 19} in \cite{S07},  $\langle\delta_{c}\gamma,\delta_{c}\gamma'\rangle=\delta_{\gamma}^{\gamma'}||c(\gamma)||_{2}^{2}$ so that $\delta_{c}^{*}\delta_{c}(\gamma)=||c(\gamma)||_{2}^{2}\gamma$. We now fix $c_{1},...,c_{n}$ such cocycles and {\gre write} $\delta_{i}$ the extension to $M$ of $\delta_{c_{i}}$ described at the beginning of section 2. We write $X_{t},X_{t}^{\epsilon}$ the ultramild (resp mild) solution given by theorem \ref{main} when the initial condition is $X_{0}$.

We now want to describe a first equivalent formulation of the isometry $||X_{t}||_{2}=||X_{0}||_{2}$. To this end, we want to give an equation on certain component{\gre s} of the free product $L^{2}(M)$. {\gre Let us call} $N$ the von Neumann algebra generated by free Brownian motions, it is well known that $M$ is the orthogonal direct sum of  $L^{2}(N)$ and $L^{2}(N)\gamma_{1}(L^{2}(N)\ominus\C)\gamma_{2}...(L^{2}(N)\ominus\C)\gamma_{n}L^{2}(N)$ where $\gamma_{i}$'s run over $\Gamma-\{1\}$. Since $X_{t}$ and $X_{t}^{\epsilon}$ are orthogonal to  $L^{2}(N)$ (since $\delta_{i}^{*}1\o 1=0$) we may consider only $X_{t;\gamma_{1},...,\gamma_{n}}^{\epsilon}\in L^{2}(N)\o (L^{2}(N)\ominus\C)^{n-1}\o L^{2}(N)$   such that $X_{t;\gamma_{1},...,\gamma_{n}}^{\epsilon}\#(\gamma_{1}\o...\o\gamma_{n})$ are the orthogonal projections  on those spaces. We wrote here $U\#(\gamma_{1}\o...\o\gamma_{n})$ the extension given by freeness of $(a_1\o ...\o a_{n+1})\#(\gamma_{1}\o...\o\gamma_{n})=a_1\gamma_{1}a_2...a_n\gamma_{n}a_{n+1}$.We will {\gre also write} $(a_{1}\o ...a_{i}\o a_{i+1}...\o a_{n})\#_{i}(1\o(S_{t}-S_{s})\o 1)=a_{1}\o ...a_{i}\o(S_{t}-S_{s})\o a_{i+1}...\o a_{n}$, $(a_{1}\o ...a_{i}\o a_{i+1}...\o a_{n})\#_{i}((S_{t}-S_{s})\o 1)=a_{1}\o ...a_{i}(S_{t}-S_{s})\o a_{i+1}...\o a_{n}$,$(a_{1}\o ...a_{i}\o a_{i+1}...\o a_{n})\#_{i}(1\o(S_{t}-S_{s}))=a_{1}\o ...a_{i}\o(S_{t}-S_{s})a_{i+1}...\o a_{n}$ and the obvious corresponding adapted stochastic integrals.  We now have the following :

\begin{propositions}
Assume $X_{0}=\gamma$, then :
\begin{align*}&X_{t;\gamma_{1},...,\gamma_{n}}^{\epsilon}=\delta_{n=1}\delta_{\gamma_{1}=\gamma}e^{-\frac{t}{2}\left(\sum_{j=1}^{N}||c_{j}(\gamma)||_{2}^{2}\right)}1\o 1\\ &+(1-\epsilon)\sum_{i=1}^{n}\sum_{j=1}^{N}\\ &\int_{0}^{t}\ e^{\frac{s-t}{2}\left(\sum_{i=1}^{n}\sum_{j=1}^{N}||c_{j}(\gamma_{i})||_{2}^{2}\right)}X_{s;\gamma_{1},...,\gamma_{n}}^{\epsilon}\#_{i}\left(\langle \gamma_{i}, c_{j}(\gamma_{i})\rangle 1\o dS_{s}^{(j)}+\langle 1, c_{j}(\gamma_{i})\rangle  dS_{s}^{(j)}\o 1\right)\\ &+ (1-\epsilon)\delta_{n\neq 1}\sum_{i=1}^{n-1}\sum_{j=1}^{N}\langle \gamma_{i}, c_{j}(\gamma_{i}\gamma_{i+1})\rangle\times \\ &\times\int_{0}^{t}\ e^{\frac{s-t}{2}\left(\sum_{i=1}^{n}\sum_{j=1}^{N}||c_{j}(\gamma_{i})||_{2}^{2}\right)}X_{s;\gamma_{1},...,\gamma_{i}\gamma_{i+1},...,\gamma_{n}}^{\epsilon}\#_{i}1\o dS_{s}^{(j)}\o 1,\end{align*}

which is non zero only if $\gamma_{1}...\gamma_{n}=\gamma$.  Moreover this relation with $\epsilon=0$ is thus also valid for $X_{t}$ (by the weak convergence defining it).
\end{propositions}

 As a consequence, using freeness and the definition of the space where $X_{t;\gamma_{1},...,\gamma_{n}}^{\epsilon}$ lives (especially the orthogonal complements to $\C$) we get :

\begin{align*}||&X_{t;\gamma_{1},...,\gamma_{n}}||_{2}^{2}=\delta_{n=1}\delta_{\gamma_{1}=\gamma}e^{-t\left(\sum_{j=1}^{N}||c_{j}(\gamma)||_{2}^{2}\right)}\\ &+\sum_{i=1}^{n}\sum_{j=1}^{N}\left(|\langle \gamma_{i}, c_{j}(\gamma_{i})\rangle|^{2}+ |\langle 1, c_{j}(\gamma_{i})\rangle|^{2} \right)\int_{0}^{t}ds\ e^{(s-t)\left(\sum_{i=1}^{n}\sum_{j=1}^{N}||c_{j}(\gamma_{i})||_{2}^{2}\right)}||X_{s;\gamma_{1},...,\gamma_{n}}||_{2}^{2}\\ &+ \delta_{n\neq 1}\sum_{i=1}^{n-1}\sum_{j=1}^{N}|\langle \gamma_{i}, c_{j}(\gamma_{i}\gamma_{i+1})\rangle|^{2}\int_{0}^{t}ds\ e^{(s-t)\left(\sum_{i=1}^{n}\sum_{j=1}^{N}||c_{j}(\gamma_{i})||_{2}^{2}\right)}||X_{s;\gamma_{1},...,\gamma_{i}\gamma_{i+1},...,\gamma_{n}}||_{2}^{2}.\end{align*}

As a consequence, solving the equation by variation of constants, and using the following convenient notation $||\hat{c}_{j}(\gamma_i)||_{2}^{2}=||c_{j}(\gamma_i)||_{2}^{2}-\left(|\langle \gamma_{i}, c_{j}(\gamma_{i})\rangle|^{2}+ |\langle 1, c_{j}(\gamma_{i})\rangle|^{2} \right)$, we obtain the following :

\begin{propositions}
Assume $X_{0}=\gamma$, then 
\begin{align*}||&X_{t;\gamma_{1},...,\gamma_{n}}||_{2}^{2}=\delta_{n=1}\delta_{\gamma_{1}=\gamma}e^{-t\left(\sum_{j=1}^{N}||\hat{c}_{j}(\gamma)||_{2}^{2}\right)}\\ &+ \delta_{n\neq 1}\sum_{i=1}^{n-1}\sum_{j=1}^{N}|\langle \gamma_{i}, c_{j}(\gamma_{i}\gamma_{i+1})\rangle|^{2}\int_{0}^{t}ds\ e^{(s-t)\left(\sum_{i=1}^{n}\sum_{j=1}^{N}||\hat{c}_{j}(\gamma_{i})||_{2}^{2}\right)}||X_{s;\gamma_{1},...,\gamma_{i}\gamma_{i+1},...,\gamma_{n}}^{\epsilon}||_{2}^{2}.\end{align*}
\end{propositions}

This equation is nothing but a forward Kolmogorov equation, and the question we ask is whether $1=||\gamma||_{2}^{2}=||X_{0}||_{2}^{2}=\sum_{n}\sum_{\gamma_{1},...,\gamma_{n}} ||X_{t;\gamma_{1},...,\gamma_{n}}||_{2}^{2}$, i.e. nothing but if the solution of the Kolmogorov equation is conservative. In order to state a result, let us define a corresponding continuous time Markov chain to give a probabilistic counterpart to the stationarity of $X_{t}$, using usual results on Kolmogorov equations (cf. e.g. \cite{liggett}).

\begin{notation}
Given a countable group $\Gamma$ and additive left cocycles with value in the left regular representation $c_{1},...,c_{N}$ as above. We write $M(\Gamma;c_{1},...,c_{N})$ 
 the continuous time Markov process defined on the countable state space of finite non trivial sequences valued in $\Gamma$: $F(\Gamma)=(\Gamma-\{ 1 \})^{(<\omega)}$ 
  defined by the following rates $R((\gamma_{1},...,\gamma_{n}))=\sum_{i=1}^{n}\sum_{j=1}^{N}||\hat{c}_{j}(\gamma_{i})||_{2}^{2}$, 
and with transition probabilities non zero only from $(\gamma_{1},...,\gamma_{n})$ to $(\gamma_{1},...,\delta_{i},\delta_{i}',...,\gamma_{n})$ with $\delta_{i}\delta_{i}'=\gamma_{i}$ (of course $\delta_{i},\delta_{i}'\neq 1$), given by $$P((\gamma_{1},...,\gamma_{n}),(\gamma_{1},...,\delta_{i},\delta_{i}',...,\gamma_{n}))=\frac{\sum_{j=1}^{N}|\langle \delta_{i}, c_{j}(\gamma_{i})\rangle|^{2}}{R((\gamma_{1},...,\gamma_{n}))}.$$ 
\end{notation}

We can now state the following trivial :

\begin{corollaries}
  Let $X_{t}$ be the ultramild solution given by theorem \ref{main}  with $\delta=(\delta_{1},...,\delta_{N})$ associated as above to cocycles $(c_{1},...,c_{N})$. Then $||X_{t}||_{2}=||X_{0}||_{2}$ (for any $X_{0}\in \ell^{2}(\Gamma)$) for all $t\in[0,T)$ (and as a consequence is stationary in $[0,T)$  on $M_{0}=L(\Gamma)$) if and only if $M(\Gamma;c_{1},...,c_{N})$ has almost surely no explosion before T. 
\end{corollaries}



\end{document}